\documentclass[11pt,reqno]{amsart}
\usepackage{bm,amsmath,amsthm,amssymb,mathtools,verbatim,amsfonts,tikz-cd,mathrsfs,diagbox,makecell}
\usepackage{enumitem}
\usepackage{float}
\usepackage{xfrac}
\usepackage{url}
\usepackage{thmtools}
\usepackage{graphicx}
\usepackage[alphabetic]{amsrefs}
\usepackage{caption}
\usepackage{morefloats}
\usepackage[top=1.4in, bottom=1.2in, left=1.4in, right=1.4in,marginpar=1in]{geometry}
\usepackage{subfigure}

\usepackage{tikz}
\tikzset{axislabel/.style={color=black!60!white, font=\tiny}}
\usepackage{adjustbox}
\usetikzlibrary{cd}
\usetikzlibrary{fit, patterns}
\usetikzlibrary{matrix,arrows,decorations.pathmorphing,decorations.pathreplacing,calligraphy,backgrounds,calc}
\usepackage[hidelinks,colorlinks=true,linkcolor=blue, citecolor=black,linktocpage=true, hypertexnames=false]{hyperref}

\usepackage{chngcntr}
\counterwithin{figure}{section}

\newtheorem{thm}{Theorem}[section]
\newtheorem{prop}[thm]{Proposition}

\newtheorem{lem}[thm]{Lemma}

\theoremstyle{definition}
\newtheorem{define}[thm]{Definition}

\theoremstyle{remark}
\newtheorem{rem}[thm]{Remark}

\numberwithin{equation}{section}

\newcommand{\ve}[1]{\boldsymbol{\mathbf{#1}}}

\newcommand{\R}{\mathbb{R}}

\newcommand{\Z}{\mathbb{Z}}
\newcommand{\N}{\mathbb{N}}

\renewcommand{\d}{\partial}
\renewcommand{\subset}{\subseteq}
\renewcommand{\supset}{\supseteq}
\renewcommand{\tilde}{\widetilde}

\newcommand{\iso}{\cong}

\DeclareMathOperator{\free}{{free}}
\DeclareMathOperator{\GL}{{GL}}
\DeclareMathOperator{\gr}{{gr}}

\DeclareMathOperator{\Id}{{Id}}
\DeclareMathOperator{\id}{{id}}

\DeclareMathOperator{\im}{{im}}

\DeclareMathOperator{\Tor}{{Tor}}

\DeclareMathOperator{\rel}{{rel}}

\DeclareMathOperator{\hori}{{hori}}

\newcommand{\lk}{\mathrm{lk}}

\newcommand{\bE}{\mathbb{E}}
\newcommand{\bF}{\mathbb{F}}

\newcommand{\bH}{\mathbb{H}}
\newcommand{\bI}{\mathbb{I}}
\newcommand{\bJ}{\mathbb{J}}

\newcommand{\bM}{\mathbb{M}}

\newcommand{\bX}{\mathbb{X}}

\newcommand{\bZ}{\mathbb{Z}}

\newcommand{\cB}{\mathcal{B}}
\newcommand{\cC}{\mathcal{C}}
\newcommand{\cD}{\mathcal{D}}

\newcommand{\cH}{\mathcal{H}}

\newcommand{\cK}{\mathcal{K}}
\newcommand{\cL}{\mathcal{L}}

\newcommand{\cR}{\mathcal{R}}
\newcommand{\cS}{\mathcal{S}}

\newcommand{\cX}{\mathcal{X}}

\newcommand{\scE}{\mathscr{E}}
\newcommand{\scF}{\mathscr{F}}

\newcommand{\scJ}{\mathscr{J}}

\newcommand{\scM}{\mathscr{M}}

\newcommand{\cCFL}{\mathcal{C\!F\!L}}
\newcommand{\cCFK}{\mathcal{C\hspace{-.5mm}F\hspace{-.3mm}K}}
\newcommand{\cHFL}{\mathcal{H\hspace{-.5mm}F\hspace{-.3mm}L}}

\newcommand{\CF}{\mathit{CF}}

\newcommand{\CFK}{\mathit{CFK}}

\newcommand{\CFL}{\mathit{CFL}}

\newcommand\CFKi{\CFK^\infty}

\newcommand{\xs}{\ve{x}}
\newcommand{\ys}{\ve{y}}
\newcommand{\zs}{\ve{z}}
\newcommand{\ws}{\ve{w}}

\renewcommand{\a}{\alpha}

\newcommand{\g}{\gamma}

\newcommand{\veps}{\varepsilon}

\renewcommand{\GL}{\mathit{GL}}

\usepackage{url}
\usepackage{leftidx}

\DeclareMathOperator{\Cone}{{Cone}}

\newcommand{\ar}{\mathrm{a.r.}}

\newcommand{\llsquare}{[\hspace{-.5mm}[}
\newcommand{\rrsquare}{]\hspace{-.5mm}]}

\newcommand{\Ord}{\operatorname{Ord}}

\allowdisplaybreaks

\renewcommand{\top}{\mathrm{top}}

\title{Unknotting number and L-space satellite operations}

\author{Daren Chen}
\address{Department of Mathematics\\Lehigh Universeity\\ Bethlehem, PA, USA}
\email{dac726@lehigh.edu}

\author{Ian Zemke}
\address{Department of Mathematics\\University of Oregon\\  Eugene, OR, USA}
\email{izemke@uoregon.edu}

\author{Hugo Zhou}
\address{Department of Mathematics\\University of Michigan\\  Ann Arbor, MI, USA}
\email{hugozhou@umich.edu}

\begin{document}
	\maketitle
	\begin{abstract}
    We give a lower bound for the unknotting number under the L-space satellite operations. 
    In the case of braided L-space satellite operations, including cabling operations, we obtain a stronger bound.
    The proof uses a lower bound for the unknotting number in knot Floer homology due to Alishahi-Eftekhary.
    The computational tool is a formula for computing the knot Floer complex under the L-space satellite operations, which was defined in the authors' previous work. 
	\end{abstract}
	\tableofcontents

\section{Introduction}
In this article, we study the Heegaard Floer torsion order of satellite knots. We recall that a \emph{satellite pattern} is a knot $P$ in the solid torus $S^1\times D^2$. Given a knot $K\subset S^3$ (called the \emph{companion}), a pattern $P$, and an integer $\lambda\in \Z$, we may form a new knot $P(K,\lambda)$ by removing a tubular neighborhood of $K$ and gluing in $(S^1\times D^2,P)$ so that $S^1\times \{\theta\}$ is mapped to the $\lambda$-framed longitude of $K$, for any fixed $\theta\in \d D^2$. 

Using the unknotting number bound of Alishahi--Eftekhary \cite{AlishahiEftekhary_Unknotting} and the cabling formula of Hanselman and Watson \cite{HWCabling}, Hom--Lidman--Park \cite{HLPUnknotting} proved that if $K$ is a non-trivial knot, then the cable $K_{p,q}$ has unknotting number at least $p$.  Furthermore, they proved that if $K$ is a non-trivial knot, then the iterated cable $K_{p_1,q_1;\dots; p_k,q_k}$ has unknotting number at least $p_1\cdots p_k$. (Here, we use the convention that the $(p,q)$-cabling operator is wraps $p$ times longitudinally and $q$ times meridianally around the boundary of $\d (S^1\times D^2)$). 

The cabling operators fit into a larger family called \emph{L-space satellite operators}, studied in \cite{CZZ} \cite{CZZApp}. We recall that a pattern $P$ is an \emph{L-space satellite operator} if the induced two component link $L_P=\mu\cup P\subset S^3$ is an L-space link. Here, $L_P$ is formed by embedding $S^1\times D^2$ as a tubular neighborhood of the unknot and $\mu$ is the core of the complementary solid torus.

In this paper, we prove an analog of Hom--Lidman--Park's result about cables for L-space satellite operators. Since cables are examples of L-space satellite operators, our proof gives an alternate proof of most of the results from \cite{HLPUnknotting}.

Our first result concerns \emph{braided} L-space satellite operators, which we recall are ones where $P$ intersects each meridianal disk $\theta \times D^2$ transversely. The \emph{winding number} of a satellite operator is the algebraic intersection number of $P$ with a meridianal disk. Note that this is the same as the linking number of the 2-component link $L_P$, so we will usually write $\ell$ for the winding number.

\begin{restatable}{thm}{IterateCable}\label{thm: iterate_braided}
   Suppose that for $i=1,\cdots,k$, $P_i$ is a braided L-space satellite pattern. If $K$ is a non-trivial knot which is not $T_{2,3}$, then $u(P_k \circ \cdots \circ P_1(K)) \geq |\ell_k \cdots \ell_1|.$  Here, $P_i(K)$ denotes $P_i(K,\lambda_i)$  for any $\lambda_i\in \Z$. 
\end{restatable}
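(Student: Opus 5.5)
The plan is to use the Alishahi--Eftekhary bound $u(K)\ge \Ord_U(K)$, where $\Ord_U$ is the torsion order of the $U$-torsion submodule of $\mathit{HFK}^-(K)$ (equivalently, the largest $n$ such that $U^{n-1}$ does not annihilate the torsion of $H_*(\CFK^-(K))$). It then suffices to prove
\[
\Ord_U(P_k\circ\cdots\circ P_1(K)) \ge |\ell_k\cdots\ell_1|,
\]
and by induction this reduces to one step. The inductive claim should be: if $P$ is a braided L-space satellite pattern with winding number $\ell$, and $K$ has a $U$-torsion class of order at least $m$ that is suitably ``extremal'' (for instance, it lies at the top Alexander grading where torsion is supported), then $P(K,\lambda)$ has a torsion class of order at least $|\ell| m$ with the same extremal property. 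The base case is that a non-trivial $K\ne T_{2,3}$ has $\Ord_U(K)\ge 1$ with a torsion class of the required form. That $\Ord_U\ge1$ just says $K$ is not the unknot. The exclusion of $T_{2,3}$ (and presumably its mirror, though the statement as written only names $T_{2,3}$) suggests the required extremal form fails exactly for the trefoil. For the trefoil the torsion is a single staircase step, and that step may cancel against the pattern's contribution. For every other non-trivial knot I would extract either torsion of order at least $2$ or a suitable torsion class from $\CFK$ using genus detection and fiberedness detection.

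For the inductive step, I would use the authors' satellite formula from \cite{CZZ}. It expresses $\CFK(P(K,\lambda))$ as a tensor product of $\CFK(K)$, or rather the bimodule/surgery-type complex built from it, with the complex $\cCFL(L_P)$ of the L-space link $L_P$. The latter is determined by the $H$-function, equivalently a staircase-type model. In the braided case, $L_P$ is the closure of a braid together with its axis $\mu$. I expect this structure to force the $H$-function of $L_P$ to have the form $H(s_1,s_2)$ with slope exactly $\ell$ in the $\mu$-direction over the relevant range, so that a $U$-action on $K$ is converted into $U^{\ell}$ in the satellite. Concretely, I would take a torsion generator $x$ of $\CFK(K)$ with $U^{m-1}x\ne 0$ in homology. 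I would tensor it with the generator of $\cCFL(L_P)$ at the extremal corner and show that the resulting class in $\CFK(P(K))$ is torsion of order at least $\ell m$. This requires identifying a filtration or grading argument, in either the Alexander grading or the $\mu$-component Alexander grading, showing that no boundary can kill $U^{\ell m-1}$ times the class.

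I expect the main obstacle to be this last non-vanishing argument. The tensor product formula involves infinitely generated pieces and higher $A_\infty$-style maps, so lower bounds on torsion order are not immediate. My first attempt would be to pass to an associated graded or truncated complex, restricting to Alexander gradings near the top. There the braided hypothesis should make $\cCFL(L_P)$ look like a product of staircases scaled by $\ell$. I would then compute the torsion order directly there and check that the truncation map is injective on the relevant class.
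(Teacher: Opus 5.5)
The reduction to $\Ord$ via Alishahi--Eftekhary is right, and so is the idea of carrying a structural property of the longest $Z$-arrow through the induction. The gap is twofold: the property you propose is not the one that governs the output, and the step you flag as the main obstacle is left open.

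\textbf{The inductive invariant.} What the satellite formula produces depends on how the longest arrow $a_1\xrightarrow{Z^n}a_0$ sits among the $W$-arrows, not on its Alexander grading. Your candidate class is the torsion generator tensored with the free generator of $\cS$ at $t=\frac{\ell-1}{2}$, i.e.\ $a_0|x'_{-\frac12}$. This is a cycle only if $a_0$ has no outgoing $W$-arrow. Otherwise the length-zero differential contains $a_0'|x'_{\eta_0-\frac12}$ with coefficient $1$. For braided patterns the method then yields only $\ell(n-1)+1$ (type $(-,+)$) or $\ell n-\omega+1$ (type $(-,-)$). So ``$\Ord\ge m\Rightarrow\Ord\ge \ell m$'' cannot be propagated with a top-Alexander-grading hypothesis.

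The paper splits the argument in two.
\begin{itemize}
\item If $\Ord(K)\ge 2$, the weaker bound of Theorem~\ref{thm: tor_ord_braided} already iterates, because $\Ord\ge \ell_s\cdots\ell_1+1$ reproduces itself.
\item If $\Ord(K)=1$, the propagated property is that a longest $Z$-arrow points into a \emph{sink}: a non-ending arrow of type $(+,\pm)$, or the ending arrow with $\varepsilon=-1$, $\eta>0$.
\end{itemize}
The satellite model is not in standard form, so propagating this needs a basis-free criterion. Lemma~\ref{lem: alpha0_hori} shows $[a|x'_{-\frac12}]$ is nonzero in $H_*(\cCFK(P(K,\lambda))/(Z))$ but killed by $W$. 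This uses $\ell>1$, $\theta=0$, $\Xi\ge 0$, and the Hedden--Watson fact that the first $W$-arrow of the staircase $\cS$ has length one. Lemma~\ref{lem: refined_tor} then gives either $\Ord>\ell n$ or a sink receiving a $Z^{\ell n}$-arrow.

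For the base case, Petkova's lemma reduces $\Ord(K)=1$ to a staircase plus length-one boxes. Unless $\cCFK(K)^{\hat\cR}\simeq\cCFK(\pm T_{2,3})^{\hat\cR}$, this contains a non-ending $Z$-arrow into a sink, and Ghiggini's detection result identifies the remaining knots. In particular $-T_{2,3}$ is \emph{not} excluded: its $Z$-arrow points into a sink, whereas for $T_{2,3}$ it ends at the ending generator. L-space satellite operators are not closed under mirroring, so an argument that also discards $-T_{2,3}$ proves less than the statement.

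\textbf{The non-vanishing step.} The paper neither truncates near the top Alexander grading nor uses a ``slope $\ell$'' property of $H_{L_P}$. It sets $W=0$ on the output, where $\cX^{\diamond}(L_P)$ is an honest bimodule whose relevant structure maps are read off from the $R_t$ (Lemma~\ref{lem: C_S_structure_maps}). The factor $\ell$ comes from $L_\tau(\xs_0^{\ell/2})=\xs_0'Z^{\ell+\theta}$ against $L_\sigma(\xs_0^{\ell/2})=\xs_0'Z^{\theta}$, and braidedness enters through Lemma~\ref{lem: braided_property}. One then writes an explicit zig-zag; for type $(+,+)$ it is
\[
\alpha_0=a_0|x'_{-\frac12},\qquad \alpha_1=a_0\Big|\sum_{i=1}^n x_{i-\frac12}Z^{\ell(n-i)}+a_1|x'_{-\frac12}Z^{\theta},\qquad \partial\alpha_1=\alpha_0Z^{\ell n+\theta}.
\]
The pair $\{\alpha_0,\alpha_1\}$ is split off by a change of basis. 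Where the identity holds only in the free part, Lemma~\ref{lem: torsion_order_criterion} is applied to the free/torsion mapping-cone decomposition. Without an argument of this kind, your inductive step is an expectation rather than a proof.
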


We also prove a general bound for L-space operators. Our bound is phrased in terms of the winding number and the relative 3-genus, whose definition we now recall. If $P\subset S^1\times D^2$ is a pattern, we define $g_3^{\rel}(P)$ to be the minimum genus of an embedded surface in $S^1\times D^2$ whose boundary consists of $|\ell|$ copies of a 0-framed longitude of the solid torus and $P$.  We define 
\[
\theta(P):=g_3^{\rel}(P)-g_3(P)
\]
(where $g_3(P)$ denotes $g_3(P(U))$). The quantity $\theta$ is always nonnegative.

\begin{restatable}{thm}{IterateGeneral}\label{thm: iterate_general} Suppose that $P_1,\dots, P_k$ are L-space satellite operators with $\theta_i=g_3^{\rel}(P_i)-g_3(P_i)$. If $K$ is a non-trivial knot which is not $ T_{2,3}$, then
\[
u(P_k\circ \cdots\circ  P_1(K))\ge |\ell_k| (\cdots (|\ell_2|(|\ell_1| + \theta_1 -1) + \theta_2 -1) \cdots -1)  + \theta_k.
\]
In the above, $P_i(K)$ denotes $P_i(K,\lambda_i)$ for any $\lambda_i\in \Z$.
\end{restatable}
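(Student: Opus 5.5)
The strategy is to use the Alishahi--Eftekhary torsion bound, $u(K)\ge \Ord_U(K)$, where $\Ord_U(K)$ is the torsion order of $\cHFL$-type knot Floer homology over $\bF[U]$: the largest $n$ such that some $U$-torsion class in $\mathit{HFK}^-(K)$ satisfies $U^{n-1}x\neq 0$. It therefore suffices to prove the inequality with $u$ replaced by $\Ord_U$. We argue by induction on $k$, using a one-step estimate. For an L-space satellite operator $P$ with winding number $\ell$ and $\theta=\theta(P)$, and a companion $K$ with $\Ord_U(K)\ge 1$ (that is, $K$ is non-trivial), we aim to show
\[
\Ord_U(P(K,\lambda))\ \ge\ |\ell|\bigl(\Ord_U(K)-1\bigr)+\theta .
\]
Iterating this gives the right-hand side of the theorem, once we know $\Ord_U(K)\ge 2$ for the base companion. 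This is where the hypothesis $K\neq U, T_{2,3}$ enters: $\Ord_U(K)=1$ precisely when $\CFK^-(K)$ is, up to acyclic/free summands, a staircase with steps of length one, i.e.\ $K$ has the knot Floer complex of $T_{2,3}$ (possibly mirrored) or of the unknot. We must dispose of this case separately. A knot with the complex of $T_{2,3}$ that is not $T_{2,3}$ still has $u\ge 1$, and the formula already gives $\ge|\ell_k|(\cdots)$ with $\Ord_U(K)-1=0$. For the inductive steps, we also need $\Ord_U(P_i\circ\cdots\circ P_1(K))\ge 2$ again, which follows from the one-step estimate when $|\ell|\ge1$ or $\theta\ge 2$. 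Degenerate cases (e.g.\ $\ell=0$) will need to be checked by hand.

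The one-step estimate comes from the satellite formula of \cite{CZZ}. There, $\CFK(P(K,\lambda))$ is computed as a tensor product of the link complex $\cCFL(L_P)$, viewed as a type-$D$/bimodule over the surgery algebra, with the $A_\infty$-module of $K$ obtained from $\CFK(K)$ together with its flip map. Because $L_P$ is an L-space link, $\cCFL(L_P)$ is determined by its Alexander polynomial, i.e.\ by a staircase-like cube complex. We pick a torsion class $x\in\mathit{HFK}^-(K)$ realizing $\Ord_U(K)=n$, concretely a length-$n$ vertical arrow in a reduced basis of $\CFK^\infty(K)$. We then trace it through the tensor product. Pairing with the $\mu$-component of $L_P$ multiplies the $U$-weight by $|\ell|$, since the variable $U$ of the companion becomes $U^{|\ell|}$ in the satellite in the relevant Alexander grading. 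The extra $\theta$ comes from the gap between the top Alexander grading of the $L_P$ complex in the companion direction, controlled by $g_3^{\rel}(P)$, and the genus of $P(U)$. This gap contributes an additional $U^\theta$ before the class dies. The resulting element should be a $U$-torsion class with order at least $|\ell|(n-1)+\theta$.

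The main obstacle is showing that this class is genuinely non-trivial in homology for exactly that many $U$-powers. Its image under $U^{|\ell|(n-1)+\theta-1}$ could in principle be killed by differentials coming from the flip-map and higher $A_\infty$ terms of the tensor product. To control this, we would first filter the tensor product by the companion's Alexander grading and pass to an associated graded, where only the $m_2$-type terms survive. We then show that the candidate element is not a boundary there, so it is not a boundary in the full complex either. In the braided case we would instead try to prove the sharper statement that the extra terms vanish entirely, giving Theorem~\ref{thm: iterate_braided}.
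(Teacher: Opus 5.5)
Your plan is the paper's: bound $\Ord$ one satellite at a time, iterate, then invoke Alishahi--Eftekhary. There is, however, a genuine gap in the base case.

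\textbf{The base case $\Ord(K)=1$.} The one-step estimate you aim for, $\Ord(P(K,\lambda))\ge|\ell|(\Ord(K)-1)+\theta$, is too weak when $\Ord(K)=1$, and your classification of that case is false. By Lemma~\ref{lem: cfk_ord=1}, $\Ord(K)=1$ exactly when $\cCFK(K)^{\hat\cR}$ is a unit-step staircase plus length-one boxes. This includes $T_{2,5}$, the figure-eight knot, every nontrivial alternating knot, and the mirror $-T_{2,3}$, none of which the theorem excludes. (Also, the unknot has $\Ord=0$, and since knot Floer homology detects $T_{2,3}$ there is no ``knot with the complex of $T_{2,3}$ that is not $T_{2,3}$''.)

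For such $K$ your estimate gives only $\Ord(P_1(K))\ge\theta_1$. The theorem already asserts $u(P_1(K))\ge|\ell_1|+\theta_1$ for $k=1$; for cables of the figure-eight knot you would get no bound instead of $u\ge p$. What you need is $\Ord(P(K,\lambda))\ge|\ell|+\theta$ for every nontrivial $K\ne T_{2,3}$, which is the $\max\{\cdot,|\ell|\}$ in Theorem~\ref{thm: tor_ord}.

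\textbf{The missing idea.} The bound depends on the configuration around the $Z^n$-arrow, not just on $n$. Write $c\to b$ for this arrow. Suppose $b$ has an incoming rather than an outgoing $W$-arrow: non-ending types $(+,+)$ and $(+,-)$, or the ending arrow with $\varepsilon(K)=-1$, $\eta>0$. Then only $\Phi^{\pm\mu}$ leaves $b|x_{\frac{1}{2}}$. The telescoping element $b|\sum_{i=1}^{n}x_{i-\frac{1}{2}}Z^{\ell(n-i)}+c|x'_{-\frac{1}{2}}Z^{\theta}$, plus correction terms when $c$ has an outgoing $W$-arrow or a $\tau$-component, has free part of its boundary equal to $b|x'_{-\frac{1}{2}}Z^{N}$ with $N\ge\ell n+\theta$.

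A factor of $\ell$ is lost only when $b$ has an outgoing $W$-arrow, or is the ending generator with $\varepsilon(K)=1$ and $\lambda-2\tau(K)<0$. In that situation $\partial(b|x_{\frac{1}{2}})$ picks up an extra term via $L_W(x_{\frac{1}{2}})=x_{-\frac{1}{2}}Z^{\omega}$ (resp.\ via $\Phi^{-K}$), and the chain is cut one step early.

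You then need the combinatorial fact behind Lemma~\ref{lem: Ord=1_cases}. Every $\Ord=1$ complex other than those of $\pm T_{2,3}$ contains a non-ending $Z$-arrow whose target has an incoming $W$-arrow: a unit staircase with at least five generators has one of type $(+,-)$, and a length-one box has one of type $(+,+)$. The mirror $-T_{2,3}$ is handled by its ending arrow. Together these give $|\ell|+\theta$. This is where $K\ne T_{2,3}$ really enters: its only $Z$-arrow is an ending arrow with $\varepsilon=1$, $\eta<0$, $n=1$, where the construction can fail (compare Theorem~\ref{thm: tor_ord_T_23}).

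\textbf{The non-triviality step.} The step you identify as the main obstacle is not settled by your argument. A cycle whose class survives in the homology of an associated graded complex can still be a boundary in the filtered complex, since a higher differential of the spectral sequence may kill it. So ``not a boundary in the associated graded, hence not a boundary'' is invalid, and you would also have to show the class is torsion in the full homology.

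The paper avoids filtrations entirely. It writes down $\{\alpha_0,\alpha_1\}$ explicitly and shows by a change of basis that it spans a direct summand of $\bX^{\diamond}$. In some cases the summand is instead in $\bX^{\free}$, or in a quotient by an acyclic subcomplex $Y$ in the ending-arrow cases. It then uses the cone decomposition $\bX^{\diamond}=\Cone(\bX^{\free}\to\bX^{\Tor})$ with Lemma~\ref{lem: torsion_order_criterion} to pass to the actual torsion order. This costs a factor $Z^{\Xi}$ when $\alpha_0$ is a cycle only up to torsion.
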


It follows from \cite[Remark 7.11]{CZZApp} that if $P$ has equal wrapping and winding numbers, then $\theta(P)=0$. In particular, for braids the bound from Theorem~\ref{thm: iterate_general} is
\[
u(P_k\circ \cdots P_1(K))\ge |\ell_k|(\cdots (|\ell_2|(|\ell_1|-1)-1)\cdots -1), 
\]
which is weaker than the bound in Theorem~\ref{thm: iterate_braided}. 

\begin{rem} It follows from \cite{CZZApp}*{Proposition~7.2} that $\theta(P)$ can alternatively be described as $R_{\ell/2}-g_3(P)-|\ell|/2$,   where $R_{\ell/2}\in \Z+\ell/2$ is a number defined using the $H$-function of $L_P$. See Definition~\ref{def: R_t}.
\end{rem}

The about results about the unknotting number are proved by applying Alishahi and Eftekhary's bound using the Heegaard Floer torsion order (see Section \ref{subsec: tor_ord}). We prove Theorems \ref{thm: iterate_braided} and \ref{thm: iterate_general} by proving general bounds on the torsion order of satellite knots formed using L-space satellites. We state versions of these bounds here. The first bound we prove is for general L-space satellite operators:

\begin{restatable}{thm}{TorOrd}\label{thm: tor_ord}
Suppose $P$ is an L-space satellite operator. For any nontrivial knot $K\neq T_{2,3}$ and $\lambda \in\bZ$, we have
\begin{equation*}
    \Ord(P(K,\lambda )) \geq  \max\{|\ell|(\Ord(K)-1),|\ell|\} + \theta.
\end{equation*}
\end{restatable}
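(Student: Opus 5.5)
We will work with the torsion order in its $\bF[v]$-module form: $\Ord(K)$ is the smallest $n$ such that $v^n$ annihilates the torsion submodule of $\mathcal{HFK}^-(K)$, the homology of $\cCFK(K)\otimes \bF[U,V]/(U)$. The computational input is the satellite formula of \cite{CZZ}. It expresses $\cCFK(P(K,\lambda))$, up to homotopy equivalence, as a tensor product (a "box tensor" type construction) of a bimodule over $\cK$ associated to the L-space link $L_P$ with the companion's complex $\cCFK(K)$, suitably twisted by $\lambda$. Since $L_P$ is an L-space link, its link complex is determined by the $H$-function, so the bimodule is explicit. The torsion order is invariant under homotopy equivalence and under the $\lambda$-twisting, which only shifts gradings and changes the large surgery complexes in a controlled way. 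We may therefore replace $\cCFK(K)$ by any convenient model. We choose a reduced model in which some basis element $x$ generates a torsion summand of $v$-order exactly $\Ord(K)$, i.e.\ $x$ appears in a vertical arrow of length $\Ord(K)$.

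The plan is to exhibit, inside the homology of the satellite complex with $U=0$, a torsion class whose $v$-order is at least each quantity in the maximum. The steps, in order, are as follows.

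\begin{enumerate}
\item \emph{First lower bound.} The vertical arrow $\partial x = V^{\Ord(K)} y$ in the companion is fed through the satellite formula. In the $\bF[v]$-reduction, the companion variable $V$ is replaced by a power $V^{|\ell|}$, because the meridian $\mu$ links $P$ $\ell$ times. This produces an arrow of length roughly $|\ell|\Ord(K)$. The $H$-function of $L_P$ at the extremal Alexander grading then contributes additional $V$-powers of total size $R_{\ell/2}-g_3(P)-|\ell|/2=\theta$, by the preceding remark, together with a loss of $|\ell|$ coming from the boundary of the generator range. Together these give a torsion element of order at least $|\ell|(\Ord(K)-1)+\theta$.
\item \emph{Second lower bound.} Since $K$ is nontrivial and $K\ne T_{2,3}$, a detection-type argument (genus or fiberedness from $\widehat{HFK}$) shows that $\cCFK(K)$ is not the unknot's complex and not the staircase of $T_{2,3}$. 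From this we produce a torsion class of order at least $|\ell|+\theta$. This is needed only when $\Ord(K)\le 2$, which is exactly where the first bound is weaker. For these small cases we use a model containing either a box or a length-one vertical arrow together with extra generators; the exclusion of $T_{2,3}$ guarantees such extra structure exists.
\item \emph{Survival in homology.} We show that the elements constructed in steps 1 and 2 are not killed in homology: no shorter differential connects them to something else. We do this with a filtration or grading argument, using the Alexander grading of the satellite and the maximality of $R_{\ell/2}$.
\end{enumerate}

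The main obstacle is step 3 together with the precise bookkeeping of $\theta$. One must verify that, in the tensor product, the length-$|\ell|\Ord(K)$ arrow combined with the $H$-function shifts really produces a torsion summand of that order, and not merely a torsion element that decomposes into shorter pieces after a change of basis. The approach here is to compute the bimodule's action on a single vertical arrow (a two-generator complex) explicitly, using the $H$-function near the top Alexander grading $t=\ell/2$. We then argue that tensoring with the remaining direct summands of a reduced model of $\cCFK(K)$ splits off, so that a lower bound on one summand gives a lower bound on $\Ord$.
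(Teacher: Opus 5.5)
Your plan has a genuine gap at its core: the reduction to a single vertical arrow (a two-generator complex), with the remaining summands of $\cCFK(K)$ splitting off, is not available. A computation on a piece that is not a summand gives no lower bound at all.

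In the decomposition of Theorem~\ref{thm:standard_decomp}, a $Z^n$-arrow is never itself a summand. It is flanked by $W$-arrows. Or, if it is the ending arrow of the standard complex, $a_0$ carries the component $\delta^1_{\sigma,\tau}(a_0)=p\otimes T^{\lambda-2\tau(K)}\tau$ to the idempotent-$1$ generator $p$ of $\cX_\lambda(K)^{\hat\cK}$; so the standard complex is tied to $p$ and does not split off either.

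These neighbours change the answer. The real mechanism behind ``$\ell n+\theta$'' is the telescoping element $\alpha_1=a_0|\sum_{i=1}^n x_{i-\frac12}Z^{\ell(n-i)}+a_1|x'_{-\frac12}Z^{\theta}$. It satisfies $\partial\alpha_1=a_0|x'_{-\frac12}Z^{\ell n+\theta}$, because $L_\tau$ and $L_\sigma$ on $\xs_0^{\ell/2}$ are weighted by $Z^{\ell+\theta}$ and $Z^{\theta}$. This element only works in the $(+,+)$ configuration.
\begin{itemize}
\item An outgoing $W$-arrow at $a_0$ adds $a_0'|x_{\eta_0+\frac12}Z^{\omega}$ to $\partial(a_0|x_{\frac12})$, and the construction then only yields $\ell(n-1)+\theta$ or $\ell(n-1)+\theta+1$.
\item At an ending arrow with $\lambda<2\tau(K)$, the analogue is the $\Phi^{-K}$-arrow $a_0|x_{\frac12}\to p|w$ weighted by $Z^\omega$. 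These two effects, not a ``boundary of the generator range'', are the source of the loss of $|\ell|$.
\item An outgoing $W$-arrow at $a_1$ brings in $a_1'|x_{\eta_1+\frac12}$ and the exponent $\kappa$.
\item At an ending arrow the construction depends on $\varepsilon(K)$ and on the sign of $\lambda-2\tau(K)$, because the framing shifts the endpoints of the $\Phi^{-K}$ arrows.
\end{itemize}
So your claim that the torsion order is insensitive to the $\lambda$-twist is false. The configuration-by-configuration analysis of Propositions~\ref{prop: non_ending_general_case} and~\ref{prop: ending_arrow_general_case}, including truncation by acyclic subcomplexes, is what actually produces the uniform bound $\ell(\Ord(K)-1)+\theta$.

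Your Step~3 also misses the key mechanism. In the configurations above with $\omega>\theta$, the $\alpha_0$ one must use contains a generator of $\cC^\diamond_{\ell/2-1}$. On such a generator $L_\sigma(\xs_0^{\ell/2-1})=\xs_1'Z^{\xi_1'-\Xi}$ is a nonzero $\bF[Z]$-torsion element when $\Xi>0$, so $\alpha_0$ is only a cycle up to torsion: $\partial\alpha_0 Z^{\Xi}=0$. The paper handles this as follows:
\begin{itemize}
\item it writes $\bX^{\diamond}=\Cone(\bX^{\free}\to\bX^{\Tor})$;
\item it shows, by explicit changes of basis, that $\{\alpha_0,\alpha_1\}$ spans a summand of $\bX^{\free}$ (sometimes after quotienting an acyclic subcomplex $Y$);
\item it then applies Lemma~\ref{lem: torsion_order_criterion}.
\end{itemize}
A grading or filtration argument as you describe gives no control over this.

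Finally, Step~2 is wrong as stated. Only $\Ord(K)=1$ needs separate treatment, and excluding $T_{2,3}$ does not guarantee extra structure: $-T_{2,3}$ is allowed and has the bare three-generator staircase. The correct dichotomy is Lemma~\ref{lem: Ord=1_cases}:
\begin{itemize}
\item a box, or a staircase with more than three generators, yields a non-ending $(+,-)$ arrow, giving $\ell+\max\{\kappa,\theta\}$;
\item $K=-T_{2,3}$ is handled by the ending-arrow case with $\varepsilon(K)=-1$, $\eta>0$.
\end{itemize}
L-space operators are not closed under mirroring, and for $T_{2,3}$ itself only the weaker bound of Theorem~\ref{thm: tor_ord_T_23} is available. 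Your plan does not see this asymmetry.
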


Note that the above theorem implies Theorem~\ref{thm: iterate_general}:
\begin{proof}[Proof of Theorem \ref{thm: iterate_general}]
       The unknotting number satisfies $u(K)\ge \Ord(K)$ by \cite[Theorem 1.1]{AlishahiEftekhary_Unknotting} and so the result follows by repeatedly applying Theorem \ref{thm: tor_ord}.
\end{proof}

We also consider $K= T_{2,3}$, but we are only able to obtain a weaker bound. See Theorem~\ref{thm: tor_ord_T_23}. 

In the case of braided L-space satellite patterns, a similar argument to that of Theorem \ref{thm: tor_ord} yields the following stronger bound:
\begin{restatable}{thm}{TorOrdBraid}\label{thm: tor_ord_braided}
Let $P$ be a braided L-space satellite pattern. For any nontrivial knot $K$ and $\lambda\in\bZ$, we have
\begin{equation*}
    \Ord(P(K,\lambda)) \geq    \max\{|\ell|(\Ord(K)-1)+1,|\ell|\}.
\end{equation*}
\end{restatable}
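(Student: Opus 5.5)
We plan to mirror the strategy behind Theorem~\ref{thm: tor_ord}, but to exploit the braided hypothesis to gain the extra $+1$. We will also use it to remove the exceptional case $K=T_{2,3}$.

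\textbf{Step 1: reduce to a lower bound on the $U$-torsion.} Recall that $\Ord(K)$ is the maximal $U$-torsion order in $\mathit{HFK}^-(K)$, or equivalently in the $\bF[U]$-reduction of $\cCFK(K)$. It therefore suffices to exhibit a single homogeneous element of $H_*(\cCFK(P(K,\lambda))|_{V=1})$, or of the version with $U=1$, whose torsion order is at least the claimed quantity.

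\textbf{Step 2: compute the satellite complex.} We use the authors' formula for $\cCFK(P(K,\lambda))$ from \cite{CZZ}. It expresses this complex as a tensor product of the link Floer complex of $L_P$ with a bimodule built from $\cCFK(K)$ and the $\lambda$-surgery data. Since $L_P$ is an L-space link, its complex is determined by the $H$-function and can be taken to be a staircase-like model. For a braided pattern, $P$ meets every meridian disk positively $|\ell|$ times. We will use this to show that the Alexander grading along the $\mu$ direction is concentrated in a single ``band'': the $H$-function satisfies $H(s_1,s_2)$ with $R_{\ell/2}=g_3(P)+|\ell|/2$, which says $\theta=0$. Moreover, the generators of $\cCFL(L_P)$ involved all have the extremal $\mu$-grading. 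In effect, the tensor product then behaves like substituting $U\mapsto U^{|\ell|}$ into $\cCFK(K)$, tensored with a contribution from $P(U)$. This is the same mechanism that appears in the cabling formula of \cite{HWCabling}.

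\textbf{Step 3: produce the torsion element.} Choose a generator $x$ of $\mathit{HFK}^-(K)$ realizing $U^{\Ord(K)}x=0$ but $U^{\Ord(K)-1}x\neq 0$. Under the substitution, the corresponding element of the satellite complex is killed by $U^{|\ell|\Ord(K)}$, but the relevant differential arrow gets shortened. In the general case, the arrow is shortened by a full $|\ell|$, up to the $\theta$ correction. In the braided case, the extremal-grading property should make the shortening only $|\ell|-1$, giving torsion at least $|\ell|(\Ord(K)-1)+1$.

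Separately, for the bound $|\ell|$ we will find a torsion element coming from the ``top'' of $\cCFK(K)$ interacting with the $\mu$-component. Since $K$ is nontrivial, $\cCFK(K)$ is not just $\bF[U,V]$. We will use the standard splitting of $\cCFK(K)$, up to local equivalence plus an acyclic-summand, into a part supporting $V_0$-type behavior and a nontrivial summand. The nontrivial summand contributes a length-$\geq 1$ arrow that becomes a length-$\geq|\ell|$ arrow after satellite. This argument needs no exclusion of $T_{2,3}$, because the braided structure avoids the cancellation that forced that exception in Theorem~\ref{thm: tor_ord}.

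\textbf{Main obstacle.} The hardest step is Step 3. The difficulty is verifying that in the tensor product no additional differential shortens the torsion chain, i.e.\ that the candidate element is not a boundary of something of smaller $U$-power. We would first attempt this with a grading argument: compute Maslov and Alexander gradings of the candidate element and of the potential boundary sources, using $H_{L_P}$ and the braided condition. We would then check that any boundary relation $\d y=U^k x'$ with smaller $k$ would violate the grading constraints. If that fails, we would fall back on an explicit filtered basis in which $\cCFK(K)$ is reduced and split as $\bF[U]$-towers plus boxes and arrows. We would then compute the satellite complex summand by summand.
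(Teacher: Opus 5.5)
Your outline names the right framework, but it contains no mechanism that produces either bound. The step you single out as the main obstacle is also aimed at the wrong target. Your fallback, an explicit computation in a split basis, is in fact the paper's route; the whole content of the proof lies in carrying it out.

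The satellite complex is not a substitution $U\mapsto U^{|\ell|}$ in $\cCFK(K)$. It is the square $\bE^{\diamond}\to\bF^{\diamond}$, $\bJ^{\diamond}\to\bM^{\diamond}$ assembled from $L_\sigma,L_\tau,L_Z,L_W$, with $Z$-exponents $\theta,\ \ell+\theta,\ \kappa,\ \omega$ as in Lemma~\ref{lem: C_S_structure_maps}. The torsion summands of $\cC_t^{\diamond}$ and $\cS^{\diamond}$ interfere with the free part. For example, take a non-ending arrow of type $(-,+)$ with $\omega>\theta$. The natural free element $\alpha_0=a'_0|x_{\frac{1}{2}+\eta_0}+a_0|x'_{-\frac{1}{2}}Z^{\kappa}$ receives $Z^{\ell(n-1)+\omega}$ from $\alpha_1$. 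However, it is not a cycle once $\Xi=\omega-\theta-1>0$: its differential is a nonzero torsion element annihilated by $Z^{\Xi}$. Lemma~\ref{lem: torsion_order_criterion} then recovers only $\ell(n-1)+\omega-\Xi=\ell(n-1)+\theta+1$, and when $\omega\le\theta$ the construction gives only $\ell(n-1)+\theta$. So the task is not to show that no differential interferes (one does), but to control the loss quantitatively.

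Accordingly, the braided input behind the extra $+1$ is not $\theta=0$, which you do identify. It is $\omega=R_{\ell/2}-R_{\ell/2-1}\ge 1$, i.e.\ Proposition~\ref{prop: braided_property}, proved via Martin's result that the summand of $\widehat{\mathit{HFL}}(L_P)$ in the top Alexander grading of $\mu$ has rank two. This gives $\Xi\ge 0$ and eliminates the bad subcases in Propositions~\ref{prop: non_ending_braided_case} and~\ref{prop: ending_arrow_braided_case}. Your remark that an extremal-grading property ``should make the shortening only $|\ell|-1$'' points in this direction but supplies no usable statement.

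The $|\ell|$ bound is not automatic either. How much an input $Z^n$-arrow is shortened depends on its type: the directions of the adjacent $W$-arrows, and for the ending arrow also $\varepsilon(K)$, $\eta$ and $\lambda-2\tau(K)$. The braided bounds range from $\ell(n-1)+1$ to $\ell(n+1)-\omega$, so ``a length-$\ge 1$ arrow becomes a length-$\ge|\ell|$ arrow'' is not available for unfavorable types. For $\Ord(K)=1$ the paper uses Petkova's classification of thin complexes (Lemmas~\ref{lem: cfk_ord=1} and~\ref{lem: Ord=1_cases}) to locate a favorable arrow. The ending arrow of $-T_{2,3}$ ($\varepsilon=-1$, $\eta>0$) is favorable, which leaves $K=T_{2,3}$. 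There the arrow is an ending arrow with $\varepsilon=1$, $\eta<0$, $n=1$. Proposition~\ref{prop: ending_arrow_braided_case}(2) gives only $\ell-\omega+1$ when $\lambda-2\tau(K)\le-2$ (compare Theorem~\ref{thm: tor_ord_T_23}), which equals $|\ell|$ only when $\omega=1$. So your claim that the braided structure removes the $T_{2,3}$ exception is exactly the point that would need a new argument, and none is given.

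Finally, there is a slip in Step~1. Setting $V=1$ (or $U=1$) produces a complex computing $\mathit{HF}^-(S^3)\cong\bF[U]$, which has no torsion at all. The torsion order must be read off $H_*(\cCFK(P(K,\lambda))/(W))$.
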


We remark that Theorem~\ref{thm: iterate_braided} does not immediately follow from Theorem~\ref{thm: tor_ord_braided}. Instead,  Theorem~\ref{thm: iterate_braided} requires a slightly more detailed study of the model for the knot Floer complex of $P(K,\lambda)$ that is produced using the description from \cite{CZZ}. This is the subject of Section~\ref{sec: iterate}.

\begin{rem}
We may replace the $u(K)$ in Theorem \ref{thm: iterate_braided} and \ref{thm: iterate_general} by $u_q(K)$,  
 the \emph{proper rational tangle replacement number} of $K$, defined analogously to the unknotting number. Explicitly, $u_q(K)$ is the
minimum number of rational tangle replacements required to turn a diagram of $K$ to a diagram of the unknot, where the new tangle  connects the same tangle end points.
    By \cite[Theorem 1.2]{EftekharyRational}, $\Ord(K)$ is a lower bound for $u_q(K)$.  
\end{rem} 

\subsection{Organization}
In Section \ref{sec: preli}, we review some preliminaries on knot Floer homology and define the torsion order. In Section \ref{sec: L-space-satellite}, we recall the L-space satellite formula. We also prove a helpful property about the $H$-function of braided L-space patterns. We also give a helpful diagrammatic description of the L-space satellite formula. In Section \ref{sec: main_bound}, we prove our main bounds on the torsion order, Theorem \ref{thm: tor_ord} and Theorem \ref{thm: tor_ord_braided}. In Section \ref{sec: iterate}, we consider iterated satellites and prove Theorem \ref{thm: iterate_braided}.

\subsection{Acknowledgments}
The authors thank Jen Hom for asking whether our formula could be used to compute the torsion order, a question that led to this project. We thank Jonathan Hanselman for helpful correspondence about Theorem~\ref{thm:standard_decomp}. HZ is supported by an AMS-Simons travel grant. IZ is supported by a Sloan Fellowship and NSF Grants DMS-2204375 and DMS-2543889.

\section{Preliminaries on Heegaard Floer homology}\label{sec: preli}
In this section, we review the necessary preliminaries on Heegaard Floer homology for our computations.
\subsection{Knot and link Floer homology}
Knot Floer homology is an invariant of knots due independently to Ozsv\'{a}th and Szab\'{o} \cite{OSKnots} and Rasmussen \cite{RasmussenKnots}.  Link Floer homology generalizes this theory to links, and is due to Ozsv\'{a}th and Szab\'{o} \cite{OSLinks}.

Given a knot in $S^3$, the \emph{knot Floer complex} takes the form of a finitely generated, free chain complex $\cCFK(K)$ over the two-variable polynomial ring 
\[
R=\bF[W,Z]
\]
where $\bF=\bZ/2\bZ$. We write $U=WZ.$

The chain complex $\cCFK(K)$ has a $\Z\times \Z$ Maslov bigrading, denoted $(\gr_{\ws}, \gr_{\zs})$. Furthermore,
\[
(\gr_{\ws},\gr_{\zs})(W)=(-2,0)\quad \text{and} \quad (\gr_{\ws}, \gr_{\zs})(Z)=(0,-2).
\]
The \emph{Alexander grading} is defined by the equation
\[
A:=\frac{ \gr_{\ws}-\gr_{\zs}}{2}.
\]

We say a basis of $\cCFK(K)$ is  \emph{reduced} if no term in the differential has coefficient $1 \in \bF[W,Z]$.

If $L\subset S^3$ is an $n$-component link, we consider a version of link Floer homology, denoted $\cCFL(L)$, which takes the form of a finitely generated, free chain complex $\cCFL(L)$ over the ring 
\[
\bF[W_1,Z_1,\dots, W_n,Z_n].
\]
This chain complex has a $\Z\times \Z$-valued Maslov bigrading $(\gr_{\ws}, \gr_{\zs})$ and an $n$-component Alexander grading $A=(A_1,\dots, A_n)$. The Alexander grading takes values in the set
\begin{equation}
	\bH(L)=\prod_{i=1}^n\left( \Z+\frac{\lk(L_i,L\setminus L_i)}{2}\right),
	\label{eq:Alexander-grading-def}
\end{equation}
where $L_1,\dots, L_n$ denote the components of $L$. The Alexander grading satisfies
\[
\sum_{i=1}^{n}A_i = \frac{\gr_{\ws}-\gr_{\zs}}{2}.
\]

The differential has the following grading:
\[
(\gr_{\ws},\gr_{\zs})(\d)=(-1,-1)\quad \text{and} \quad A(\d)=(0,\dots, 0). 
\] 
Since each $W_iZ_i$, for $i=1,\cdots,n$ induces the same map on homology, the homology group  $\cHFL(L):=H_*(\cCFL(L))$ can be viewed as a module over $\bF[U]$ where  $U$ acts by any of $W_1Z_1,\dots,W_nZ_n$. Since the differential $\d$ preserves the Alexander grading, there is a submodule $\cHFL(L,\ve{s})$ for each $\ve{s}\in \bH(L).$

The following class of links will be important for our purposes:

\begin{define} A link $L$ is called an \emph{L-space link} if $S^3_\Lambda(L)$ is an L-space for all sufficiently large positive integer framing $\Lambda$.  By \cite[Theorem 12.1]{MOIntegerSurgery}, $L$ is an L-space link if and only if $\cHFL(L,\ve{s})\cong \bF[U]$ for all $\ve{s}\in \bH(L).$
\end{define}
\subsection{The $H$-function}
The $H$-function of a link $L\subset S^3$ takes the form of a map
\[
H_L\colon \bH(L)\to \Z^{\ge 0}.
\]
It is defined by the equation
\[
H_L(\ve{s})=-\frac{1}{2}\max \{ \gr_{\ws}(\xs): \xs\in \cHFL^-(L), U^n \xs\neq 0 \forall n\}.
\]
When $L$ is an L-space link,  the $H$-function has a particularly simple description:  \[H_L(\ve{s})=-\frac{1}{2}\gr_{\ws}(\xs_{\ve{s}})\] where $\xs_{\ve{s}}$ is the generator of $\cHFL(L,\ve{s})\cong \bF[U]$. Moreover, in this case, by \cite{GorNem15}*{Theorem~2.2.11} the $H$-function is determined by the multivariable Alexander polynomials of sublinks of $L$. 

In the case when $L$ is a two-component L-space link, we make the following definition, whose importance will be clear when we define a bordered bimodule associated to $L.$
\begin{define}\label{def: R_t}
	If $L$ is a two-component L-space link with linking number $\ell$ and $H$-function $H_L(t,r)$, we set
	\[
	R_t := \max\left\{r\in \Z+\ell/2 \middle|  \begin{array}{l}  H_{L}(t,r+1)=H_L(t,r)\text{ and}\\   H_L(t,r-1)=H_L(t,r)+1\end{array} \right\}. 
	\]
\end{define}
By \cite[Lemma 4.9]{CZZApp}, the function $R_t$ is nondecreasing for $t \leq \frac{\ell}{2}$ and nonincreasing for $t \geq \frac{\ell}{2}$. Moreover, $R_t = g_3(P) - \frac{\ell}{2}$ for $t \ll 0$, and $R_t = g_3(P) + \frac{\ell}{2}$ for $t \gg 0$.
\subsection{Standard complexes and local system complexes} 

In this section we recall several decomposition results of complexes over $\hat{\cR}=\bF[W,Z]/U$. These results are due to a number of authors. See \cite{DHSTmore}, \cite{KWZMnemonic}, \cite{HanselmanMinus} for closely related results. See also \cite{KWZKhovanov} \cite{HRWImmersedCurves} and  \cite{PopSnake}. We will mostly follow Hanselman's description.

Before we state the decomposition results we will use, we recall the following informal terminology. Suppose $(C,\d)$ is a free, finitely generated chain complex over a ring $R$. If $a,a'\in B$ are basis elements, we say that there is an \emph{arrow} from $a$ to $a'$, weighted by $r\in R$, if the $a'$ component of $\d(a)$ is equal to $r$.

   \begin{define}
       Let $\hat\cR=\bF[W,Z]/(U)$. If $n_1,\dots, n_m$ are positive integers, and 
$       \sigma_1,\dots, \sigma_m$ are in $\{-1,1\}$,  we define the \textit{standard complex} $\cC(\sigma_1 n_1,\dots, \sigma_m n_m)$ to be the free $\hat{\cR}$ chain complex with generators $a_0,\dots, a_m$ and with differential as follows: for each $1\le i\le m$, there is an arrow between $a_i$ and
$a_{i-1}$ weighted by $Z^{n_i}$ if $i$ is odd, and weighted by
$W^{n_i}$ if $i$ is even, for some $n_i>0$. If $\sigma_i=1$, this arrow points from $a_i$ to $a_{i-1}$, and if $\sigma_i=-1$, the arrow points from $a_{i-1}$ to $a_i$.
\end{define}

A standard complex $\cC(\sigma_1n_1,\cdots,\sigma_m n_m)$ is called a \emph{positive staircase} if $\sigma_i>0$ for all odd $i$ and $\sigma_i<0$ for all even $i$. See below as an example, namely $\cC(n_1,-n_2)$.
    \begin{equation*}
\begin{tikzcd}[column sep=1 cm, row sep=0.8 cm]
& a_2 & a_1 \ar[d, "Z^{n_1}"] \ar[l, "W^{n_2}"']
\\
&&a_0
\end{tikzcd}
    \end{equation*}
Similarly, a \emph{negative staircase} is when $\sigma_i<0$ for all odd $i$ and $\sigma_i>0$ for all even $i$.

By \cite[Theorem 1.3]{DHSTmore}, for any $K\subset S^3$, $\cCFK(K)^{\hat\cR} :=\cCFK(K)/(U)$ is locally equivalent to a unique standard complex $\cC(\sigma_1n_1,\cdots,\sigma_m n_m)$. In fact, they prove that $\cCFK(K)^{\hat\cR}$ contains this standard complex as a direct summand \cite{DHSTmore}*{Corollary~6.2}. The standard complex of a knot $K$ in $S^3$ satisfies the symmetry given by $\sigma_i n_i = - \sigma_{m+1-i} n_{m+1-i}$ for $i=1,\cdots,m$.
We call $a_0$ the \emph{ending generator} and $a_m$ the \emph{starting generator} of the standard complex to match the terminology used in \cite{HLPUnknotting} and the arrow between $a_0$ and $a_1$ (resp.~$a_m$ and $a_{m-1}$) the \emph{ending arrow} (resp.~\emph{starting arrow}). 

 The invariant
$\varepsilon(K)$ can be read off from the standard complex as follows: $\varepsilon(K)=0$   if and only if $m=0$; if $m>0$, then $\varepsilon(K)=1 \Leftrightarrow \sigma_1>0$, and $\varepsilon(K)=-1 \Leftrightarrow \sigma_1<0$.

Write $\cC$ for the standard complex $\cC(\sigma_1n_1,\cdots,\sigma_m n_m)$. Then
\begin{align*}
H_*(\cC/(Z)) &\cong \bF[W] \oplus \bigoplus_{i=1}^{m/2} \bF[W]/W^{n_{2i}} \\  H_*(\cC/(W)) &\cong \bF[Z] \oplus \bigoplus_{i=1}^{m/2} \bF[Z]/Z^{n_{2i-1}}
\end{align*}
where $a_0$ is the free generator in $\cC/(Z)$ and $a_m$ is the free generator in $\cC/(W)$.

Next, we discuss standard models for complexes which have homology which is torsion over both $\bF[W]$ and $\bF[Z]$.

\begin{define}
\label{def:local-system-complex} (Local system complex) Let $\vec{n}=( n_1,n_2,\dots,n_{2p})$ be a sequence of positive integers and let $\vec{\sigma}=(\sigma_1,\dots, \sigma_{2p})$ be a sequence of numbers in $\{-1,+1\}$. Assume, furthermore, that 
\begin{equation}\label{eq: local_system}
\sum_{i=1}^p \sigma_{2i}n_{2i}=\sum_{i=1}^p\sigma_{2i-1} n_{2i-1}=0.
\end{equation}
Let $w$ be a positive integer and $A\in \GL_{w}(\bF)$. Then there is a type-$D$ module $\cL=\cL(w,\vec{n},\vec{\sigma},A)$ over $\hat{\cR}$, defined as follows. The underlying vector space is given by $\bigoplus_{i=1}^{2p} X_i$ where each $X_i$ is identified with $\bF^{w}$. The differential is as follows. If $i\neq 1$, then the differential contains a component mapping between $X_i$ and $X_{i-1}$ which is weighted by $W^{n_i}$ if $i$ is even and $Z^{n_i}$ if $i$ is odd. If $\sigma_i>0$, then this component maps from $X_i$ to $X_{i-1}$, and if $\sigma_i<0$, this component maps from $X_{i-1}$ to $X_i$. Finally, if $\sigma_1=1$, then there is a component of $\d$ from $X_1$ to $X_{2p}\otimes \hat{\cR}$ which is given by $A\otimes Z^{n_1}$. If $\sigma_1=-1$, then there is a component of $\d$ which maps $X_{2p}$ to $X_1$ by $A^{-1}\otimes Z^{n_1}$. We refer to any complex $\cL$ constructed in the above manner as a \emph{local system complex}. 
\end{define}

\begin{thm}\label{thm:standard_decomp} If $K$ is a knot in $S^3$, then there is a decomposition
\[
    \cCFK(K)^{\hat\cR} \simeq \cC \oplus \cL_1 \oplus \cdots \oplus \cL_k
\]   where $\cC$ is a standard complex and each $\cL_i$ is a local system complex.
\end{thm}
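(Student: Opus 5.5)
The plan is to reduce to the classification of finitely generated type-$D$ structures over $\hat\cR=\bF[W,Z]/(U)$, which Hanselman treats as a train-track/immersed-curve classification, and then to use the knot-specific constraints to pin down which summands can occur. Set $C=\cCFK(K)^{\hat\cR}$ and take a reduced basis, so every arrow is weighted by a positive power of $W$ or of $Z$. Because $WZ=0$, no mixed monomials appear. The complex is therefore encoded by a decorated graph: vertices are the basis elements, and each edge carries a pure power $W^n$ or $Z^n$.

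First I would invoke the structure theorem from \cite{HanselmanMinus} (equivalently \cite{KWZMnemonic}) for $\hat\cR$-complexes. After a change of basis, every such complex splits as a direct sum of indecomposables of two types. The first type is the \emph{linear} chains, which are the standard complexes $\cC(\sigma_1n_1,\dots,\sigma_mn_m)$ in the alternating $Z,W$ pattern. The second type is the \emph{cyclic} chains carrying a local system $A\in\GL_w(\bF)$. The alternation of $Z$ and $W$ arrows should follow from reducedness together with a basis change: two consecutive arrows of the same type at a vertex can be merged or cancelled by a standard filtered change of basis, as in the proof of \cite{DHSTmore}*{Theorem~1.3}. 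I expect this step to be the main obstacle. The precise claim is that the graph can be made to have valence at most two, with one $W$-edge and one $Z$-edge at each vertex, and proving it needs an inductive change-of-basis argument ordered by powers. I would cite it from Hanselman rather than reprove it.

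Second, I would use the knot-specific input to control which linear chains appear. By the computation displayed above, $H_*(\cC/(Z))$ has rank-one free part for a standard complex, while a local system complex has torsion homology in both $\cL/(Z)$ and $\cL/(W)$. Cyclic summands therefore contribute no free part. A linear chain whose ends are not of the standard type, for example one ending in an arrow pointing the wrong way or having an even/odd mismatch, contributes either torsion only or extra free summands. Since $H_*(\cCFK(K)/(Z))\cong H_*(\cCFK(K)\otimes\bF[W,Z]/(Z))\cong \bF[W]\oplus$ torsion, because it computes $\mathit{HF}^-(S^3)$, exactly one linear summand has a free part in each of the $W$- and $Z$-quotients. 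All other linear summands must then be torsion in both quotients. I would check that such linear chains cannot be reduced indecomposables, or that they decompose further, because a chain with torsion-only homology at both ends must close up into a cycle.

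Third, for each cyclic summand I would verify the balancing condition \eqref{eq: local_system}. Going once around the cycle must return to the same Alexander grading and Maslov grading. Each $Z^{n}$ arrow shifts $\gr_{\zs}$ by a signed multiple of $n$, and each $W^n$ arrow shifts $\gr_{\ws}$. Summing the signed shifts of each kind around the cycle gives $\sum\sigma_{2i}n_{2i}=\sum\sigma_{2i-1}n_{2i-1}=0$. Combining the three steps gives the decomposition $\cC\oplus\cL_1\oplus\cdots\oplus\cL_k$.
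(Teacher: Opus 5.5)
\textbf{The gap is in Step 3.} That step does not establish \eqref{eq: local_system}, and the balancing condition is a genuine part of the statement. Lemma~\ref{lem: snake_structure}, for instance, uses it to stop a run of same-direction arrows from wrapping around a cycle.

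Your grading computation drops the fact that $\d$ has bidegree $(-1,-1)$. Use the conventions of Definition~\ref{def:local-system-complex}, and let the arrow between $X_{i-1}$ and $X_i$ have weight $c_i$, with $\gr(W^n)=(-2n,0)$ and $\gr(Z^n)=(0,-2n)$. Then $\gr(X_i)-\gr(X_{i-1})=\sigma_i\bigl((1,1)+\gr(c_i)\bigr)$. Summing around the cycle gives
\[
\sum_{i=1}^{p}\sigma_{2i}n_{2i}=\sum_{i=1}^{p}\sigma_{2i-1}n_{2i-1}=\frac{1}{2}\sum_{i=1}^{2p}\sigma_i .
\]
The Alexander grading alone only gives equality of the two sums. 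So gradings yield \eqref{eq: local_system} exactly when equally many arrows point each way around the cycle, and nothing in your argument forces that.

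\textbf{A counterexample to the argument.} Take $\d x_2=Wx_1$ and $\d x_1=Zx_2$. This is a reduced, consistently bigraded complex over $\hat\cR$, with $\gr(x_1)=\gr(x_2)+(1,-1)$. It is torsion in both quotients and is a band with $\sigma_1n_1=\sigma_2n_2=1$; it is the curve encircling the puncture. Its direct sum with a single generator satisfies everything you use: the $\hat\cR$ string/band classification, reducedness, bigradings, and rank-one free parts of both quotient homologies. Yet it is not of the asserted form.

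\textbf{What is missing.} Such bands are ruled out only because $\cCFK(K)^{\hat\cR}$ is the reduction of a free complex over $\bF[W,Z]$. In the example, write $\d x_2=Wx_1+Ua$ and $\d x_1=Zx_2+Ub$. Then $\d^2x_2=U(x_2+Wb+\d a)=0$, and since $U$ is a non-zero-divisor this forces $\d a=x_2+Wb$, contradicting reducedness. The paper gets its decomposition from Hanselman's immersed curves for the full complex, whose bounding chain satisfies the Maurer--Cartan condition and can be taken of local system type. That is where the $\bF[W,Z]$ information enters. To repair your route, you would need an argument using the lift to $\bF[W,Z]$ showing that every band summand has $\sum_i\sigma_i=0$. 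Working purely at the $\hat\cR$ level cannot give this.

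\textbf{Two smaller points in Step 2.}
\begin{itemize}
\item Setting $Z=0$ computes $\mathit{HFK}^-(K)$, not $\mathit{HF}^-(S^3)$; the latter corresponds to $Z=1$. The free part still has rank one, since inverting $W$ recovers $\widehat{\mathit{HF}}(S^3)\otimes\bF[W,W^{-1}]$.
\item No linear chain is ever torsion in both quotients, so there is nothing to ``close up.'' Each linear summand contributes one free summand per endpoint, in the quotient that kills that endpoint's arrow; a lone generator contributes one to each quotient. The rank count $1+1$ therefore forces exactly one linear summand, with one free summand in each quotient. After reindexing, that summand is a standard complex.
\end{itemize}
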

\begin{proof}[Proof sketch]
We give a very brief sketch of the proof of Theorem~\ref{thm:standard_decomp}, and describe how it follows from Hanselman's work \cite{HanselmanMinus}. We note it also follows from \cite{KWZKhovanov}*{Theorem~5.14}. Both works build on \cite{HRWImmersedCurves}. Hanselman describes a way of representing $\cCFK(K)^{\hat{\cR} }$ as an immersed multicurve  $\Gamma_K$ in a punctured torus. The immersed multicurve is compact, and equipped with a bounding cochain. Informally, a bounding chain is a collection of self intersections of the immersed curve with itself where a holomorphic curve is allowed to turn. Hanselman proves in \cite{HanselmanMinus}*{Proposition~7.1} that the bounding chain can be taken to be of \emph{local system type} \cite{HanselmanMinus}*{Definition~6.3}. For such bounding chains, there are no distinguished points between different components of an immersed curve. Furthermore, the only self intersections occur on components which are not primitive (\textit{i.e.}, homotopic to $k$ times another curve for some $|k|>1$). Hanselman proves that such immersed curves can be taken to consist of $k$-parallel copies of a single immersed curve, connected by a simple crossing region (see \cite{HanselmanMinus}*{Figure~25}). 

 To recover the hat theory, one takes a distinguished meridian in the torus $\mu$ that runs through the puncture. One fills in the puncture with a disk containing two basepoints $w$ and $z$ on either side of the puncture. Write $\mu$ also for the Lagrangian in the non punctured torus. Then
 \[
 \cCFK(K)^{\hat{R}}\iso \CF(\Gamma_K,\mu,w,z).
\]
 In the above, $\CF(\Gamma_K,\mu,w,z)$ is the free $\hat{\cR}$-module generated by intersection points between $\Gamma_K$ and $\mu$. The differential counts immersed bigons $u$ which are weighted by $W^{n_w(u)}Z^{n_z(u)}$.

Write $\mu'$ for a parallel copy of $\mu$, which is disjoint from the region containing $w$ and $z$. If $K$ is a knot in $S^3$, then we can apply a regular homotopy to $\Gamma_K$ so that only a single component intersects $\mu'$, and furthermore there is only a single intersection point. (See \cite{HanselmanMinus}*{Section~8}). The component of $\Gamma_K$ intersecting $\mu'$ does not have a local system. 

We may assume, via regular homotopy, that there are no bigons bounded by $\mu$ and $\Gamma_K$ which do not contain one of $w$ or $z$.  We also consider the image of $\Gamma_K$ in the complement of $\mu'$, which we view as $S^1\times [-1,1]$. This multi-curve can be lifted to an immersed multi-curve in the infinite strip $\R\times [-1,1]$, which we denote $\gamma_K$. By convention, we will assume that $z$ occurs in $(-1,0)\times S^1$ and $w$ is in $(0,1)\times S^1$. 

We claim that the decomposition in Theorem~\ref{thm:standard_decomp} can be read off of the curve $\Gamma_K$, put in this position. The standard complex $\cC$ corresponds to the non-compact component of $\gamma_K$, while each compact component of $\gamma_K$ gives one of the local system complexes $\cL_i$. We see this as follows. Consider the non-compact component $\gamma_0$ of $\gamma_K$. Consider the components of $\gamma_0\setminus \mu$. Call these \emph{segments}. There is a first segment of $\gamma_K$ which goes from $\{-1\}\times \R$ to $\mu$. This first intersection point corresponds to $a_0$. The second segment could travel from $a_0$ to $\{1\}\times \R$, in which case $\cC$ has a single generator. Otherwise, the next segment of $\g_0$ travels from $a_0$ to some other intersection point $a_1$. Since the bounding chain  $\g_K$ satisfies the Maurer-Cartan condition (\cite{HanselmanMinus}*{Definition~3.18}) and is of local system type, there can be no immersed monogons with boundary on $\g_0$ which do not contain $w$ or $z$. In particular, the segment from $a_0$ to $a_1$ cannot have any self intersections. There is clearly a bigon from $a_0$ to $a_1$ which is weighted by some power of $W$. Continuing this procedure builds the complex $\cC$. One may check that there are no other immersed bigons bounding $\g_0$ and $\mu$ which cover only $w$ or $z$ other than the ones constructed above.

 Applying the analogous procedure to the non compact components of $\Gamma_K$ yields the local system complexes $\cL_1,\dots, \cL_n$. 
\end{proof}

It will be important for our purposes to realize that the summands $\cC$ and $\cL_i$ in Theorem~\ref{thm:standard_decomp} satisfy certain additional restraints, which result from $\cCFK(K)^{\hat{\cR}}$ extending to a complex over $\cR$.  It is shown in \cite[Proposition 2.8]{HLPUnknotting} that for three consecutive arrows in one direction,  the middle arrow must have length at least two. More precisely, suppose that in one of the summands in Theorem~\ref{thm:standard_decomp} there exist generators $a_0,a_1,a_2,a_3$ such that there is an arrow from $a_i$ to $a_{i-1}$ weighted by $Z^{n_i}$ for $i=1,3$ and $W^{n_2}$ for $i=2$. Then $n_2>1$. This configuration may occur either in the standard complex or in one of the local system complexes (in the case of a local system, we assume that each generator $a_i$ is in the vector space $X_i$  from Definition~\ref{def:local-system-complex}). The indices do not specify the positions of the generators in the standard complex.
    \begin{equation*}
\begin{tikzcd}[column sep=1 cm, row sep=0.8 cm]
&   a_3 \ar[d, "Z^{n_3}"] 
\\
a_1\ar[d, "Z^{n_1}"]&a_2 \ar[l, "W^{n_2}"]\\
a_0&
\end{tikzcd}
    \end{equation*}
This result can be readily extended to a longer sequence of consecutive arrows, which we state below.

\begin{lem}[Compare \cite{HLPUnknotting}*{Proposition~2.8}] \label{lem: snake_structure}
   Suppose $\{a_i:~0\leq i\leq k\}$ are the generators in a summand in the decomposition of Theorem~\ref{thm:standard_decomp} of $\cCFK(K)$, such that there is an arrow from $a_i$ to $a_{i-1}$ weighted by $Z^{n_i}$ (resp.~$W^{n_i}$) whenever $1\leq i\leq k$ is odd (resp.~even) for some $n_i>0$. Then $n_i>1$ for every $2\leq i \leq k-1.$ Moreover, if $a_0$ is the ending generator of the standard complex, then $n_1>1$.
\end{lem}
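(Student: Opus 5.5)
The interior statement reduces to \cite{HLPUnknotting}*{Proposition~2.8}. Fix $2\le i\le k-1$ and look at the four consecutive generators $a_{i-2},a_{i-1},a_i,a_{i+1}$. If $i$ is even, the arrows are $a_{i+1}\to a_i$ weighted by $Z^{n_{i+1}}$, $a_i\to a_{i-1}$ weighted by $W^{n_i}$, and $a_{i-1}\to a_{i-2}$ weighted by $Z^{n_{i-1}}$. This is exactly the three-arrow configuration of \cite[Proposition 2.8]{HLPUnknotting}, with $a_{i-2},\dots,a_{i+1}$ in the roles of $a_0,\dots,a_3$, so $n_i>1$.

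If $i$ is odd, the middle arrow is weighted by $Z^{n_i}$ and the outer arrows by $W^{n_{i\pm1}}$. Here I apply the conjugation symmetry of $\cCFK(K)$, which exchanges the roles of $W$ and $Z$ (equivalently, reflect the immersed curve $\Gamma_K$ so that $w$ and $z$ swap). This gives the same configuration for the mirror-roles version, so the $Z$/$W$-swapped form of Proposition~2.8 holds and yields $n_i>1$. The point to check is that the HLP argument only uses that $\cCFK(K)^{\hat\cR}$ lifts to a complex over $\cR=\bF[W,Z]$ with $\d^2=0$. That hypothesis is symmetric in $W$ and $Z$, so the swapped statement holds. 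For generators in a local system summand, the generators $a_j$ lie in the blocks $X_j$ and the same local argument applies.

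For the second claim, suppose $a_0$ is the ending generator of the standard complex, with $a_1\to a_0$ weighted by $Z^{n_1}$ and $a_2\to a_1$ weighted by $W^{n_2}$ (if $k\ge 2$). Suppose for contradiction that $n_1=1$. The plan is to mimic the HLP argument, with the role of a missing arrow "$a_{-1}$" played by the fact that $a_0$ generates the free summand $\bF[W]$ of $H_*(\cC/(Z))$. Over $\cR$, $\d a_1 = Z a_0 + (\text{terms divisible by } U)$. Then $\d^2 a_1 = 0$ forces $Z\,\d a_0 \in U\cdot(\ldots)$, so $\d a_0$ is divisible by $W$.

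Next I use the grading $A$. The generator $a_0$ has $\gr_{\ws}(a_0)=0$, since it is the $\bF[W]$-tower generator with $H_*(\cCFK/(Z))\simeq \widehat{HF}(S^3)[W]\oplus$ torsion. Also $A(a_1)=A(a_0)+n_1$, and the $\gr_{\ws}$ grading of $a_1$ is determined by the arrow. The target contradiction is that the class $Z a_0$ becomes null-homologous in a way that kills the $\bF[Z]$-tower, or that the $\bF[U]$-tower of $H_*(\cCFK(K)/(W-1))$, i.e.\ $\widehat{HF}(S^3)$-type localization, is violated. Concretely, after inverting $W$ the arrow $a_1\to a_0$ of weight $Z$ combines with the $U$-correction terms and produces an arrow of coefficient that can be cancelled. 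Cancelling it would remove $a_0$ from the tower, contradicting that $a_0$ represents the generator of $H_*(\cC/(Z))$ free part.

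I expect this last step, making the $U$-correction terms precise, to be the main obstacle. If that fails, I would instead use the immersed curve picture from the proof sketch of Theorem~\ref{thm:standard_decomp}. The segment of $\gamma_0$ from the left boundary to $a_0$, followed by the $Z^1$-bigon to $a_1$, would produce an immersed monogon or a bigon covering only the $z$ basepoint region once. Hanselman's Maurer–Cartan constraint excludes this when $n_1=1$, just as it excludes it in the interior case. This is the same mechanism underlying HLP's interior result, so I would present both parts uniformly via the curve $\gamma_K$ if the algebraic route gets messy.
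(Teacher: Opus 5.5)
Your treatment of the interior indices $2\le i\le k-1$ is correct and matches the paper. For $i$ even you quote \cite[Proposition~2.8]{HLPUnknotting}. For $i$ odd you observe that the HLP argument (expand $\d^2=0$ over $\bF[W,Z]$, divide by $U$, reduce modulo $W$ or $Z$) is symmetric in $W$ and $Z$; it is this symmetry of the argument, not the conjugation symmetry of $\cCFK(K)$, that you are actually using. For local system summands the paper adds a check that you skip. By the balancing condition \eqref{eq: local_system}, a chain of arrows all pointing the same way cannot wrap around the local system. It can therefore be taken to avoid the twisted block, and each pair $\{a_i,a_{i-1}\}$ spans a summand modulo $W$ or $Z$.

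The claim about the ending generator has a genuine gap. You mention $k\ge 2$ but never use $a_2$ or the arrow $a_2\to a_1$ weighted by $W^{n_2}$. Without that arrow the statement is false. For $T_{2,3}$, with standard complex $\cC(1,-1)$, the ending arrow $a_1\to a_0$ is weighted by $Z^1$, and the $W$-arrow at $a_1$ points away from $a_1$. So no argument that involves only $a_0$ and $a_1$ can succeed:
\begin{itemize}
\item your computation of $\d^2a_1$ only recovers the known fact that $\d a_0$ is a multiple of $U$;
\item the localization heuristic is not an argument;
\item the immersed-curve fallback would apply verbatim to $T_{2,3}$, which does have a length-one ending bigon.
\end{itemize}

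The missing step is to run the HLP argument one generator further up. Over $\cR$, write $\d a_1=Za_0+Uc$ and $\d a_2=W^{n_2}a_1+Ub'$. There may also be a $Z$-weighted term $Z^{n_3}a_3$ in $\d a_2$; since $a_3$ is not adjacent to $a_0$, it contributes only multiples of $UZ$ to the $a_0$-coefficient. Compare $a_0$-coefficients in $\d^2a_2=0$ and divide by $U$. This shows that the $a_0$-coefficient of $\d b'$ is congruent to $W^{n_2-1}(1+Wc_0)\not\equiv 0$ modulo $Z$, where $c_0$ is the $a_0$-coefficient of $c$. That is impossible. The only arrow at the ending generator $a_0$ is the $Z$-arrow from $a_1$, so no arrow hits $a_0$ in $\cCFK(K)/(Z)$. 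The same computation also covers the case $n_2=1$.
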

\begin{proof}
  Suppose $a_i$ for $0\le i\le k$ are generators in a local system complex $\cL = \oplus^{2p}_{i=1}X_i$, such that
 each $a_i$ generates a copy of $\bF$ in  $X_{j+i}$ for some $j\in \{1,3,\cdots,2p-1\}$, where $X_{i}=X_{i-2p}$ if $i>2p$. 
Since $\sigma_{j+i} >0$ for $i=1,\cdots,k$,  in order for \eqref{eq: local_system} to hold, there must be $1\leq i\leq 2p$ that does not take value in $\{j,\cdots,j+k\}$.  Therefore by a change of basis  we may assume $j=1$ and $j+k<2p.$ 
It follows that the $\bF[Z]$-span (resp.~$\bF[W]$-span) of $\{a_i,a_{i-1}\}$ forms a direct summand in the $\cCFK(K)/(W)$ (resp.~$\cCFK(K)/(Z)$) if $i$ is odd (resp.~even) for $1\leq i\leq k.$ 
The same condition holds by definition if $a_i$ for $0\le i\le k$ are generators in the standard complex.
  
The result follows from the same arguments as in the proof of
Proposition~2.8 of \cite{HLPUnknotting}, which we write down for completeness.

Suppose to the contradiction that $n_2=1.$
We have
\[
\partial a_2 = Wa_1 + Ub
\]
for some $b \in \cCFK(K).$  Since $\partial^2=0$, $\partial b$ must contain $Z^{n_1-1}a_0$ as a non-trivial term. This contradicts the fact that the $\bF[Z]$-span of $\{a_1,a_0\}$ forms a direct summand in $\cCFK(K)/(W)$. A similar argument proves the claim for other $n_i>1.$ 

Suppose $a_0$ is the ending generator and $n_1 =1.$ 
We have 
\[
\partial a_2 = W^{n_2}a_1 + Ub'
\]
for some $b' \in \cCFK(K).$  Since $\partial^2=0$, $\partial b'$ must contain $W^{n_2-1}a_0$ as a non-trivial term. This contradicts the fact that $a_0$ is the ending generator.
\end{proof}

\subsection{The knot Floer torsion order}\label{subsec: tor_ord}

\begin{define}\label{def: torsion_order} 
Define the \emph{knot Floer torsion order} of a knot $K$, denoted $\Ord(K)$, to be the minimal $n\in \N$ such that $Z^n$ acts trivially on the torsion submodule of $H_*(\cCFK(K)/(W)).$ It is easy to see that this is equal to the length of the longest arrow in the decomposition in Theorem~\ref{thm:standard_decomp}.
\end{define}

 \begin{lem}[Lemma 7 in \cite{PetkovaThin}] \label{lem: cfk_ord=1}
 If $\Ord(K)=1$, then $\cCFK(K)^{\hat\cR} \simeq \cC \oplus \cB_1  \oplus \cdots  \oplus \cB_k$ for $k \geq 0$ where $\cC$ is a positive or negative staircase and each $\cB_i$ is a \emph{length-one box}, defined by $\cB_i=\cB_i(1,\vec{n},\vec{\sigma},\Id)$ with $\vec{n}=(1,1,1,1)$ and  $\vec{\sigma}=(1,1,-1,-1).$ 
\end{lem}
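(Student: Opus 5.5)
The plan is to start from the decomposition of Theorem~\ref{thm:standard_decomp}, $\cCFK(K)^{\hat\cR}\simeq \cC\oplus \cL_1\oplus\cdots\oplus\cL_k$. By Definition~\ref{def: torsion_order}, $\Ord(K)$ is the length of the longest arrow in this decomposition. So the hypothesis $\Ord(K)=1$ says exactly that every $n_i$ appearing in $\cC$ and in every $\cL_j$ equals $1$. The task is then purely combinatorial: determine which sign patterns $\vec\sigma$ are compatible with all lengths being $1$, using Lemma~\ref{lem: snake_structure} together with the symmetry of the standard complex.

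\emph{Standard complex.} Write $\cC=\cC(\sigma_1,\dots,\sigma_m)$, all lengths $1$. We must show that the signs alternate, $\sigma_{i+1}=-\sigma_i$, so that $\cC$ is a positive or negative staircase. Lemma~\ref{lem: snake_structure} says that a run of consecutive arrows all pointing toward $a_0$, with alternating weights $Z,W,Z,\dots$, has every interior arrow of length $>1$. With all lengths $1$, no three consecutive arrows can point the same way. The same argument with the roles of $W$ and $Z$ exchanged gives the same conclusion for runs pointing toward $a_m$. The ``moreover'' clause forbids $\sigma_1=\sigma_2=+1$ at the ending generator. Applying the symmetry $\sigma_in_i=-\sigma_{m+1-i}n_{m+1-i}$, together with the $W\leftrightarrow Z$ version of that clause at the starting generator, handles $\sigma_1=\sigma_2=-1$ and the analogous cases at the other end.

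The remaining and hardest step is to exclude a single interior pair $\sigma_i=\sigma_{i+1}$ that is flanked by arrows of the opposite direction. Here I would rerun the $\partial^2=0$ lifting argument from the proof of Lemma~\ref{lem: snake_structure} over the full ring $\cR$, but now along the zigzag. Suppose we have $a_{i+1}\to a_i\to a_{i-1}$, weighted $Z$ then $W$. Lift the differential as $\partial a_i=Wa_{i-1}+Ub$. Then $\partial^2a_i=0$ forces $\partial b$ to cancel the $Z$-length-one neighbor of $a_{i-1}$ modulo $U$. This should contradict that neighbor spanning a direct summand of $\cCFK(K)/(W)$ or $\cCFK(K)/(Z)$, since the flanking arrow has the opposite orientation. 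If this fails, I would fall back on Maslov grading bookkeeping: with all arrows of length one, $\gr_{\ws}$ and $\gr_{\zs}$ change by $\pm1$ along each arrow, and the symmetry of $\cC$ constrains the total.

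\emph{Local system summands.} For each $\cL_j=\cL(w,\vec n,\vec\sigma,A)$ with all $n_i=1$, read indices cyclically. Every position is then interior, so Lemma~\ref{lem: snake_structure} (as arranged in its proof, after rotating the indexing) forbids three consecutive equal signs. The interior-pair argument above forbids two as well, except in the configuration forced by \eqref{eq: local_system}. I expect this to pin $\vec\sigma$ down to the cyclic pattern $(1,1,-1,-1)$ with $p=2$. The cleanest way to see $p=2$ is geometric, via the immersed-curve picture in the proof sketch of Theorem~\ref{thm:standard_decomp}: a compact component all of whose bigons have length one must be a small loop encircling the $w,z$ disk once. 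Such a loop is primitive, so Hanselman's local-system-type bounding chain carries no nontrivial local system on it. This lets us take $w=1$ and $A=\Id$, or equivalently split a $w$-fold component into $w$ parallel copies, each giving a length-one box $\cB(1,(1,1,1,1),(1,1,-1,-1),\Id)$.

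I expect the main obstacle to be the interior-pair exclusion in both the standard and the local system summands, together with justifying that the local system is trivial. For the latter I would lean on the geometric description rather than algebra.
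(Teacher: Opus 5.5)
The gap is at the step you flag as hardest, and neither of your two suggested arguments closes it. In your configuration $a_{i+1}\xrightarrow{Z}a_i\xrightarrow{W}a_{i-1}$, the flanking arrow of opposite direction points \emph{into} $a_{i-1}$. So $a_{i-1}$ is a sink and $\partial a_{i-1}\equiv 0\pmod U$. The identity $\partial^2a_i=W\partial a_{i-1}+U\partial b=0$ then merely determines $\partial b$. There is no length-one $Z$-arrow out of $a_{i-1}$ to be cancelled, hence no contradiction. In Lemma~\ref{lem: snake_structure} the lift works because $a_2$ is the \emph{source} of the same-direction pair $a_2\to a_1\to a_0$, not merely a middle vertex.

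Grading bookkeeping cannot rescue this. Take $\cC(1,-1,-1,1,1,-1)$:
\begin{itemize}
\item it satisfies the symmetry $\sigma_in_i=-\sigma_{7-i}n_{7-i}$;
\item its generators $a_0,\dots,a_6$ admit $(\gr_{\ws},\gr_{\zs})=(0,2),(1,1),(2,0),(1,1),(0,2),(1,1),(2,0)$, so $\gr_{\ws}(a_0)=\gr_{\zs}(a_6)=0$ and every generator has $A-\gr_{\ws}=-1$;
\item it has no three consecutive arrows in one direction and is fine at both ends;
\item yet it contains the interior pairs $a_1\to a_2\to a_3$ and $a_5\to a_4\to a_3$.
\end{itemize}
So Lemma~\ref{lem: snake_structure}, its ``moreover'' clause, symmetry and gradings all permit a non-staircase. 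Your local-system step has the same problem in another guise. The claim that a compact component with only length-one bigons is a small primitive loop is essentially the conclusion, asserted rather than proved.

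The missing idea is to expand $\partial^2$ at the \emph{source} of the pair. Lift the decomposition basis to a reduced basis over $\cR$, as the proof of Lemma~\ref{lem: snake_structure} implicitly does. No matrix entry then has a constant term, so the coefficient of the monomial $WZ=U$ in the $z$-component of $\partial^2x$ equals the mod-$2$ count of length-two paths $x\to y\to z$ whose weights are $W$ and $Z$ in some order. This gives the whole statement:
\begin{itemize}
\item \emph{Standard complex.} For $a_{i+1}\to a_i\to a_{i-1}$ in a chain this count is $1$. Hence all consecutive arrows alternate, ends included, and $\cC$ is a positive or negative staircase.
\item \emph{Local systems with $2p\ge 6$.} The unique such path from $X_{i+1}$ to $X_{i-1}$ contributes an invertible block ($\Id$, $A$ or $A^{-1}$). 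So the signs alternate cyclically, all $\sigma_{2j-1}$ coincide, and \eqref{eq: local_system} fails.
\item \emph{Local systems with $2p=4$.} Here \eqref{eq: local_system} forces a square with one source and one sink. The two paths between them contribute $(A+\Id)U$ or $(A^{-1}+\Id)U$, so $A=\Id$ and $\cL$ splits as $w$ length-one boxes.
\end{itemize}

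With this repair your route becomes a self-contained combinatorial proof that uses neither Lemma~\ref{lem: snake_structure} nor symmetry. The paper instead observes that length-one $W$- and $Z$-arrows both preserve $\delta=A-\gr_{\ws}$. It then splits $\cCFK(K)^{\hat\cR}$ into thin $\delta$-graded summands and applies Petkova's classification of thin complexes to each.
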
   
\begin{proof}
    Consider the $\delta$-grading defined by $\delta=A-\gr_W.$ Since $\Ord(K)=1$, the differential of $\cCFK(K)^{\hat\cR}$ consists of arrows weighted by either $W$ or $Z$ and one readily checks that both preserve the $\delta$-grading. It follows that $\cCFK(K)^{\hat\cR}$ splits  over  $\delta$-gradings. A complex is thin if and only if it consists of elements of the same $\delta$-grading. The result then follows from applying   \cite[Lemma 7]{PetkovaThin} to each $\delta$-graded summand of $\cCFK(K)^{\hat\cR}$.
\end{proof}

We will make use of the following simple fact about the torsion order:

\begin{lem}\label{lem: torsion_order_criterion} Suppose that $(A,\d_A)$, $(B,\d_B)$ and $(C,\d_C)$ are chain complexes of graded $\bF[Z]$-modules and $C$ decomposes as the mapping cone of a chain map $f\colon A\to B$. Further, assume that $H_*(B)$ is torsion (i.e. for each $x\in B$, there is some $N$ such that $Z^N x=0$). Suppose that $a\in A$ is a cycle such that $Z^n[a]=0\in H_*(A)$ and $Z^{n-1}[a]\neq 0\in H_*(A)$. Suppose that $Z^j \cdot [f(a)]=0\in H_*(B)$. Then 
    \[
    \Ord(H_*(C)) \ge n-j.
    \]
    \end{lem}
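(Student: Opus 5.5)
The plan is to use the long exact sequence of the mapping cone and to produce a class in $H_*(C)$ that is annihilated by no power of $Z$ smaller than $Z^{n-j}$. Here $\Ord(H_*(C))$ means the smallest $N$ with $Z^N$ killing the torsion submodule of $H_*(C)$. Write $C=\Cone(f)=A\oplus B$ with differential $\d(x,y)=(\d_A x, f(x)+\d_B y)$. There is a short exact sequence $0\to B\to C\to A\to 0$, and its connecting map $H_*(A)\to H_*(B)$ is $f_*$.

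The first step is to lift $Z^j a$ to a cycle of $C$. Since $Z^j[f(a)]=0$, we may choose $b\in B$ with $\d_B b = Z^j f(a)$. Then $c:=(Z^j a, b)$ satisfies $\d c = (Z^j\d_A a, Z^jf(a)+\d_B b)=0$, so $c$ is a cycle.

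The second step is to check that $[c]$ is torsion. Let $\pi\colon C\to A$ be the projection. We have $Z^{n-j}[c]\in H_*(C)$ and $\pi_*(Z^{n-j}[c])=Z^n[a]=0$. By exactness, $Z^{n-j}[c]$ lies in the image of $H_*(B)\to H_*(C)$. Since $H_*(B)$ is torsion, some further power $Z^N$ kills $Z^{n-j}[c]$, so $[c]$ lies in the torsion submodule of $H_*(C)$.

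The third step is the lower bound. Suppose $Z^m[c]=0$ for some $m<n-j$. Applying $\pi_*$ gives $Z^{m+j}[a]=0$ in $H_*(A)$. Since $m+j\le n-1$, this implies $Z^{n-1}[a]=0$, which contradicts the hypothesis. Hence $Z^{n-j-1}[c]\neq 0$ while $[c]$ is torsion, and therefore $\Ord(H_*(C))\ge n-j$. If $n-j\le 0$ the statement is vacuous.

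The only point requiring care is the convention for the cone: the direction of the maps, and the fact that $f$ is a chain map, so that $\pi$ is a chain map and $B\hookrightarrow C$ is a subcomplex. With the convention fixed above, nothing is hard; the main obstacle is simply confirming that the torsion hypothesis on $B$ is what makes $[c]$ torsion.
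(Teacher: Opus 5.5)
Your proof is correct and follows essentially the same route as the paper: lift $Z^j[a]$ to a class $[c]\in H_*(C)$ using the long exact sequence of the cone, use that $H_*(B)$ is torsion to see that $[c]$ is torsion, and project to $A$ to see that $Z^{n-j-1}[c]\neq 0$. The only cosmetic difference is that you write the lift $c=(Z^j a,b)$ explicitly at the chain level, whereas the paper obtains it from exactness at $H_*(A)$.
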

    \begin{proof}Assume $n>j$, since otherwise the claim is trivial. We consider the exact triangle 
    \[
    \begin{tikzcd}
   H_*(B)\ar[r, "i_*"] & H_*(C)\ar[r,"p_*"] & H_*(A)\ar[r, "f_*"] &H_*(B)
    \end{tikzcd}
    \]
   where $i\colon B\to C$ and $p\colon C\to A$ are the natural chain maps. The element $Z^j[a]\in H_*(A)$ maps trivially by $f_*$ to $H_*(B)$, and therefore may be written as $p_*([c])$ for some $c\in H_*(C)$. We first claim that $[c]$ represents a torsion element in $H_*(C)$, i.e., $Z^N\cdot [c]=0$ for some large $N$. If $N>n-j$, then $Z^N\cdot [c]$ is mapped by $p_*$ to $Z^{N+j}[a]$, which is zero. Hence $Z^N \cdot [c]$ is in the image of $H_*(B)$. However $H_*(B)$ is a torsion module, so $Z^N\cdot [c]$ must be zero for large $N$, establishing that $[c]$ is torsion. We next claim that $Z^{n-j-1}[c]\neq 0\in H_*(C)$. To see this observe that $p_*(Z^{n-j-1}[c])=Z^{n-1}[a]$ which is non-zero, as claimed. Therefore $\Ord(H_*(C))\ge n-j$.  
    \end{proof}

\section{Background on the surgery algebra and surgery modules}

\subsection{Type-$D$ and $A$ modules}
In this section, we  recall Lipshitz, Ozsv\'{a}th and Thurston's formalism of type-$D$, type-$A$ and type-$AA$ modules  \cite{LOTBordered} \cite{LOTBimodules}.

\begin{define} Suppose that $A$ is an associative algebra over a ring $\ve{k}$. A \emph{right type-$D$ module} $C^A$ consists of a right $\ve{k}$-module $C$ equipped with a $\ve{k}$-linear map $\delta^1\colon C\to C\otimes_{\ve{k}} A$ such that
\[
(\bI_{C}\otimes \mu_2)\circ (\delta^1\otimes \bI_{A})\circ \delta^1=0.
\]
\end{define}

\begin{rem} When $\ve{k}=\bF=\Z/2$, we conflate a type-$D$ structure over $A$ with a free chain complex over $A$.
\end{rem}
\begin{define}Suppose that $B$ is an associative algebra over a ring $\ve{k}$. A type-$A$ module, or an \emph{$A_\infty$} module $M_B$ is a right $\ve{k}$-module $M$ equipped with $\ve{k}$-linear maps
\[
m_{n+1}\colon M \otimes_{\ve{k}}\underbrace{B\otimes_{\ve{k}}\cdots \otimes_{\ve{k}} B}_n \to M
\]
for $n\ge 0$. These maps are required to further satisfy
\[
\begin{split}
&\sum_{j=0}^n m_{n-j}( m_{j+1}(\xs, b_1,\dots, b_{j}), b_{j+1},\dots, b_n))\\
+&\sum_{j=1}^{n-1} m_{n}( \xs, b_1,\dots, b_j b_{j+1},\dots, b_n)=0
\end{split}
\]
\end{define}
An $A_\infty$-module $M_B$ with $m_j=0$ for $j>1$ can be viewed as a regular chain complex over $B$, where the differential is given by $m_0$ and the multiplication over $B$ by $m_1$.
\begin{define}
If $A$ and $B$ are associative algebras over a ring $\ve{k}$, an \emph{$AA$-bimodule} ${}_{A} M_{B}$ consists of a $\ve{k}$-bimodule $M$, equipped with structure maps 
\[
m_{i|1|j}\colon A^{\otimes i}\otimes M\otimes B^{\otimes j}\to M
\]
which satisfy the following $A_\infty$-associativity relation:
\[
\begin{split}
0=&\sum_{\substack{0\le i\le n\\ 0\le j\le m}} m_{n-i|1|m-j}(a_n,\dots, m_{i|1|j}(a_i,\dots, a_1, \xs,b_1,\dots, b_j),\dots, b_m)\\
+&\sum_{i=1}^{n-1} m_{n-1|1|m}(a_n,\dots, a_{i+1}a_i,\dots, a_1, \xs,b_1,\dots, b_m)\\
+&\sum_{i=1}^{m-1} m_{n|1|m-1}(a_n,\dots, a_1, \xs, b_1,\dots, b_ib_{i+1},\dots, b_m) 
\end{split}.
\]
\end{define}
Given a type $D$-module $C^A$ and an $AA$-bimodule ${}_{A} M_{B}$,  we can form the \emph{box tensor product} $C^A \boxtimes {}_{A} M_{B}$,  a right type $A$-module over $B$. The underlying module is $C\otimes M $, and the structure maps are given by
\[
m_{n+1}(\xs\otimes \ys,b_1,\cdots,b_n)=\sum_{k=0}^\infty(\bI_{C}\otimes m_{k,1,n})\big((\delta^k(\xs)\otimes \ys) \otimes b_1 \otimes \cdots \otimes b_n\big).
\]
For example, $m_1$ is defined by the following diagram
\[ m_1=\quad
\begin{tikzcd}[column sep=0 cm, row sep=0.2cm]
     C  \ar[ddd]&[0.2cm]  \boxtimes& M\ar[dd] \\[0.4cm]
     & & \\
     & & m_{0,1,0} \ar[d]  \\[0.4cm]
     C &  \boxtimes  & M\  
	 \end{tikzcd}
     +\quad
     \begin{tikzcd}[column sep=0 cm, row sep=0.2cm]
     C  \ar[d]&[0.2cm]  \boxtimes& M\ar[dd] \\[0.3cm]
     \delta^1 \ar[dd]\ar[drr]& & \\
     & & m_{1|1|0} \ar[d]  \\[0.3cm]
     C &  \boxtimes  & M\  
	 \end{tikzcd}
     +\quad
     \begin{tikzcd}[column sep=0 cm, row sep=0.3cm]
     C  \ar[d]&[0.2cm] \boxtimes& M\ar[dd] \\[0.1cm]
     \delta^1 \ar[d]\ar[drr]& &\\
      \delta^1 \ar[d]\ar[rr] & & m_{2,1,0} \ar[d]  \\[0.1cm]
     C &  \boxtimes  & M\   
	 \end{tikzcd}
     +\quad
    \cdots
     \]
     
     \subsection{The surgery algebra}

     In this section, we recall the \emph{surgery algebra}, defined in \cite{ZemBordered}, as well as some formalism about modules over the surgery algebra.

   The second author reformulated the link surgery formula of Ozsv\'{a}th--Szab\'{o} \cite{OSIntegerSurgeries} and Manolescu--Ozsv\'{a}th \cite{MOIntegerSurgery} in terms of an associative algebra $\cK$ called \emph{the surgery algebra}. We presently recall the definition of $\cK$. It is an associative algebra over the ring of two idempotents $\ve{I}=\ve{I}_0\oplus \ve{I}_1$ (where each $\ve{I}_i=\bF=\Z/2$). Furthermore
\[
\ve{I}_0\cdot \cK\cdot \ve{I}_0=\bF[W,Z],\quad \ve{I}_0 \cdot \cK \cdot \ve{I}_1=0
\]
\[
\ve{I}_1\cdot \cK\cdot \ve{I}_0=\bF[U,T,T^{-1}] \otimes \langle \sigma, \tau\rangle \quad \text{and}, \quad \ve{I}_1\cdot \cK\cdot \ve{I}_1=\bF[U,T,T^{-1}].
\]
The algebra is subject to the relations that
\[
\sigma W=UT^{-1} \sigma, \quad \sigma Z=T \sigma,\quad \tau W=T^{-1} \tau,\quad \tau  Z=UT  \tau.
\]
Define $\hat\cK$ by quotienting $U$ in $\cK$.

\subsection{Surgery modules and bimodules}

Ozsv\'{a}th and Szab\'{o} \cite{OSIntegerSurgeries} proved a very useful surgery formula for Heegaard Floer homology. They defined a chain complex $\bX_\lambda(Y,K)$ which computes $\ve{CF}^-(Y_\lambda(K))$ whenever $K$ is a rationally null-homologous link in $Y$ and $\lambda$ is a Morse (i.e. longitudinal) framing. Here $\ve{\CF}^-(Y_{\lambda}(K))$ denotes the completion $\CF^-(Y_{\lambda}(K))\otimes_{\bF[U]} \bF\llsquare U\rrsquare$. 

The second author encoded the surgery formula $\bX_{\lambda}(Y,K)$ into the framework of type-$D$ and $A$ modules, as follows. Firstly, there is a type-$D$ structure $\cX_{\lambda}(Y,K)^{\cK}$ over $\cK$, which encodes the data of the surgery formula. This should be thought of as the invariant for $Y\setminus \nu(K)$. See \cite{ZemBordered}*{Section~8.5} for details on the construction. See \cite{HMSZ_Naturality}*{Section~5.3} for an exposition of the construction for knots in $S^3$.

Next, there is a type-$A$ module ${}_{\cK} \cD$, whose underlying $\ve{I}$-module consists of $\bF[W,Z]$ (in idempotent 0) and $\bF[U,T,T^{-1}]$ (in idempotent 1). The actions of $\ve{I}_0\cdot \cK\cdot \ve{I}_0$ and $\ve{I}_1\cdot \cK\cdot \ve{I}_1$ on ${}_{\cK} \cD$ are given by ordinary multiplication. Furthermore,
\[
m_2(\sigma, W^iZ^j)=U^i T^{j-i}\quad \text{and} \quad m_2(\tau, W^i Z^j)=U^j T^{j-i}.
\]
The maps $m_j$ vanish $j>2$. We think of ${}_{\cK} \cD$ as being the bimodule for a 0-framed solid torus (i.e. the complement of a 0-framed unknot in $S^3$).

The complex $\bX_{\lambda}(Y,K)$ is recovered by a box tensor product
\[
\bX_{\lambda}(Y,K)\simeq \cX_{\lambda}(Y,K)^{\cK}\boxtimes{}_{\cK} \cD. 
\]

\begin{rem} Although suppressed from our notation and the above discussion, we complete the algebra $\cK$ and all of the modules above at the ideal $(U)$. For the present paper, this is of little consequence since we mostly focus on the quotient $\cK/U$.
\end{rem}

The link surgery formula can be similarly reinterpreted in terms of modules over the surgery algebra $\cK$. Manolescu and Ozsv\'{a}th describe a chain complex $\cC_{\Lambda}(L)$, called the \emph{link surgery complex}, whose homology coincides with that of $\ve{\CF}^-(Y_{\Lambda}(L))$, when $Y$ is an integer homology 3-sphere and $L\subset Y$ is a link with integral framing $\Lambda$. The second author described a type-$D$ module
\[
\cX_{\Lambda}(Y,L)^{\cK\otimes_{\bF}\cdots \otimes_{\bF} \cK}
\]
which encodes the link surgery formula. See \cite{ZemBordered}*{Section~8.6} for the construction. The chain complex $\cC_{\Lambda}(L)$ is obtained by tensoring $|L|$ copies of the module ${}_{\cK} \cD$ to $\cX_{\Lambda}(Y,L)^{\cK\otimes\cdots \otimes \cK}$.

Finally, we will discuss the connected sum formulas. These are described in \cite{ZemBordered}*{Section~12}. Suppose that $L\subset Y$ is a 2-component link. The construction described above gives a type-$D$ module $\cX_{\Lambda}(Y,L)^{\cK\otimes \cK}$. In \cite{ZemBordered}*{Section~8.4}, the second author describes a type-$A$ bimodule ${}_{\cK\otimes \cK} [\bI^{\Supset}]$. We tensor $\cX_{\Lambda}(Y,L)^{\cK\otimes \cK}$ with this bimodule (along just one $\cK$ factor from each bimodule) to produce a $DA$-bimodule which we denote by ${}_{\cK} \cX_{\Lambda}(Y,L)^{\cK}$. Typically, we omit the $\Lambda$ subscript from the notation to indicate that $\Lambda=(0,0)$ or that $\Lambda$ is implicit from context.

We now suppose that $L_1$ and $L_2$ are 2-component links in $Y_1$ and $Y_2$, and we consider the $DA$-bimodules ${}_{\cK} \cX_{\Lambda_i}(Y_i,L_i)^{\cK}$. Write $L_1=K_1\cup K_1'$ and $L_2=K_2\cup K_2'$.

 It follows from \cite{ZemBordered}*{Theorem~12.1} and \cite{ZemBordered}*{Proposition~13.2} that the box tensor product
\[
{}_{\cK} \cX_{\Lambda_1}(Y_1,L_1)^{\cK}\boxtimes {}_{\cK} \cX_{\Lambda_2}(Y_2,L_2)^{\cK}
\]
is homotopy equivalent to ${}_{\cK} \cX_{\Lambda'}(Y',L')^{\cK}$ where $Y'=(Y_1\# Y_2)_{\lambda'}(K_1'\# K_2)$ and $L'=K_1\cup K_2'$. Additionally, $\lambda'$ is the sum of the framings on $K_1'$ and $K_2$ and $\Lambda'$ denotes the restriction of $\Lambda_1$ and $\Lambda_2$ to $K_1$ and $K_2'$. The analogous pairing theorems hold for connected sums of links with other numbers of components.

\section{Background on L-space satellite operations}\label{sec: L-space-satellite}

In this section, we review the notion of an L-space satellite operation and the results from \cite{CZZ} and \cite{CZZApp}. 

In Section~\ref{sec:background-L-space-basic}, we recall the definition of an L-space satellite operation and the bimodules ${}_{\cK} \cX(L_P)^{\bF[W,Z]}$ associated to an L-space satellite operation. 

In Section~\ref{subsec: bimodule_cK_diamond} we consider the reduction, where we set $W=0$. This module admits a simple model, denoted ${}_{\cK}\cX^{\diamond}(L_P)_{\bF[Z]}$,
which has the advantage that only $m_{1|1|0}$ and $m_{0|1|1}$ are nontrivial (i.e. is a genuine bimodule, instead of an $A_\infty$-bimodule). We review its definition and prove an important lemma concerning its structure. We also introduce some helpful notation.
In Section \ref{subsec: complex_Pk}, we assemble the pieces and complete the setup for the computations. 

In Section~\ref{subsec: diagrammatic}, we introduce a diagrammatic description that will be used in the proofs.

\subsection{L-space satellite operators}
\label{sec:background-L-space-basic}
Given a pattern $P \subset S^1 \times D^2$ and a companion knot $K \subset S^3$, the satellite knot $P(K,\lambda)$ is defined to be the image of $P$ under an identification of $S^1 \times D^2$ with $\nu(K)$ that sends $S^1 \times \{*\}$ to the $\lambda$-framed longitude of $K$. We typically abuse notation and view $P$ as both a knot in the solid torus, as well as the knot in $S^3$ obtained by embedding the solid torus $S^1\times D^2$ into $S^3$  as a neighborhood of the $0$-framed unknot.

Denote $L_P=\mu\cup P$ where $\mu$ is a meridian of the solid torus.
The \emph{winding number} of $P$ is given by $\ell=\lk(P,\mu)$. We may assume $\ell\geq 0$ after reversing the string orientation of $P$ if needed.

		\begin{define} We say that a satellite operator $P$  is an \emph{L-space satellite operator} if the 2-component link $L_P$ is an L-space link, i.e., $S^3_{\Lambda}(L_P)$ is an L-space for all integral framings $\Lambda\gg (0,0)$. 
				\end{define}

To a 2-component link $L$ with integral framing $\Lambda$, the third author  \cite{ZemBordered} \cite{ZemExact} constructed a $DA$-bimodule ${}_{\cK} \cX_{\Lambda}(L)^{\cK}$. In \cite{CZZ}, we computed the reduction ${}_{\cK} \cX_{\Lambda}(L)^{\bF[W,Z]}$, obtained by restricting the right algebra to idempotent 0.

 The data of ${}_{\cK} \cX(L_P)^{\bF[W,Z]}$ can be encoded in terms of the diagram of the form shown in Figure~\ref{fig:DA-bimodule-schematic}. Therein, $t\in\Z+\lk(\mu,P)/2$. Each of the complexes $\cC_t$ and $\cS$  are staircase complexes. Each $T^{t} \cS$ denotes a copy of $\cS$. The staircase complexes $\cC_t$ and $\cS$ are determined by the $H$-function of $L$. 
\begin{figure}[h]
\begin{equation*}
	\begin{tikzcd}[labels=description, column sep=1.1cm, row sep=0cm]
		\cdots
		&[-1.2cm]\cC_{t-2}
		\ar[r, bend left, "Z|L_Z"]
		\ar[d,"\sigma|L_\sigma", bend left=15, pos=.4]
		\ar[d, "\tau|L_\tau", bend right=15, pos=.6]
		& \cC_{t-1}
		\ar[r, bend left, "Z|L_Z"]
		\ar[l, bend left, "W|L_W"]
		\ar[d,"\sigma|L_\sigma", bend left=15, pos=.4]
		\ar[d, "\tau|L_\tau", bend right=15, pos=.6]
		&\cC_{t}
		\ar[r, bend left, "Z|L_Z"]
		\ar[l, bend left, "W|L_W"]
		\ar[d,"\sigma|L_\sigma", bend left=15, pos=.4]
		\ar[d, "\tau|L_\tau", bend right=15, pos=.6]
		& \cC_{t+1}
		\ar[r, bend left, "Z|L_Z"]
		\ar[l, bend left, "W|L_W"]
		\ar[d,"\sigma|L_\sigma", bend left=15, pos=.4]
		\ar[d, "\tau|L_\tau", bend right=15, pos=.6]
		& \cC_{t+2}
		\ar[l, bend left, "W|L_W"]
		\ar[d,"\sigma|L_\sigma", bend left=15, pos=.4]
		\ar[d, "\tau|L_\tau", bend right=15, pos=.6]
		&[-1.2cm] \cdots
		\\[2cm]
		\cdots
		& T^{t-2}\cS
		\ar[r, bend left, "T|1"]
		\ar[loop below,looseness=20, "U|U"]
		& T^{t-1}\cS
		\ar[r, bend left, "T|1"]
		\ar[l, bend left, "T^{-1}|1"]
		\ar[loop below,looseness=20, "U|U"]
		&T^{t}\cS
		\ar[r, bend left, "T|1"]
		\ar[l, bend left, "T^{-1}|1"]
		\ar[loop below,looseness=20, "U|U"]
		&T^{t+1}\cS
		\ar[r, bend left, "T|1"]
		\ar[l, bend left, "T^{-1}|1"]
		\ar[loop below,looseness=20, "U|U"]
		&T^{t+2}\cS
		\ar[l, bend left, "T^{-1}|1"]
		\ar[loop below,looseness=20, "U|U"]
		&\cdots 
	\end{tikzcd}
\end{equation*}
\caption{A schematic encoding the $DA$-bimodule of an L-space link $L$. Additional data (not shown) necessary to encode the $DA$ module are the actions $\delta_3^1$.}
\label{fig:DA-bimodule-schematic}
\end{figure}

The diagram in Figure~\ref{fig:DA-bimodule-schematic} displays  the $\delta_2^1$  actions on this bimodule. For example, the arrows labeled $Z|L_Z$ indicate the action of $\delta_2^1(Z,-)$. We will write $L_Z$ for the induced map from $\cC_{t}$ to $\cC_{t+1}$.
Additionally, there will typically be several more $\delta_3^1$ actions which are not shown in the above diagram. We refer the reader to  \cite[Section 4.1]{CZZApp} for details of the construction. See also  \cite[Section 5]{CZZ}.

\subsection{Braided L-space patterns}

In this section, we prove the following:

\begin{prop}\label{prop: braided_property} Suppose that $L_P=\mu\cup P$ is an L-space pattern such that $P$ is braided about $\mu$, then $\cC_{t}\iso \cS$ for $t\ge \ell/2$ (in the language of \cite{CZZApp}, $N_{L_P}=\ell/2$), and $R_{\ell/2-1}<R_{\ell/2}$.
\end{prop}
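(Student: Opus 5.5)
Throughout, we read the statement through the $H$-function $H(t,r)=H_{L_P}(t,r)$, with $t$ the Alexander coordinate for $\mu$ and $r$ the coordinate for $P$. Recall from \cite{CZZApp} that $\cC_t$ is the staircase attached to the one-variable function $r\mapsto H(t,r)$, and $\cS$ is the staircase attached to $r\mapsto H(+\infty,r)=H_P(r)$, the $H$-function of the knot $P(U)$. So the first assertion amounts to
\[
H(t,r)=H_P(r)\quad\text{for all } t\ge \ell/2 \text{ and all } r.
\]
We would prove this from braidedness. Since $P$ is a closed braid about $\mu$, the complement of $\mu$ fibers over $S^1$ with fiber a disk $D$ meeting $P$ in $\ell$ points. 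Hence $\mu$ is fibered in the complement of $P$, with Seifert surface $D\setminus\nu(P)$.

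The first step is an Alexander polynomial computation. Since $L_P$ is an L-space link, \cite{GorNem15}*{Theorem~2.2.11} determines $H$ from the Alexander polynomials of the sublinks. By Torres' formula and the fiberedness of $\mu$ in the complement of $P$, the multivariable Alexander polynomial $\Delta_{L_P}(x,y)$ has $x$-degree exactly $\ell-1$, so its $t$-span is as small as possible. Concretely, the Alexander grading of $\mu$ is bounded by the genus of the punctured disk. Consequently $\cHFL(L_P,(t,r))$ stabilizes in $t$ once $t>\ell/2-1$, i.e.\ at $t=\ell/2$, and $H(t,r)$ equals its $t\to\infty$ limit $H_P(r)$ there. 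Phrasing this via the inclusion–exclusion formula for $H$ in terms of $\Delta_{L_P}$ and $\Delta_P$, we get $H(t,r)-H(t+1,r)=0$ for $t\ge\ell/2$, because the relevant coefficient sums vanish outside the Newton polygon. This gives $\cC_t\cong\cS$ for $t\ge\ell/2$, i.e.\ $N_{L_P}=\ell/2$.

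The second step is $R_{\ell/2-1}<R_{\ell/2}$. By the first step, $R_{\ell/2}$ is computed from $H_P$, so $R_{\ell/2}=R_{+\infty}=g_3(P)+\ell/2$. Here $P(U)$ is an L-space knot, being a large surgery component of an L-space link (as in \cite{CZZApp}), so its top Alexander grading is realized as the last drop of $H_P$. It then suffices to show that at $t=\ell/2-1$ the top jump in $r$ occurs strictly lower. We would use the top coefficient of $\Delta_{L_P}$ in the $x$-variable. Fiberedness of $\mu$ gives that this coefficient is monic and equals, up to a unit, the characteristic polynomial of the braid monodromy on the punctured disk. In particular its $y$-degree is bounded by the genus data of $P$, strictly less than in the $t=\infty$ slice. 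Translating back gives $H(\ell/2-1,r)=H(\ell/2,r)+1$ at $r=R_{\ell/2}$, so the defining condition of $R_{\ell/2-1}$ fails at $R_{\ell/2}$. Combined with monotonicity of $R_t$ for $t\le\ell/2$ \cite[Lemma 4.9]{CZZApp}, this gives the strict inequality.

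The main obstacle is the second step: pinning down precisely the $y$-degree of the top $x$-coefficient, and converting it into a statement about $H$ at the specific value $r=R_{\ell/2}$. If the polynomial route is messy, we would instead argue geometrically with the relative genus. A braid has $g_3^{\rel}(P)=g_3(P)$ (compare \cite[Remark 7.11]{CZZApp}), and the Alexander-grading-$(\ell/2,\,g_3(P)+\ell/2)$ generator of the top group corresponds to the fiber surface. Adjunction-type bounds would then show that the corner $(\ell/2-1,\,g_3(P)+\ell/2)$ has the $H$-value increased by one.
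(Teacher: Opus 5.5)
Your first step is sound and can replace the paper's citation of \cite{CZZApp}*{Proposition~7.3}. The $x$-span of $\Delta_{L_P}$ is $\ell-1$, so the coefficients $a_{t,r}$ of $(xy)^{1/2}\Delta_{L_P}$ vanish for $t>\ell/2$. Inclusion--exclusion, together with the boundary values of $H$ as $r\to-\infty$, then gives $H(t,\cdot)=H(t+1,\cdot)$ for $t\ge\ell/2$. (It is $H$ that stabilizes; $\cHFL(L_P,\ve{s})\cong\bF[U]$ always.)

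The problem is the second step. Run the same bookkeeping one slice lower: $H(\ell/2-1,r)-H(\ell/2,r)=\sum_{r'\le r}a_{\ell/2,r'}$. Fiberedness makes the $x^{\ell/2}$-coefficient a single monomial $y^{c}$. By Morton's formula $\Delta_{L_P}\doteq\det(I-x\bar\beta(y))$, the top coefficient is the unit $\pm y^{e(\beta)}$, with $e$ the exponent sum. It is $\Delta_{L_P}$ itself, not its top coefficient, that is a characteristic polynomial. Hence $H(\ell/2-1,r)=H(\ell/2,r)$ for $r<c$ and $H(\ell/2-1,r)=H(\ell/2,r)+1$ for $r\ge c$, and monotonicity forces $c\le R_{\ell/2}$.

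From this, $R_{\ell/2-1}<R_{\ell/2}$ holds if and only if $c=R_{\ell/2}$ exactly. If $c<R_{\ell/2}$, the last drop of $H(\ell/2,\cdot)$ survives and $R_{\ell/2-1}=R_{\ell/2}$. So a $y$-degree ``strictly less than in the $t=\infty$ slice'' would give the opposite conclusion. Likewise, the identity $H(\ell/2-1,R_{\ell/2})=H(\ell/2,R_{\ell/2})+1$ you aim for holds automatically once $c\le R_{\ell/2}$, and it does not violate the defining condition of $R_{\ell/2-1}$. What you actually need is $H(\ell/2-1,R_{\ell/2}-1)=H(\ell/2,R_{\ell/2}-1)$.

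Moreover, $c=R_{\ell/2}$ unwinds to $(e(\beta)+1)/2=g_3(P)+\ell/2$, a sharp Bennequin equality for the braid. Neither fiberedness nor Alexander-polynomial data can supply it. For $\beta=\sigma_1^{-1}\in B_2$ one gets $c=0<1=R_1$, and the resulting candidate $H$-function is monotone in both variables with $R_0=R_1$. That link is not an L-space link, but nothing in the Euler characteristic detects this.

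The missing ingredient is genuinely Floer-theoretic. The paper uses Martin's theorem \cite{MartinT26} that $\widehat{\mathit{HFL}}(L_P)$ has rank exactly $2$ in the top $\mu$-Alexander grading. Via the Koszul resolution and the stabilization $\cC_t\cong\cS$, it identifies this group with $H_*(\Cone(L_Z\colon\tilde\cC_{\ell/2-1}\to\tilde\cC_{\ell/2}))$ in the hat theory. It then shows the top generator of $\tilde\cC_{\ell/2}$ is never hit. Rank $2$ forces all other generators to cancel, and combined with $L_Z$ being a chain map over the minus theory this gives $A_2(\xs^{\top}_{\ell/2-1})<A_2(\xs^{\top}_{\ell/2})$.

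Your geometric fallback does not address this either: $g_3^{\rel}(P)=g_3(P)$ is the statement $\theta=0$, which only concerns $R_{\ell/2}$.
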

\begin{proof} The claim that $\cC_t\iso \cS$ for $t\ge \ell/2$ follows from \cite{CZZApp}*{Proposition~7.3}. 

We consider $\widehat{\CFL}(L_P)$. This complex is obtained by setting $W_1=W_2=Z_1=Z_2=0$ in $\cCFL(L_P)$. Alternatively, we can take the $DA$-bimodule ${}_{\bF[W,Z]} \cX(L_P)^{\bF[W,Z]}$ and set $W=Z=0$ on the right, and tensor on the left with the Koszul complex
\[
\cK^{\bF[W,Z]}:=\begin{tikzcd}[labels=description] & \xs \ar[dl, "W"] \ar[dr, "Z"]\\
\ys\ar[dr, "Z"]&& \zs \ar[dl, "W"]\\
& \ws
\end{tikzcd}.
\]
The tensor product $\cK^{\bF[W,Z]} \boxtimes {}_{\bF[W,Z]} \cX(L_P)^{\bF[W,Z]}$ decomposes as the direct sum $\bigoplus_{s\in \ell/2+\Z} Y_s$, where
\[
Y_s:= \begin{tikzcd}[labels=description] & \cC_s \ar[dd, "h_{WZ}+h_{ZW}"] \ar[dl, "L_W"] \ar[dr, "L_Z"]\\
\cC_{s-1}\ar[dr, "L_Z"]&& \cC_{s+1}\ar[dl, "L_W"]\\
& \cC_{s}
\end{tikzcd}.
\]

We now consider the version obtained by setting $W=Z=0$ on the right. We write $\tilde{Y}_s$ and $\tilde{\cC}_s$ for the associated complexes.

Since $L_Z\colon \cC_{s}\to \cC_{s+1}$ is an isomorphism for all $s\ge \ell/2$, the  maximal $s$ where $\tilde{Y}_s$ is non-trivial is $s=\ell/2$. Since $L_Z\colon \cC_{\ell/2}\to \cC_{\ell/2+1}$ is an isomorphism, we see that $\tilde{Y}_{\ell/2}\simeq \tilde{X}_{\ell/2}$, where
\[
\tilde{X}_{\ell/2}:=\Cone(L_Z\colon \tilde{\cC}_{\ell/2-1}\to \tilde{\cC}_{\ell/2}).
\]

By \cite{MartinT26}*{Proposition~1}, we know that $H_*(\tilde{Y}_{\ell/2})$ has rank $2$.

Next, we observe that under our assumptions, the image of $\tilde{\cC}_{\ell/2-1}$ under $L_{Z}$ cannot have $\xs_{\ell/2}^{\top}$, the highest $A_2$-graded generator of $\tilde{\cC}_{\ell/2}$, in its image. This may be seen as follows. The $A_2$ Alexander grading of $\xs_{\ell/2-1}^{\top}$ is $R_{\ell/2-1}$, by definition.  By \cite{CZZApp}*{Lemma~4.9}, we have $R_{\ell/2-1}\le R_{\ell/2}$. Therefore, all other generators of $\tilde{\cC}_{\ell/2-1}$ have $A_2$ Alexander grading less than $R_{\ell/2-1}\le A_2(\xs_{\ell/2}^{\top})$. Since $L_Z$ preserves the $A_2$ Alexander grading, the only element of $\tilde{\cC}_{\ell/2-1}$ which could hit $\xs_{\ell/2}^{\top}$ under $L_Z$ is $\xs_{\ell/2-1}^{\top}$.  We note, however, that the $\gr_{\ws}$ grading of $\xs^{\top}_{\ell/2-1}$ is $1$, while the $\gr_{\ws}$-grading of $\xs^{\top}_{\ell/2}$ is $0$. This would imply that if $\xs^{\top}_{\ell/2}$ was a summand of $L_Z(\xs^{\top}_{\ell/2-1})$, it would need a factor of $W$, and therefore would vanish in the hat version. Furthermore, the argument above shows that no element $\xs$ of $\tilde{\cC}_{\ell/2}$ can have $\xs^{\top}_{\ell/2}$ as non-trivial component of $L_Z(\xs)$ when we set $W=Z=0$. 

Let $\xs^{1}_{\ell/2}$ and $\xs^{1}_{\ell/2-1}$ denote the next highest $A_2$-graded elements of $\tilde{\cC}_{\ell/2}$ and $\tilde{\cC}_{\ell/2-1}$. Let $\xs^{j}_{\ell/2}$ and $\xs^{j}_{\ell-1}$ denote the subsequent elements of $\tilde{\cC}_{\ell}$ and $\tilde{\cC}_{\ell/2-1}$, ordered by their $A_2$-Alexander grading.  Note that $\tilde{\cC}_s$ has at most one generator in a given $(\gr_{\ws},\gr_{\zs})$-bigrading.

We consider three cases:
\begin{enumerate}
\item $\tilde{\cC}_{\ell/2-1}$ has a single generator $\xs_{\ell/2-1}^{\top}$.
\item $\tilde{\cC}_{\ell/2-1}$ has more than one generator and $L_Z(\xs_{\ell/2-1}^{\top})=0\in \tilde{\cC}_{\ell/2}$.
\item $\tilde{\cC}_{\ell/2-1}$ has more than one generator and $L_Z(\xs_{\ell/2-1}^{\top})\neq 0\in \tilde{\cC}_{\ell/2}$.
\end{enumerate}

We first address the case that $\tilde{\cC}_{\ell/2-1}$ contains a single generator. In this case, we can use the stabilization properties of the $H$-function, to see that $R_{\ell/2-1}<R_{\ell/2}$, as follows. Firstly, since $\tilde{\cC}_{\ell/2-1}$ contains a single generator, we see that
\begin{equation}
H_{L}(\ell/2-1,t)=\begin{cases}1 & \text{ if } t\ge R_{\ell/2-1}\\
R_{\ell/2-1}-t+1& \text{ if } t<R_{\ell/2-1}.
\end{cases}
\label{eq:H_L-initial-braid}
\end{equation} 
We know also that 
\begin{equation}
H_{L}(1-\ell/2,t)=H_U(1-\ell)=\ell-1
\label{eq:H(1-ell/2)}
\end{equation} if $t\gg 0$, where $U$ is the unknot (note that by convention we assume that $\ell\ge 0$ and furthermore $\ell\neq 0$ for a braid). Using the symmetry of the H-function (i.e., $H_L(s,t)=H_L(-s,-t)-s-t$), we conclude from Equation~\eqref{eq:H(1-ell/2)} that if $t\ll 0$, then
\[
H_L(\ell/2-1,t)=H_L(1-\ell/2,-t)-(\ell/2-1)-t=\ell/2-t.
\]
In light of Equation~\eqref{eq:H_L-initial-braid}, we conclude therefore that $R_{\ell/2-1}=\ell/2-1$. Finally, since $N_{L_P}=\ell/2$ (i.e. there is a $\gr_{\ws}$-grading preserving isomorphism $\cC_t\iso \cS$ to $t\ge \ell/2$), we know that $R_{\ell/2}= \ell/2+g_3(P)>\ell/2-1=R_{\ell/2-1}$,   completing the proof in this case.

We henceforth assume that $\tilde{\cC}_{\ell/2-1}$ has more than one generator.

In the case that $L_Z(\xs_{\ell/2-1}^{\top})=0$, we know that $H_*(\tilde{X}_{\ell/2})$ is spanned by $\xs_{\ell/2}^{\top}$ and $\xs_{\ell/2-1}^{\top}$. Therefore over the hat flavor, $L_Z$ must send $\xs_{\ell/2-1}^{j}$ to $\xs_{\ell/2}^j$ for $j>0$. Since $L_Z$ is also a chain map over the minus theory, by considering the $\xs_{\ell/2}^{\top}$ component of $[\d, L_Z](\xs_{\ell/2-1}^{1})$  we conclude that $L_Z(\xs_{\ell/2-1}^{\top})=\xs_{\ell/2}^{\top}\otimes W^i$ for some $i>0$. (By considering the $H$-function of $L_P$ further, it is possible to show that $i=1$, but this is not important here). In particular, this implies that 
\[
R_{\ell/2-1}=A_2(\xs_{\ell/2-1}^{\top})=A_2(\xs_{\ell/2}^{\top})-i=R_{\ell/2}-i<R_{\ell/2},
\]
concluding the proof in this case.

We consider the final case, where $L_{Z}(\xs_{\ell/2-1}^{\top})\neq 0$ in the hat theory. As described above, the image of this element cannot be $\xs^{\top}_{\ell/2}$, and therefore it must be mapped to some $\xs_{\ell/2}^{i}$ for some $i>0$. However this implies that
\[
R_{\ell/2-1}=A_2(\xs_{\ell/2-1}^{\top})=A_2(\xs_{\ell/2}^i)<A_2(\xs_{\ell/2}^{\top})=R_{\ell/2},
\]
completing the proof.
\end{proof}

\subsection{The bimodule  ${}_{\hat{\cK}} \cX^{\diamond}(L_P)_{\bF[Z]}$} \label{subsec: bimodule_cK_diamond} To compute the torsion order of the satellites, we first set  $W=0$ in the output of ${}_{\hat{\cK}} \cX(L_P)^{\bF[W,Z]}$, and pass to a type $AA$-bimodule. Let
 \[{}_{\hat{\cK}} \cX(L_P)_{\bF[Z]}:={}_{\hat{\cK}} \cX(L_P)^{\bF[Z]}\boxtimes {}_{\bF[Z]} \bF[Z]_{\bF[Z]}.\] 
 In \cite[Section 4.3]{CZZApp}, a homotopy equivalent bimodule  ${}_{\cK} \cX^{\diamond}(L_P)_{\bF[Z]}$ is constructed by replacing each $\cC_t$ and $\cS$ with its homology in $\bF[Z]$, as follows. 
 
As a type-$A$ module, the complex  $\cC_t^{\bF[Z]} \boxtimes {}_{\bF[Z]}\bF[Z]_{\bF[Z]}$ is quasi-isomorphic to 
 \begin{equation}\label{eq: C_t-min-def}
  \cC^{\diamond}_t := \bF[Z] \oplus \bF[Z] / Z^{\xi^t_1} \oplus \cdots  \oplus \bF[Z] / Z^{\xi^t_{k_t}}   
 \end{equation}
 for some integers $\xi_1^t,\dots, \xi_{k_t}^{t}$. We equip $\cC_t^\diamond$ 
with vanishing differential and the obvious action of $\bF[Z].$  Since $\cC_{t}^{\bF[W,Z]}$ is a staircase complex, $k_t$ is the number of steps (i.e. the number of arrows weighted by $Z$-powers), and $\xi_1^t,\dots, \xi^{t}_{k_t}$ are the lengths of the steps which are weighted by powers of $Z$.

Let $\xs^t_0$ denote the free generator and  $\xs^t_1, \dots, \xs^t_{k_t}$ denote the generators of the torsion summands, ordered as in Equation~\eqref{eq: C_t-min-def}. We assume that the generators are given the same ordering as they appear in the staircase $\cC_t$ itself, so that
\[
\gr_{\zs}(\xs^t_i)=\gr_{\zs}(\xs^t_0)+2\sum^i_{j=1}\xi^t_j, i=1,\dots,k.
\]

We similarly have 
\begin{equation}\label{eq: S-min-def}
  \cS^{\diamond} := \bF[Z] \oplus \bF[Z] / Z^{\xi'_1} \oplus \cdots  \oplus \bF[Z] / Z^{\xi'_{k}} ,
 \end{equation}
 with vanishing differential and the obvious action of $\bF[Z].$ 
Denote by $\xs'_0$  the free generator and $\xs'_i, i>0$, the  generators of the torsion summands, ordered analogously to the generators of $\cC_t$.

Later on, the quantity $\xi'_1$ will become important. We make the convention that $\xi'_1 = 0$ if $g_3(P)=0$.

The underlying right $\bF[Z]$-module of ${}_{\cK} \cX^{\diamond}(L_P)_{\bF[Z]}$ is obtained by replacing each $\cC_t$ with $\cC^{\diamond}_t$  and each $\cS$ with $\cS^{\diamond}$ in ${}_{\cK} \cX(L_P)_{\bF[Z]}$. 

The generators in ${}_{\cK} \cX^{\diamond}(L_P)_{\bF[Z]}$ inherit a $(\gr_{\ws},\gr_{\zs})$-bigrading from ${}_{\cK} \cX(L_P)_{\bF[Z]}$. According to the construction of ${}_{\cK} \cX(L_P)^{\bF[W,Z]}$ in \cite[Section 5.2]{CZZ},  together with \cite[Equation (4.3)]{CZZApp}, we have
\begin{equation} \label{eq: gradingxs}
    (\gr_{\ws},\gr_{\zs})(\xs^{t}_0) =  \begin{cases}
      (0, - 2R_{t} -2t )    &t\geq \frac{\ell}{2} \\
      (-\ell +2t, - 2R_{t} -2\ell )   &t< \frac{\ell}{2}
    \end{cases} \quad \text{and} \quad
    (\gr_{\ws},\gr_{\zs})(\xs'_0) =(0,-2g_3(P)),
\end{equation}
where $R_t$ is the quantity defined using the $H$-function in Definition \ref{def: R_t}. 
\begin{lem}[Lemma 4.10 in \cite{CZZApp}]
\label{lem:X-diamond-non-A_infty-bimodule}
    The only non-vanishing actions $m_{i|1|j}$ on ${}_{\cK} \cX^{\diamond}(L_P)_{\bF[Z]}$ are $m_{1|1|0}$ and $m_{0|1|1}$. Furthermore, the right action of $\bF[Z]$ sends $\cC_t^{\diamond}$ to itself, and coincides with the obvious $\bF[Z]$-module structure from its description in Equation~\eqref{eq: C_t-min-def}. Similarly, the right $\bF[Z]$-module structure sends $T^i\otimes \cS^{\diamond}$ to itself, and coincides with the natural analogous action.
\end{lem}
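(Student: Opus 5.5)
This lemma is cited from \cite{CZZApp} (Lemma 4.10). The plan is to reconstruct it by homological perturbation: transfer the $A_\infty$-bimodule structure of ${}_{\hat{\cK}}\cX(L_P)_{\bF[Z]}$ to the homology model ${}_{\cK}\cX^{\diamond}(L_P)_{\bF[Z]}$, and then kill the higher actions by grading arguments.

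\textbf{Step 1 (transfer).} Start from ${}_{\hat\cK}\cX(L_P)^{\bF[W,Z]}$, set $W=0$ and tensor with ${}_{\bF[Z]}\bF[Z]_{\bF[Z]}$. Each staircase $\cC_t$ and $\cS$ then becomes a complex of free $\bF[Z]$-modules in which only the $Z$-weighted arrows survive. The result is a direct sum of two-step complexes $\bF[Z]\xrightarrow{Z^{\xi}}\bF[Z]$ together with a single free generator, namely the terminal one in the $W=0$ quotient. For each summand choose the obvious strong deformation retraction onto the module $\bF[Z]/Z^{\xi}$. Regarded as a deformation retraction of $\bF$-complexes, it consists of:
\begin{itemize}
\item an inclusion $i$ of the monomial basis $\{Z^k\xs\}_{k<\xi}$;
\item a projection $p$;
\item a homotopy $h$ sending $Z^{k+\xi}\xs$ to $Z^k\ys$, where $\ys$ is the source of the arrow.
\end{itemize}
The homological perturbation lemma then yields transferred actions $m_{i|1|j}$ on $\cX^{\diamond}$. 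Each is a sum over trees that alternate the original actions ($\delta^1_2$, $\delta^1_3$ from Figure~\ref{fig:DA-bimodule-schematic}, plus the right $\bF[Z]$ multiplication) with $h$, capped by $i$ and $p$.

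\textbf{Step 2 (right action and $m_{0|1|1}$).} The right action of $\bF[Z]$ is strictly associative and preserves each $\cC_t$ and each $T^i\cS$. The homotopy $h$ also preserves these pieces. So the transferred $m_{0|1|1}$ is $p(i(\cdot)Z)$, which is the natural module action on $\bF[Z]/Z^{\xi}$.

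The transferred $m_{0|1|j}$ with $j\geq2$ would be built from trees of the form $p\,(\cdots h(h(i(x)Z)Z)\cdots)$. I would show these vanish by choosing $h$ compatible with multiplication by $Z$ in the sense $h(i(x)Z)=0$. This amounts to taking $i$ to have image spanned by the lowest $Z$-powers and choosing a split that is $Z$-equivariant on the image of $i$. Alternatively, one can argue that the higher terms would need to decrease $\gr_{\zs}$ by an amount incompatible with the bigrading.

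\textbf{Step 3 (left actions).} The terms $m_{i|1|0}$ with $i\geq2$, and the mixed terms $m_{i|1|j}$ with $i,j\geq1$, are the main obstacle. They arise from compositions such as $p\,\delta^1_2(a_2)\,h\,\delta^1_2(a_1)\,i$, and from the $\delta^1_3$ terms. My plan is a grading argument, done as a case check on algebra elements:
\begin{itemize}
\item Use the bigrading (Equation~\eqref{eq: gradingxs}) and the Alexander grading, which the maps $L_Z$, $L_W$, $L_\sigma$, $L_\tau$ shift in a controlled way.
\item Note that $h$ lowers $\gr_{\zs}$ relative to its input by $2\xi-2$ while raising homological degree by one.
\item Note that in $\hat\cK$ (with $U=0$) products of $\sigma,\tau$ with $W,Z$ are heavily constrained; for example $\sigma W=0$ and $\tau Z=0$.
\end{itemize}
From this I would show that any nonzero output of a length-$\geq2$ tree on the left would have to land in a generator whose $\gr_{\zs}$ is incompatible with all of the available generators $\xs^t_k$, $\xs'_k$. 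The expected conclusion is that only $m_{1|1|0}$ and $m_{0|1|1}$ survive, with $m_{1|1|0}$ induced by $L_Z$, $L_W$, $L_\sigma$, $L_\tau$ on homology. If the grading argument fails for some product, the fallback is to modify $h$ by a filtered change of basis that absorbs the offending term.
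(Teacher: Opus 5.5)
\textbf{There is a genuine gap.} Your framework is the right one: homological perturbation over $\bF$ onto $\cC_t^{\diamond}$ and $\cS^{\diamond}$, with the explicit $i$, $p$, $h$. But the mechanism that kills the higher operations is missing, and the mechanisms you propose do not work.

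\emph{Step~2.} The condition $h(i(x)Z)=0$ cannot be arranged by any choice of $h$. For $x=\xs Z^{\xi-1}$ one has $i(x)Z=\xs Z^{\xi}$. This is a cycle with $p(\xs Z^{\xi})=0$, so the homotopy relation forces $\d h(\xs Z^{\xi})=\xs Z^{\xi}\neq 0$, hence $h(\xs Z^{\xi})\neq 0$. The conclusion $m_{0|1|j}=0$ for $j\ge 2$ is still true, but for a different reason. The homotopy $h$ takes values in the $\bF[Z]$-span of the $\ys$'s; right multiplication by $Z$ preserves that span, and $p$ annihilates it.

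\emph{Step~3.} Maslov and Alexander gradings cannot do the job. For instance, the transferred $m_{2|1|1}(W,Z,-,Z)$ maps $\cC_t^{\diamond}$ to itself with $(\gr_{\ws},\gr_{\zs})$-bidegree $(0,-2)$. That is exactly the bidegree of $x\mapsto xZ$, which is a nonzero map, so there is no obstruction of the kind you describe. (Incidentally, $h$ raises $\gr_{\zs}$ by one; it does not lower it by $2\xi-2$.) The relations $\sigma W=\tau Z=0$ and the fallback of modifying $h$ do not supply an argument either.

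\emph{The missing idea} is the \emph{algebraic grading}: degree $0$ on the $\xs$'s and degree $1$ on the $\ys$'s. You mention this for $h$ but never exploit it. Put algebra elements in degree $0$. Then the unperturbed bimodule ${}_{\hat\cK}\cX(L_P)_{\bF[Z]}$ has only the following operations:
\begin{itemize}
\item $m_{0|1|0}$, of degree $-1$;
\item $m_{1|1|0}$ (from $\delta^1_2$) and $m_{0|1|1}$, of degree $0$. The maps $L_W,L_Z,L_\sigma,L_\tau$, the $T^{\pm1}$-maps and multiplication by $Z$ send $\xs$'s to $\xs$'s and $\ys$'s to $\ys$'s. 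For the $L$-maps this follows from $\gr_{\ws}$-parity: their $\gr_{\ws}$-shifts are even, while $\xs$'s and $\ys$'s have opposite parity.
\item $m_{2|1|0}$ (from $\delta^1_3$), of degree $+1$, since $h_{W,Z}$, $h_{\sigma,W}$, etc.\ raise algebraic grading by one.
\end{itemize}
The homotopy $h$ also has degree $+1$. All the complexes are supported in degrees $0$ and $1$, and the homology is in degree $0$.

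A tree with $v$ vertices and $v-1$ copies of $h$ therefore has degree $i+j-1$. So the transferred $m_{i|1|j}$ is a map of degree $i+j-1$ from a module concentrated in degree $0$ to itself, and it vanishes unless $i+j=1$. Concretely, the first $h$ or $\delta^1_3$ in a tree lands in degree $1$; after that, any further raise, or the final $p$, gives zero. Only the one-vertex trees $p\circ\delta^1_2\circ i$ and $p\circ(\,\cdot\,Z)\circ i$ survive. This disposes of all higher $m_{i|1|j}$ at once, with no case analysis on algebra elements, and it also yields your (correct) description of $m_{0|1|1}$.
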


We introduce shorthand notation for several quantities frequently used throughout the paper. Their roles will become clear in Lemma \ref{lem: C_S_structure_maps}. 
\begin{define}\label{def: shorthand}    
 Define the quantities
\begin{equation}\label{eq: shorthand}
\begin{aligned}
    &\theta =R_{\frac{\ell}{2}}-g_3(P) - \frac{\ell}{2}, \quad 
    \omega = R_{\frac{\ell}{2}}-R_{\frac{\ell}{2}-1},    \\ &  \kappa = R_{\frac{\ell}{2}-1}-g_3(P) + \frac{\ell}{2}, \quad \text{and} \quad  \Xi =  \frac{\ell}{2}+g_3(P) -R_{\frac{\ell}{2}-1}-1.
\end{aligned}
\end{equation}
 \end{define}
 There is redundancy in the above definitions:
 it is easy to see that \[\ell - \kappa = \omega - \theta =  \Xi +1.\] 
 It follows from \cite[Lemma 4.9]{CZZApp} that   $\theta, \omega $ and $\kappa$ are all  nonnegative (because they are the exponents of $Z$-powers which appear in the structure map of a $DA$-bimodule). 
\begin{lem}\label{lem: braided_property}
    If $P$ is a braided  L-space satellite pattern, then \[\theta=0, \quad \ell \geq \omega>0 \quad \text{and} \quad   \Xi \geq 0.\]
\end{lem}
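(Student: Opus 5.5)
The three claims follow from Proposition~\ref{prop: braided_property}, the relation $\ell-\kappa=\omega-\theta=\Xi+1$, and the nonnegativity of $\theta,\omega,\kappa$ from \cite[Lemma 4.9]{CZZApp}. I will prove $\theta=0$ first and then read off the rest.

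\emph{Showing $\theta=0$.} By Proposition~\ref{prop: braided_property}, $\cC_t\iso\cS$ for all $t\ge \ell/2$, i.e.\ $N_{L_P}=\ell/2$. I expect this isomorphism to preserve the $\gr_{\ws}$-grading, as was already used in the proof of that proposition, and I will also use that it preserves the $\gr_{\zs}$-grading. Granting this, the free generators $\xs_0^{\ell/2}$ and $\xs'_0$ correspond and have the same bigrading. Equation~\eqref{eq: gradingxs} then gives
\[
-2R_{\ell/2}-\ell=-2g_3(P),
\]
so $R_{\ell/2}=g_3(P)+\ell/2$ and hence $\theta=0$.

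An alternative route avoids the grading comparison. The remark after Definition~\ref{def: R_t} says $R_t=g_3(P)+\ell/2$ for $t\gg0$, and $R_t$ is nonincreasing for $t\ge \ell/2$. The identification $\cC_t\iso\cS$ for all $t\ge\ell/2$ should make $R_t$ constant on that range, since $R_t$ is the $A_2$-grading of the top generator of $\cC_t$. Either way $R_{\ell/2}=g_3(P)+\ell/2$. The main obstacle is justifying that the identification $\cC_t\iso\cS$ matches these gradings and not just the complexes; I would check this against \cite{CZZApp}*{Proposition~7.3}, which is what gives $N_{L_P}=\ell/2$.

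\emph{Showing $\omega>0$ and $\Xi\ge0$.} Proposition~\ref{prop: braided_property} also gives $R_{\ell/2-1}<R_{\ell/2}$, so $\omega=R_{\ell/2}-R_{\ell/2-1}>0$. Since $\omega$ is an integer, $\omega\ge1$. With $\theta=0$, the relation $\omega-\theta=\Xi+1$ becomes $\Xi=\omega-1\ge0$.

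\emph{Showing $\ell\ge\omega$.} The relation $\ell-\kappa=\omega-\theta$ becomes $\ell-\omega=\kappa$. Since $\kappa\ge0$ by \cite[Lemma 4.9]{CZZApp}, we get $\ell\ge\omega$.
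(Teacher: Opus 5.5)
Your second route for $\theta=0$ is the paper's argument, and with it your proof is complete and matches the paper. Proposition~\ref{prop: braided_property} gives $\omega>0$ and constancy of $R_t$ for $t\ge \ell/2$. The value of $R_t$ for $t\gg 0$ then gives $R_{\ell/2}=g_3(P)+\ell/2$, and $\Xi=\omega-1\ge 0$ follows. For $\ell\ge\omega$, the paper derives $R_{\ell/2-1}\ge g_3(P)-\ell/2$ from monotonicity of $R_t$ on $t\le \ell/2$ and its value for $t\ll 0$. That inequality is literally $\kappa\ge 0$, which you quote instead, so the two arguments coincide.

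The constancy you flag as the main obstacle is supplied as in the proof of Proposition~\ref{prop: braided_property}. There, $L_Z\colon \cC_t\to\cC_{t+1}$ is an isomorphism for $t\ge \ell/2$ and preserves the $A_2$-grading. It therefore carries top generator to top generator, so $R_t=R_{t+1}$ on that range.

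Your first route, however, is wrong and should be deleted, for two reasons.
\begin{itemize}
\item The premise is false. The identification $\cC_t\iso\cS$ preserves $\gr_{\ws}$ but not $\gr_{\zs}$. By \eqref{eq: gradingxs}, for $t\gg 0$ we have $\gr_{\zs}(\xs^t_0)=-2R_t-2t=-2g_3(P)-\ell-2t$, which differs from $\gr_{\zs}(\xs'_0)=-2g_3(P)$.
\item The algebra is also wrong. Your equation $-2R_{\ell/2}-\ell=-2g_3(P)$ solves to $R_{\ell/2}=g_3(P)-\ell/2$, not $g_3(P)+\ell/2$. That would give $\theta=-\ell<0$ for a braid (where $\ell\ge 1$), contradicting $\theta\ge 0$.
\end{itemize}
The false premise and the sign slip happen to cancel, so the route reaches the right conclusion but proves nothing. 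Only the second route should be kept.
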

\begin{proof}
    By Proposition \ref{prop: braided_property}, $\omega= R_{\frac{\ell}{2}} - R_{\frac{\ell}{2}-1} >0 $  and $R_t$ is constant for $t \geq \frac{\ell}{2}.$
    By \cite[Lemma 4.9]{CZZApp},  $R_t$ is nondecreasing for $t \leq \frac{\ell}{2}$, $R_t = g_3(P) - \frac{\ell}{2}$ for $t \ll 0$ and $R_t = g_3(P) + \frac{\ell}{2}$ for $t \gg 0$. It follows that $R_{\frac{\ell}{2}}=g_3(P) + \frac{\ell}{2}$ and $R_{\frac{\ell}{2}} - R_{\frac{\ell}{2}-1} \leq R_{\frac{\ell}{2}} - g_3(P) + \frac{\ell}{2} = \ell.$ Hence $\theta = 0, \ell \geq \omega$ and $\Xi=\omega-
    \theta -1 = \omega -1 \geq 0.$
\end{proof}

 By Lemma~\ref{lem:X-diamond-non-A_infty-bimodule}, the bimodule ${}_{\hat{\cK}} \cX^{\diamond}(L_P)_{\bF[Z]}$ is an $AA$-bimodule with only $m_{1|1|0}$ and $m_{0|1|1}$ non-trivial, i.e., it is an ordinary bimodule over $(\hat{\cK},\bF[Z])$. In particular there is an $\bF[Z]$-torsion submodule. In \cite{CZZApp}, we studied the quotient of ${}_{\hat{\cK}} \cX^{\diamond}(L_P)_{\bF[Z]}$ by this torsion submodule, which we denoted ${}_{\hat{\cK}} \cX^{\top}(L_P)_{\bF[Z]}$.   The structure maps of ${}_{\hat{\cK}} \cX^{\top}(L_P)_{\bF[Z]}$  are described in \cite[Lemma 4.17]{CZZApp}.

 The following is an extension of \cite[Lemma 4.17]{CZZApp}:
\begin{lem} \label{lem: C_S_structure_maps} The structure maps of ${}_{\hat\cK} \cX^{\diamond}(L_P)_{\bF[Z]}$ restricted to $\xs^t_0$ when $t=\frac{\ell}{2}$ and $\frac{\ell}{2}-1$ are as follows:
    \begin{align*}
            L_Z(\xs^{\ell/2-1}_0)  &= \begin{cases}
            \xs^{\ell/2}_1 Z^{\xi_1^{\ell/2} + 1 - \omega } \quad &\text{if }  \omega > 1 \\
            0  \quad &\text{if }  \omega \leq 1
        \end{cases}   \\
         L_W(\xs^{\ell/2}_0)  &= \xs^{\ell/2-1}_0 Z^ \omega  \\
       L_{\tau}(\xs^{\ell/2}_0)  &= \xs'_0 Z^{\ell+\theta}  \hspace{2.5 em}  L_{\sigma}(\xs^{\ell/2}_0)  = \xs'_0 Z^ \theta     \\
       L_{\tau}(\xs^{\ell/2-1}_0) &= \xs'_0 Z^\kappa \hspace{3em}  L_{\sigma}(\xs^{\ell/2-1}_0) = \begin{cases}
            \xs'_1 Z^{\xi'_1 - \Xi} \quad &\text{if }   \Xi > 0  \\
            0  \quad &\text{if }   \Xi \leq 0
        \end{cases}.     
    \end{align*}
    Furthermore, if $\Xi >0$ then $g_3(P)>0$ and $\xi'_1 - \Xi \geq 0.$ Similarly, if $\omega > 1,$ then $\xi_1^{\ell/2} + 1 - \omega \geq 0.$
    In the above, recall that $L_Z$ denotes the action of $Z$, (e.g. $L_Z(\xs_0^{\ell/2-1})=m_{1|1|0}(Z,\xs_0^{\ell/2-1})$), and similarly for the other actions.
\end{lem}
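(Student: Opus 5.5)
We determine each structure map from gradings, and use the $\ve{I}_1$-side and $\bF[Z]$-module structure to pin down the coefficients. Every action $m_{1|1|0}$ on ${}_{\hat\cK}\cX^{\diamond}(L_P)_{\bF[Z]}$ preserves $A_2$ and has a prescribed $(\gr_{\ws},\gr_{\zs})$-shift. The shift is $(-1,-1)$ from $\delta^1_2$, combined with the grading of the algebra element: $Z$ has bigrading $(0,-2)$ and $W$ has $(-2,0)$, while $\sigma$ and $\tau$ carry the gradings fixed in \cite{ZemBordered}. The target is always a right $\bF[Z]$-module of the form $\cC^\diamond_t$ or $\cS^\diamond$, in which each bigrading contains at most one generator times a $Z$-power. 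So each output is either zero or a single term $\xs Z^n$ with $n$ forced by Equation~\eqref{eq: gradingxs}.

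First, the free-part actions. The images $L_W(\xs^{\ell/2}_0)$, $L_\sigma(\xs^{\ell/2}_0)$, $L_\tau(\xs^{\ell/2}_0)$ and $L_\tau(\xs^{\ell/2-1}_0)$ are already computed in \cite[Lemma 4.17]{CZZApp} modulo torsion. They are nonzero there, so they are nonzero in $\cX^\diamond$ as well. Since $\xs^t_0$ is non-torsion and the actions are $\bF[Z]$-linear, their images are not torsion. Their $\xs_0$-components are therefore exactly $\xs^{\ell/2-1}_0Z^\omega$, $\xs'_0Z^\theta$, $\xs'_0Z^{\ell+\theta}$ and $\xs'_0Z^\kappa$. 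I would check each exponent by subtracting the gradings in \eqref{eq: gradingxs}. For example, $\gr_{\zs}(\xs^{\ell/2}_0)-\gr_{\zs}(\xs'_0)=-2R_{\ell/2}-\ell+2g_3(P)=-2\theta-2\ell$, which matches $\tau$ combined with $Z^{\ell+\theta}$.

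It remains to exclude torsion components, and this uses $A_2$-grading. The torsion generators $\xs^t_i$ with $i\geq 1$ have $A_2$-grading strictly below $R_t$. The grading bookkeeping shows that a torsion component of the output would require a negative $Z$-power, or would sit in the wrong bigrading.

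Second, the two maps that vanish modulo torsion are $L_Z(\xs^{\ell/2-1}_0)$ and $L_\sigma(\xs^{\ell/2-1}_0)$. In \cite[Lemma 4.17]{CZZApp} their free parts vanish, because otherwise $R_t$ would contradict the $H$-function relations. So each output lies in the torsion submodule, and the only candidate is a single $\xs_i Z^n$. Comparing $\gr_{\ws}$ and $\gr_{\zs}$ with $\gr_{\zs}(\xs_i)=\gr_{\zs}(\xs_0)+2\sum_{j\le i}\xi_j$ singles out $i=1$, with exponent $\xi^{\ell/2}_1+1-\omega$, respectively $\xi'_1-\Xi$. The element $\xs_1 Z^n$ has order $\xi_1-n$, which equals $\omega-1$, respectively $\Xi$. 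So the map can be nonzero only if $\omega>1$, respectively $\Xi>0$, and this gives the stated vanishing.

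The main obstacle is showing that the map is actually nonzero when $\omega>1$ or $\Xi>0$, and that the exponents are nonnegative. Here I would use that $\cX^\diamond$ is homotopy equivalent to $\cX$, and argue as in the proof of Proposition~\ref{prop: braided_property}. Consider the staircase relation for $[\d,L_Z]$, respectively $[\d,L_\sigma]$, at the first staircase step of $\cC_{\ell/2-1}$, or use the relation $\sigma W=UT^{-1}\sigma$ evaluated on $L_\sigma L_W$. The relevant relations are the $A_\infty$ relation $L_ZL_W$ versus $U$-multiplication, and the identity $L_\sigma(L_W(\xs^{\ell/2}_0))=0$ modulo $U$. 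These force the relevant homology class to be detected by the torsion generator $\xs_1$. Nonnegativity of the exponent then follows: a nonzero map to $\xs_1Z^n$ with $n<0$ is impossible, so $\xi_1\ge \Xi$ and $\xi^{\ell/2}_1\geq\omega-1$. Likewise $\Xi>0$ forces $\cS^\diamond$ to have a torsion summand, hence $g_3(P)>0$.
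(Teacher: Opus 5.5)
Your bookkeeping matches the paper's. The four free-part values come from \cite[Lemma~4.17]{CZZApp}, and each bigrading of $\cC^\diamond_t$ or $\cS^\diamond$ holds at most one monomial. The vanishing for $\omega\le 1$ (resp.\ $\Xi\le 0$) holds because the only candidate is $\xs_1Z^n$ with $n\ge \xi_1$. The ``furthermore'' clause follows once nonvanishing is known. (Small slip: in the paper's conventions $L_Z$ shifts $(\gr_{\ws},\gr_{\zs})$ by exactly $(0,-2)$, with no extra $(-1,-1)$; your later numerics do use the correct shifts.)

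The genuine gap is the step you flag as the main obstacle: showing $L_Z(\xs^{\ell/2-1}_0)\neq 0$ when $\omega>1$ and $L_\sigma(\xs^{\ell/2-1}_0)\neq 0$ when $\Xi>0$. The tools you list do not establish this as stated.
\begin{itemize}
\item The chain-map identity for $L_Z$ evaluated on $\ys^{\ell/2-1}_1$ is satisfied by a map that vanishes on $\xs^{\ell/2-1}_0$.
\item In $\cX^\diamond$ (where $W=0$, hence $U=0$, on the right) or over $\hat\cK$, the relations $L_ZL_W\simeq U$ and $L_\sigma L_W\simeq UT^{-1}L_\sigma$ read ``$\simeq 0$''. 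This is compatible with both maps vanishing; in particular, the identity $L_\sigma(L_W(\xs^{\ell/2}_0))=0$ modulo $U$ carries no information.
\item Since $\gr_{\ws}(\xs^t_i)=\gr_{\ws}(\xs^t_0)-2\sum_{j\le i}\zeta^t_j$, the bigrading singles out $\xs_1$ only if the first $W$-step of the target staircase has length one. Otherwise there is no candidate at all and the map is forced to vanish.
\end{itemize}
So nonvanishing cannot come out of bookkeeping inside $\cX^\diamond$.

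The paper closes this by working in the unreduced bimodule ${}_{\cK}\cX(L_P)^{\bF[W,Z]}$. By the construction in \cite{CZZ}, $L_Z$ and $L_\sigma$ send $\xs^t_0$ to a nonzero homology generator of the target staircase. The bigrading then leaves either $\xs^{\ell/2}_1Z^p$ (resp.\ $\xs'_1Z^q$) with $p,q\ge0$, or $\xs^{\ell/2}_0WZ^{1-\omega}$ (resp.\ $\xs'_0WZ^{-\Xi}$). The latter exists only when $\omega\le 1$ (resp.\ $\Xi\le 0$) and dies when $W=0$.

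Your relations also work, but only if you use them there, with $U\neq 0$:
\begin{itemize}
\item Over $\bF[W,Z]$ you already have $L_W(\xs^{\ell/2}_0)=\xs^{\ell/2-1}_0Z^\omega$, by grading and \cite[Lemma~4.17]{CZZApp}.
\item The $A_\infty$ relation for the inputs $(Z,W)$, with the left action of $U$ agreeing with right multiplication by $WZ$, gives $L_Z\circ L_W\simeq WZ$ on $\cC_{\ell/2}$. Hence $Z^\omega[L_Z(\xs^{\ell/2-1}_0)]=[WZ\,\xs^{\ell/2}_0]\neq 0$ in $H_*(\cC_{\ell/2})$, which is torsion-free of rank one.
\item Likewise, $\sigma W=UT^{-1}\sigma$ gives $L_\sigma\circ L_W\simeq U\,T^{-1}L_\sigma$. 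Hence $Z^\omega[L_\sigma(\xs^{\ell/2-1}_0)]=[WZ^{\theta+1}\xs'_0]\neq 0$ in $H_*(\cS)$.
\end{itemize}
Nonvanishing over $\bF[W,Z]$, plus the bigrading, leaves only $\xs_1Z^p$ with $0\le p<\xi_1$ in the relevant case. This term survives in $\cX^\diamond$, forces the first $W$-step to have length one, and yields the nonnegativity claims at the same time.
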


\begin{proof}
    To prove the result, we consider the definition of the structure maps on the full bimodule ${}_{\cK} \cX(L_P)^{\bF[W,Z]}$, before passing to ${}_{\hat\cK} \cX^{\diamond}(L_P)_{\bF[Z]}$. We may view $\xs^{t}_i$ and $\xs'_i$ as generators of the staircases $\cC_{t}$ and $\cS$, respectively.
    
   We recall from \cite[Section 4.1]{CZZApp} that, as maps with domain $\cC_t$, the maps in ${}_{\cK}\cX(L_P)_{\bF[Z]}$ satisfy the following grading shift formulas:
\begin{equation}\label{eq: grading_shift}
\begin{aligned}
(\gr_{\ws},\gr_{\zs})(L_W) &= (-2,0),\\
(\gr_{\ws},\gr_{\zs})(L_Z) &= (0,-2),\\
(\gr_{\ws},\gr_{\zs})(L_\sigma) &= (0, 2t+\ell),\\
(\gr_{\ws},\gr_{\zs})(L_\tau) &= (-2t+\ell,0).
\end{aligned}
\end{equation}

By Equation~\eqref{eq: gradingxs}, we have
\[
\begin{split}
(\gr_{\ws},\gr_{\zs})(\xs^{\ell/2-1}_0) =& (-2,-2 R_{\ell/2 -1} - \ell ) \\ (\gr_{\ws},\gr_{\zs})(\xs^{\ell/2}_0) =& (0,-2 R_{\ell/2} - \ell ). \end{split}\]

Using the definition of the $R_t$ invariants  and the above grading formula for $L_Z$, we observe that 
\[
(\gr_{\ws},\gr_{\zs})(L_Z(\xs^{\ell/2-1}_0)) = (-2, -2 R_{\ell/2 -1} - \ell -2) = (\gr_{\ws},\gr_{\zs})(\xs^{\ell/2}_0) + (-2,2\omega-2).
\]
 If $\omega \leq 1$, we  can set $L_Z(\xs^{\ell/2-1}_0) = \xs^{\ell/2}_0\otimes WZ^{1- \omega} = 0$ on the full $DA$-bimodule ${}_{\cK} \cX(L_P)^{\bF[W,Z]}$. After setting $W=0$ on the right and forming the bimodule ${}_{\hat{\cK}} \cX(L_P)_{\bF[Z]}$, we obtain $L_Z(\xs^{\ell/2-1})=0$.

If $\omega > 1,$ 
the only possible generator in $\cC_{\ell/2}$ with the same bigrading as $L_Z(\xs^{\ell/2-1}_0)$ is of the form $\xs^{\ell/2}_1 Z^p$ for some $p$. 
By definition, in the module ${}_{\cK} \cX(L_P)^{\bF[W,Z]}$, the element $L_Z(\xs^{\ell/2-1}_0)$ is a non-zero generator of the homology of $\cC_{\ell/2}$. Therefore,
 we must have $L_Z(\xs^{\ell/2-1}_0) = \xs^{\ell/2}_1 Z^p$ for some $p\ge 0$.  Comparing  $\gr_{\zs}$ values  yields
\[-2 R_{\ell/2 -1} - \ell - 2 = -2 R_{\ell/2}  - \ell+ 2 \xi_1^{\ell/2} -2p\]
 and we solve that $ p= \xi_1^{\ell/2} + 1 - \omega .$

We now consider the claim about $L_\sigma (\xs^{\ell/2-1})$. Similarly to the above argument, we have 
\[
(\gr_{\ws},\gr_{\zs})(L_{\sigma}(\xs^{\ell/2-1}_0))=(-2,-2 R_{\ell/2 -1} + \ell -2)= (\gr_{\ws},\gr_{\zs})(\xs'_0) + (-2, 2\Xi).
\]
 If $\Xi \leq 0,$ we can take $L_{\sigma}(\xs^{\ell/2-1}_0) = \xs'_0 \otimes WZ^{-\Xi},$ which vanishes after passing from ${}_{\cK} \cX(L_P)^{\bF[W,Z]}$ to ${}_{\hat{\cK}} \cX^{\diamond}(L_P)_{\bF[Z]}$.

If $\Xi > 0,$ the only possible generator in $\cS$ with the same bigrading as $L_{\sigma}(\xs^{\ell/2-1}_0)$ is $\xs'_1 Z^q$ for some $q$. 
By definition, $L_{\sigma}(\xs^{\ell/2-1}_0)$ is nonzero, thus  we must have $L_{\sigma}(\xs^{\ell/2-1}_0) = \xs'_1 Z^q$ where $q \geq 0$ over ${}_{\cK} \cX(L_P)^{\bF[W,Z]}$. Therefore we have the same map over ${}_{\hat\cK} \cX^{\diamond}(L_P)_{\bF[Z]}$.
Comparing  $\gr_{\zs}$  yields
\[-2 R_{\ell/2 -1} + \ell -2 = -2 g_3(P) + 2 \xi'_1  -2q\]
 and we solve that $ q= \xi'_1+ R_{\ell/2-1}-\ell/2-g_3(P) +1 =\xi'_1 - \Xi.$ 

 The other equations are proven in \cite[Lemma 4.17]{CZZApp}.
 
 The claim that $\Xi>0$ implies that $g_3(P)>0$ and $\xi_1'\ge \Xi$ follows from the above argument: if $\Xi>0$ then $\cS$ must have a generator $\xs_1'$ in addition to $\xs_0'$, and therefore $g_3(P)>0$. Since $\xi_1'-\Xi$ appears as an exponent of $Z$, it must be nonnegative.
\end{proof}

 \subsection{The complex $\cCFK(P(K,\lambda))_{\bF[Z]}$} \label{subsec: complex_Pk}
 Following the approach in \cite[Section 5]{CZZApp}, we compute the knot Floer complex of $P(K,\lambda)$ via  the formula
 \begin{equation}\label{eq: CFK_PK_Z}
     \cCFK(P(K,\lambda))_{\bF[Z]} \simeq \cX_{\lambda}(K)^{\hat\cK} \boxtimes {}_{\hat\cK}\cH_{-}^{\hat\cK}\boxtimes {}_{\hat\cK} \cX^{\diamond}(L_P)_{\bF[Z]}.
 \end{equation}
 Denote by $\bX^{\diamond}(P,K,\lambda)_{\bF[Z]}$ the complex in the right hand side of \eqref{eq: CFK_PK_Z}.
Here $\cX_{\lambda}(K)^{\hat\cK}$ denotes the type-$D$ module associated to the knot $K$ with framing $\lambda\in \Z$, and ${}_{\hat\cK}\cH_-^{\hat\cK}$ denotes the bimodule corresponding to the negative Hopf link.
We now review the components appearing in this formula. 

The bimodule ${}_{\hat\cK}\cH_{-}^{\hat\cK}\boxtimes {}_{\hat\cK} \cX^{\diamond}(L_P)_{\bF[Z]}$ takes the form
\begin{equation}
{}_{\hat\cK} \cH_-^{\hat\cK} \boxtimes {}_{\hat\cK} \cX^{\diamond}(L_P)_{\bF[Z]}=
\begin{tikzcd}[column sep=1.6cm, row sep=1.6cm, labels=description] \scE^{\diamond}_{*,*}\ar[d, "f^K+f^{-K}"]\ar[r,"f^\mu+f^{-\mu}"]  & \scF^{\diamond}_{*,*}\ar[d,"f^K+f^{-K}"]\\
\scJ^{\diamond}_{*,*}\ar[r, "f^\mu+f^{-\mu}"]& \scM^{\diamond}_{*,*}
\label{eq:bimodule-X-P-n-cube}
\end{tikzcd}
\end{equation}
where $\scE^{\diamond}_{*,*}$, $\scF^{\diamond}_{*,*}$, $\scJ^{\diamond}_{*,*}$, $\scM^{\diamond}_{*,*}$ are built from the complexes and maps appearing in the bimodule ${}_{\cK} \cX^{\diamond}(L_P)_{\bF[Z]}$. The indices $(s,t)$ 
take values in
\[
\bH(P)=\left(\bZ+\frac{1}{2}\right)\times \left(\bZ+\frac{\lk(P,\mu)+1}{2}\right)
\]
viewed as the Alexander gradings associated to the two components of the Hopf link $H$ in $H\# L_P$. We have 
\[
\scE^{\diamond}_{s,t}=\begin{cases}
    \cC^{\diamond}_{t+\frac{1}{2}} &\text{ if }s>0\\
    \cC^{\diamond}_{t-\frac{1}{2}} &\text{ if } s<0\\
\end{cases}
\]
and $\scF^{\diamond}_{s,t} = \cS^{\diamond}$. We write $f^{\pm \mu}$ for the components of the structure map from $\scE_{*,*}^{\diamond}$ to $\scF_{*,*}^{\diamond}$. We illustrate these actions below:
\begin{equation} \label{eq: scE_scF_actions}
	 \begin{tikzcd}[labels=description, row sep=2cm] 
\scE^{\diamond}_{*,t} 
	 	\ar[d, "f^{\pm \mu}"]
	 	\\ 
	 	\scF^{\diamond}_{*,t}
	 \end{tikzcd}
	 \hspace{-.3cm}
	 =
	 \hspace{-.3cm}\begin{tikzcd}[column sep=.8cm, row sep=0cm]
	 	\cdots
	 	&[-1cm]\scE^{\diamond}_{-\frac{5}{2},t}
	 	\ar[d,"L_\sigma",  pos=.4]        
	 	& \scE^{\diamond}_{-\frac{3}{2},t}
	 	\ar[l ,"W|1"']
	 	\ar[d,"L_\sigma",  pos=.4]         \ar[dl,"L_\tau",  pos=.4]
        \ar[dl,"L_\tau",  pos=.4]
	 	&\scE^{\diamond}_{-\frac{1}{2},t}
	 	\ar[r, bend left,out=35,in=150, "Z|L_Z"]
	 	\ar[l, "W|1"']
	 	\ar[d,"L_\sigma",  pos=.4]         \ar[dl,"L_\tau",  pos=.4]
	 	&[.6cm] \scE^{\diamond}_{\frac{1}{2},t}
	 	\ar[r, "Z|1"]
	 	\ar[l, out=-160,in=-20,pos=.51,  "W|L_W"'{yshift=1pt}]
	 	\ar[d,"L_\sigma", pos=.4] \ar[dl,"L_\tau",  pos=.4]
	 	& \scE^{\diamond}_{\frac{3}{2},t}
	 	\ar[r, "Z|1"]
	 	\ar[d,"L_\sigma",  pos=.4]         \ar[dl,"L_\tau",  pos=.4]
	 	& \scE^{\diamond}_{\frac{5}{2},t}
	 	\ar[d,"L_\sigma", pos=.4] \ar[dl,"L_\tau",  pos=.4]
	 	&[-1cm] \cdots
	 	\\[1.2cm]
	 	\cdots
	 	&[.4cm]
	 	\scF^{\diamond}_{-\frac{5}{2},t}
	 	&\scF^{\diamond}_{-\frac{3}{2},t}
	 	\ar[l,  "W|1"]
	 	&\scF^{\diamond}_{-\frac{1}{2},t}
	 	\ar[r,  "Z|1"']
	 	\ar[l,  "W|1"]
	 	&\scF^{\diamond}_{\frac{1}{2},t}
	 	\ar[r,  "Z|1"']
	 	&\scF^{\diamond}_{\frac{3}{2},t}
	 	\ar[r,  "Z|1"']
	 	&\scF^{\diamond}_{\frac{5}{2},t}
	 	&\cdots 
	 \end{tikzcd}
\end{equation}
 We have $\scJ^{\diamond}_{s,t}=
    \cC^{\diamond}_{t+\frac{1}{2}}$ and $\scM^{\diamond}_{s,t}=
    \cS^{\diamond}$. The maps between $\scE^{\diamond}_{*,*}$ and $\scJ^{\diamond}_{*,*}$ are follows.
    \begin{equation} \label{eq: scE_scJ_actions}
    	 \begin{tikzcd}[labels=description,row sep=2.2cm] \scE^{\diamond}_{*,t}\ar[d, "f^{\pm K}"]
	 	\\  \scJ^{\diamond}_{*,t}
	 \end{tikzcd}
	 \hspace{-.2cm}
	 =
	 \hspace{-.3cm}
         \begin{tikzcd}[ {column sep=2cm,between origins}, row sep=.6cm]
	 	\cdots
	 	&[-1cm]\scE^{\diamond}_{-\frac{5}{2},t}
	 	\ar[d,"{ \tau|1 }"  pos=.4]        
	 	& \scE^{\diamond}_{-\frac{3}{2},t}
	 	\ar[l ,"W|1"']
	 	\ar[d,"{ \tau|1 }",  pos=.4]         
	 	&\scE^{\diamond}_{-\frac{1}{2},t}
	 	\ar[r, bend left,out=35,in=150, "Z|L_Z"]
	 	\ar[l, "W|1"']
	 	\ar[d,"{\begin{array}{l} \sigma|L_Z \\ \tau|1 \end{array}}"xshift=-4pt,  pos=.4]        
	 	&[.6cm] \scE^{\diamond}_{\frac{1}{2},t}
	 	\ar[r, "Z|1"]
	 	\ar[l, out=-160,in=-20,pos=.51,  "W|L_W"'{yshift=1pt}]
	 	\ar[d,"{\begin{array}{l} \sigma|1 \\ \tau|L_W \end{array}}"xshift=-4pt, pos=.4] 
	 	& \scE^{\diamond}_{\frac{3}{2},t}
	 	\ar[r, "Z|1"]
	 	\ar[d,"{ \sigma|1 }",  pos=.4]         
	 	& \scE^{\diamond}_{\frac{5}{2},t}
	 	\ar[d,"{ \sigma|1}", pos=.4] 
	 	&[-1cm] \cdots
	 	\\[1cm]
	 	\cdots
	 	& \scJ^{\diamond}_{-\frac{5}{2},t}
	 	\ar[r,bend left=15, "T|1"]
	 	&\scJ^{\diamond}_{-\frac{3}{2},t}
	 	\ar[r,bend left=15,  "T|1"]
	 	\ar[l,bend left=15, "T^{-1}|1"]
	 	&\scJ^{\diamond}_{-\frac{1}{2},t}
	 	\ar[r,bend left=15, "T|1"]
	 	\ar[l,bend left=15,  "T^{-1}|1"]
	 	&\scJ^{\diamond}_{\frac{1}{2},t}
	 	\ar[r,bend left=15, "T|1"]
	 	\ar[l,bend left=15,  "T^{-1}|1"]
	 	&\scJ^{\diamond}_{\frac{3}{2},t}
	 	\ar[r,bend left=15,  "T|1"]
	 	\ar[l,bend left=15,  "T^{-1}|1"]
	 	&\scJ^{\diamond}_{\frac{5}{2},t}
	 	\ar[l,bend left=15, "T^{-1}|1"]
	 	&\cdots 
	 \end{tikzcd}
    \end{equation}
The rest of the bimodule admits a similar description. We refer the reader to \cite{CZZApp}*{Figures~5.1 and 5.2} for further details. See also \cite[Section 9.3]{CZZ} the description over the full algebras $\cK$ and $\bF[W,Z]$. 

The type $D$-module $\cX_{\lambda}(K)^{\hat\cK}$ is constructed as follows. Fix a basis $B$ of $\cCFK(K)$ from the decomposition in Theorem~\ref{thm:standard_decomp}. We set $\cX_{\lambda}(K)\cdot \ve{I}_0$ to be the $\bF$-span of $B$ and $\cX_{\lambda}(K)\cdot \ve{I}_1$ to be the $\bF$-span of a single generator $p$.  The structure map of $\cX_{\lambda}(K)^{\hat\cK}$ admits a decomposition \[\delta^1 = \delta^1_{\hat{\cR}} + \delta^1_{\sigma,\tau}\]
where $\delta^1_{\hat\cR}$ denotes the terms in $\delta^1$ with algebraic outputs in $\hat\cR$ and $\delta^1_{\sigma,\tau}$ denotes the terms in $\delta^1$ with algebraic outputs a multiple of $\sigma$ or $\tau$. Given $a\in B,$ suppose $\partial a = \sum_{b_k \in B} r_k b_k$ where $r_k \in \hat\cR$. We declare $ \delta^1_{\hat{\cR}} (a) = \sum_{b_k \in B}  b_k \otimes r_k$. We declare $\delta^1_{\sigma,\tau}$ to be zero except for  $a_0$ and $a_m$, the ending and starting generators of the standard complex, respectively. We set 
\begin{equation}\label{eq: CFK_connecting_map}
\delta^1_{\sigma,\tau}(a_0) =  p \otimes T^{\lambda - 2\tau(K)}\tau \qquad \delta^1_{\sigma,\tau}(a_m) =  p \otimes \sigma.
\end{equation}
For a fixed $t$, we write
\[ \cCFK(K)^{\hat\cR}\boxtimes {}_{\hat\cR} \scE^{\diamond}_{*,t} =\bigoplus_{s\in \Z+\frac{1}{2}} E^{\diamond}_{s,t}\]
where the Alexander grading is additive in the first component in the tensor product and $E^{\diamond}_{s,t}$ denotes the Alexander grading $s$ summand. 
We define $F^{\diamond}_{s,t}$, $J^{\diamond}_{s,t}$, and $M^{\diamond}_{s,t}$ analogously. 

The complex $\bX^{\diamond}(P,K,n)_{\bF[Z]}$ can be described as a 2-dimensional hypercube of the form
\begin{equation}
\bX^{\diamond}(P,K,n)_{\bF[Z]}=\begin{tikzcd}[column sep=1.1cm, row sep=1.1cm]
 \bE^{\diamond} \ar[r, "\Phi^{\mu}+\Phi^{-\mu}"] \ar[d, "\Phi^{K}+\Phi^{-K}",swap] & \bF^{\diamond}\ar[d, "\Phi^{K}+\Phi^{-K}"]\\
\bJ^{\diamond} \ar[r, swap,"\Phi^{\mu}+\Phi^{-\mu}"]& \bM^{\diamond}
\end{tikzcd}
\end{equation}
where
\[
\bE^{\diamond}=\bigoplus_{(s,t)\in \bH(P)} E^{\diamond}_{s,t}
\]
and $\bF^{\diamond}, \bJ^{\diamond}$ and $\bM^{\diamond}$ are defined analogously. The maps  between the  hypercube vertices are defined  by $\Phi^{\pm \mu}= \bI_{\cX_{\lambda}(K)} \boxtimes f^{\pm \mu}$ and $\Phi^{\pm K}= \bI_{\cX_{\lambda}(K)} \boxtimes f^{\pm K}$ respectively. We call $\Phi^{\pm \mu}$ and $\Phi^{\pm K}$ the \emph{length-$1$ differentials} and call the internal differentials in each complex $\bE^{\diamond}, \bF^{\diamond}, \bJ^{\diamond}$ and $\bM^{\diamond}$ the \emph{length-$0$ differentials}. The model has no diagonal maps from $\bE^{\diamond}$ to $\bM^{\diamond}$.

\subsection{A diagrammatic description of $\bX(P,K,\lambda)$} \label{subsec: diagrammatic}
We give a diagrammatic description of $\bX(P,K,\lambda)$. This is inspired by the filtrations defined in \cite{HeddenLevineSurgery}.

Fix a $t\in \mathbb{Z}+\dfrac{\ell}{2}$. Recall from \cite[Section 9]{CZZ}, we have the decomposition

\[
\cCFK(K)^{\bF[W,Z]}\boxtimes {}_{\bF[W,Z]} \scE_t^{\bF[W,Z]}=\bigoplus_{s\in \Z+1/2} E_{s,t},
\]
where $E_{s,t}$ is the subcomplex of the above tensor product where the first coordinate of the Alexander grading (corresponding to $K$) takes value $s$. The $DA$-bimodule ${}_{\bF[W,Z]} \scE_t^{\bF[W,Z]}$
is specified by a diagram of the following form:
\[
\scE_{*,t}=\begin{tikzcd}[labels=description, column sep=1cm, row sep=0cm]
	\cdots
	&[-1cm]
	\scE_{-\frac{5}{2},t}
	\ar[r, bend left,out=45,in=135, "Z|U"]
	& \scE_{-\frac{3}{2},t}
	\ar[r, bend left,out=45,in=135, "Z|U"]
	\ar[l, bend left,out=45,in=135, "W|1"]
	&\scE_{-\frac{1}{2},t}
	\ar[r, bend left,out=45,in=135, "Z|L_Z"]
	\ar[l, bend left,out=45,in=135, "W|1"]
	\ar[loop above, looseness=15, "{(W,Z)|h_{W,Z}}"]
	&[.6cm]\scE_{\frac{1}{2},t}
	\ar[r, bend left,out=45,in=135, "Z|1"]
	\ar[l, bend left,out=45,in=135, "W|L_W"]
	\ar[loop above, looseness=15, "{(Z,W)|h_{Z,W}}"]
	& \scE_{\frac{3}{2},t}
	\ar[r, bend left,out=45,in=135, "Z|1"]
	\ar[l, bend left,out=45,in=135, "W|U"]
	& \scE_{\frac{5}{2},t}
	\ar[l, bend left,out=45,in=135, "W|U"] 
	&[-1cm] \cdots
\end{tikzcd},
\]
and $\scE_{s,t}$ denotes the type-$D$ module over $R$,
\begin{equation}
	\scE_{s,t}=\begin{cases} \cC_{t-\frac{1}{2}} & \text{ if } s<0\\
		\cC_{t+\frac{1}{2}} & \text{ if } s>0
	\end{cases}
\end{equation}
We view $\scE_{s,t}$ as having Alexander grading in $(s,t,*)$.

A diagrammatic way to perform this box tensor product is as follows.
Consider a complex $\CFKi(K,s+\frac{1}{2})$ over $\bF[U]$. (Here, $s\in 1/2+\Z$, as in the preceding paragraphs). Its underlying module consists of elements of the form $[a,i,j]$, where $a$ is a generator of $\cCFK(K)^{\bF[W,Z]}$, $i,j\in \bZ$ satisfying
\[ A(a) + i-j   = s+\frac{1}{2}.\] 
The $U$-action is
\[ U[a,i,j] = [a,i-1,j-1]. \]
The differential is given as follows: for each term $b \otimes W^p Z^q$ in $\delta^1(a)$ in $\cCFK(K)^{\bF[W,Z]}$, we declare $[b,i-p,j-q]$ to be a term in $\partial[a,i,j]$.

To get $E_{s,t}$, place  $\CFKi(K,s+\frac{1}{2})$ in a $\mathbb{Z}\times \mathbb{Z}$ grid, with index $(i,j)$.
Color the upper half $\{i\ge j\}$ of the grid by gray, and the bottom half $\{i< j\}$ by white. 

First, identify the elements
$[a,i,j]$ in the same  $U$-translation orbit, and replace it by $a|\scE_{i-j-\frac{1}{2},t}.$ 
As a result, an orbit 
in the gray region is replaced  by the staircase $\cC_{t-\frac{1}{2}}$, and an orbit in the white region by the staircase $\cC_{t+\frac{1}{2}}$. 

Then replace the arrows by maps between staircases as follows: a horizontal arrow in the gray region is replaced by a map with coefficient $1$ (i.e. it sends each generator to its corresponding generator with coefficient $1$), and a vertical arrow of length $n$ in the gray region is replaced by a map with coefficient $U^n$. Symmetrically, a horizontal arrow of length $n$ in the white region is replaced by a map with coefficient $U^n$, and a vertical one is replaced by a map with coefficient $1$. If an arrow passes from the gray region to the white one, replace it by $L_Z$. Conversely, if it passes from the white region to the gray region, replace it by $L_W$. If there are two adjacent arrows crossing from one region to the other and then back, add a diagonal arrow with weight $h_{W,Z}$ or $h_{Z,W}$ depending on the order of the two arrows. 

See Figure \ref{fig:E for RHT input} and \ref{fig:E for box input} for an example when the input is the right hand trefoil and the length-$1$ box complex respectively. 

\begin{figure}[h!]
	\begin{tikzpicture}[scale=0.8]
		\begin{scope}[on background layer]
			\foreach \j in {-6,0,6}
			{\fill[gray!20] (\j+-2,1.5) rectangle (\j+1.5,0.5);
				\fill[gray!20] (\j+-2,2.5) rectangle (\j+2.5,1.5);
				\fill[gray!20] (\j+-2,3) rectangle (\j+3,2.5);
				\fill[gray!20] (\j+-2,0.5) rectangle (\j+0.5,-0.5);
				\fill[gray!20] (\j+-2,-0.5) rectangle (\j+-0.5,-1.5);
				\fill[gray!20] (\j+-2,-1.5) rectangle (\j+-1.5,-2);
				\draw (\j+0.5,0.5) to (\j+1.5,0.5);
			} 
			\foreach \j in {-6,0,6}
			\foreach \i in {-1,...,3}
			{\draw[thin, black!20!white]  (\j+\i-0.5, 3) -- (\j+\i-0.5, -2);
				\draw[thin, black!20!white]  (\j+3, \i-0.5) -- (\j-2, \i-0.5); }
		\end{scope}      
	
	    \foreach \j in {-6}
	    {\filldraw (\j+1, 2) circle (2pt) node[] {};
	    	\filldraw (\j, 2) circle (2pt) node[] {};
	    	\filldraw (\j+1, 1) circle (2pt) node[] {};
	    	\draw [-stealth] (\j+1, 2) -- (\j+0.1, 2);
	    	\draw [-stealth] (\j+1, 2) -- (\j+1, 1.1);
	    }
		\foreach \j in {0}
		{\filldraw (\j+1, 1) circle (2pt) node[] {};
			\filldraw (\j, 1) circle (2pt) node[] {};
			\filldraw (\j+1, 0) circle (2pt) node[] {};
			\draw [-stealth] (\j+1, 1) -- (\j+0.1, 1);
			\draw [-stealth] (\j+1, 1) -- (\j+1, 0.1);
		}
		\foreach \j in {6}
		{\filldraw (\j+1, 0) circle (2pt) node[] {};
			\filldraw (\j, 0) circle (2pt) node[] {};
			\filldraw (\j+1, -1) circle (2pt) node[] {};
			\draw [-stealth] (\j+1, 0) -- (\j+0.1, 0);
			\draw [-stealth] (\j+1, 0) -- (\j+1, -0.9);
		}
		
		\node   at (-6.8,2) {{\small $a|\cC_{-\frac{1}{2}}$}};
		\node   at (-4.15,2) {{\small $b|\cC_{-\frac{1}{2}}$}};
		\node   at (-4.15,1) {{\small $c|\cC_{-\frac{1}{2}}$}};
		\node   at (-4.7,1.5) {{\small $U$}};
		\node   at (-5.4,2.2) {{\small $1$}};
		
		\node   at (-0.8,1) {{\small $a|\cC_{-\frac{1}{2}}$}};
		\node   at (1.7,1) {{\small $b|\cC_{-\frac{1}{2}}$}};
		\node   at (1.6,0) {{\small $c|\cC_{\frac{1}{2}}$}};
		\node   at (0.6,1.2) {{\small $1$}};
		\node   at (1.3,0.5) {{\small $L_Z$}};
		
		\node   at (5.2,0) {{\small $a|\cC_{-\frac{1}{2}}$}};
		\node   at (7.6,-0.1) {{\small $b|\cC_{\frac{1}{2}}$}};
		\node   at (7.6,-1.1) {{\small $c|\cC_{\frac{1}{2}}$}};
		\node   at (6.5,0.3) {{\small $L_W$}};
		\node   at (7.2,-0.5) {{\small $1$}};
		
		\node   at (-6,-2.5) {{$E_{-\frac{3}{2},0}$}};
		\node   at (0,-2.5) {{$E_{-\frac{1}{2},0}$}};
		\node   at (6,-2.5) {{$E_{\frac{1}{2},0}$}};
		\foreach \j in {-6,0,6}
		{
		\node at (\j-1,3.2){{\tiny$-2$}};
		\node at (\j,3.2){{\tiny$-1$}};
		\node at (\j+1,3.2){{\tiny$0$}};
		\node at (\j+2,3.2){{\tiny$1$}};
		\node at (\j-2.1,3.3){{\tiny$i$}};
		\node at (\j-2.5,3){{\tiny$j$}};
		\node at (\j-2.5,2){{\tiny$1$}};
		\node at (\j-2.5,1){{\tiny$0$}};
		\node at (\j-2.5,0){{\tiny$-1$}};
		\node at (\j-2.5,-1){{\tiny$-2$}};
		} 
	\end{tikzpicture}      
\caption{Example of $E_{s,t}$, when $K=T_{2,3}$, and $t=0$. We choose a representative for each  $U$-translation orbit of $[a,i,j]$.}
\label{fig:E for RHT input}
\end{figure}
\begin{figure}[h!]
	\begin{tikzpicture}[scale=0.8]
		\begin{scope}[on background layer]
			\foreach \j in {-6,0,6}
			{\fill[gray!20] (\j+-2,1.5) rectangle (\j+1.5,0.5);
				\fill[gray!20] (\j+-2,2.5) rectangle (\j+2.5,1.5);
				\fill[gray!20] (\j+-2,3) rectangle (\j+3,2.5);
				\fill[gray!20] (\j+-2,0.5) rectangle (\j+0.5,-0.5);
				\fill[gray!20] (\j+-2,-0.5) rectangle (\j+-0.5,-1.5);
				\fill[gray!20] (\j+-2,-1.5) rectangle (\j+-1.5,-2);
				\draw (\j+0.5,0.5) to (\j+1.5,0.5);
			} 
			\foreach \j in {-6,0,6}
			\foreach \i in {-1,...,3}
			{\draw[thin, black!20!white]  (\j+\i-0.5, 3) -- (\j+\i-0.5, -2);
				\draw[thin, black!20!white]  (\j+3, \i-0.5) -- (\j-2, \i-0.5); }
		\end{scope}      
		
		\foreach \j in {-6}
		{\filldraw (\j+1, 2) circle (2pt) node[] {};
			\filldraw (\j, 2) circle (2pt) node[] {};
			\filldraw (\j+1, 1) circle (2pt) node[] {};
			\filldraw (\j, 1) circle (2pt) node[] {};
			\draw [-stealth] (\j+1, 2) -- (\j+0.1, 2);
			\draw [-stealth] (\j, 2) -- (\j, 1.1);
			\draw [-stealth] (\j+1, 2) -- (\j+1, 1.1);
			\draw [-stealth] (\j+1, 1) -- (\j+0.1, 1);
		}
		\foreach \j in {0}
		{\filldraw (\j+1, 1) circle (2pt) node[] {};
			\filldraw (\j, 1) circle (2pt) node[] {};
			\filldraw (\j+1, 0) circle (2pt) node[] {};
			\filldraw (\j, 0) circle (2pt) node[] {};
			\draw [-stealth] (\j+1, 1) -- (\j+0.1, 1);
			\draw [-stealth] (\j+1, 1) -- (\j+1, 0.1);
			\draw [-stealth] (\j+1, 0) -- (\j+0.1, 0);
			\draw [-stealth] (\j, 1) -- (\j, 0.1);
			\draw [-stealth] (\j+1, 1) -- (\j+0.1, 0.1);
		}
		\foreach \j in {6}
		{\filldraw (\j+1, 0) circle (2pt) node[] {};
			\filldraw (\j, 0) circle (2pt) node[] {};
			\filldraw (\j+1, -1) circle (2pt) node[] {};
			\filldraw (\j, -1) circle (2pt) node[] {};
			\draw [-stealth] (\j+1, 0) -- (\j+0.1, 0);
			\draw [-stealth] (\j+1, 0) -- (\j+1, -0.9);
			\draw [-stealth] (\j+1, -1) -- (\j+0.1, -1);
			\draw [-stealth] (\j, 0) -- (\j,-0.9);
				\draw [-stealth] (\j+1, 0) -- (\j+0.1, -0.9);
		}
		
		\node   at (-6.8,2) {{\small $a|\cC_{-\frac{1}{2}}$}};
		\node   at (-4.15,2) {{\small $b|\cC_{-\frac{1}{2}}$}};
		\node   at (-4.15,1) {{\small $c|\cC_{-\frac{1}{2}}$}};
		\node   at (-6.8,1) {{\small $d|\cC_{-\frac{1}{2}}$}};
		\node   at (-4.7,1.5) {{\small $U$}};
		\node   at (-6.3,1.5) {{\small $U$}};
		\node   at (-5.4,2.2) {{\small $1$}};
		\node   at (-5.4,0.7) {{\small $1$}};
		
		\node   at (-0.8,1) {{\small $a|\cC_{-\frac{1}{2}}$}};
		\node   at (1.7,1) {{\small $b|\cC_{-\frac{1}{2}}$}};
		\node   at (1.6,0) {{\small $c|\cC_{\frac{1}{2}}$}};
		\node   at (-0.8,0) {{\small $d|\cC_{-\frac{1}{2}}$}};
		\node   at (0.6,1.2) {{\small $1$}};
		\node   at (1.3,0.5) {{\small $L_Z$}};
		\node   at (-0.3,0.5) {{\small $U$}};
		\node   at (0.6,-0.3) {{\small $L_W$}};
		\node   at (0.45,0.5) {{\tiny $h_{Z,W}$}};

		\node   at (5.2,0) {{\small $a|\cC_{-\frac{1}{2}}$}};
		\node   at (5.1,-1) {{\small $d|\cC_{\frac{1}{2}}$}};
		\node   at (7.6,-0.1) {{\small $b|\cC_{\frac{1}{2}}$}};
		\node   at (7.6,-1.1) {{\small $c|\cC_{\frac{1}{2}}$}};
		\node   at (6.5,0.3) {{\small $L_W$}};
		\node   at (5.7,-0.5) {{\small $L_Z$}};
		\node   at (7.2,-0.5) {{\small $1$}};
		\node   at (6.5,-1.3) {{\small $U$}};
		\node   at (6.45,-0.5) {{\tiny $h_{W,Z}$}};
		
		\node   at (-6,-2.5) {{$E_{-\frac{3}{2},0}$}};
		\node   at (0,-2.5) {{$E_{-\frac{1}{2},0}$}};
		\node   at (6,-2.5) {{$E_{\frac{1}{2},0}$}};
		\foreach \j in {-6,0,6}
		{
			\node at (\j-1,3.2){{\tiny$-2$}};
			\node at (\j,3.2){{\tiny$-1$}};
			\node at (\j+1,3.2){{\tiny$0$}};
			\node at (\j+2,3.2){{\tiny$1$}};
			\node at (\j-2.1,3.3){{\tiny$i$}};
			\node at (\j-2.5,3){{\tiny$j$}};
			\node at (\j-2.5,2){{\tiny$1$}};
			\node at (\j-2.5,1){{\tiny$0$}};
			\node at (\j-2.5,0){{\tiny$-1$}};
			\node at (\j-2.5,-1){{\tiny$-2$}};
		} 
	\end{tikzpicture}      
	\caption{Example of $E_{s,t}$, when the input is the length-$1$ box complex and $t=0$.}
	\label{fig:E for box input}
\end{figure}
Similarly, for $F_{s,t}$,  we have the decomposition  \[\cCFK(K)^{\bF[W,Z]}\boxtimes {}_{\bF[W,Z]}\scF_{*,t}^{\bF[W,Z]} =\bigoplus_{s\in \Z+1/2} F_{s,t},\] where ${}_{\bF[W,Z]}\scF_{*,t}^{\bF[W,Z]} $ is the $DA$ bimodule given by the following diagram:
\[
\scF_{*,t}=\begin{tikzcd}[labels=description, column sep=.8cm, row sep=0cm]
	\cdots
	&[-.8cm] \scF_{-\frac{5}{2}, t}
	\ar[r,bend left,out=45,in=135, "Z|U"]
	\ar[loop below,looseness=20, "U|U"]
	& \scF_{-\frac{3}{2},t}
	\ar[r, bend left,out=45,in=135, "Z|U"]
	\ar[l, bend left,out=45,in=135, "W|1"]
	\ar[loop below,looseness=20, "U|U"]
	&\scF_{-\frac{1}{2},t}
	\ar[r, bend left,out=45,in=135, "Z|1"]
	\ar[l, bend left,out=45,in=135, "W|1"]
	\ar[loop below,looseness=20, "U|U"]
	&\scF_{\frac{1}{2},t}
	\ar[r, bend left,out=45,in=135, "Z|1"]
	\ar[l, bend left,out=45,in=135, "W|U"]
	\ar[loop below,looseness=20, "U|U"]
	&\scF_{\frac{3}{2},t}
	\ar[l, bend left,out=45,in=135, "W|U"]
	\ar[loop below,looseness=20, "U|U"]
	&[-.8cm]\cdots 
\end{tikzcd}
\]
In the above, we set
\[
\scF_{s,t}=\cS
\]
for all $s,t$.

A diagrammatic way to compute $F_{s,t}$ is as follows. Divide a $\mathbb{Z}\times \mathbb{Z}$ grid into strips of the form 
\[\max\left\{i,j\right\}=k, \text{ for }k\in \mathbb{Z},\]
 and draw a wall between each adjacent regions.

 To get $F_{s,t}$, place $\CFKi(K,s+\frac{1}{2})$ in the $\mathbb{Z}\times \mathbb{Z}$ grid.
 Similar as before, identify the elements
$[a,i,j]$ in the same  $U$-translation orbit, and replace it by $a|\scF_{i-j-\frac{1}{2},t}=\cS.$

Then, replace an arrow by a map with coefficient $U^n$ if it crosses the walls $n$ times. In particular, if it stays in a region $\max\left\{i,j\right\}=k$ for a fixed $k$, then replace it by a map with coefficient $1$.

\begin{figure}[h!]
	\begin{tikzpicture}[scale=0.8]
		\begin{scope}[on background layer]
			
			\foreach \j in {-6,0,6}
			\foreach \i in {-1,...,3}
			{\draw[thin, black!20!white]  (\j+\i-0.5, 3) -- (\j+\i-0.5, -2);
				\draw[thin, black!20!white]  (\j+3, \i-0.5) -- (\j-2, \i-0.5); }
			\foreach \j in {-6,0,6}
		{\draw[thin, black!60!white] (\j-2, 2.5) -- (\j+2.5, 2.5);
			\draw[thin, black!60!white] (\j+2.5, 2.5) -- (\j+2.5, -2);
			\draw[thin, black!60!white] (\j+1.5, 1.5) -- (\j+1.5, -2);
			\draw[thin, black!60!white]  (\j+0.5, 0.5) -- (\j+0.5, -2);
			\draw[thin, black!60!white]  (\j+1.5, 1.5) -- (\j-2, 1.5);
			\draw[thin, black!60!white] (\j+0.5, 0.5) -- (\j-2, 0.5);
			\draw[thin, black!60!white] (\j-2, -0.5) -- (\j-0.5, -0.5);
			\draw[thin, black!60!white] (\j-0.5, -0.5) -- (\j-0.5, -2);
			\draw[thin, black!60!white] (\j-2, -1.5) -- (\j-1.5, -1.5);
			\draw[thin, black!60!white] (\j-1.5, -1.5) -- (\j-1.5, -2);}
		\end{scope}      
		
		\foreach \j in {-6}
		{\filldraw (\j+1, 2) circle (2pt) node[] {};
			\filldraw (\j, 2) circle (2pt) node[] {};
			\filldraw (\j+1, 1) circle (2pt) node[] {};
			\draw [-stealth] (\j+1, 2) -- (\j+0.1, 2);
			\draw [-stealth] (\j+1, 2) -- (\j+1, 1.1);
		}
		\foreach \j in {0}
		{\filldraw (\j+1, 1) circle (2pt) node[] {};
			\filldraw (\j, 1) circle (2pt) node[] {};
			\filldraw (\j+1, 0) circle (2pt) node[] {};
			\draw [-stealth] (\j+1, 1) -- (\j+0.1, 1);
			\draw [-stealth] (\j+1, 1) -- (\j+1, 0.1);
		}
		\foreach \j in {6}
		{\filldraw (\j+1, 0) circle (2pt) node[] {};
			\filldraw (\j, 0) circle (2pt) node[] {};
			\filldraw (\j+1, -1) circle (2pt) node[] {};
			\draw [-stealth] (\j+1, 0) -- (\j+0.1, 0);
			\draw [-stealth] (\j+1, 0) -- (\j+1, -0.9);
		}
		
		\node   at (-6.4,2) {{\small $a|\cS$}};
		\node   at (-4.5,2) {{\small $b|\cS$}};
		\node   at (-4.95,0.5) {{\small $c|\cS$}};
		\node   at (-4.7,1.5) {{\small $U$}};
		\node   at (-5.4,2.2) {{\small $1$}};
		
		\node   at (-0.4,1) {{\small $a|\cS$}};
		\node   at (1.4,1) {{\small $b|\cS$}};
		\node   at (1.05,-0.5) {{\small $c|\cS$}};
		\node   at (0.6,1.2) {{\small $1$}};
		\node   at (1.3,0.5) {{\small $1$}};
		
		\node   at (5.6,0) {{\small $a|\cS$}};
		\node   at (7.4,0) {{\small $b|\cS$}};
		\node   at (7.05,-1.5) {{\small $c|\cS$}};
		\node   at (6.5,0.3) {{\small $U$}};
		\node   at (7.2,-0.5) {{\small $1$}};
		
		\node   at (-6,-2.5) {{$F_{-\frac{3}{2},0}$}};
		\node   at (0,-2.5) {{$F_{-\frac{1}{2},0}$}};
		\node   at (6,-2.5) {{$F_{\frac{1}{2},0}$}};
		\foreach \j in {-6,0,6}
		{
			\node at (\j-1,3.2){{\tiny$-2$}};
			\node at (\j,3.2){{\tiny$-1$}};
			\node at (\j+1,3.2){{\tiny$0$}};
			\node at (\j+2,3.2){{\tiny$1$}};
			\node at (\j-2.1,3.3){{\tiny$i$}};
			\node at (\j-2.5,3){{\tiny$j$}};
			\node at (\j-2.5,2){{\tiny$1$}};
			\node at (\j-2.5,1){{\tiny$0$}};
			\node at (\j-2.5,0){{\tiny$-1$}};
			\node at (\j-2.5,-1){{\tiny$-2$}};
		} 
	\end{tikzpicture}      
	\caption{Example of $F_{s,t}$, when $K=T_{2,3}$.}
\end{figure}

\section{Proof of the main bound}\label{sec: main_bound}
In this section, we prove our main result:
\TorOrd*

Since $K$ is a nontrivial knot, we have
  $\Ord(K)=n>0$. It follows that there is a $Z^n$-arrow connecting two generators $a_1$ and $a_0$ in either the standard complex or one of the local system complexes in the decomposition in Theorem~\ref{thm:standard_decomp}. We divide the proof into the following two cases:
  \begin{enumerate}[label=($A$-\arabic*), ref=$A$-\arabic*] 
    \item\label{case:arrow-1} The $Z^n$-weighted arrow connecting $a_0$ and $a_1$ is not the last arrow in the standard complex for $\cCFK(K)$. We call such an arrow a \emph{non-ending arrow}.
  \item\label{case:arrow-2} The $Z^n$-weighted arrow connecting $a_0$ and $a_1$ is the last arrow in the standard complex for $\cCFK(K)$. We call such an arrow an \emph{ending arrow}. 
  \end{enumerate}
In both cases, we study the form that the complex $\cCFK(P(K,\lambda))_{\bF[Z]}$ takes with respect to the tensor product description in Equation~\eqref{eq: CFK_PK_Z}. 

 In the first case, there are adjacent generators in the standard complex or local system which are connected to $a_1$ and $a_0$ by arrows weighted by powers of $W$. In the second case, one of these generators will be connected by an arrow with a power of $W$, while the other generator will admit a differential in the module $\cX_{\lambda}(K)^{\hat{\cK}}$ which is weighted by a multiple of $\sigma$ or $\tau$.

 \subsection{Non-ending arrows} \label{subsec: case_non_ending}
 
 In this section, we consider case~\eqref{case:arrow-1}, where neither generator $a_0$ or $a_1$ is the final generator in the standard complex of $\cCFK(K)$.

We now divide non-ending arrows into four special classes, depending on the configurations of adjacent arrows. We will consider each class separately. In each case, our strategy will be to partially compute $\cCFK(P(K,\lambda))_{\bF[Z]}$ following the framework outlined in Section~\ref{subsec: complex_Pk}, and then apply Lemma~\ref{lem: torsion_order_criterion} to obtain a bound on the torsion order.

 \begin{define}
 \label{def:non-ending-types}
Suppose that $\cCFK(K)^{\hat{\cR}}$ is a knot Floer complex, which is decomposed as a direct sum of a standard complex $\cC$, and local system complexes $\cL_1,\dots, \cL_n$. Suppose that $Z^n$ is a non-ending arrow of $\cCFK(K)^{\hat{\cR}}$. There are four consecutive generators $a'_0, a_{0} ,a_1 , a'_1$ (the indices  do not correspond to the position in the standard complex) and a pair of nonzero integers  $(\eta_0,\eta_1)$ such that there is an arrow weighted by $Z^n$ from $a_1$ to $a_0$,  an arrow weighted by $W^{|\eta_i|}$ from $a'_{i} $ to $ a_{i}$ if $\eta_i>0 $ or from $a_{i} $ to $ a'_{i}$ if $\eta_i<0$ 
and there are no other arrows entering or leaving $a_i, i=0,1.$  In particular, the $\bF[Z]$-span of $\{ a_{0}, a_1\}$ forms a direct summand in $\cCFK(K)/(W)$ and the $\bF[W]$-span of $\{ a_{i}, a'_i\}$ for $i=0,1$ forms a direct summand in $\cCFK(K)/(Z)$.  
We call such an arrow a \emph{non-ending $Z^n$-arrow of type $(\sigma_0,\sigma_1)$}, where $\sigma_i = \operatorname{sgn}(\eta_i)$ for $i=0,1$. See Figure~\ref{fig: non_ending_arrow_cases} for an illustration.
\end{define}
\begin{figure}[hbtp!]\captionsetup{width=\textwidth}
\begin{tikzpicture}
 \node at (-6,1.4) {{\tiny$(+,+)$}};
    \node at (-6,0) [inner sep=0pt] {
\begin{tikzcd}[column sep=1 cm, row sep=0 cm]
 a_1 \ar[d, "Z^n"]   &  {\color{green!70!black} a'_1} \ar[l, color = green!70!black, "W^{\eta_1}"]
\\[1 cm]
 a_0  &  {\color{green!70!black} a'_0} \ar[l, color = green!70!black, "W^{\eta_0}"']
\end{tikzcd}
};
 \node at (-2,1.4) {{\tiny$(-,+)$}};
\node at (-2,0) [inner sep=0pt] {
\begin{tikzcd}[column sep=1 cm, row sep=1 cm]
& a_1 \ar[d, "Z^n"]  &  {\color{green!70!black} a'_1} \ar[l, color = green!70!black, "W^{\eta_1}"]
\\
{\color{green!70!black} a'_0}  & a_0  \ar[l, color = green!70!black, "W^{-\eta_0}"']  &  
\end{tikzcd}
};
 \node at (2.5,1.4) {{\tiny$(+,-)$}};
\node at (2.5,0) [inner sep=0pt] {
\begin{tikzcd}[column sep=1 cm, row sep=1 cm]
{\color{green!70!black} a'_1}  
& a_1 \ar[d, "Z^n"] \ar[l, color = green!70!black, "W^{-\eta_1}"]  &  
\\
  & a_0   &  {\color{green!70!black} a'_0} \ar[l, color = green!70!black, "W^{\eta_0}"']
\end{tikzcd}
};
 \node at (6.5,1.4) {{\tiny$(-,-)$}};
\node at (6.5,0) [inner sep=0pt] {
\begin{tikzcd}[column sep=1 cm, row sep=1 cm]
{\color{green!70!black} a'_1}  
& a_1 \ar[d, "Z^n"] \ar[l, color = green!70!black, "W^{-\eta_1}"]  
\\
{\color{green!70!black} a'_0}  & a_0  \ar[l, color = green!70!black, "W^{-\eta_0}"']  
\end{tikzcd}
};
\end{tikzpicture}
  \caption{The label above each figure indicates the sign of the pair $(\eta_0,\eta_1)$. There may be additional $Z$-weighted arrows to or from $a_1'$ and $a_0'$, which are not shown.}
    \label{fig: non_ending_arrow_cases}
\end{figure}
\begin{rem}
The sign of $\eta_i$  matches the corresponding case in \cite{HLPUnknotting}.
    For example, the type $(-,-)$ corresponds to $\eta^{--}_n$ in \cite[Figure 3]{HLPUnknotting}. 
\end{rem}

We are now ready to state our result regarding $\Ord(P(K, \lambda))$,  in the four cases depicted in Figure \ref{fig: non_ending_arrow_cases}.
As it turns out,  the result does not depend on the framing $\lambda,$ therefore in the following proposition we suppress it from the notation and write $P(K)$ for simplicity.

\begin{prop}\label{prop: non_ending_general_case}
    Suppose there is a non-ending $Z^n$-arrow  in $\cCFK(K)$ for some positive integer and $P$ is an L-space satellite pattern.  
    \begin{enumerate}
        \item \label{it: non_ending_general_case_1} Type $(+,+)$: $\Ord(P(K)) \geq \ell n + \theta$;
        \item \label{it: non_ending_general_case_2} Type $(-,+)$: $\Ord(P(K)) \geq \ell (n-1) + \theta$;
         \item \label{it: non_ending_general_case_3} Type $(+,-)$: $\Ord(P(K)) \geq \ell n + \max\{\kappa,\theta\}$;
         \item \label{it: non_ending_general_case_4} Type $(-,-)$ and $n>1$: $\Ord(P(K)) \geq \ell (n-1) + \max\{\kappa,\theta\}$
    \end{enumerate}
    where $\theta = R_{\frac{\ell}{2}} - g_3(P) - \frac{\ell}{2}$ and $\kappa = R_{\frac{\ell}{2}-1} - g_3(P) + \frac{\ell}{2}.$ 
\end{prop}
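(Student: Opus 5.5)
The plan is to localize the computation of $\bX^{\diamond}(P,K,\lambda)_{\bF[Z]}$ near the four generators $a_0',a_0,a_1,a_1'$ and then invoke Lemma~\ref{lem: torsion_order_criterion}. Since the $\bF[Z]$-span of $\{a_0,a_1\}$ is a summand of $\cCFK(K)/(W)$ and each pair $\{a_i,a_i'\}$ spans a summand of $\cCFK(K)/(Z)$, the portion of the hypercube built from tensoring these generators with $\scE^{\diamond},\scF^{\diamond},\scJ^{\diamond},\scM^{\diamond}$ can be split off. More precisely, after a filtration argument it is a subquotient whose torsion order bounds $\Ord(P(K))$ from below.

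I would fix the Alexander grading $t$ near $\ell/2$ and use the diagrammatic description of Section~\ref{subsec: diagrammatic}. In the grid for $\CFKi(K,s+\tfrac12)$, the $Z^n$ arrow from $a_1$ to $a_0$ is vertical of length $n$. I would choose $s$ so that this arrow crosses the diagonal from the gray region into the white region. The arrow then becomes a composite involving $L_Z$ and $U$-powers on the staircases $\cC_{\ell/2-1}$ and $\cC_{\ell/2}$. The horizontal $W^{|\eta_i|}$-arrows to $a_i'$ become coefficient-$1$ maps in the gray region or $L_W$ or $U$-powers, depending on the signs $\sigma_i$. After setting $W=0$ and passing to $\cX^{\diamond}$, only the free generators $\xs_0^{\ell/2-1},\xs_0^{\ell/2}$ and $\xs'_0$ matter to leading order.

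By Lemma~\ref{lem: C_S_structure_maps}, these generators are governed by the exponents $\omega$, $\theta$, $\kappa$ and $\ell+\theta$ appearing in $L_W$, $L_\sigma$ and $L_\tau$. Taking $A$ to be the $\bE^{\diamond}$ (or $\bE^{\diamond}\oplus\bJ^{\diamond}$) part and $B$ the rest of the mapping cone, I would produce a cycle $a$, built from $a_1$ tensored with $\xs_0$. Its $Z$-torsion order in $H_*(A)$ should be roughly $\ell n$: summing the $Z^n$ arrow over the $n$ diagonal crossings, each crossing contributes about $\ell$ through the $T$-shift structure of $\sigma,\tau$.

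For the types with $\sigma_1=-$, the arrow out of $a_1$ lets one recover the extra $\kappa$ term. When $\sigma_0=-$, one crossing is lost, which gives $\ell(n-1)$. I then need to bound the annihilating power $j$ of $f(a)$ in $H_*(B)$, namely the $\scF^{\diamond}/\scM^{\diamond}$ images under $\Phi^{\pm\mu}$, using $L_\sigma(\xs_0^{\ell/2})=\xs_0' Z^\theta$ and $L_\tau(\xs_0^{\ell/2})=\xs_0'Z^{\ell+\theta}$. Once that is done, Lemma~\ref{lem: torsion_order_criterion} gives $\Ord\ge n_A-j$, and this should match the four stated bounds. In type $(-,-)$, the hypothesis $n>1$ is used because with $n=1$ no diagonal crossing survives.

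The main obstacle is the bookkeeping in the middle step. One must identify the precise homology class and show that $Z^{N-1}[a]\neq 0$ in $H_*(A)$ for $N=\ell n+\theta$ and its analogues. This requires checking that no other differential (the $h_{W,Z}$ diagonals and the $L_Z$ maps of Lemma~\ref{lem: C_S_structure_maps}, which vanish or carry shifted powers when $\omega\le1$ or $\Xi\le 0$) kills $Z^{N-1}[a]$ earlier. I would first work out the $(+,+)$ case carefully by drawing the grids, then treat the other three cases as modifications obtained by reflecting the $W$-arrows.
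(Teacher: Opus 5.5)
Your proposal has a genuine gap in the middle step. The cone decomposition you feed into Lemma~\ref{lem: torsion_order_criterion} does not satisfy its hypotheses, and the factor $\ell n$ does not arise where you are looking for it.

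\textbf{The splitting fails.} With $A=\bE^{\diamond}$ (or $\bE^{\diamond}\oplus\bJ^{\diamond}$) and $B$ the remaining vertices, $H_*(B)$ is in general not torsion. $\bF^{\diamond}$ and $\bM^{\diamond}$ are built from copies of $\cS^{\diamond}$, whose free towers survive in homology; for $K$ the unknot, $\bF^{\diamond}\to\bM^{\diamond}$ is essentially a hat surgery cone tensored with $\cS^{\diamond}$.

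\textbf{The factor $\ell$ lives in the connecting map.} The contribution of $\ell$ per unit of $n$ comes from comparing $L_\sigma(\xs^{\ell/2}_0)=\xs'_0Z^{\theta}$ with $L_\tau(\xs^{\ell/2}_0)=\xs'_0Z^{\ell+\theta}$, i.e.\ from $\Phi^{\pm\mu}$. In your splitting that is the connecting map $f$, not part of $A$. Inside $\bE^{\diamond}$, the $Z^n$-arrow of $K$ becomes $1$, $L_Z$ or $0$ according to the sign of $s$, and the $W$-arrows become $1$, $L_W$ or $0$. Moreover $\Phi^{\pm K}$ vanishes on $a_0|\cdot$ and $a_1|\cdot$, since neither is an endpoint of the standard complex. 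So nothing in $A$ produces a class of order about $\ell n$, and the diagonal crossing of the $Z^n$-arrow inside a single $E_{s,t}$ is not the relevant feature.

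\textbf{The reduction principle is invalid.} A ``subquotient whose torsion order bounds $\Ord(P(K))$'' is not a valid principle. You need a genuine direct summand, or the exact hypotheses of Lemma~\ref{lem: torsion_order_criterion}.

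What is missing is the following.
\begin{enumerate}
\item Split instead as $\bX^{\diamond}=\Cone(\bX^{\free}\to\bX^{\Tor})$, along the free and torsion summands of $\cC^{\diamond}_t$ and $\cS^{\diamond}$. Then $H_*(B)$ is automatically torsion.
\item Build the class in $\bF^{\diamond}$ at $t=\frac{\ell-1}{2}$ from the sink $a_0$, not from $a_1$. Take $\alpha_0=a_0|x'_{-\frac12}$ and
\[
\alpha_1=a_0\Big|\sum_{i=1}^n x_{i-\frac12}Z^{\ell(n-i)}+a_1|x'_{-\frac12}Z^{\theta}.
\]
The $Z^n$-arrow enters only as the coefficient-$1$ length-$0$ differential $a_1|x'_{-\frac12}\mapsto a_0|x'_{n-\frac12}$ in $\bF^{\diamond}$. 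The zigzag through $a_0|x_{i-\frac12}$, $i=1,\dots,n$, then telescopes to $\partial\alpha_1=\alpha_0Z^{\ell n+\theta}$.
\item Perform a change of basis using Lemma~\ref{lem: horizontal_chain} to make $\{\alpha_0,\alpha_1\}$ a direct summand.
\end{enumerate}

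The other types are modifications of this construction.
\begin{itemize}
\item For $\eta_1<0$, add $a'_1|x_{\eta_1+\frac12}$ with balancing $Z$-powers to reach $\max\{\kappa,\theta\}$.
\item For $\eta_0<0$, split on the sign of $\omega-\theta$. When $\omega\le\theta$, drop the first zigzag step; this is where $n>1$ is needed in type $(-,-)$.
\item When $\omega>\theta$, set $\alpha_0=a'_0|x_{\frac12+\eta_0}+a_0|x'_{-\frac12}Z^{\kappa}$. This is a cycle only in $\bX^{\free}$, with $Z^{\Xi}\partial\alpha_0=0$. Lemma~\ref{lem: torsion_order_criterion} then applies with $j=\Xi$, and $\omega-\Xi=\theta+1$ gives the bound.
\end{itemize}
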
 
The proof strategy is to construct a pair of free generators $\alpha_0$ and $\alpha_1$ in $\bX^{\diamond}(P,K,\lambda)_{\bF[Z]}$ together with an arrow of the desired length from $\alpha_1$ to $\alpha_0$.

The generators $\alpha_0$ and $\alpha_1$ will be constructed in  \[
\bigoplus_{s\in 1/2+\Z} E^{\diamond}_{s,\frac{\ell-1}{2}}\oplus  \bigoplus_{s\in 1/2+\Z} F^{\diamond}_{s,\frac{\ell-1}{2}}\] where
$ \bigoplus_s E^{\diamond}_{s,\frac{\ell-1}{2}} = \cCFK(K)^{\hat\cR}\boxtimes_{\hat\cR} \scE^{\diamond}_{*,\frac{\ell-1}{2}}$ 
and $\bigoplus_s F^{\diamond}_{s,\frac{\ell-1}{2}} =  \cCFK(K)^{\hat\cR}\boxtimes_{\hat\cR} \scF^{\diamond}_{*,\frac{\ell-1}{2}}.$ 
Unless stated otherwise, we work throughout with the basis induced by the above tensor products. 
Recall that $ \scF^{\diamond}_{s,\frac{\ell-1}{2}} = \cS^{\diamond}$ and 
 \begin{equation}\label{eq: scE-s-def}
    \scE^{\diamond}_{s,\frac{\ell-1}{2}} = \begin{cases}
        \cC^{\diamond}_{\frac{\ell}{2}-1} \quad &\text{if} \quad s<0\\
        \cC^{\diamond}_{\frac{\ell}{2}} \quad &\text{if} \quad s>0.
    \end{cases}
\end{equation}

\begin{define} \label{def: xs-shorthand}
We introduce a shorthand notation as follows. 
 Denote the generator
 $\xs^{t}_0$ of the free $\bF[Z]$-summand in $\scE^{\diamond}_{s,\frac{\ell-1}{2}} $ (see Equation~\eqref{eq: C_t-min-def}) by $ x_{s}$, where $t = \frac{\ell}{2}-1$ or $\frac{\ell}{2}$ depending on $s$ as in Equation~\eqref{eq: scE-s-def}. Similarly denote the generator $\xs^t_1\in \scE^{\diamond}_{s,\frac{\ell-1}{2}}$ by  $ \tilde{x}_s$ if it exists. 
Denote the free generator $\xs'_0$ of the free $\bF[Z]$-tower in $ \scF^{\diamond}_{s,\frac{\ell-1}{2}} = \cS^{\diamond}$ by $x'_s$ and the generator $\xs'_1\in \scF^{\diamond}_{s,\frac{\ell-1}{2}} = \cS^{\diamond}$ by  $ \tilde{x}'_s$ if it exists. Note that $x_s, \tilde{x}_s \in \cC^{\diamond}_{\frac{\ell}{2}-1}$ when $s<0$ and $x_s, \tilde{x}_s \in \cC^{\diamond}_{\frac{\ell}{2}}$ when $s>0$.\end{define}

 By Lemma \ref{lem: C_S_structure_maps},
the structure maps $f^{\pm \mu}\colon \scE^{\diamond}_{*,\frac{\ell-1}{2}} \to \scF^{\diamond}_{*,\frac{\ell-1}{2}} $ restricted to the free generators are as follows
\begin{equation}\label{eq: f_pm_mu_EF}
	 \begin{tikzcd}[labels=description, row sep=2cm] \scE^{\diamond}_{*,\frac{\ell-1}{2}} 
	 	\ar[d, "f^{\pm \mu}"]
	 	\\ 
	 	\scF^{\diamond}_{*,\frac{\ell-1}{2}}
	 \end{tikzcd}
	 \hspace{-.3cm}
	 \supset
	 \hspace{-.3cm}\begin{tikzcd}[column sep=1.5cm, row sep=0cm ] 
& & 
& {\color{gray!80}\tilde{x}_{\frac{1}{2}}}
  \ar[r, "{\color{gray!80}Z|1}", color=gray!80]
& {\color{gray!80}\tilde{x}_{\frac{3}{2}}}
& \cdots
\\[0.5 cm]
	 	\cdots
	 	&[-1.5 cm]
	 	 x_{-\frac{3}{2}}
         \ar[dd, color=gray!80, bend left,dashed, out=-15,in=-145, pos=0.3, "Z^{\xi'_1 - \Xi}"]
	 	&[0.5 cm] x_{-\frac{1}{2}}
           \ar[ur, color=gray!80, bend right,dashed, out=35,in=145, pos=0.5, "Z|Z^{\xi_1^{\ell/2} + 1 - \omega }"]
        \ar[dl,  "Z^{\kappa}"]
	 	\ar[dd, color=gray!80, bend left,dashed, out=-25,in=-145, pos=0.3, "Z^{\xi'_1 - \Xi}"]
	 	\ar[l,  "W|1"'] 	
	 	&[.6cm] x_{\frac{1}{2}}
         \ar[dl, "Z^{\ell+\theta}"]
	 	\ar[r, "Z|1"]
        \ar[l,  "W|Z^ \omega "']
	 	\ar[d,"Z^ \theta", pos=.4]
	 	& x_{\frac{3}{2}}
        \ar[dl, "Z^{\ell+\theta}"]
	 	\ar[d,"Z^ \theta",  pos=.4]
	 	&[-1.5cm] \cdots
	 	\\[2cm]
	 	\cdots
	 	&[.4cm]
	 	x_{-\frac{3}{2}}'
	 	&[0.5 cm]x_{-\frac{1}{2}}'
	 	\ar[r,  "Z|1"']
	 	\ar[l,  "W|1"]
	 	&x_{\frac{1}{2}}'
	 	\ar[r,  "Z|1"']
	 	&x_{\frac{3}{2}}'
	 	&\cdots 
        \\[0.5 cm]
\cdots
& {\color{gray!80}\tilde{x}_{-\frac{3}{2}}'}
& {\color{gray!80}\tilde{x}_{-\frac{1}{2}}'}
  \ar[r, "{\color{gray!80}Z|1}"', color=gray!80]
  \ar[l, "{\color{gray!80}W|1}",  color=gray!80]
& {\color{gray!80}\tilde{x}_{\frac{1}{2}}'}
  \ar[r, "{\color{gray!80}Z|1}"', color=gray!80]
& {\color{gray!80}\tilde{x}_{\frac{3}{2}}'}
& \cdots
	 \end{tikzcd}
\end{equation}

In the above diagram, the gray dashed arrow between the first two rows exists only if $ \omega  > 1$;   The downward gray dashed arrows
 exist only when   $\Xi >0.$ The generators $x_s$ and $x_s'$ are free generators, while the tilde labeled generators are torsion. Compared with Equation (\ref{eq: scE_scF_actions}), here we restrict to certain generators in each $\scE^{\diamond}_{s,\frac{\ell-1}{2}}$, $\scF^{\diamond}_{s,\frac{\ell-1}{2}}$, and omit zero maps.

    Recall from \cite[Section 4]{CZZApp} that we have the following decomposition of chain complexes of $AA$-bimodules: \[{}_{\hat{\cK}} \cX^{\diamond}(L_P)_{\bF[Z]} = \Cone\Big({}_{\hat{\cK}} \cX^{\free}(L_P)_{\bF[Z]} \xrightarrow{f}{}_{\hat{\cK}} \cX^{\Tor}(L_P)_{\bF[Z]}\Big),\]
    which gives a decomposition of chain complexes of $\bF[Z]$-modules by box tensoring $\cX_{\lambda}(K)^{\hat\cK} \boxtimes {}_{\hat\cK}\cH_{-}^{\hat\cK}$ on the left:
    
    \begin{equation}\bX^{\diamond}(P,K,\lambda)_{\bF[Z]} = \Cone \Big(\bX^{\free}(P,K,\lambda)_{\bF[Z]} \xrightarrow{\id\boxtimes\id\boxtimes f} \bX^{\Tor}(P,K,\lambda)_{\bF[Z]}\Big).
    \label{eq:mapping-cone-free-tor}
    \end{equation}
    
    The description of ${}_{\hat{\cK}} \cX^{\free}(L_P)_{\bF[Z]}$ is completely determined by the values of $R_t$ by \cite[Lemma 4.17]{CZZApp}, while 
    ${}_{\hat{\cK}} \cX^{\Tor}(L_P)_{\bF[Z]}$ depends on more information from the $H$-function of $L_P$. However, to get a lower bound on the torsion order, we will mostly focus on ${}_{\hat{\cK}} \cX^{\free}(L_P)_{\bF[Z]}$, and some information about the map $f$, as described in Lemma \ref{lem: C_S_structure_maps}. Our strategy is to find a pair of generators $\alpha_0,\alpha_1$ in $\bX^{\free}(P,K,\lambda)_{\bF[Z]}$ in the mapping cone decomposition of $\bX^{\diamond}(P,K,\lambda)_{\bF[Z]}$ from Equation~\eqref{eq:mapping-cone-free-tor}, such that
    \[\partial\alpha_0=0,\,\, \partial\alpha_1 = \alpha_0 Z^N\] for some $N$, and then apply Lemma \ref{lem: torsion_order_criterion}.

\begin{figure}[hbtp!]
	\captionsetup{width=\textwidth}
	\subfigure[When $\eta_0,\eta_1 >0$.]{
		\centering
		\begin{tikzpicture}[scale=0.5]
			\begin{scope}
				\foreach \j in {-5,5,15}{
					\fill[gray!20] (\j+-1,1.5) rectangle (\j+1.5,0.5);
					\fill[gray!20] (\j+-1,2.5) rectangle (\j+2.5,1.5);
					\fill[gray!20] (\j+-1,3) rectangle (\j+3,2.5);
					\fill[gray!20] (\j+-1,0.5) rectangle (\j+0.5,-0.5);
					\fill[gray!20] (\j+-1,-0.5) rectangle (\j+-0.5,-1.5);
				}
			\foreach \j in {-10,-5,...,10,15}
				{\foreach \i in {0,...,3}
				{\draw[thin, black!20!white]  (\j+\i-0.5, 3) -- (\j+\i-0.5, -2);}
			\foreach \i in {-1,...,3}
			{ \draw[thin, black!20!white]  (\j+3, \i-0.5) -- (\j-1, \i-0.5); }}
			\foreach \j in {-10,0,10}
				{\draw[thin, black!60!white] (\j-1, 2.5) -- (\j+2.5, 2.5);
					\draw[thin, black!60!white] (\j+2.5, 2.5) -- (\j+2.5, -2);
					\draw[thin, black!60!white] (\j+1.5, 1.5) -- (\j+1.5, -2);
					\draw[thin, black!60!white]  (\j+0.5, 0.5) -- (\j+0.5, -2);
					\draw[thin, black!60!white]  (\j+1.5, 1.5) -- (\j-1, 1.5);
					\draw[thin, black!60!white] (\j+0.5, 0.5) -- (\j-1, 0.5);
					\draw[thin, black!60!white] (\j-1, -0.5) -- (\j-0.5, -0.5);
					\draw[thin, black!60!white] (\j-0.5, -0.5) -- (\j-0.5, -2);
					\draw[thin, black!60!white] (\j-0.5, -1.5) -- (\j-0.5, -2);}
			\end{scope} 
			\foreach \j in {-1,0,1}
			{ \foreach \i in {0,1}
				{\filldraw ({\j*10+1+5*\i}, 2-\j-\i) circle (2pt)
					node[] (a1\i\j) {};
					\filldraw ({\j*10+1+5*\i}, -\j-\i) circle (2pt) 
					node[] (a0\i\j) {};
					\ifnum\j<1        \ifnum \i=1         \draw[densely dashed,-stealth] ($(a1\i\j)$) -- ($(a0\i\j)+(0,0.1)$);      \else \draw[-stealth] ($(a1\i\j)$) -- ($(a0\i\j)+(0,0.1)$); \fi \else \draw[-stealth] ($(a1\i\j)$) -- ($(a0\i\j)+(0,0.1)$); \fi
					\filldraw [color=green!70!black] ({\j*10+2+5*\i}, 2-\j-\i) circle (2pt) node[] (a1'\i\j) {};
					\filldraw [color=green!70!black] ({\j*10+2+5*\i}, -\j-\i) circle (2pt) node[] (a0'\i\j) {};
					\draw [color=green!70!black, -stealth] ($(a1'\i\j)$) -- ($(a1\i\j)+(0.1,0)$);
					\draw [color=green!70!black, -stealth] ($(a0'\i\j)$) -- ($(a0\i\j)+(0.1,0)$);
				}
				\draw[red, bend left=20, -stealth] ($(a01\j)+ (-0.1,-0.1)$) to node[midway, below] {{\tiny $Z^{\ell+\theta}$}} ($(a00\j) + (0.1,-0.1)$);
			}      
			\draw[red, bend right=20, -stealth] ($(a010) + (0.1,-0.1)$)  to node[midway, below] {{\tiny $Z^{\theta}$}} ($(a001)+ (-0.1,-0.1)$);
			\draw[red, bend right=20, -stealth] ($(a01-1) + (0.1,-0.1)$)  to node[midway, below] {{\tiny $Z^{\theta}$}} ($(a000)+ (-0.1,-0.1)$);
			\node [left] at (a00-1) {{\tiny$a_0$}};
			\node [left] at (a10-1) {{\tiny$a_1$}};
			\node [color=green!70!black,right] at (a1'0-1) {{\tiny$a'_1$}};
			\node [color=green!70!black,right] at (a0'0-1) {{\tiny$a'_0$}};
			\node [] at ($(a00-1)+(-.35,1)$) {{\tiny$U^2$}};
			\node [] at ($(a01-1)+(-.55,1)$) {{\tiny$UL_Z$}};
			\node [] at ($(a000)+(-.3,1)$) {{\tiny$U$}};
			\node [] at ($(a010)+(-.5,1)$) {{\tiny$L_Z$}};
			\node [] at ($(a001)+(-.3,1)$) {{\tiny$1$}};
			\node [] at ($(a011)+(-.3,1)$) {{\tiny$1$}};
			\node [color=green!70!black] at ($(a10-1)+(0.6,-0.3)$) {{\tiny$1$}};
			\node [color=green!70!black] at ($(a11-1)+(0.6,0.3)$) {{\tiny$1$}};
			\node [color=green!70!black] at ($(a100)+(0.6,0.3)$) {{\tiny$1$}};
			\node [color=green!70!black] at ($(a110)+(0.6,0.3)$) {{\tiny$Z^{\omega}$}};
			\node [color=green!70!black] at ($(a101)+(0.8,0.3)$) {{\tiny$U$}};
			\node [color=green!70!black] at ($(a111)+(0.8,0.3)$) {{\tiny$U$}};
			\node [color=green!70!black] at ($(a00-1)+(0.75,0.3)$) {{\tiny$U$}};
			\node [color=green!70!black] at ($(a01-1)+(0.75,0.3)$) {{\tiny$U$}};
			\node [color=green!70!black] at ($(a000)+(0.75,0.3)$) {{\tiny$U$}};
			\node [color=green!70!black] at ($(a010)+(0.75,0.3)$) {{\tiny$U$}};
			\node [color=green!70!black] at ($(a001)+(0.75,0.3)$) {{\tiny$U$}};
			\node [color=green!70!black] at ($(a011)+(0.75,0.3)$) {{\tiny$U$}};
			\draw[blue] 
			($(a00-1)+(-5pt,-5pt)$) rectangle ($(a00-1)+(5pt,5pt)$);
			\draw[violet] (a01-1) circle (5pt);
			\draw[violet] (a010) circle (5pt);
			\draw[violet] (a101) circle (5pt);
            \node   at (-9,5) {{\tiny $\mathfrak{a}=A(a_0)$}};
			\node   at (-9,4) {{\tiny$F^{\diamond}_{\mathfrak{a}-\frac{1}{2},\frac{\ell-1}{2}}$}};
			\node   at (-4,4) {{\tiny$E^{\diamond}_{\mathfrak{a}+\frac{1}{2},\frac{\ell-1}{2}}$}};
				\node   at (1,4) {{\tiny$F^{\diamond}_{\mathfrak{a}+\frac{1}{2},\frac{\ell-1}{2}}$}};
			\node   at (6,4) {{\tiny$E^{\diamond}_{\mathfrak{a}+\frac{3}{2},\frac{\ell-1}{2}}$}};
			\node   at (11,4) {{\tiny$F^{\diamond}_{\mathfrak{a}+\frac{3}{2},\frac{\ell-1}{2}}$}};
			\node   at (16,4) {{\tiny$E^{\diamond}_{\mathfrak{a}+\frac{5}{2},\frac{\ell-1}{2}}$}};
			
			\draw[red, -stealth] (-11.5,4) to node[midway, above] {{\tiny $L_{\sigma}$}} (-10.5,4);
			\draw[red, -stealth] (-5.5,4) to node[midway, above] {{\tiny $L_{\tau}$}} (-7.5,4);
			\draw[red, -stealth] (-2.5,4) to node[midway, above] {{\tiny $L_{\sigma}$}} (-0.5,4);
			\draw[red, -stealth] (4.5,4) to node[midway, above] {{\tiny $L_{\tau}$}} (2.5,4);
			\draw[red, -stealth] (7.5,4) to node[midway, above] {{\tiny $L_{\sigma}$}} (9.5,4);
			\draw[red, -stealth] (14.5,4) to node[midway, above] {{\tiny $L_{\tau}$}} (12.5,4);
			\draw[red, -stealth] (17.5,4) to node[midway, above] {{\tiny $L_{\sigma}$}} (18.5,4);

\node[axislabel] at (-11.1,-2.3) {$i$};
\node[axislabel] at (-11.5,-2) {$j$};
\node[axislabel] at (-10,-2.3) {$-1$};
\node[axislabel] at (-9,-2.3) {$0$};
\node[axislabel] at (-8,-2.3) {$1$};

\node[axislabel] at (-11.5,2) {$1$};
\node[axislabel] at (-11.5,1) {$0$};
\node[axislabel] at (-11.5,0) {$-1$};
\node[axislabel] at (-11.5,-1) {$-2$};   
		\end{tikzpicture}      
		\label{subfig: non_ending_1}
	}
	\subfigure[When $\eta_0<0,\eta_1 >0$.]{
		\centering
		\begin{tikzpicture}[scale=0.5]
			\begin{scope}
	\foreach \j in {-5,5,15}{
		\fill[gray!20] (\j+-1,1.5) rectangle (\j+1.5,0.5);
		\fill[gray!20] (\j+-1,2.5) rectangle (\j+2.5,1.5);
		\fill[gray!20] (\j+-1,3) rectangle (\j+3,2.5);
		\fill[gray!20] (\j+-1,0.5) rectangle (\j+0.5,-0.5);
		\fill[gray!20] (\j+-1,-0.5) rectangle (\j+-0.5,-1.5);
	}
	\foreach \j in {-10,-5,...,10,15}
	{\foreach \i in {0,...,3}
		{\draw[thin, black!20!white]  (\j+\i-0.5, 3) -- (\j+\i-0.5, -2);}
		\foreach \i in {-1,...,3}
		{ \draw[thin, black!20!white]  (\j+3, \i-0.5) -- (\j-1, \i-0.5); }}
	\foreach \j in {-10,0,10}
	{\draw[thin, black!60!white] (\j-1, 2.5) -- (\j+2.5, 2.5);
		\draw[thin, black!60!white] (\j+2.5, 2.5) -- (\j+2.5, -2);
		\draw[thin, black!60!white] (\j+1.5, 1.5) -- (\j+1.5, -2);
		\draw[thin, black!60!white]  (\j+0.5, 0.5) -- (\j+0.5, -2);
		\draw[thin, black!60!white]  (\j+1.5, 1.5) -- (\j-1, 1.5);
		\draw[thin, black!60!white] (\j+0.5, 0.5) -- (\j-1, 0.5);
		\draw[thin, black!60!white] (\j-1, -0.5) -- (\j-0.5, -0.5);
		\draw[thin, black!60!white] (\j-0.5, -0.5) -- (\j-0.5, -2);
		\draw[thin, black!60!white] (\j-0.5, -1.5) -- (\j-0.5, -2);}
\end{scope} 
			\foreach \j in {-1,0,1}
			{ \foreach \i in {0,1}
				{\filldraw ({\j*10+1+5*\i}, 2-\j-\i) circle (2pt)
					node[] (a1\i\j) {};
					\filldraw ({\j*10+1+5*\i}, -\j-\i) circle (2pt) 
					node[] (a0\i\j) {};
					\ifnum\j<1        \ifnum \i=1         \draw[densely dashed,-stealth] ($(a1\i\j)$) -- ($(a0\i\j)+(0,0.1)$);      \else \draw[-stealth] ($(a1\i\j)$) -- ($(a0\i\j)+(0,0.1)$); \fi \else \draw[-stealth] ($(a1\i\j)$) -- ($(a0\i\j)+(0,0.1)$); \fi
					\filldraw [color=green!70!black] ({\j*10+2+5*\i}, 2-\j-\i) circle (2pt) node[] (a1'\i\j) {};
					\filldraw [color=green!70!black] ({\j*10+5*\i}, -\j-\i) circle (2pt) node[] (a0'\i\j) {};
					\draw [color=green!70!black, -stealth] ($(a1'\i\j)$) -- ($(a1\i\j)+(0.1,0)$);
					\draw [color=green!70!black, -stealth] ($(a0\i\j)$) -- ($(a0'\i\j)+(0.1,0)$);
				}
				\ifnum\j=-1\relax
				\else
				\draw[red, bend left=20, -stealth] ($(a01\j)+ (-0.1,-0.1)$) to node[midway, below] {{\tiny $Z^{\ell+\theta}$}} ($(a00\j) + (0.1,-0.1)$);
				\fi
			}    
			\draw[red, in=10, out=165, -stealth] ($(a01-1)+ (-0.1,0.1)$) to node[pos=0.55, above] {{\tiny $Z^{\ell+\theta}$}} ($(a00-1) + (0.1,0)$);
			\draw[red, in=-35, out=-175, -stealth] ($(a0'1-1)+ (-0.1,-0.1)$) to node[pos=0.35, below] {{\tiny $Z^{\kappa}$}} ($(a0'0-1) + (0.1,-0.1)$);
			\draw[red, densely dashed, in=-135, out=-95, -stealth] ($(a0'1-1)+ (0.1,-0.1)$) to node[midway, above] {{\tiny $Z^{\xi'_1-\Xi}$}} ($(a0'00) + (-0.1,-0.1)$);
			
			\draw[red, bend right=20, -stealth] ($(a010) + (0.1,-0.1)$)  to node[midway, below] {{\tiny $Z^{\theta}$}} ($(a001)+ (-0.1,-0.1)$);
			\draw[red, bend left=10, -stealth] ($(a01-1) + (0.1,0.1)$)  to node[midway, above] {{\tiny $Z^{\theta}$}} ($(a000)+ (-0.1,0.1)$);
			\node [below] at (a00-1) {{\tiny$a_0$}};
			\node [left] at (a10-1) {{\tiny$a_1$}};
			\node [color=green!70!black,right] at (a1'0-1) {{\tiny$a'_1$}};
			\node [color=green!70!black,left] at (a0'0-1) {{\tiny$a'_0$}};
			\node [] at ($(a00-1)+(.45,1)$) {{\tiny$U^2$}};
			\node [] at ($(a01-1)+(.7,1)$) {{\tiny$UL_Z$}};
			\node [] at ($(a000)+(.3,1)$) {{\tiny$U$}};
			\node [] at ($(a010)+(.5,1)$) {{\tiny$L_Z$}};
			\node [] at ($(a001)+(.3,1)$) {{\tiny$1$}};
			\node [] at ($(a011)+(.3,1)$) {{\tiny$1$}};
			\node [color=green!70!black] at ($(a10-1)+(0.6,-0.3)$) {{\tiny$1$}};
			\node [color=green!70!black] at ($(a11-1)+(0.6,0.3)$) {{\tiny$1$}};
			\node [color=green!70!black] at ($(a100)+(0.6,0.3)$) {{\tiny$1$}};
			\node [color=green!70!black] at ($(a110)+(0.6,0.3)$) {{\tiny$Z^{\omega}$}};
			\node [color=green!70!black] at ($(a101)+(0.8,0.3)$) {{\tiny$U$}};
			\node [color=green!70!black] at ($(a111)+(0.8,0.3)$) {{\tiny$U$}};
			\node [color=green!70!black] at ($(a00-1)+(-0.45,0.3)$) {{\tiny$1$}};
			\node [color=green!70!black] at ($(a01-1)+(-0.45,-0.4)$) {{\tiny$Z^{\omega}$}};
			\node [color=green!70!black] at ($(a000)+(-0.25,-0.3)$) {{\tiny$U$}};
			\node [color=green!70!black] at ($(a010)+(-0.25,0.3)$) {{\tiny$U$}};
			\node [color=green!70!black] at ($(a001)+(-0.25,0.3)$) {{\tiny$U$}};
			\node [color=green!70!black] at ($(a011)+(-0.25,0.3)$) {{\tiny$U$}};
			\draw[blue]     ($(a00-1)+(-5pt,-5pt)$) rectangle ($(a00-1)+(5pt,5pt)$);
			\draw[blue]     ($(a0'1-1)+(-5pt,-5pt)$) rectangle ($(a0'1-1)+(5pt,5pt)$);
			\draw[violet] (a01-1) circle (5pt);
			\draw[violet] (a010) circle (5pt);
			\draw[violet] (a101) circle (5pt);

            \node[axislabel] at (-11.1,-2.3) {$i$};
\node[axislabel] at (-11.5,-2) {$j$};
\node[axislabel] at (-10,-2.3) {$-1$};
\node[axislabel] at (-9,-2.3) {$0$};
\node[axislabel] at (-8,-2.3) {$1$};

\node[axislabel] at (-11.5,2) {$1$};
\node[axislabel] at (-11.5,1) {$0$};
\node[axislabel] at (-11.5,0) {$-1$};
\node[axislabel] at (-11.5,-1) {$-2$};
		\end{tikzpicture}      
		\label{subfig: non_ending_2}
	}
	\subfigure[When $\eta_0>0,\eta_1 <0$.]{
		\centering
		\begin{tikzpicture}[scale=0.5]
			\begin{scope}
				\foreach \j in {-5,5,15}{
					\fill[gray!20] (\j+-1,1.5) rectangle (\j+1.5,0.5);
					\fill[gray!20] (\j+-1,2.5) rectangle (\j+2.5,1.5);
					\fill[gray!20] (\j+-1,3) rectangle (\j+3,2.5);
					\fill[gray!20] (\j+-1,0.5) rectangle (\j+0.5,-0.5);
					\fill[gray!20] (\j+-1,-0.5) rectangle (\j+-0.5,-1.5);
				}
				\foreach \j in {-10,-5,...,10,15}
				{\foreach \i in {0,...,3}
					{\draw[thin, black!20!white]  (\j+\i-0.5, 3) -- (\j+\i-0.5, -2);}
					\foreach \i in {-1,...,3}
					{ \draw[thin, black!20!white]  (\j+3, \i-0.5) -- (\j-1, \i-0.5); }}
				\foreach \j in {-10,0,10}
				{\draw[thin, black!60!white] (\j-1, 2.5) -- (\j+2.5, 2.5);
					\draw[thin, black!60!white] (\j+2.5, 2.5) -- (\j+2.5, -2);
					\draw[thin, black!60!white] (\j+1.5, 1.5) -- (\j+1.5, -2);
					\draw[thin, black!60!white]  (\j+0.5, 0.5) -- (\j+0.5, -2);
					\draw[thin, black!60!white]  (\j+1.5, 1.5) -- (\j-1, 1.5);
					\draw[thin, black!60!white] (\j+0.5, 0.5) -- (\j-1, 0.5);
					\draw[thin, black!60!white] (\j-1, -0.5) -- (\j-0.5, -0.5);
					\draw[thin, black!60!white] (\j-0.5, -0.5) -- (\j-0.5, -2);
					\draw[thin, black!60!white] (\j-0.5, -1.5) -- (\j-0.5, -2);}
			\end{scope} 
			\foreach \j in {-1,0,1}
			{ \foreach \i in {0,1}
				{\filldraw ({\j*10+1+5*\i}, 2-\j-\i) circle (2pt)
					node[] (a1\i\j) {};
					\filldraw ({\j*10+1+5*\i}, -\j-\i) circle (2pt) 
					node[] (a0\i\j) {};
					\ifnum\j<1        \ifnum \i=1         \draw[densely dashed,-stealth] ($(a1\i\j)$) -- ($(a0\i\j)+(0,0.1)$);      \else \draw[-stealth] ($(a1\i\j)$) -- ($(a0\i\j)+(0,0.1)$); \fi \else \draw[-stealth] ($(a1\i\j)$) -- ($(a0\i\j)+(0,0.1)$); \fi
					\filldraw [color=green!70!black] ({\j*10+5*\i}, 2-\j-\i) circle (2pt) node[] (a1'\i\j) {};
					\filldraw [color=green!70!black] ({\j*10+2+5*\i}, -\j-\i) circle (2pt) node[] (a0'\i\j) {};
					\draw [color=green!70!black, -stealth] ($(a1\i\j)$) -- ($(a1'\i\j)+(0.1,0)$);
					\draw [color=green!70!black, -stealth] ($(a0'\i\j)$) -- ($(a0\i\j)+(0.1,0)$);
				}
				\draw[red, bend left=20, -stealth] ($(a01\j)+ (-0.1,-0.1)$) to node[midway, below] {{\tiny $Z^{\ell+\theta}$}} ($(a00\j) + (0.1,-0.1)$);
			}      
			\draw[red, bend right=5, -stealth] ($(a111)+ (-0.1,0.1)$) to node[pos=0.5, above] {{\tiny $Z^{\ell+\theta}$}} ($(a101) + (0.1,0)$);
			\draw[red, bend left=5, -stealth] ($(a1'11)+ (-0.1,0)$) to node[pos=0.3, below] {{\tiny $Z^{\kappa}$}} ($(a1'01) + (0.1,-0.1)$);
			\draw[violet] (a1'11) circle (5pt);
			\draw[red, densely dashed, in=165, out=85, -stealth] ($(a1'11)+ (0.1,0.1)$) to  ($(a1'11)+ (2.5,0.7)$);
			
			\draw[red, bend right=20, -stealth] ($(a010) + (0.1,-0.1)$)  to node[midway, below] {{\tiny $Z^{\theta}$}} ($(a001)+ (-0.1,-0.1)$);     
			\draw[red, bend right=20, -stealth] ($(a01-1) + (0.1,-0.1)$)  to node[midway, below] {{\tiny $Z^{\theta}$}} ($(a000)+ (-0.1,-0.1)$);
			\node [left] at (a00-1) {{\tiny$a_0$}};
			\node [right] at (a10-1) {{\tiny$a_1$}};
			\node [color=green!70!black,left] at (a1'0-1) {{\tiny$a'_1$}};
			\node [color=green!70!black,right] at (a0'0-1) {{\tiny$a'_0$}};
			\node [] at ($(a00-1)+(-.35,1)$) {{\tiny$U^2$}};
			\node [] at ($(a01-1)+(-.65,1)$) {{\tiny$UL_Z$}};
			\node [] at ($(a000)+(-.3,1)$) {{\tiny$U$}};
			\node [] at ($(a010)+(-.5,1)$) {{\tiny$L_Z$}};
			\node [] at ($(a001)+(-.3,1)$) {{\tiny$1$}};
			\node [] at ($(a011)+(.3,1)$) {{\tiny$1$}};
			\node [color=green!70!black] at ($(a10-1)+(-0.4,0.3)$) {{\tiny$1$}};
			\node [color=green!70!black] at ($(a11-1)+(-0.4,0.3)$) {{\tiny$1$}};
			\node [color=green!70!black] at ($(a100)+(-0.4,0.3)$) {{\tiny$1$}};
			\node [color=green!70!black] at ($(a110)+(-0.4,0.3)$) {{\tiny$1$}};
			\node [color=green!70!black] at ($(a101)+(-0.4,0.3)$) {{\tiny$1$}};
			\node [color=green!70!black] at ($(a111)+(-0.4,-0.3)$) {{\tiny$Z^{\omega}$}};
			\node [color=green!70!black] at ($(a00-1)+(0.75,0.3)$) {{\tiny$U$}};
			\node [color=green!70!black] at ($(a01-1)+(0.75,0.3)$) {{\tiny$U$}};
			\node [color=green!70!black] at ($(a000)+(0.75,0.3)$) {{\tiny$U$}};
			\node [color=green!70!black] at ($(a010)+(0.75,0.3)$) {{\tiny$U$}};
			\node [color=green!70!black] at ($(a001)+(0.75,0.3)$) {{\tiny$U$}};
			\node [color=green!70!black] at ($(a011)+(0.75,0.3)$) {{\tiny$U$}};
			\draw[blue] 
			($(a00-1)+(-5pt,-5pt)$) rectangle ($(a00-1)+(5pt,5pt)$);
			\draw[violet] (a01-1) circle (5pt);
			\draw[violet] (a010) circle (5pt);
			\draw[violet] (a101) circle (5pt);

            \node[axislabel] at (-11.1,-2.3) {$i$};
\node[axislabel] at (-11.5,-2) {$j$};
\node[axislabel] at (-10,-2.3) {$-1$};
\node[axislabel] at (-9,-2.3) {$0$};
\node[axislabel] at (-8,-2.3) {$1$};

\node[axislabel] at (-11.5,2) {$1$};
\node[axislabel] at (-11.5,1) {$0$};
\node[axislabel] at (-11.5,0) {$-1$};
\node[axislabel] at (-11.5,-1) {$-2$};
		\end{tikzpicture}      
		\label{subfig: non_ending_3}
	}
	\subfigure[When $\eta_0,\eta_1 <0$.]{
		\centering
		\begin{tikzpicture}[scale=0.5]
			\begin{scope}
				\foreach \j in {-5,5,15}{
					\fill[gray!20] (\j+-1,1.5) rectangle (\j+1.5,0.5);
					\fill[gray!20] (\j+-1,2.5) rectangle (\j+2.5,1.5);
					\fill[gray!20] (\j+-1,3) rectangle (\j+3,2.5);
					\fill[gray!20] (\j+-1,0.5) rectangle (\j+0.5,-0.5);
					\fill[gray!20] (\j+-1,-0.5) rectangle (\j+-0.5,-1.5);
				}
				\foreach \j in {-10,-5,...,10,15}
				{\foreach \i in {0,...,3}
					{\draw[thin, black!20!white]  (\j+\i-0.5, 3) -- (\j+\i-0.5, -2);}
					\foreach \i in {-1,...,3}
					{ \draw[thin, black!20!white]  (\j+3, \i-0.5) -- (\j-1, \i-0.5); }}
				\foreach \j in {-10,0,10}
				{\draw[thin, black!60!white] (\j-1, 2.5) -- (\j+2.5, 2.5);
					\draw[thin, black!60!white] (\j+2.5, 2.5) -- (\j+2.5, -2);
					\draw[thin, black!60!white] (\j+1.5, 1.5) -- (\j+1.5, -2);
					\draw[thin, black!60!white]  (\j+0.5, 0.5) -- (\j+0.5, -2);
					\draw[thin, black!60!white]  (\j+1.5, 1.5) -- (\j-1, 1.5);
					\draw[thin, black!60!white] (\j+0.5, 0.5) -- (\j-1, 0.5);
					\draw[thin, black!60!white] (\j-1, -0.5) -- (\j-0.5, -0.5);
					\draw[thin, black!60!white] (\j-0.5, -0.5) -- (\j-0.5, -2);
					\draw[thin, black!60!white] (\j-0.5, -1.5) -- (\j-0.5, -2);}
			\end{scope} 
			\foreach \j in {-1,0,1}
			{ \foreach \i in {0,1}
				{\filldraw ({\j*10+1+5*\i}, 2-\j-\i) circle (2pt)
					node[] (a1\i\j) {};
					\filldraw ({\j*10+1+5*\i}, -\j-\i) circle (2pt) 
					node[] (a0\i\j) {};
					\ifnum\j<1        \ifnum \i=1         \draw[densely dashed,-stealth] ($(a1\i\j)$) -- ($(a0\i\j)+(0,0.1)$);      \else \draw[-stealth] ($(a1\i\j)$) -- ($(a0\i\j)+(0,0.1)$); \fi \else \draw[-stealth] ($(a1\i\j)$) -- ($(a0\i\j)+(0,0.1)$); \fi
					\filldraw [color=green!70!black] ({\j*10+5*\i}, 2-\j-\i) circle (2pt) node[] (a1'\i\j) {};
					\filldraw [color=green!70!black] ({\j*10+5*\i}, -\j-\i) circle (2pt) node[] (a0'\i\j) {};
					\draw [color=green!70!black, -stealth] ($(a1\i\j)$) -- ($(a1'\i\j)+(0.1,0)$);
					\draw [color=green!70!black, -stealth] ($(a0\i\j)$) -- ($(a0'\i\j)+(0.1,0)$);
				}
				\ifnum\j=-1\relax
				\else
				\draw[red, bend left=20, -stealth] ($(a01\j)+ (-0.1,-0.1)$) to node[midway, below] {{\tiny $Z^{\ell+\theta}$}} ($(a00\j) + (0.1,-0.1)$);
				\fi
			}    
			\draw[red, in=10, out=165, -stealth] ($(a01-1)+ (-0.1,0.1)$) to node[pos=0.55, above] {{\tiny $Z^{\ell+\theta}$}} ($(a00-1) + (0.1,0)$);
			\draw[red, in=-35, out=-175, -stealth] ($(a0'1-1)+ (-0.1,-0.1)$) to node[pos=0.35, below] {{\tiny $Z^{\kappa}$}} ($(a0'0-1) + (0.1,-0.1)$);
			\draw[red, bend right=5, -stealth] ($(a111)+ (-0.1,0.1)$) to node[pos=0.5, above] {{\tiny $Z^{\ell+\theta}$}} ($(a101) + (0.1,0)$);
			\draw[red, bend left=5, -stealth] ($(a1'11)+ (-0.1,0)$) to node[pos=0.3, below] {{\tiny $Z^{\kappa}$}} ($(a1'01) + (0.1,-0.1)$);
			\draw[violet] (a1'11) circle (5pt);
			
			\draw[red, densely dashed, in=165, out=85, -stealth] ($(a1'11)+ (0.1,0.1)$) to  ($(a1'11)+ (2.5,0.7)$);
			\draw[red, densely dashed, in=-135, out=-95, -stealth] ($(a0'1-1)+ (0.1,-0.1)$) to node[midway, above] {{\tiny $Z^{\xi'_1-\Xi}$}} ($(a0'00) + (-0.1,-0.1)$);
			
			\draw[red, bend right=20, -stealth] ($(a010) + (0.1,-0.1)$)  to node[midway, below] {{\tiny $Z^{\theta}$}} ($(a001)+ (-0.1,-0.1)$);
			\draw[red, bend left=10, -stealth] ($(a01-1) + (0.1,0.1)$)  to node[midway, above] {{\tiny $Z^{\theta}$}} ($(a000)+ (-0.1,0.1)$);
			\node [below] at (a00-1) {{\tiny$a_0$}};
			\node [right] at (a10-1) {{\tiny$a_1$}};
			\node [color=green!70!black,left] at (a1'0-1) {{\tiny$a'_1$}};
			\node [color=green!70!black,left] at (a0'0-1) {{\tiny$a'_0$}};
			\node [] at ($(a00-1)+(.45,1)$) {{\tiny$U^2$}};
			\node [] at ($(a01-1)+(.7,1)$) {{\tiny$UL_Z$}};
			\node [] at ($(a000)+(.3,1)$) {{\tiny$U$}};
			\node [] at ($(a010)+(.5,1)$) {{\tiny$L_Z$}};
			\node [] at ($(a001)+(.3,1)$) {{\tiny$1$}};
			\node [] at ($(a011)+(.3,1)$) {{\tiny$1$}};
			\node [color=green!70!black] at ($(a10-1)+(-0.4,0.3)$) {{\tiny$1$}};
			\node [color=green!70!black] at ($(a11-1)+(-0.4,0.3)$) {{\tiny$1$}};
			\node [color=green!70!black] at ($(a100)+(-0.4,0.3)$) {{\tiny$1$}};
			\node [color=green!70!black] at ($(a110)+(-0.4,0.3)$) {{\tiny$1$}};
			\node [color=green!70!black] at ($(a101)+(-0.4,0.3)$) {{\tiny$1$}};
			\node [color=green!70!black] at ($(a111)+(-0.4,-0.3)$) {{\tiny$Z^{\omega}$}};
			\node [color=green!70!black] at ($(a00-1)+(-0.45,0.3)$) {{\tiny$1$}};
			\node [color=green!70!black] at ($(a01-1)+(-0.45,-0.4)$) {{\tiny$Z^{\omega}$}};
			\node [color=green!70!black] at ($(a000)+(-0.25,-0.3)$) {{\tiny$U$}};
			\node [color=green!70!black] at ($(a010)+(-0.25,0.3)$) {{\tiny$U$}};
			\node [color=green!70!black] at ($(a001)+(-0.25,0.3)$) {{\tiny$U$}};
			\node [color=green!70!black] at ($(a011)+(-0.25,0.3)$) {{\tiny$U$}};
			\draw[blue]     ($(a00-1)+(-5pt,-5pt)$) rectangle ($(a00-1)+(5pt,5pt)$);
			\draw[blue]     ($(a0'1-1)+(-5pt,-5pt)$) rectangle ($(a0'1-1)+(5pt,5pt)$);
			\draw[violet] (a01-1) circle (5pt);
			\draw[violet] (a010) circle (5pt);
			\draw[violet] (a101) circle (5pt);

            \node[axislabel] at (-11.1,-2.3) {$i$};
\node[axislabel] at (-11.5,-2) {$j$};
\node[axislabel] at (-10,-2.3) {$-1$};
\node[axislabel] at (-9,-2.3) {$0$};
\node[axislabel] at (-8,-2.3) {$1$};

\node[axislabel] at (-11.5,2) {$1$};
\node[axislabel] at (-11.5,1) {$0$};
\node[axislabel] at (-11.5,0) {$-1$};
\node[axislabel] at (-11.5,-1) {$-2$};
		\end{tikzpicture}      
		\label{subfig: non_ending_4}
	}
\caption{Schematic illustration for the proof of Proposition \ref{prop: non_ending_general_case} when $n=2$. Each solid dot represents a generator of the form $a|x_s$ or $a|x'_s$. The first tensor factor $a \in \cCFK(K)^{\hat\cR}$ is labeled in the leftmost diagram of each case, while the index $s$ of  $x_s\in \scE_{s,\frac{\ell -1}{2}}$ or $x'_s\in \scF_{s,\frac{\ell -1}{2}}$ is given by $s=i-j-\frac{1}{2}$. The dotted arrows point to torsion elements. The element $\alpha_0$ is defined as a linear combination of the boxed generators, while $\alpha_1$ is defined as a linear combination of the circled generators. In the second and fourth pictures, we are illustrating only the case that $\omega-\theta>0$.} 
\label{fig: prop_non_ending}
\end{figure}

Before proceeding to the proof of Proposition \ref{prop: non_ending_general_case}, we first establish a useful lemma.
\begin{lem} \label{lem: horizontal_chain} Equip $\cCFK(K)^{\hat{\cR}}$ with a basis induced by a decomposition into a standard complex and a collection of local system complexes, as in Theorem~\ref{thm:standard_decomp}.    Suppose that $a \in \cCFK(K)^{\hat\cR}$ is a basis element in this decomposition which admits an incoming arrow weighted by $Z^n$ for some $n>0$, and possibly another incoming arrow weighted by $W^\eta$ for some $\eta>0$. With respect to the corresponding basis of $\bX^{\diamond}(P,K,\lambda)_{\bF[Z]}$,
\begin{itemize}
    \item the only incoming arrow to $a | x'_{-\frac{1}{2}}$ is the arrow of $\Phi^{-\mu}$ from $a | x_{\frac{1}{2}}$;
    \item for each $i=1,\dots,n-1$, the only incoming arrows to $a | x'_{i-\frac{1}{2}}$ are the arrow of $\Phi^{\mu}$ from $a|x_{i-\frac{1}{2}}$ and the arrow of $\Phi^{-\mu}$ from $a|x_{i+\frac{1}{2}}$; and
    \item the only incoming arrows to $a | x'_{n-\frac{1}{2}}$ are the length $0$-differential weighted by $1$, the arrow of $\Phi^{\mu}$ from $a|x_{n-\frac{1}{2}}$ and the arrow of $\Phi^{-\mu}$ from $a|x_{n+\frac{1}{2}}$.
\end{itemize}
\end{lem}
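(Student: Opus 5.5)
The plan is a direct enumeration of every term of the differential on $\bX^{\diamond}(P,K,\lambda)_{\bF[Z]}$ whose output has a nonzero component along $a|x'_{s}$, for $s=-\frac12,\dots,n-\frac12$. Since $a|x'_s$ lies in the vertex $\bF^{\diamond}$ of the square, only two kinds of terms can reach it. One kind is the length-$0$ differential internal to $\bF^{\diamond}$, i.e.\ $\cCFK(K)^{\hat\cR}\boxtimes\scF^{\diamond}_{*,\frac{\ell-1}{2}}$. The other is the length-$1$ maps $\Phi^{\pm\mu}$ out of $\bE^{\diamond}$. There is no diagonal from $\bE^{\diamond}$ to $\bM^{\diamond}$, and the maps $\Phi^{\pm K}$ only leave $\bF^{\diamond}$. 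So I will treat these two sources separately, working in the fixed $t=\frac{\ell-1}{2}$ column and using Lemma~\ref{lem:X-diamond-non-A_infty-bimodule}: only $m_{1|1|0}$ and $m_{0|1|1}$ act, so each box tensor term comes from a single $\delta^1$ output of $\cCFK(K)$.

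\textbf{Length-$0$ terms.} An arrow into $a|x'_s$ inside $\bF^{\diamond}$ comes from a basis element $b$ with $a\otimes r\in\delta^1(b)$, where $r\in\{Z^n,W^\eta\}$ by hypothesis. I will also need to note that $\delta^1_{\sigma,\tau}$ lands in idempotent $1$ and so contributes nothing here.

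Recall that in $\scF^{\diamond}_{*,t}$ the action of $Z$ sends $\scF_{s}$ to $\scF_{s+1}$ with coefficient $1$ for $s\ge -\frac12$, and with coefficient $U$ (hence $0$ after $U=0$) for $s\le -\frac32$. The action of $W$ sends $\scF_s$ to $\scF_{s-1}$ with coefficient $1$ for $s\le-\frac12$ and $U$ otherwise. Composing, $b\otimes x'_{s'}$ maps under $Z^n$ to $a\otimes x'_{s'+n}$ with coefficient $1$ exactly when the whole path stays in $s'\ge -\frac12$. This is equivalent to the diagrammatic rule "crosses no wall." It gives precisely $s'=-\frac12$, so the only target is $a|x'_{n-\frac12}$, with weight $1$. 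For every other target $s\le n-\frac32$, either the path starts at $s'\le-\frac32$ and picks up a $U$, or the source index is out of range.

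The incoming $W^\eta$ arrow would land at $a|x'_{s}$ from $s+\eta>s\ge-\frac12$. It therefore always passes through a region where $W$ acts by $U$, so it vanishes. I will check this carefully using the wall picture: a horizontal arrow ending at $\max\{i,j\}$-level with $i\ge j$ must cross a wall. This bookkeeping is the main place where an off-by-one error could occur, so I will verify it against the grid picture of $F_{s,t}$.

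\textbf{Length-$1$ terms.} These are $\Phi^{\pm\mu}=\bI\boxtimes f^{\pm\mu}$. The maps $f^{\pm\mu}$ are built from $L_\sigma,L_\tau$ composed with the $\hat\cR$-actions as in \eqref{eq: scE_scF_actions}, and they use the input of $\delta^1$ only through idempotent-$0$ algebra elements. Restricted to free generators, they are given in \eqref{eq: f_pm_mu_EF}. The component with the same first factor $a$ gives exactly $a|x_{s}\to a|x'_{s}$ via $L_\sigma$ and $a|x_{s+1}\to a|x'_{s}$ via $L_\tau$. For $s=-\frac12$, the $L_\sigma$ term from $a|x_{-\frac12}$ is $0$ on free generators, or lands only on torsion $\tilde x'$ (dashed), so only the $\Phi^{-\mu}$ arrow remains.

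Terms of $f^{\pm\mu}$ involving an algebra input ($\delta_2^1$ with input $Z$ or $W$ from $\delta^1(b)$) are again multiplied by the $\scF$ coefficients above. I will argue that they vanish for the same reason the length-$0$ terms vanish, except possibly for $Z^n$ from $s'=-\frac12$ region sources. Here I must check that composing $L_\sigma/L_\tau$ with the $Z^n$ action lands outside $x'_{s}$ for $s<n-\frac12$, or carries a $U$.

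The main obstacle is this last check: controlling the mixed terms of $f^{\pm\mu}$ with algebra inputs, which are only partially displayed. I would first try to read them off \cite{CZZApp}*{Figures~5.1 and 5.2} and confirm each carries a $U$ factor outside the claimed range.
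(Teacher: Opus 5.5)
This is essentially the paper's proof. You exclude $\Phi^{\pm K}$, obtain the length-$0$ arrows from $m_{1|1|0}(Z^n,x'_{s-n})$ and $m_{1|1|0}(W^\eta,x'_{s+\eta})$, and read off the $\bI|L_\sigma$ and $\bI|L_\tau$ arrows from Lemma~\ref{lem: C_S_structure_maps}. The first of these is nonzero, with weight $1$, only when $s-n=-\frac12$; the second always vanishes since $s+\eta>0$. For the last step you use that $L_\sigma(x_{-\frac12})$ is torsion.

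The step you flag as the main obstacle does not actually arise, and the reason is not a $U$ factor. In the $\diamond$ model, $f^{\pm\mu}$ has no components with an algebra input at all. The diagonal terms such as $Z|h_{\sigma,Z}$ and $W|h_{\tau,W}$ in the $\hat\cR$ model (Figure~\ref{fig:the-maps-f-pm-mu}) come from higher actions of $\cX(L_P)$, and these vanish for $\cX^{\diamond}(L_P)$ by Lemma~\ref{lem:X-diamond-non-A_infty-bimodule}. So, as recorded in \eqref{eq: scE_scF_actions}, $\Phi^{\pm\mu}$ is just $\bI|L_\sigma$ and $\bI|L_\tau$ and never changes the $\cCFK(K)$ factor. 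With that observation your argument is complete.
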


\begin{proof}
    Clearly $a | x'_{i-\frac{1}{2}}$ admits no incoming arrows of $\Phi^{\pm K}$, since those have images in $J_{s,t}$ or $M_{s,t}$. Consider length-$0$ differentials, 
    which take the form of
  \[
       \begin{tikzcd}[column sep=0 cm, row sep=0.3 cm]
     \cCFK(K)^{\hat\cR}  \ar[d ]& \otimes& \scF^{\diamond}\ar[dd] \\
     \delta^1_{\hat\cR} \ar[dd]\ar[drr] \\
     & & m_{1|1|0}  \ar[d]  \\
     \cCFK(K)^{\hat\cR} &  \otimes  & \scF^{\diamond} 
	 \end{tikzcd}.
     \]   
     In $\cCFK(K)^{\hat\cR}$, $a$ admits an incoming arrow weighted by $Z^n$ from some generator $b$ and possibly another incoming arrow weighted by $W^{\eta}$  from some generator $c$. 
    Therefore any incoming  length-$0$ differential to $a | x'_{i-\frac{1}{2}}$ would have to originate from either $b|x'_{i-\frac{1}{2}-n}$ or $c|x'_{i-\frac{1}{2}+\eta}$. We now compute using \eqref{eq: scE_scF_actions}. 
    In the former case, when $i=0,\dots,n-1,$ since $i-\frac{1}{2}-n < -1$,  $m_{1|1|0}(Z^n,x'_{i-\frac{1}{2}-n})$  vanishes and when $i=n,$  $m_{1|1|0}(Z^n,x'_{-\frac{1}{2}})=1$; in the latter case, for any $i\geq 0,$ since  $i-\frac{1}{2}+\eta > 0,$ $m_{1|1|0}(W^{\eta},x'_{i-\frac{1}{2}+\eta})$ vanishes.
   
    Therefore, for $i=0,\dots,n-1$,  the generators $a | x'_{i-\frac{1}{2}}$ admit no incoming length-$0$ differentials whereas the generators $a | x'_{n-\frac{1}{2}}$  admits a length-0 differential weighted by $1$.
    It remains to consider the arrow of $\Phi^{\mu}$  originating from $a | x_{i-\frac{1}{2}}$, which is of the form  $\bI | L_\sigma$,  and the arrow of $\Phi^{-\mu}$ originating from $a | x_{i+\frac{1}{2}}$, which is of the form $\bI | L_\tau$. For $i=1,\dots,n,$ by Lemma \ref{lem: C_S_structure_maps}, both arrows point to $a | x'_{i-\frac{1}{2}}$ with nonzero coefficients.
    When $i=0,$
 there is no arrow of the form $\bI | L_\sigma$  pointing to $a | x'_{-\frac{1}{2}}$, since such an arrow would have to originate from $a | x_{-\frac{1}{2}}$, but $L_{\sigma}(x_{-\frac{1}{2}})$ is torsion by Lemma \ref{lem: C_S_structure_maps}. On the other hand, the arrow of the form   $\bI | L_\tau$ pointing to $a | x'_{-\frac{1}{2}}$  is still weighted by a nonnegative power of $Z$. 
\end{proof}

\begin{figure}[hbtp!]
\begin{tikzpicture}
\draw[->, thick, decorate, decoration={snake, amplitude=1.5pt, segment length=8pt}, black!70!white]
(-0.65,0) -- (0.2,0);

\node at (-4,0)
{
\begin{tikzpicture}[scale=0.7]
\begin{scope}
    \draw[very thick, <-, black!20!white]  (6.5, -3.5) -- (6.5, 5) node[above,black!20!white] {{\small $Z$}}; 
    \draw[very thick, ->, black!0!white]  (-1, 0) -- (6, 0);
\end{scope}

\filldraw (5.5, 4) circle (2pt) node[above] {{\tiny $a_1|x'_{-\frac{1}{2}}$}} node (a0) {};
\filldraw (5, 3 ) circle (2pt) node[below] {{\tiny $a_0|x'_{\frac{5}{2}}$}} node (b1) {};
\filldraw (4, 4.5 ) circle (2pt) node[above] {{\tiny $a_0|x_{\frac{5}{2}}$}} node (a1) {};
\filldraw (3, 1 ) circle (2pt) node[below] {{\tiny $a_0|x'_{\frac{3}{2}}$}} node (b2) {};
\filldraw (2, 2.5 ) circle (2pt) node[above left] {{\tiny $a_0|x_{\frac{3}{2}}$}} node (a2) {};
\filldraw (1, -1 ) circle (2pt) node[below] {{\tiny $a_0|x'_{\frac{1}{2}}$}} node (b3) {};
\filldraw (0, 0.5 ) circle (2pt) node[above left] {{\tiny $a_0|x_{\frac{1}{2}}$}} node (a3) {};
\filldraw (-1, -3 ) circle (2pt)  node[right, xshift=3pt, yshift=-5pt] {{\tiny $a_0|x'_{-\frac{1}{2}}$}} node (b4) {};

\draw[->, thick] (a0) -- (b1) node[pos=0.6, right, color=red] {{\tiny $1$}};

\draw[->, thick] (a1) -- (b1) node[pos=0.3, right, color=red] {{\tiny $Z^\theta$}};
\draw[->, thick] (a1) -- (b2) node[pos=0.3, left, color=red] {{\tiny $Z^{\ell+\theta}$}};

\draw[->, thick] (a2) -- (b2) node[pos=0.3, right, color=red] {{\tiny $Z^\theta$}};
\draw[->, thick] (a2) -- (b3) node[pos=0.3, left, color=red] {{\tiny $Z^{\ell+\theta}$}};

\draw[->, thick] (a3) -- (b3) node[pos=0.3, right, color=red] {{\tiny $Z^\theta$}};
\draw[->, thick] (a3) -- (b4) node[pos=0.3, left, color=red] {{\tiny $Z^{\ell+\theta}$}};

\draw[violet] (a0) circle (5pt);
\draw[violet] (a1) circle (5pt);
\draw[violet] (a2) circle (5pt);
\draw[violet] (a3) circle (5pt);
\draw[blue] 
       ($(b4)+(-5pt,-5pt)$) rectangle ($(b4)+(5pt,5pt)$);
       \end{tikzpicture}
};
\node at (4,0)
{
\begin{tikzpicture}[scale=0.7]
\begin{scope}
    \draw[very thick, <-, black!20!white]  (6.5, -3.5) -- (6.5, 5) node[above,black!20!white] {{\small $Z$}}; 
    \draw[very thick, ->, black!0!white]  (-1, 0) -- (6, 0);
\end{scope}

\filldraw (5.5, 4) circle (2pt) node[above] {{\tiny $a_1|x'_{-\frac{1}{2}}$}} node (a0) {};
\filldraw (5, 3 ) circle (2pt) node[below] {{\tiny $a_0|x'_{\frac{5}{2}}$}} node (b1) {};

\filldraw (4, 4.5 ) circle (2pt) node[xshift=-50pt, yshift=13pt] {{\tiny $a_0|(x_{\frac{5}{2}} + x_{\frac{3}{2}} Z^{\ell} + x_{\frac{1}{2}} Z^{2\ell} ) + a_1|x'_{-\frac{1}{2}} Z^{\theta}$}} node (a1) {};
\filldraw (1, -3 ) circle (2pt) node[right, xshift=3pt, yshift=-5pt] {{\tiny $a_0|x'_{-\frac{1}{2}}$}} node (b4) {};

\draw[->, thick] (a1) -- (b4) node[pos=0.5, right, color=red] {{\tiny $Z^{3\ell+\theta}$}};

\filldraw (2, 1 ) circle (2pt) node[left] {{\tiny $a_0|x'_{\frac{3}{2}}$}} node (b2) {};
\node at (1.3, .4 ) {{\tiny $+a_0|x'_{\frac{1}{2}} Z^{\ell}$}};
\filldraw (1, 2.5 ) circle (2pt) node[above ] {{\tiny $a_0|x_{\frac{3}{2}}$}} node (a2) {};
\filldraw (0, -1 ) circle (2pt) node[below] {{\tiny $a_0|x'_{\frac{1}{2}}$}} node (b3) {};
\node at (0.1, -2.1 ) {{\tiny $+a_0|x'_{-\frac{1}{2}} Z^{\ell}$}};
\filldraw (-1, 0.5 ) circle (2pt) node[above] {{\tiny $a_0|x_{\frac{1}{2}}$}} node (a3) {};

\draw[->, thick] (a0) -- (b1) node[pos=0.6, right, color=red] {{\tiny $1$}};

\draw[->, thick] (a2) -- (b2) node[pos=0.3, right, color=red] {{\tiny $Z^\theta$}};

\draw[->, thick] (a3) -- (b3) node[pos=0.3, right, color=red] {{\tiny $Z^\theta$}};
\draw[violet] (a1) circle (5pt);
\draw[blue] 
       ($(b4)+(-5pt,-5pt)$) rectangle ($(b4)+(5pt,5pt)$);  
\end{tikzpicture}};
\end{tikzpicture}

\caption{A schematic illustration of the change of basis in Case \eqref{it: non_ending_general_case_1} of Proposition \ref{prop: non_ending_general_case} when $n=3$. Left: before the change of basis, where arrows of $\Phi^{-\mu}$ are weighted by $Z^{\ell + \theta}$, arrows of  $\Phi^{\mu}$ are weighted by $Z^{\theta}$ and the length-$0$ differential is  weighted by $1$. Right: after the change of basis. The $y$-coordinate is determined by the $\gr_Z$-grading, while the $x$-coordinate has no intrinsic meaning. }\label{fig: Zzigzag}
\end{figure}
\begin{proof}[Proof of Proposition \ref{prop: non_ending_general_case}]
In the following proof,  we use $\partial^{\free}$ for the differential on $\bX^{\free}(P,K,\lambda)$ and $\partial$ for the differential on $\bX^{\diamond}(P,K,\lambda)$.
We proceed case by case.

\medskip
\noindent\textbf{Case \eqref{it: non_ending_general_case_1}: when $\eta_0, \eta_1>0$.} An example is depicted in Figure \ref{fig: prop_non_ending}\subref{subfig: non_ending_1}. 
Since this case does not involve $\bX^{\Tor}(P,K,\lambda)$,  the proof is comparatively simple.

We set \[\alpha_0 = a_0 | x'_{-\frac{1}{2}},  \qquad \alpha_1 =   a_0 \Big|  \sum_{i=1}^n x_{i - \frac{1}{2}}Z^{\ell (n-i)} +  a_1 | x'_{-\frac{1}{2}} Z^ \theta\] and  make the following claims:

\begin{enumerate}[label=(\alph*), ref=\alph*]
 \item \label{it: alpha0_cycle}$\partial \alpha_0 =0$;
    \item \label{it: Zn_claim}$\partial \alpha_1 =  \alpha_0 Z^{\ell n + \theta}$;
    \item \label{it: shortest_arrow_claim} $\{\alpha_0,\alpha_1\}$ generates a direct summand of the chain complex $\bX^{\diamond}(P,K,\lambda)$. 
\end{enumerate}

Claim \eqref{it: alpha0_cycle} follows from the fact that $\delta^1(a_0)=0$ in $\cX_{\lambda}(K)^{\hat\cK} $, and $m_{0|1|0}(x'_{-\frac{1}{2}})=0$.

Next, to prove Claim \eqref{it: Zn_claim}, we compute the differential of each term in $\alpha_1.$ 
 Since  $\delta^1(a_0)=0$, the only differentials each generator
 $ a_0 | x_{i-\frac{1}{2}}$ admits are $\Phi^{\mu}+\Phi^{-\mu} = \bI |( L_\sigma + L_\tau)$.      
 Recall from Lemma \ref{lem: C_S_structure_maps} that
\begin{equation}\label{eq: L_sigma+L_tau}
(L_\sigma + L_\tau) (x_{i-\frac{1}{2}})=\begin{cases}
    x'_{i-\frac{1}{2}} Z^{ \theta} +  x'_{i-\frac{3}{2}} Z^{\ell + \theta}  \qquad &i>0 \\
     \epsilon \cdot  \tilde{x}'_{i-\frac{1}{2}} Z^{\xi'_1 - \Xi} +  x'_{i-\frac{3}{2}} Z^{\kappa}  \qquad &i\leq 0
\end{cases}
\end{equation}
where $\epsilon\in \{0,1\}$ is nonzero if and only if $\Xi>0$.   Therefore
\begin{align}\label{eq: nonending_case_1_horizontal_diff}
    \partial (a_0 | x_{i-\frac{1}{2}}) &= a_0 | ( x'_{i-\frac{1}{2}} Z^{ \theta} + x'_{i-\frac{3}{2}} Z^{\ell + \theta} )   \qquad \text{for} \quad i \geq 1. 
    \end{align}
    
  Since $\delta^1(a_1)$ has coefficients in $\hat\cR$ (in particular, no components weighted by a non-zero multiple of $\sigma$ or $\tau$) \ and $m_{0|1|0}(x'_{-\frac{1}{2}})=0$,  $ a_1 | x'_{-\frac{1}{2}}$ admits only  length-$0$ differentials, which take the form of
  \[
       \begin{tikzcd}[column sep=0 cm, row sep=0.3 cm]
     \cCFK(K)^{\hat\cR}  \ar[d ]& \otimes& \scF^{\diamond}\ar[dd] \\
     \delta^1_{\hat\cR} \ar[dd]\ar[drr] \\
     & & m_{1|1|0}  \ar[d]  \\
     \cCFK(K)^{\hat\cR} &  \otimes  & \scF^{\diamond} 
	 \end{tikzcd}.
     \]   
    Since  $\delta^1(a_1) = a_0 Z^n$ and $m_{1|1|0}(Z^n,x'_{-\frac{1}{2}}) = x'_{n-\frac{1}{2}}$ we obtain
\begin{align*}    
    \partial  (a_1 | x'_{-\frac{1}{2}}) &= a_0 | x'_{n-\frac{1}{2}}. 
\end{align*} We  compute
\begin{align*}
    \partial \alpha_1 &=    \sum_{i=1}^n  \partial (a_0 | x_{i - \frac{1}{2}}) Z^{\ell (n-i)} + \partial ( a_1 | x'_{-\frac{1}{2}}) Z^ \theta\\
    &=     a_0 \Big|   \sum_{i=1}^n  \left( x'_{i - \frac{3}{2}}Z^{\ell (n-i)}\cdot Z^{\ell + \theta} +     x'_{i - \frac{1}{2}}Z^{\ell (n-i)}\cdot Z^{\theta}\right) +  a_0 |x'_{n-\frac{1}{2}}Z^ \theta\\
    &=  a_0 | x'_{-\frac{1}{2}}Z^{\ell n + \theta},
\end{align*}
which proves Claim \eqref{it: Zn_claim}.

To prove Claim \eqref{it: shortest_arrow_claim}, we perform a change of basis as follows (the changes in the second line apply only when $n>1$):  
\begin{align*}
a_0 | x_{n - \frac{1}{2}} &\mapsto \alpha_1 \\
   a_0 | x'_{i-\frac{1}{2}} &\mapsto a_0 | ( x'_{i-\frac{1}{2}} +   x'_{i-\frac{3}{2}} Z^{\ell } )    \qquad i= 1,\dots, n-1. 
\end{align*}
The remainder of the basis is unchanged. A schematic illustration of this change of basis is depicted in Figure \ref{fig: Zzigzag}. 

By Lemma \ref{lem: horizontal_chain}, in the resulting basis of $\bX^{\diamond}(P,K,\lambda)_{\bF[Z]}$, for $i=1,\dots, n-1$, the only arrow pointing to $a_0 |( x'_{i-\frac{1}{2}} +   x'_{i-\frac{3}{2}} Z^{\ell })$ comes from $a_0 | x_{i-\frac{1}{2}}$.
Since $\partial^2 =0$ and each $a_0 | x'_{i-\frac{1}{2}}$ is non-torsion,  there is no arrow pointing to $a_0 | x_{i-\frac{1}{2}}$ for $i=1,\dots,n-1$. Therefore, $\{a_0 | x_{i-\frac{1}{2}},a_0 | ( x'_{i-\frac{1}{2}} +   x'_{i-\frac{3}{2}} Z^{\ell }):i=1,\dots,n-1 \}$  generates a direct summand in $\bX^{\diamond}(P,K,\lambda)$. 
In the resulting basis,
the only arrow pointing to  $\alpha_0$ comes from $\alpha_1.$
Since $\partial^2 =0$ and  $\alpha_0$ is non-torsion,
it follows that  $\{\alpha_0,\alpha_1\}$ generates a direct summand in $\bX^{\diamond}(P,K,\lambda)$.

From the claims \eqref{it: alpha0_cycle}, \eqref{it: Zn_claim} and \eqref{it: shortest_arrow_claim}, we conclude that  $\Ord(P(K)) \geq \ell n + \theta.$

\medskip
\noindent\textbf{Case \eqref{it: non_ending_general_case_2}: when $\eta_0<0, \eta_1>0$.} An example is depicted in Figure \ref{fig: prop_non_ending}\subref{subfig: non_ending_2}. Since we have a sequence of consecutive arrows in one direction, by Lemma \ref{lem: snake_structure}, we may assume  $n > 1.$ 
\ \\
Compared to the previous case,  $\delta^1(a_0) $ now contains an additional term  $ a'_0 W^{-\eta_0}$. As a result,
each generator $a_0 | x_s$ and $a_0 | x'_s$ may additionally admit a length-$0$ differential of the form $a'_0 | m_{1|1|0}(W^{-\eta_0}, x_s)$ and $a'_0 | m_{1|1|0}(W^{-\eta_0}, x'_s)$, respectively. Since
\[m_{1|1|0}(W^{-\eta_0}, x_{i+\frac{1}{2}})= m_{1|1|0}(W^{-\eta_0}, x'_{i-\frac{1}{2}})=0\] 
for $i>0$ by \eqref{eq: scE_scF_actions},
$\partial (a_0 | x_{i+\frac{1}{2}})$ and $\partial (a_0 | x_{i-\frac{1}{2}})$ are unchanged for $i>0.$ 

When $i=0$,  we have $L_W(x_{\frac{1}{2}}) = x_{-\frac{1}{2}} Z^\omega$ by Lemma \ref{lem: C_S_structure_maps}. Again, by \eqref{eq: scE_scF_actions}, we have
\[m_{1|1|0}(W^{-\eta_0}, x_{\frac{1}{2}}) = m_{1|1|0}(W^{-\eta_0 -1}, L_W(x_{\frac{1}{2}})) = x_{\eta_0 + \frac{1}{2}} Z^\omega.\] 
Therefore
\begin{align} \label{eq: nonending_case2_leftmost}
    \partial (a_0 | x_{\frac{1}{2}}) &=  a_0 | x'_{\frac{1}{2}} Z^{ \theta} + a'_0 | x_{\eta_0 + \frac{1}{2}} Z^\omega + a_0 | x'_{-\frac{1}{2}} Z^{\ell + \theta}  \notag \\
&= a_0 | x'_{\frac{1}{2}} Z^{ \theta} + \left( a'_0|x_{\frac{1}{2}+\eta_0} +  a_0 | x'_{-\frac{1}{2}}Z^{\kappa}\right) \cdot Z^\omega 
\intertext{and}
\partial (a_0 | x'_{-\frac{1}{2}}) &= a'_0 | x'_{\eta_0 -\frac{1}{2}}. \notag
\end{align}
We consider two cases depending on the sign of $\omega - \theta =  \Xi +1$, as follows.
\begin{enumerate}[label=\roman*$)$]
    \item \label{it: eta0<0_eta_1>0_case_1} 
    \textbf{Suppose $\omega - \theta \leq 0.$} In this case, 
    we set 
  \[\alpha_0 = a_0 | x'_{\frac{1}{2}}, \qquad  \alpha_1 =    a_0 \Big|\sum_{i=1}^{n-1}  x_{i + \frac{1}{2}}Z^{\ell (n-1-i)} + a_1 | x'_{-\frac{1}{2}}Z^ \theta.\] 
  This  differs from what is depicted in Figure~\ref{fig: prop_non_ending}\subref{subfig: non_ending_2}. Instead, it is 
  essentially the same setting as in Case \eqref{it: non_ending_general_case_1}. We claim:
  \begin{enumerate}[label=(\alph*), ref=\alph*]
 \item $\partial \alpha_0 =0$;
    \item $\partial \alpha_1 =  \alpha_0 Z^{\ell (n-1) + \theta}$;
    \item \label{it: alpha0alpha1directsummand} $\{\alpha_0,\alpha_1\}$ generates a direct summand in $\bX^{\diamond}(P,K,\lambda)$. 
\end{enumerate} 
We leave the first two claims to the reader and prove
\eqref{it: alpha0alpha1directsummand} by a change of basis, as follows:
\begin{align*}
a_0 | x_{n - \frac{1}{2}} &\mapsto \alpha_1 \\
   a_0 | x'_{i-\frac{1}{2}} &\mapsto a_0 |( x'_{i-\frac{1}{2}} +   x'_{i-\frac{3}{2}} Z^{\ell } )    \qquad i= 2,\dots, n-1 \\
   a'_0|x_{\frac{1}{2}+\eta_0} &\mapsto  a'_0|x_{\frac{1}{2}+\eta_0} + a_0 | x'_{\frac{1}{2}} Z^{ \theta - \omega} +   a_0 | x'_{-\frac{1}{2}}Z^{\kappa}.
\end{align*}
The remainder of the basis is unchanged. In the resulting basis, the only arrow pointing to $\alpha_0$ is from $\alpha_1.$
    We conclude that  $\Ord(P(K)) \geq \ell (n-1) + \theta.$
    \item 
    \label{it: eta0<0_eta_1>0_case_2} 
   \textbf{Suppose $\omega - \theta > 0.$}
Set 
    \[\alpha_0 = a'_0|x_{\frac{1}{2}+\eta_0} +  a_0 | x'_{-\frac{1}{2}}Z^{\kappa}, \qquad \alpha_1 =     a_0 \Big| \sum_{i=1}^n x_{i - \frac{1}{2}}Z^{\ell (n-i)} + a_1 | x'_{-\frac{1}{2}}Z^ \theta\] and we make the following claims:
\begin{enumerate}[label=(\alph*), ref=\alph*]
 \item \label{it: alpha0_cycle_case_2}  $\partial \alpha_0 Z^{\Xi} = 0$;
    \item \label{it: Zn_claim_case_2} $\partial \alpha_1 =  \alpha_0 Z^{\ell (n-1) + \omega}$;
    \item \label{it: shortest_arrow_claim_case_2} $\{ \alpha_0, \alpha_1 \}$ generates a summand in $\bX^{\free}(P,K,\lambda)$. 
\end{enumerate}     

    We first prove Claim \eqref{it: alpha0_cycle_case_2} by considering all possible outgoing arrow from $ a'_0|x_{\frac{1}{2}+\eta_0}$.
    An outgoing arrow of $\Phi^{\pm K}$ requires that
  $a'_0$ be the starting or ending generator of the standard complex (note that $a'_0$ cannot be an ending generator because it is adjacent to a $W$-arrow). Since there are no outgoing $W$ arrows from $a_0'$, an outgoing length-$0$ differential would require that 
  the next arrow weighted by $Z$-powers be outgoing from $a'_0$ in $\cCFK(K)^{\hat\cR}$. In either case,
   by Lemma \ref{lem: snake_structure}, we have $\eta_0 \leq -2.$  Hence $\frac{1}{2}+\eta_0 < -1$, 
   and by \eqref{eq: scE_scJ_actions}, we have \[ m_{1|1|0}(Z,x_{\frac{1}{2}+\eta_0}) = m_{1|1|0}(\sigma,x_{\frac{1}{2}+\eta_0}) = 0. \]
   Therefore
   any length-$0$ differential and $\Phi^{\pm K}$ from $a'_0|x_{\frac{1}{2}+\eta_0}$ must vanish.  It follows that the only outgoing arrows from $a'_0|x_{\frac{1}{2}+\eta_0}$ are of the form $\bI | (L_\sigma + L_\tau)$, which were computed in \eqref{eq: L_sigma+L_tau}.  Suppose $\Xi = 0,$ it is straightforward to verify that $\partial \alpha_0 = 0$ and the claim holds.  If $\Xi > 0$, we compute
   \begin{align*}
       \partial \alpha_0 &= \partial ( a'_0|x_{\frac{1}{2}+\eta_0}) +  \partial (a_0 | x'_{-\frac{1}{2}}) Z^{\kappa}\\
       &=    a'_0|\tilde{x}'_{\frac{1}{2}  +\eta_0} Z^{\xi'_1 - \Xi}  +   a'_0|x'_{-\frac{1}{2}+\eta_0} Z^{\kappa}     +   a'_0|x'_{-\frac{1}{2}+\eta_0} Z^{\kappa}\\
       &=  a'_0|\tilde{x}'_{\frac{1}{2}  +\eta_0} Z^{\xi'_1 - \Xi}.
   \end{align*}
   Since $ a'_0|\tilde{x}'_{\frac{1}{2}+\eta_0} Z^{\xi'_1} = 0$ in $\cS^{\diamond}$,   Claim \eqref{it: alpha0_cycle_case_2} follows. 
   
We leave the verification of Claim \eqref{it: Zn_claim_case_2}  to the reader, which is similar to the previous case.

To prove Claim \eqref{it: shortest_arrow_claim_case_2},  we first observe that $a'_0|x_{\frac{1}{2}+\eta_0}$ admits no other incoming arrows than the one from $a_0|x_{\frac{1}{2}+\eta_0}$. Indeed, any such arrow must be a length-$0$ differential, which requires an incoming arrow to $a'_0$ weighted by $Z^j$ for some $j>0$ in $\cCFK(K)^{\hat \cR}$. Since $\frac{1}{2}+\eta_0-j < -1,$ by \eqref{eq: scE_scF_actions}, $m_{1|1|0}(Z^j, x_{\frac{1}{2}+\eta_0-j})=0$. Therefore any such arrow vanishes.

We perform a change of basis as follows:
\begin{align*}
    a'_0|x_{\frac{1}{2}+\eta_0} &\mapsto \alpha_0  \hspace{7em} a_0 | x_{n - \frac{1}{2}} \mapsto \alpha_1 \\
     a_0 | x'_{\frac{1}{2}} &\mapsto a_0 | x'_{\frac{1}{2}} + \alpha_0 Z^{\omega - \theta}\\
    a_0 | x'_{i-\frac{1}{2}} &\mapsto a_0 | (x'_{i-\frac{1}{2}} +  x'_{i-\frac{3}{2}} Z^{\ell })     \qquad i= 2,\dots, n-1.
\end{align*}
The remainder of the basis is unchanged. 

In the resulting basis,
by the observation earlier together with Lemma \ref{lem: horizontal_chain},  the only arrow pointing to $a_0 | x'_{\frac{1}{2}} + \alpha_0 Z^{\omega - \theta}$ comes from $a_0 | x_{\frac{1}{2}}$. It follows that $\{a_0 | x_{\frac{1}{2}}, a_0 | x'_{\frac{1}{2}} + \alpha_0 Z^{\omega - \theta}\}$ generates a direct summand in $\bX^{\free}(P,K,\lambda)$.
Also, similarly as before, for $i=2,\dots, n-1$, $\{a_0 | x_{i-\frac{1}{2}},a_0 |( x'_{i-\frac{1}{2}} +  x'_{i-\frac{3}{2}} Z^{\ell })\}$ generates a direct summand in $\bX^{\diamond}(P,K,\lambda)$.

In the resulting basis, the only arrow pointing to $\alpha_0$ comes from $\alpha_1.$
It follows that $\{ \alpha_0, \alpha_1 \}$ generates a summand in $\bX^{\free}(P,K,\lambda)$.

 Having proved all three claims, we now apply Lemma \ref{lem: torsion_order_criterion} to compute the torsion order. Letting $A= \bX^{\free}(P,K,\lambda), B= \bX^{\Tor}(P,K,\lambda)$ and $f=\partial$, we conclude that $\Ord(P(K)) \geq \ell(n-1)+ \omega - \Xi = \ell(n-1)+ \theta + 1.$ 
\end{enumerate}

\medskip
\noindent\textbf{Case \eqref{it: non_ending_general_case_3}: when $\eta_0>0, \eta_1<0$.} An example is depicted in Figure \ref{fig: prop_non_ending}\subref{subfig: non_ending_3}. Compared to Case \eqref{it: non_ending_general_case_1}, 
 $\delta^1(a_1) $ now contains an additional term  $ a'_1 W^{-\eta_1}$. As a result,
each generator $a_1 | x_s$ and $a_1 | x'_s$ may additionally admit a length-$0$ differential of the form $a'_1 | m_{1|1|0}(W^{-\eta_1}, x_s)$ and $a'_1 | m_{1|1|0}(W^{-\eta_1}, x'_s)$, respectively. 
When $i > 0,$ by \eqref{eq: scE_scF_actions}, we have
\[ m_{1|1|0}(W^{-\eta_1}, x_{i+\frac{1}{2}}) =  m_{1|1|0}(W^{-\eta_1}, x'_{i-\frac{1}{2}}) = 0. \] 
Therefore $\partial (a_1 | x_{i+\frac{1}{2}})$ and $\partial (a_1 | x'_{i-\frac{1}{2}})$ are unchanged for $i<0.$
When $i=0,$ we compute
\begin{align*}
\partial (a_1 | x_{\frac{1}{2}} )&=   a'_1 | x_{\eta_1 + \frac{1}{2}} Z^{\omega} +  a_1 | x'_{- \frac{1}{2}} Z^{\ell + \theta} \\
\partial (a_1 | x'_{- \frac{1}{2}}) &=      a'_1 | x'_{\eta_1 - \frac{1}{2}} + a_0 | x'_{n- \frac{1}{2}}.
\intertext{We also have}
\partial^{\free} (a'_1 | x_{\eta_1 + \frac{1}{2}}) &=   a'_1 | x'_{\eta_1 - \frac{1}{2}}  Z^{\kappa}.
\end{align*} 
Set   $\alpha_0 = a_0 | x'_{-\frac{1}{2}}$ and
\begin{align*}
     \alpha_1 &= \left(    a_0 \Big| \sum_{i=1}^n    x_{i - \frac{1}{2}} Z^{\ell (n-i)} + a_1 | x'_{-\frac{1}{2}}  Z^ \theta \right) Z^{\max\{\kappa - \theta, 0 \}} +  a'_1|x_{\eta_1+\frac{1}{2}}Z^{\max\{\theta - \kappa, 0 \}} 
\end{align*}
 where the terms in parentheses correspond to $\alpha_1$
 in Case \eqref{it: non_ending_braided_case_1}; the extra term at the end stems from the additional length-$0$ differential, and the power of $Z$ arises from straightforward algebraic reasons.
   We claim:
\begin{enumerate}[label=(\alph*), ref=\alph*]
 \item \label{it: alpha0_cycle_case_3}  $\partial \alpha_0  = 0$;
    \item \label{it: Zn_claim_case_3} $\partial^{\free} \alpha_1 =  \alpha_0 Z^{\ell n + \max\{\kappa, \theta \}}$;
    \item \label{it: shortest_arrow_claim_case_3} $\{ \alpha_0, \alpha_1 \}$ generates a summand in $\bX^{\free}(P,K,\lambda)$. 
\end{enumerate}  
Claim \eqref{it: alpha0_cycle_case_3} is immediate. For Claim \eqref{it: Zn_claim_case_3}, we compute
\begin{align*}
    \partial^{\free} \alpha_1 &= \left( \alpha_0 Z^{\ell n + \theta} + a'_1 | x'_{\eta_1 - \frac{1}{2}}  Z^{\theta}  \right) Z^{\max\{\kappa - \theta, 0 \}} + a'_1 | x'_{\eta_1 - \frac{1}{2}}  Z^{\kappa} \cdot Z^{\max\{\theta - \kappa, 0 \}} \\
    &= \alpha_0 Z^{\ell n + \max\{\kappa, \theta \}}.
\end{align*}
In order to prove Claim \eqref{it: shortest_arrow_claim_case_3}, we divide the argument into the following two cases depending on the sign of $\kappa - \theta$. We demonstrate the case when $\kappa - \theta \geq 0$; the case when $\kappa - \theta < 0$ is similar and left to the reader.

Perform the change of basis as follows:
\begin{align*}
    a'_1 | x_{\eta_1 + \frac{1}{2}} &\mapsto \alpha_1 \\
    a'_1 | x'_{\eta_1 - \frac{1}{2}} &\mapsto a'_1 | x'_{\eta_1 - \frac{1}{2}} + a_0 | x'_{n - \frac{1}{2}}\\
    a_0 | x'_{i-\frac{1}{2}} &\mapsto a_0 | (x'_{i-\frac{1}{2}} +  x'_{i-\frac{3}{2}} Z^{\ell })     \qquad i= 1,\dots, n.
\end{align*}
The remainder of the basis is unchanged. In the resulting basis of $\bX^{\diamond}(P,K,\lambda)$,  the only arrow  pointing to $\alpha_0$ comes from $\alpha_1.$ It follows that $\{ \alpha_0, \alpha_1 \}$ generates a summand in $\bX^{\free}(P,K,\lambda)$.

Having proved all the claims, by Lemma \ref{lem: torsion_order_criterion}, we conclude that $\Ord(P(K)) \geq \ell n + \max\{\kappa,\theta \}$.

\medskip
\noindent\textbf{Case \eqref{it: non_ending_general_case_4}: when $\eta_0, \eta_1<0$ and $n>1$.} An example is depicted in Figure \ref{fig: prop_non_ending}\subref{subfig: non_ending_4}.
\ \\
The proof follows from  a combination of the arguments in Case \eqref{it: non_ending_general_case_2} and \eqref{it: non_ending_general_case_3}.
We sketch out the main steps, leaving the details to the reader. As in Case \eqref{it: non_ending_general_case_2},
there are two cases depending on the sign of $\omega - \theta =  \Xi + 1$  as follows. 
\begin{enumerate}[label=\roman*$)$]
\item  \label{it: eta0<0_eta_1<0_case_1} \textbf{Suppose $\omega - \theta  \leq 0$}. This is the case where the assumption $n>1$ is necessary.  
Set 
\begin{align*}
\alpha_0 &= a_0|x'_{\frac{1}{2}},\\
\alpha_1 &= \left( a_0 \Big| \sum_{i=1}^{n-1}  x_{i + \frac{1}{2}} Z^{\ell (n-1-i)} + a_1|x'_{-\frac{1}{2}} Z^ \theta  \right) Z^{\max \{\kappa - \theta, 0 \} } + a'_1|x_{\eta_1+\frac{1}{2}} Z^{\max \{ \theta - \kappa, 0 \}}.
\end{align*}
Again this is different from what is depicted in Figure \ref{fig: prop_non_ending}\subref{subfig: non_ending_4} by omitting the leftmost portion. We have
\begin{enumerate}[label=(\alph*), ref=\alph*]
\item $\partial \alpha_0 = 0$;
    \item  $\partial^{\free} \alpha_1 =  \alpha_0 Z^{\ell (n-1) + \max\{\kappa, \theta \}}$;
    \item  $\{ \alpha_0, \alpha_1 \}$ generates a summand in $\bX^{\free}(P,K,\lambda)$ 
\end{enumerate}  
and by Lemma \ref{lem: torsion_order_criterion}, we conclude that $\Ord(P(K)) \geq \ell (n-1) + \max\{\kappa,\theta \}.$ 
\item  \label{it: eta0<0_eta_1<0_case_2} \textbf{Suppose $\omega - \theta > 0$.} Set
\begin{align*}
    \alpha_0 &= a'_0|x_{\eta_0 + \frac{1}{2}} +  a_0|x'_{-\frac{1}{2}}Z^\kappa, \\
    \alpha_1 &=  \left( a_0 \Big| \sum_{i=1}^{n}  x_{i - \frac{1}{2}} Z^{\ell (n-i)} + a_1|x'_{-\frac{1}{2}} Z^ \theta \right) Z^{\max \{\kappa-\theta, 0 \}} + a'_1|x_{\eta_1+\frac{1}{2}} Z^{\max \{\theta - \kappa, 0 \}}.
\end{align*}
Then we have
\begin{enumerate}[label=(\alph*), ref=\alph*]
\item $\partial \alpha_0 Z^{\Xi} = 0$;
    \item  $\partial^{\free} \alpha_1 =  \alpha_0 Z^{\ell (n-1) + \omega - \theta + \max\{\kappa, \theta \}}$;
    \item  $\{ \alpha_0, \alpha_1 \}$ generates a summand in $\bX^{\free}(P,K,\lambda)$. 
\end{enumerate}  
By Lemma \ref{lem: torsion_order_criterion}, we conclude that $\Ord(P(K)) \geq \ell (n-1) + \omega - \theta + \max\{\kappa,\theta \} - \Xi = \ell (n-1) + \max\{\kappa,\theta \} + 1.$ 

\end{enumerate}
\end{proof}
For braided L-space satellite patterns, the bounds in Proposition \ref{prop: non_ending_general_case} can be improved. 
The following proposition does not fully recover the bounds in the first half of \cite[Proposition 3.1]{HLPUnknotting} in Case \eqref{it: non_ending_braided_case_3} and \eqref{it: non_ending_braided_case_4}. Nevertheless, it suffices for the proof of the unknotting number bound on the iterated braided L-space satellite patterns stated in Theorem \ref{thm: iterate_braided}.
\begin{prop}\label{prop: non_ending_braided_case}
    Suppose there is a non-ending $Z^n$-arrow  in $\cCFK(K)$ for some positive integer $n$ and $P$ is a braided L-space satellite pattern.  
    \begin{enumerate} 
        \item \label{it: non_ending_braided_case_1} Type $(+,+)$: $\Ord(P(K)) \geq \ell n $;
        \item \label{it: non_ending_braided_case_2} Type $(-,+)$: $\Ord(P(K)) \geq \ell (n-1) + 1$;
         \item \label{it: non_ending_braided_case_3} Type $(+,-)$: $\Ord(P(K)) \geq \ell (n+1) - \omega  $;
         \item \label{it: non_ending_braided_case_4} Type $(-,-)$: $\Ord(P(K)) \geq \ell n -\omega + 1$
    \end{enumerate}
    where $\omega = R_{\frac{\ell}{2}} - R_{\frac{\ell}{2}-1}. $
\end{prop}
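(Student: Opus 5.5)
The plan is to specialize the proof of Proposition~\ref{prop: non_ending_general_case} using Lemma~\ref{lem: braided_property}. For a braided pattern we have $\theta=0$, $0<\omega\le \ell$ and $\Xi=\omega-1\ge 0$. Also $\kappa=\ell-\Xi-1=\ell-\omega$. For each of the four types we reuse the same generators $\alpha_0,\alpha_1$ and the same changes of basis, now reading off the exponents in terms of $\ell$ and $\omega$.

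\textbf{Types $(+,+)$ and $(-,+)$.} For type $(+,+)$, Case~\eqref{it: non_ending_general_case_1} gives $\Ord\ge \ell n+\theta=\ell n$ directly. For type $(-,+)$, the condition $\omega-\theta=\omega>0$ always holds, so we are in subcase~\ref{it: eta0<0_eta_1>0_case_2}. That subcase gives $\ell(n-1)+\theta+1=\ell(n-1)+1$. The only point to double-check is that Lemma~\ref{lem: snake_structure} still forces $n>1$ and $\eta_0\le -2$, which is independent of the pattern.

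\textbf{Types $(+,-)$ and $(-,-)$.} Here the general bound uses $\max\{\kappa,\theta\}=\kappa=\ell-\omega$.
\begin{itemize}
\item For type $(+,-)$, Case~\eqref{it: non_ending_general_case_3} gives $\ell n+\ell-\omega=\ell(n+1)-\omega$, which is exactly the claim.
\item For type $(-,-)$, subcase~\ref{it: eta0<0_eta_1<0_case_2} applies, since again $\omega-\theta>0$. It gives $\ell(n-1)+\kappa+1=\ell n-\omega+1$, as claimed.
\end{itemize}
The general proposition excluded $n=1$ in type $(-,-)$ only through subcase~\ref{it: eta0<0_eta_1<0_case_1}. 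That subcase is now vacuous, so the hypothesis $n>1$ can be dropped.

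\textbf{Main obstacle.} I must verify that the constructions in subcases~\ref{it: eta0<0_eta_1>0_case_2} and~\ref{it: eta0<0_eta_1<0_case_2} genuinely work when $n=1$. In that case the sum $\sum_{i=1}^n x_{i-\frac12}Z^{\ell(n-i)}$ reduces to the single term $x_{\frac12}$. I also need the claim that $\partial\alpha_0 Z^{\Xi}=0$.

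In type $(-,-)$ with $n=1$, I would recheck two things:
\begin{itemize}
\item that $\partial^{\free}\alpha_1=\alpha_0 Z^{\omega+\kappa}$;
\item that the change of basis isolating $\{\alpha_0,\alpha_1\}$ as a summand of $\bX^{\free}$ still goes through. The chain of generators $a_0|x'_{i-\frac12}$ is empty, and Lemma~\ref{lem: horizontal_chain} is only used at the endpoints.
\end{itemize}
The outgoing arrows from $a'_0|x_{\eta_0+\frac12}$ vanish because $\eta_0\le -2$. I would confirm that this argument did not use $n>1$. Granting this, Lemma~\ref{lem: torsion_order_criterion} with $j=\Xi$ gives $\ell n-\omega+1$.
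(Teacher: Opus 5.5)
Your proposal is correct and follows the paper's proof. Both arguments specialize Proposition~\ref{prop: non_ending_general_case} using $\theta=0$, $\Xi=\omega-1\ge 0$ and $\kappa=\ell-\omega$, read off types $(+,+)$ and $(+,-)$ directly, and note that for types $(-,+)$ and $(-,-)$ only subcase ii) can occur, which also removes the hypothesis $n>1$ in type $(-,-)$. The paper simply asserts that subcase ii) works when $n=1$, so your plan to recheck that case (where $a_0|x'_{\frac12}$ becomes the endpoint in Lemma~\ref{lem: horizontal_chain}, receiving the length-$0$ arrow from $a_1|x'_{-\frac12}$) is extra care rather than a different route.
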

\begin{proof}
For braided L-space patterns, $\theta=0, \Xi \geq 0$ and $\kappa = \ell - \omega + \theta = \ell - \omega$ by Definition \ref{def: shorthand} and Lemma \ref{lem: braided_property}.
The bounds  in Case \eqref{it: non_ending_general_case_1} and \eqref{it: non_ending_general_case_3} above directly follow from their counterparts in Proposition \ref{prop: non_ending_general_case}. We focus on the remaining cases.
    Compared to the proof of Proposition \ref{prop: non_ending_general_case},  certain subcases can now be excluded.

      Case \eqref{it: non_ending_general_case_2}: since $\Xi \geq 0,$   only subcase \ref{it: eta0<0_eta_1>0_case_2} is possible. We obtain $\Ord(P(K)) \geq \ell (n-1)  +1.$

Case \eqref{it: non_ending_general_case_4}: similar as before, only subcase \ref{it: eta0<0_eta_1<0_case_2} is possible. In particular, the assumption $n>1$ is no longer necessary.
      We obtain $\Ord(P(K)) \geq \ell (n-1)  +\kappa+1 = \ell n-\omega+1.$
\end{proof}

\subsection{Ending arrows}
Next, suppose that the ending arrow of the standard complex of $\cCFK(K)$ is weighted by $Z^n$ for some positive integer $n$. Note that this implies $\varepsilon(K) \neq 0$. Fixing a framing $\lambda \in \Z,$ we consider the satellite knot $P(K,\lambda).$  

In the case of cables \cite{HLPUnknotting}, using the fact that $u(-P(K))=u(P(K))$ and the fact that the set of cable patterns is closed under mirroring, it suffices to consider only the case that $\varepsilon (K) = 1$. However,  the set of L-space satellite operators is not closed under mirroring, therefore we need to consider the case when $\varepsilon(K)=1$ as well as when $\varepsilon(K)=-1$. 

When $\varepsilon(K) =1$, by definition there is an arrow weighted by $Z^n$ from $a_1$ to $a_0$, where $a_0$ is the ending generator of the standard complex. By the symmetry of knot Floer complex, we also have generators $a_{m-1}$ and $a_m$ in the standard complex, where $a_m$ is the starting generator, and
an arrow weighted by $W^n$ from $a_{m-1}$ to $a_m$. Denote by $a'_1$ the generator that precedes $a_1$ in the standard complex.
As before, we can designate a nonzero integer $\eta$, such that there is an arrow weighted by $W^{|\eta|}$ from $a'_1$ to $a_1$ if $\eta>0$ and from $a_1$ to $a'_1$ if $\eta<0.$
 When $\eta=-n$, it is possible that $a_m = a'_1$ and $a_{m-1} = a_1.$  
\begin{figure}[hbtp!]\captionsetup{width=\textwidth}
\begin{tikzpicture}[scale=0.85, transform shape]
 \node at (-4,2.8) {{\small$\varepsilon = 1,\eta>0$}};
 \node at (-4.4,2) [inner sep=0pt] {
 \begin{tikzcd}
    a_m & a_{m-1} \ar[l, "W^n"]
\end{tikzcd}
 };
 \node[rotate=-45] at (-3,1.3) {$\cdots$};
    \node at (-3.5,0) [inner sep=0pt] {
\begin{tikzcd}[column sep=1 cm, row sep=1 cm]
a_1 \ar[d, "Z^n"']   &  {\color{green!70!black} a'_1} \ar[l, color = green!70!black,  "W^{\eta}"]
\\
a_0 &
\end{tikzcd}
};
  \node at (3.5,2.8) {{\small$\varepsilon = 1,\eta<0$}};
 \node at (2,2) [inner sep=0pt] {
 \begin{tikzcd}
    a_m & a_{m-1} \ar[l, "W^n"]
\end{tikzcd}
 };
 \node[rotate=-45] at (3.3,1.3) {$\cdots$};
    \node at (4.6,0) [inner sep=0pt] {
\begin{tikzcd}[column sep=1 cm, row sep=1 cm]
{\color{green!70!black} a'_1}  
& a_1 \ar[d, "Z^n"] \ar[l, color = green!70!black,   "W^{-\eta}"] 
\\
&a_0 
\end{tikzcd}
};
\end{tikzpicture}
\end{figure}

When $\varepsilon(K) = -1$, there is an arrow weighted by $Z^n$ from $a_0$ to $a_1$, where $a_0$ is the ending generator of the standard complex (and by the symmetry an arrow weighted by $W^n$ from $a_m$ to $a_{m-1}$).  Define  the nonzero integer $\eta$ such that there is an arrow weighted by $W^{|\eta|}$ from $a'_1$ to $a_1$ if $\eta>0$ and from $a_1$ to $a'_1$ if $\eta<0.$ When $\eta=n$, it is possible that $a_m = a'_1$ and $a_{m-1} = a_1.$ 

\begin{figure}[hbtp!]\captionsetup{width=\textwidth}
\begin{tikzpicture}[scale=0.85, transform shape]
 \node at (-3.5,3) {{\small$\varepsilon = -1,\eta>0$}};
 \node at (-4.5,1.5) [inner sep=0pt] {
\begin{tikzcd}[column sep=1 cm, row sep=1 cm]
 a_0 \ar[d, "Z^n"]   &  
\\
   a_1  & {\color{green!70!black} a'_1} \ar[l, color = green!70!black,  "W^{\eta}"']
\end{tikzcd}
 };
 \node[rotate=-45] at (-3.3,0) {$\cdots$};
    \node at (-2,-0.5) [inner sep=0pt] { \begin{tikzcd}
     a_{m-1}  & a_{m} \ar[l, "W^n"']
\end{tikzcd}
};
  \node at (3.8,3) {{\small$\varepsilon = -1,\eta<0$}};
 \node at (3,1.5) [inner sep=0pt] {
 \begin{tikzcd}[column sep=1 cm, row sep=1 cm]
& a_0 \ar[d, "Z^n"]    
\\
{\color{green!70!black} a'_1}   &a_1 \ar[l, color = green!70!black,   "W^{-\eta}"'] 
\end{tikzcd}
 };
 \node[rotate=-45] at (2.5,0) {$\cdots$};
    \node at (4.2,-0.5) [inner sep=0pt] {\begin{tikzcd}
    a_{m-1}  & a_{m} \ar[l, "W^n"']
\end{tikzcd}
};
\end{tikzpicture}
\end{figure} 

We are now ready to state our result regarding $\Ord(P(K, \lambda))$ in the case of ending arrows.
\begin{prop}\label{prop: ending_arrow_general_case}
       Suppose the ending arrow of the standard complex in $\cCFK(K)$ is weighted by $Z^n$ for some $n>0$, and $\eta$ as defined above. Let $P$ be an L-space satellite pattern and $\lambda \in \Z$. 
\begin{itemize}
    \item 
       Suppose $\varepsilon (K) = 1.$
       \medskip
    \begin{enumerate}
        \item \label{it: ending_general_epsilon>0_case_1}
        If $\eta>0$, then 
        \begin{align*}
            \Ord(P(K, \lambda)) \geq \begin{cases}
            \ell (n -1) + \theta   \quad &\text{if} \quad  \lambda - 2\tau(K) < 0 \\
             \ell n  + \theta   \quad &\text{if} \quad  \lambda - 2\tau(K) \geq 0;
            \end{cases}
        \end{align*}
        \item \label{it: ending_general_epsilon>0_case_2} If $\eta<0$, 
        \begin{align*}
            \Ord(P(K, \lambda)) \geq \begin{cases}
            \ell (n-1) + \max\{\kappa, \theta \}   \quad &\text{if} \quad  \lambda - 2\tau(K) <0 \quad \text{and} \quad n>1\\
             \ell n  + \max\{\kappa, \theta \}   \quad &\text{if} \quad  \lambda - 2\tau(K) \geq 0.
            \end{cases}
            \end{align*}
    \end{enumerate}
    \item Suppose $\varepsilon (K) = -1.$
    \medskip
    \begin{enumerate}[start=3]
        \item \label{it: ending_general_epsilon<0_eta>0} If $\eta>0$, then
        \[
            \Ord(P(K, \lambda)) \geq          
             \begin{cases}
            \ell n + \max\{\kappa, \theta \}   \quad &\text{if} \quad  \lambda - 2\tau(K) < 0 \\
             \ell n  + \theta   \quad &\text{if} \quad  \lambda - 2\tau(K) \geq 0;
            \end{cases}
            \]
        \item \label{it: ending_general_epsilon<0_eta<0} If $\eta<0$, then
        \[
            \Ord(P(K, \lambda)) \geq    
            \begin{cases}
            \ell (n-1) + \max\{\kappa, \theta \}   \quad &\text{if} \quad  \lambda - 2\tau(K) < 0 \\
             \ell (n-1)  + \theta   \quad &\text{if} \quad  \lambda - 2\tau(K) \geq 0.
            \end{cases} 
            \]
    \end{enumerate}
    \end{itemize}
\end{prop}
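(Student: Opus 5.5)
The plan is to mimic the proof of Proposition~\ref{prop: non_ending_general_case}. We again work in $\bigoplus_s E^{\diamond}_{s,\frac{\ell-1}{2}}\oplus\bigoplus_s F^{\diamond}_{s,\frac{\ell-1}{2}}$ and use the free/torsion cone decomposition \eqref{eq:mapping-cone-free-tor}. We exhibit a cycle $\alpha_0$ (or a class with $\partial\alpha_0 Z^{\Xi}=0$) and a chain $\alpha_1$ in $\bX^{\free}(P,K,\lambda)$ with $\partial^{\free}\alpha_1=\alpha_0Z^N$, show by an explicit change of basis that $\{\alpha_0,\alpha_1\}$ spans a summand, and then conclude with Lemma~\ref{lem: torsion_order_criterion}. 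The new ingredient compared with the non-ending case is that one neighbour of the $Z^n$-arrow is replaced by the $\sigma/\tau$ structure map \eqref{eq: CFK_connecting_map}. Explicitly, $\delta^1_{\sigma,\tau}(a_0)=p\otimes T^{\lambda-2\tau(K)}\tau$, so the generators $a_0|x_s$ acquire $\Phi^{\pm K}$ arrows into $\bJ^{\diamond}$. By \eqref{eq: scE_scJ_actions} these are identity-like maps $\tau|1$ (or $\tau|L_W$ at $s=\frac12$) landing in $p|\scJ^{\diamond}_{s+\lambda-2\tau(K),\frac{\ell-1}{2}}$, shifted by the $T$-power. The sign of $\lambda-2\tau(K)$ therefore decides whether this shift moves the relevant $\bJ^{\diamond}$-targets into the range where they can be cancelled against other generators.

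\textbf{Case $\varepsilon(K)=1$.} Here $a_0$ is the target of the $Z^n$-arrow, and the role previously played by $a'_0$ is played by $p$. If $\lambda-2\tau(K)\ge0$, the $\Phi^{\pm K}$ arrows out of $a_0|x_{i-\frac12}$ for $1\le i\le n$ land on $p|(\cdot)$ generators. These are hit isomorphically by other generators, for example the $p|\scJ^{\diamond}$ copies coming from $a_m$ via $\sigma$, or the neighbouring $T^{\pm1}|1$ actions. After cancelling them, we can reuse the Case~\eqref{it: non_ending_general_case_1} choice
\[
\alpha_0=a_0|x'_{-\frac12},\qquad \alpha_1=a_0\Big|\sum_{i=1}^n x_{i-\frac12}Z^{\ell(n-i)}+a_1|x'_{-\frac12}Z^\theta .
\]
When $\eta<0$ we add the correction term $a'_1|x_{\eta+\frac12}Z^{\max\{\theta-\kappa,0\}}$ exactly as in Case~\eqref{it: non_ending_general_case_3}. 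This gives $\ell n+\theta$ and $\ell n+\max\{\kappa,\theta\}$ respectively. If $\lambda-2\tau(K)<0$, the term $a_0|x_{\frac12}$ is obstructed: its $\tau$-image is a generator that cannot be cancelled. So we drop the $i=1$ term and take $\alpha_0=a_0|x'_{\frac12}$ together with the truncated sum, as in subcase~\ref{it: eta0<0_eta_1>0_case_1} of Case~\eqref{it: non_ending_general_case_2}. This loses one factor of $\ell$ and gives $\ell(n-1)+\theta$, or $\ell(n-1)+\max\{\kappa,\theta\}$ when $\eta<0$. The hypothesis $n>1$ is needed so that the truncated sum is nonempty, exactly as in Case~\eqref{it: non_ending_general_case_4}.

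\textbf{Case $\varepsilon(K)=-1$.} The arrow now points from $a_0$ to $a_1$, so we swap the roles of $a_0$ and $a_1$ relative to the non-ending cases. Now $a_1$ receives both the $Z^n$-arrow and the $W^{|\eta|}$-arrow, which is exactly the configuration of Lemma~\ref{lem: horizontal_chain}. We take $\alpha_0=a_1|x'_{-\frac12}$ (shifted to $a_1|x'_{\frac12}$ when $\eta<0$, losing one $\ell$). The chain $\alpha_1$ is a zigzag in $a_1|x_{i-\frac12}$ ending with $a_0|x'_{-\frac12}Z^{\theta}$. The $\tau$ output of $a_0$ now acts on $a_0|x'$-type and $a_0|x$-type generators. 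When $\lambda-2\tau(K)<0$, the $\Phi^{-K}$ image of $a_0|x_{\eta+\frac12}$-type terms is not cancellable, so we add a $\kappa$-weighted correction exactly like the $a'_1$ term in Case~\eqref{it: non_ending_general_case_3}. This produces $\max\{\kappa,\theta\}$. When $\lambda-2\tau(K)\ge0$, the correction is absent and we get $\theta$.

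\textbf{Main obstacle.} The delicate step is the bookkeeping of the $\Phi^{\pm K}$ arrows from $a_0|x_s$ and $a_m|x_s$ into $p|\scJ^{\diamond}$ and $p|\scM^{\diamond}$, and showing that after a change of basis they do not point into $\alpha_0$ or feed extra terms into $\partial\alpha_1$. This is where the dichotomy on $\lambda-2\tau(K)$ arises, so I would first work out the grid picture of Section~\ref{subsec: diagrammatic} for $\bJ^{\diamond}$ with the $T^{\lambda-2\tau(K)}$ shift. With $\alpha_0,\alpha_1$ in hand, Lemma~\ref{lem: torsion_order_criterion} applied to $A=\bX^{\free}$, $B=\bX^{\Tor}$ finishes each case.
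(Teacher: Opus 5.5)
The overall framework matches the paper: the free/torsion cone \eqref{eq:mapping-cone-free-tor}, a pair $\alpha_0,\alpha_1$ split off by a change of basis, Lemma~\ref{lem: torsion_order_criterion}, and cancelling the $\bJ^\diamond,\bM^\diamond$ targets against $\Phi^K$ from $a_m$ (the paper's acyclic quotient $Y$). However, two steps fail as stated.

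\textbf{The subcase $\omega>\theta$.} You take $\alpha_0=a|x'_{\frac12}$ with the truncated sum for $\varepsilon(K)=1$, $\lambda-2\tau(K)<0$, and for $\varepsilon(K)=-1$, $\eta<0$. In those cases the generator $a|x_{\frac12}$ has boundary
\[
\partial(a_0|x_{\frac12})=a_0|x'_{\frac12}Z^{\theta}+\bigl(p|w_{\frac12+\lambda-2\tau(K)}+a_0|x'_{-\frac12}Z^{\kappa}\bigr)Z^{\omega}
\]
For $\varepsilon(K)=-1$, replace $a_0$ by $a_1$ and $p|w_{\frac12+\lambda-2\tau(K)}$ by $a'_1|x_{\eta+\frac12}$. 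If $\omega\le\theta$, you can absorb the $Z^\theta$ term into the bracketed element by a change of basis; this is the paper's subcase i). If $\omega>\theta$, the $Z^\theta$ arrow into your $\alpha_0$ cannot be removed, and it is shorter than the arrow from $\alpha_1$. So $\{\alpha_0,\alpha_1\}$ is not a summand. This always happens for braided patterns, since there $\theta=0<\omega$. The paper instead takes $\alpha_0$ to be the bracketed element and $\alpha_1$ the full sum, including $a|x_{\frac12}Z^{\ell(n-1)}$. Because $L_\sigma(\mathbf{x}^{\ell/2-1}_0)$ can be the nonzero torsion element $\mathbf{x}'_1Z^{\xi'_1-\Xi}$, this $\alpha_0$ only satisfies $\partial\alpha_0\,Z^{\Xi}=0$. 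Lemma~\ref{lem: torsion_order_criterion} is then applied with $j=\Xi=\omega-\theta-1$, yielding your bounds plus $1$.

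\textbf{The case $\varepsilon(K)=-1$, $\lambda-2\tau(K)=0$.} Your claim that the correction disappears once $\lambda-2\tau(K)\ge0$ fails at $0$. Here $\Phi^{-K}(a_0|x'_{-\frac12})=p|w'_{-\frac12}\neq 0$, so $\partial\alpha_1$ picks up $p|w'_{-\frac12}Z^\theta$.

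For $\varepsilon(K)=1$ this generator lies in the acyclic subcomplex. For $\varepsilon(K)=-1$ it does not: the starting generator $a_m$ has an outgoing $W^n$-arrow to $a_{m-1}$. Hence $a_m|y'_{-\frac12}$ and $a_m|y_{\frac12}$ carry length-$0$ differentials into $a_{m-1}|(\cdot)$, and the acyclic piece only contains $p|w'_{i-\frac12}$ for $i\ge 1$. The naive $\kappa$-correction by $p|w_{\frac12}$ is also unavailable, since $p|w_{\frac12}$ is hit with weight $1$ by $\Phi^K(a_m|y_{\frac12})$.

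The paper's fix uses the starting end of the standard complex in the $t=\frac{\ell-3}{2}$ layer. It adds $a_m|y'_{-\frac12}Z^{\max\{\kappa',\theta\}}+a_{m-1}|y_{-n+\frac12}Z^{\max\{\theta-\kappa',0\}}$ to $\alpha_1$, rescaling the old part by $Z^{\max\{\kappa'-\theta,0\}}$. This uses
\[
\partial(a_m|y'_{-\frac12})=a_{m-1}|y'_{-n-\frac12}+p|w'_{-\frac12},\qquad \partial^{\free}(a_{m-1}|y_{-n+\frac12})=a_{m-1}|y'_{-n-\frac12}Z^{\kappa'},
\]
with $\kappa'=R_{\frac\ell2-2}-g_3(P)+\frac\ell2$. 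The result is $\ell n+\max\{\kappa',\theta\}\ge \ell n+\theta$, and analogously for $\eta<0$. This ingredient is missing from your plan.

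\textbf{Minor points.} $\Phi^{-K}$ on $E^\diamond_{*,\frac{\ell-1}{2}}$ lands in $J^\diamond_{*,\frac{\ell-3}{2}}$, which consists of copies of $\cC^\diamond_{\frac\ell2-1}$. Also, for $\varepsilon(K)=-1$, $\lambda-2\tau(K)<0$, the uncancellable term is $\Phi^{-K}(a_0|x'_{-\frac12})=p|w'_{\lambda-2\tau(K)-\frac12}$. It is corrected by $p|w_{\lambda-2\tau(K)+\frac12}$, not by anything involving $x_{\eta+\frac12}$.
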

\begin{rem}\label{rem:exclude-T23} The case where $\veps(K)=1$, $\eta<0$ does not completely cover the case that $n=1$. The only case where we cannot apply either Proposition~\ref{prop: ending_arrow_general_case} or Proposition~\ref{prop: non_ending_general_case} is where the longest arrow in $\cCFK(K)^{\hat{\cR}}$ has length 1 (i.e. $n=1$) and $\cCFK(K)^{\hat{\cR}}$ has no local system complexes, i.e. $\cCFK(K)^{\hat{\cR}}$ coincides with $\cCFK(T_{2,3})^{\hat{\cR}}$. This case is studied in Theorem~\ref{thm: tor_ord_T_23}. 
\end{rem}
The proof follows the same strategy as before. We construct a pair of free generators $\alpha_0$ and $\alpha_1$ in $\bX^{\diamond}(P,K,\lambda)_{\bF[Z]}$ together with an arrow of the desired length from $\alpha_1$ to $\alpha_0$, then apply Lemma \ref{lem: torsion_order_criterion} to bound the torsion order.
  
The generators  $\alpha_0$ and $\alpha_1$
will be constructed in the following portion of the complex:
\begin{equation} \label{eq: ending_EFJM}
\begin{tikzcd}[column sep=1.3cm, row sep=0.8cm]
E^{\diamond}_{*,\frac{\ell-3}{2}}
  \ar[r,"\Phi^{\pm\mu}"{description}]
  \ar[d,"\Phi^{K}"{description}]
&
F^{\diamond}_{*,\frac{\ell-3}{2}}
  \ar[d,"\Phi^{K}"{description}]\\
J^{\diamond}_{*,\frac{\ell-3}{2}}
  \ar[r,"\Phi^{\pm\mu}"{description}]
&
M^{\diamond}_{*,\frac{\ell-3}{2}}\\
E^{\diamond}_{*,\frac{\ell-1}{2}}
  \ar[r,"\Phi^{\pm\mu}"{description}]
  \ar[u,"\Phi^{-K}"{description}]
&
F^{\diamond}_{*,\frac{\ell-1}{2}}
  \ar[u,"\Phi^{-K}"{description}]
\end{tikzcd}
\end{equation}
where each of these complexes is obtained by tensoring $\cX_{\lambda}(K)^{\hat{\cK}}$ with the corresponding  $\scE^{\diamond}_{*,*},\scF^{\diamond}_{*,*}, \scJ^{\diamond}_{*,*}$ and $\scM^{\diamond}_{*,*}$, respectively.

We now extend the shorthand from Definition~\ref{def: xs-shorthand}.
Recall that
  \begin{equation}\label{eq: scE-s-def2}
      \scE^{\diamond}_{s,\frac{\ell-3}{2}} = \begin{cases}
          \cC^{\diamond}_{\frac{\ell}{2}-2} \quad &s<0 \\
          \cC^{\diamond}_{\frac{\ell}{2}-1} \quad &s>0
      \end{cases},
  \end{equation} while  each $\scJ^{\diamond}_{s,\frac{\ell-3}{2}}$ is a copy of $\cC^{\diamond}_{\frac{\ell}{2}-1}$, and each $\scF^{\diamond}_{s,\frac{\ell-3}{2}}$ or  $\scM^{\diamond}_{s,\frac{\ell-3}{2}}$ a copy of $\cS^{\diamond}$. 
 \begin{define} \label{def: ys-shorthand}
  We denote the free generator  $\xs^t_0 \in \scE^{\diamond}_{s,\frac{\ell-3}{2}}$ by $y_s$, where $t = \frac{\ell}{2}-2$ or $\frac{\ell}{2}-1$ depending on $s$ as in Equation~\eqref{eq: scE-s-def2}. Similarly, denote the  generator  $\xs^t_1 \in \scE^{\diamond}_{s,\frac{\ell-3}{2}}$ by $\tilde{y}_s$ if it exists.
  
  Denote the free generator  $\xs'_0 \in \scF^{\diamond}_{s,\frac{\ell-3}{2}} = \cS^{\diamond}$ by $y'_s$ and the generator $\xs'_1 \in \scF^{\diamond}_{s,\frac{\ell-3}{2}} = \cS^{\diamond}$ by $\tilde{y}'_s$ if it exists.
  
  Denote the free generator  $\xs^{\ell/2-1}_0 \in \scJ^{\diamond}_{s,\frac{\ell-3}{2}}=\cC^{\diamond}_{\frac{\ell}{2}-1}$ by $w_s$ and
  the generator $\xs^{\ell/2-1}_1 \in \scJ^{\diamond}_{s,\frac{\ell-3}{2}} = \cC^{\diamond}_{\frac{\ell}{2}-1}$ by $\tilde{w}_s$ if it exists. 
  
  Denote the free generator  $\xs'_0 \in \scM^{\diamond}_{s,\frac{\ell-3}{2}} = \cS^{\diamond}$ by $w'_s$ and the generator $\xs'_1 \in \scM^{\diamond}_{s,\frac{\ell-3}{2}} = \cS^{\diamond}$ by $\tilde{w}'_s$ if it exists. 
 \end{define} 

The following diagram describes the maps $f^{\pm \mu}$ and $f^{\pm K}$ restricted to the free generators in this portion of the complex.
It is obtained by combining  \eqref{eq: scE_scJ_actions} and the proof of Lemma \ref{lem: C_S_structure_maps}. The maps $f^{\pm \mu}$ are depicted roughly horizontally, the maps $f^{K}$ point vertically downwards and the maps $f^{-K}$ point vertically upwards. 
\begin{equation}\label{eq: f_mu_and_K_EFJM}
\begin{tikzcd}[labels=description,column sep=0.9cm, row sep=1cm] \scE^{\diamond} \ar[r, "f^{\pm \mu}"]\ar[d, "f^{\pm K}"] &  \scF^{\diamond}
	 	\ar[d, "f^{\pm K}"]
	 	\\ 
	 	\scJ^{\diamond}  \ar[r, "f^{\pm \mu}"] & \scM^{\diamond}
	 \end{tikzcd}
	 \hspace{0.1cm}
	 \supset
	 \hspace{-.1cm}\begin{tikzcd}[column sep=1.1 cm, row sep=0cm]
	 	&[-1.1 cm]
	 	& 
	 	&[-0.2 cm] {\color{gray!80} \cdots}
	 	& 
        &
        {\color{gray!80} \tilde{y}'_\frac{1}{2}}
	 	&[-1cm] 
	 	\\[0.1cm]
        \cdots
	 	&[-1.1 cm]
	 	 y'_{-\frac{3}{2}}
	 	& y_{-\frac{1}{2}}
        \ar[d,  color=gray!80, dashed, "\sigma|L_Z"]
        \ar[ur, color=gray!80, dashed, "L_\sigma"'] 
	 	\ar[l,  "L_\tau"'] 	
	 	&[-0.2 cm] y'_{-\frac{1}{2}}
         \ar[dd, bend left, out=35,in=135,   "\sigma|1"]
	 	& y_{\frac{1}{2}}
         \ar[ur, color=gray!80, dashed, pos=0.2,"Z^{\xi'_1 - \Xi}"'{xshift=-2pt,yshift=5pt}] 
        \ar[l, "Z^{\kappa}"]
	 	 \ar[dd, bend left, pos=0.4, "\sigma|1"]
        &
        y'_\frac{1}{2}
         \ar[dd, bend left, pos=0.4, "\sigma|1"]
	 	&[-1cm] \cdots
	 	\\[0.5cm]
	 	&
	 	& {\color{gray!80} \cdots}
	 	& {\color{gray!80} \tilde{w}'_{-\frac{1}{2}}}
	 	& 
        &
        {\color{gray!80} \tilde{w}'_\frac{1}{2}}
	 	&[-1.5cm] 
        \\[0.1cm]
	 	\cdots
	 	&
	 	 w'_{-\frac{3}{2}}
	 	& w_{-\frac{1}{2}}
        \ar[ur, color=gray!80, dashed, pos=0.2, "Z^{\xi'_1 - \Xi}"'{xshift=-2pt,yshift=5pt}] 
	 	\ar[l,  "Z^\kappa"'] 	
	 	& w'_{-\frac{1}{2}}
	 	& w_{\frac{1}{2}}
        \ar[ur, color=gray!80, dashed,pos=0.2,  "Z^{\xi'_1 - \Xi}"'{xshift=-2pt,yshift=5pt}] 
        \ar[l, "Z^{\kappa}"]
        &
        w'_\frac{1}{2}
	 	&[-1.5cm] \cdots
        \\[1cm]
        \cdots
	 	&
	 	 x'_{-\frac{3}{2}}
          \ar[u,  "\tau|1"]
	 	& x_{-\frac{1}{2}}
        \ar[dr, color=gray!80, dashed, pos=0.8,"Z^{\xi'_1 - \Xi}"'{xshift=-2pt,yshift=5pt}] 
        \ar[u,  "\tau|1"]
	 	\ar[l,  "Z^{\kappa}"']  	
	 	& x'_{-\frac{1}{2}}
         \ar[u,  "\tau|1"]
	 	& x_{\frac{1}{2}}
         \ar[r,  "Z^{\theta}"']
        \ar[l, "Z^{\ell + \theta}"]
	 	 \ar[u,  "\tau|Z^ \omega "]
        &
        x'_\frac{1}{2}
	 	& \cdots
        \\[0.1cm]
	 	&
	 	& 
	 	& {\color{gray!80} \tilde{x}'_{-\frac{1}{2}}}
	 	& 
        &
	 	& 
	 \end{tikzcd}
\end{equation}
The gray dashed arrows point to $\bF[Z]$-torsion elements which only exist under appropriate conditions.

When tensoring with $\cX_{\lambda}(K)^{\hat \cK}$, elements in the middle row  are tensored with the single generator $p$ in the idempotent $1$ part of $\cX_{\lambda}(K)^{\hat \cK}$ ; elements in the top and bottom rows are tensored with generators in the idempotent $0$ part of $\cX_{\lambda}(K)^{\hat \cK}$. See Figure \ref{fig: shifting_f_pm_K} for an example when $\lambda - 2\tau(K)=-1$. The arrows of $\Phi^{K}$ (replacing $f^{K}$) still point vertically downwards whereas the end points of
the arrows of $\Phi^{- K}$ (replacing $f^{-K}$) are shifted by $\lambda - 2 \tau(K)$ units horizontally.
More precisely, we have
\[\Phi^{\pm K}=
	 \hspace{-.3cm}
     \begin{tikzcd}[column sep=0cm, row sep=0.5 cm]
     \cCFK(K)^{\hat \cR}  \ar[d ]& \boxtimes& \scE^{\diamond}\oplus \scF^{\diamond}\ar[dd]\\
     \delta^1_{\sigma,\tau} \ar[dd]\ar[drr]& &&\\
     & & f^{\pm K} \ar[d] \\
     \langle p \rangle \otimes {\bF[U,T,T^{-1}]} &  \boxtimes  & \scJ^{\diamond}\oplus \scM^{\diamond}  
	 \end{tikzcd}\]
and 
\[\delta^1_{\sigma,\tau}(a_0) =  p \otimes T^{\lambda - 2\tau(K)}\tau,\qquad  \delta^1_{\sigma,\tau}(a_m) =  p \otimes \sigma. \]
Therefore, using \eqref{eq: scE_scJ_actions} for the description of $f^{\pm K}: \scE^{\diamond}\to \scJ^{\diamond}$ (and a similar one from $\scF^{\diamond}$ to $\scM^{\diamond}$, see \cite{CZZApp}*{Figures~5.1}), we compute
\begin{align}\label{eq: Phi-K_arrow}
       \Phi^{-K} (a_0|x_i) &= \begin{cases}
              p|w_{i+\lambda - 2\tau(K)} \quad &i<0\\
             p|w_{i+\lambda - 2\tau(K)} Z^ \omega  \quad &i=\frac{1}{2}\\
            0 \quad &i>1
        \end{cases}\hspace{3em}
        \Phi^{-K} (a_0|x'_i) = \begin{cases}
              p|w'_{i+\lambda - 2\tau(K)} \quad &i<0\\
            0 \quad &i>0
        \end{cases},
\intertext{and}\label{eq: PhiK_arrow}
         \Phi^{K} (a_m|y_i) &= \begin{cases}
              0 \hspace{3 em} &i<-1\\
            \epsilon \cdot p|\tilde{w}_{-\frac{1}{2}} Z^{q}  \quad &i=-\frac{1}{2}\\
             p|w_{i} \quad &i>0
        \end{cases}
        \hspace{5em}
        \Phi^{K} (a_m|y'_i) = \begin{cases}
              0 \quad &i<-1\\
             p|w'_{i} \quad &i>-1
        \end{cases},
\end{align}
where $q=\xi^{\ell/2-1}+1-R_{\frac{\ell}{2}-1}+R_{\frac{\ell}{2}-2}$ and $\epsilon \in \{0,1\}$ is nonzero if and only if $R_{\frac{\ell}{2}-1}-R_{\frac{\ell}{2}-2}>1$.
This follows since
$\Phi^{K}(a_m|y_{-\frac{1}{2}})= p| L_Z(y_{-\frac{1}{2}}) = p|L_Z(\xs^{\frac{\ell}{2}-2}_0)$ 
by the preceding discussion, and $L_Z(\xs^{\frac{\ell}{2}-2}_0)$ is obtained by an argument analogous to that used in the proof of Lemma~\ref{lem: C_S_structure_maps}, together with Equation~\eqref{eq: gradingxs}.  Therefore, we have the following: 

 \begin{lem} \label{lem: middlerow_chain}
Under the basis of $\bX^{\diamond}(P,K,\lambda)$ given by Equation~\eqref{eq: CFK_PK_Z},  the only incoming arrows to $p|w_{i+\frac{1}{2}}$ are the arrows of $\Phi^K$ if $i \geq 0$, and $\Phi^{-K}$ if $i \leq \lambda - 2 \tau(K)$; the only incoming arrows to $p|w'_{i-\frac{1}{2}}$ are the arrow of $\Phi^{-\mu}$ from   $p|w_{i+\frac{1}{2}}$,
the arrow of $\Phi^K$ if $i \geq 0$, and the arrow of $\Phi^{-K}$ if $i \leq \lambda - 2 \tau(K)$.
 \end{lem}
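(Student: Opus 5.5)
The plan is to enumerate every component of the differential on $\bX^{\diamond}(P,K,\lambda)$ whose target lies in the idempotent-$1$ part $\langle p\rangle\boxtimes(\scJ^{\diamond}\oplus\scM^{\diamond})$. I will sort these components by hypercube length and read off which ones can hit the free generators $p|w_s$ and $p|w'_s$.

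\textbf{Length-$0$ differentials.} I first rule these out. A length-$0$ arrow into $p|w_s$ or $p|w'_s$ would have to come from $\delta^1(p)$ paired with an $m_{1|1|0}$ action of $\bF[T,T^{-1}]$ on $\scJ^{\diamond}$ or $\scM^{\diamond}$ (the $T|1$ arrows in \eqref{eq: scE_scJ_actions}). For a knot in $S^3$, the idempotent-$1$ part of $\cX_\lambda(K)^{\hat\cK}$ is the single generator $p$ with $\delta^1(p)=0$. Moreover, by Lemma~\ref{lem:X-diamond-non-A_infty-bimodule}, the internal differentials of $\cC^{\diamond}_t$ and $\cS^{\diamond}$ vanish, so $m_{0|1|0}=0$. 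Hence no length-$0$ arrows enter these generators.

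\textbf{Length-$1$ arrows.} These are the only remaining possibility, since the model has no diagonal maps from $\bE^{\diamond}$ to $\bM^{\diamond}$. They come from two sources.

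\emph{(i) The arrows $\Phi^{\pm K}$ from $\bE^{\diamond}$ and $\bF^{\diamond}$.} These are nonzero only on generators of the form $a_0|\cdot$ and $a_m|\cdot$, because $\delta^1_{\sigma,\tau}$ is supported on $a_0$ and $a_m$ by \eqref{eq: CFK_connecting_map}. Their values are recorded in \eqref{eq: Phi-K_arrow} and \eqref{eq: PhiK_arrow}.
\begin{itemize}
\item $\Phi^{K}(a_m|y_i)$ has a free component $p|w_i$ exactly when $i>0$. Setting $s=i+\tfrac12$, this means $p|w_{s}$ is hit by $\Phi^K$ precisely when $i\ge0$.
\item $\Phi^{-K}(a_0|x_j)$ has a free component $p|w_{j+\lambda-2\tau(K)}$ precisely when $j\le \tfrac12$. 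This includes $j=\tfrac12$, where the coefficient is $Z^{\omega}$. Writing $j+\lambda-2\tau(K)=i+\tfrac12$, the condition $j\le\tfrac12$ becomes $i\le\lambda-2\tau(K)$.
\item The case $i=-\tfrac12$ for $\Phi^K$ lands only on the torsion generator $\tilde w_{-1/2}$, so it contributes no arrow to a free generator.
\item The same bookkeeping with $\Phi^{\pm K}(a_m|y'_i)$ and $\Phi^{\pm K}(a_0|x'_i)$ gives the $\Phi^{\pm K}$ arrows into $p|w'_{i-1/2}$, under the same conditions on $i$.
\end{itemize}

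\emph{(ii) The arrows $\Phi^{\pm\mu}$ along the edge $\bJ^{\diamond}\to\bM^{\diamond}$.} Here $p$ stays fixed and the maps are $\bI|L_\sigma$ and $\bI|L_\tau$, since the $\scJ^{\diamond}\to\scM^{\diamond}$ edge mirrors $\scE^{\diamond}\to\scF^{\diamond}$. Because every $\scJ^{\diamond}_{s,(\ell-3)/2}$ equals $\cC^{\diamond}_{\ell/2-1}$, Lemma~\ref{lem: C_S_structure_maps} applies on each summand:
\begin{itemize}
\item $L_\tau(\xs_0^{\ell/2-1})=\xs'_0Z^{\kappa}$, which gives the arrow $p|w_{i+1/2}\to p|w'_{i-1/2}$;
\item $L_\sigma(\xs_0^{\ell/2-1})$ lies in the torsion part $\tilde{\xs}'_1$, so it contributes no arrow to a free generator.
\end{itemize}

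\textbf{Main obstacle.} The delicate step is the bookkeeping in (i), for two reasons. First, $\Phi^{-K}$ is shifted horizontally by $\lambda-2\tau(K)$ through the factor $T^{\lambda-2\tau(K)}$, and I must check that the sign conditions in \eqref{eq: scE_scJ_actions} translate into exactly the inequalities $i\ge0$ and $i\le\lambda-2\tau(K)$. Second, I must confirm that no other combination of $\sigma$ or $\tau$ outputs lands on a free generator; the boundary values $s=\pm\tfrac12$ in particular need checking.

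I would verify both points directly against \eqref{eq: scE_scJ_actions} and the analogous figure for $\scF^{\diamond}\to\scM^{\diamond}$ in \cite{CZZApp}*{Figure~5.1}.
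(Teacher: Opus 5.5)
Your proposal is correct and follows the same route as the paper. Both arguments use $\delta^1(p)=0$ to rule out everything except two sources: the $\Phi^{\pm K}$ arrows, read off from Equations~\eqref{eq: Phi-K_arrow} and \eqref{eq: PhiK_arrow}, and the $\Phi^{\pm\mu}$ edge $\bJ^{\diamond}\to\bM^{\diamond}$, handled via Lemma~\ref{lem: C_S_structure_maps} using that $L_\sigma(\xs_0^{\ell/2-1})$ is torsion and $L_\tau(\xs_0^{\ell/2-1})=\xs_0'Z^{\kappa}$. Your translation into the conditions $i\ge 0$ and $i\le \lambda-2\tau(K)$ is right, though you reuse the letter $i$ for both the half-integer index in \eqref{eq: PhiK_arrow} and the integer in the statement, which you should clean up.
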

\begin{proof}
Since $\delta^1(p)=0,$ the only incoming arrows to $p|w_{i+\frac{1}{2}}$ are the arrows of $\Phi^{\pm K}$ and  the only incoming arrows to $p|w'_{i-\frac{1}{2}}$ are the arrows of $\Phi^{\pm K}$ and  $\Phi^{\pm \mu}$. 
    The statements regarding $\Phi^{\pm K}$ follow from Equations~\eqref{eq: Phi-K_arrow} and \eqref{eq: PhiK_arrow}. The statement regarding $\Phi^{\pm\mu}$ is obtained by combining   \cite{CZZApp}*{Figures~5.2} and Lemma \ref{lem: C_S_structure_maps}.
\end{proof} 
The length-$0$ differentials, although not depicted in Figure \ref{fig: shifting_f_pm_K},  can also be fully understood using the top and bottom row of Equation~\eqref{eq: scE_scF_actions}.  We only focus on elements of the form $a_m | y_s $ and $a_m | y'_{s-1} $ for $s>0$, namely those mapping to a free generator under $\Phi^K$.
It is straightforward to verify that 
an incoming or outgoing length-$0$ differential exists for such an element  
if and only if $\varepsilon(K) = -1$ and $s=\frac{1}{2}$. 
 In this case,  there is an outgoing length-$0$ differential from $a_{m}|y'_{-\frac{1}{2}}$ to $a_{m-1}|y'_{-\frac{1}{2}-n}$ weighted by $1$  and  an outgoing length-$0$ differential from $a_{m}|y_{\frac{1}{2}}$ to $a_{m-1}|y_{\frac{1}{2}-n}$ weighted by $Z^{R_{\ell/2-1}-R_{\ell/2-2}}$. The coefficient $Z^{R_{\ell/2-1}-R_{\ell/2-2}}$ is that of $L_W(y_{\frac{1}{2}})=L_W(\xs_0^{\ell/2-1})$, which can be computed using a similar argument as in the proof of Lemma \ref{lem: C_S_structure_maps}.
\begin{figure}[h]
\begin{equation*}
\begin{tikzcd}[column sep=1.1 cm, row sep=0cm]
&[-1.1 cm]
	 	& 
	 	&[-0.2 cm] {\color{gray!80} \cdots}
	 	& 
        &
        {\color{gray!80} a_m|\tilde{y}'_\frac{1}{2}}
	 	&[-1cm] 
	 	\\[0.1cm]
	 	\cdots
	 	&[-1.1 cm]
	 	 a_m |y'_{-\frac{3}{2}}
	 	& a_m |y_{-\frac{1}{2}}
         \ar[d,  color=gray!80, dashed, "L_Z"]
        \ar[ur, color=gray!80, dashed, "L_{\sigma}"'] 
	 	\ar[l,  "L_\tau"'] 	
	 	&[-0.2 cm] a_m |y'_{-\frac{1}{2}}
         \ar[dd, bend left, out=40,in=130,  "1"]
	 	& a_m |y_{\frac{1}{2}}
        \ar[ur, color=gray!80, dashed, pos=0.2,"Z^{\xi'_1 - \Xi}"'{xshift=-2pt,yshift=5pt}] 
        \ar[l, "Z^{\kappa}"]
	 	 \ar[dd, bend left, pos=0.48,  "1"]
        &
        a_m |y'_\frac{1}{2}
         \ar[dd, bend left, out=35,in=135,  "1"]
	 	&[-1cm] \cdots
	 	\\[0.5cm]
	 	&
	 	& {\color{gray!80} \cdots}
	 	& {\color{gray!80} p|\tilde{w}'_{-\frac{1}{2}}}
	 	& 
        &
        {\color{gray!80} p|\tilde{w}'_\frac{1}{2}}
	 	&[-1.5cm] 
        \\[0.1cm]
	 	\cdots
	 	&
	 	 p|w'_{-\frac{3}{2}}
	 	& p|w_{-\frac{1}{2}}
        \ar[ur, color=gray!80, dashed, pos=0.2,"Z^{\xi'_1 - \Xi}"'{xshift=-2pt,yshift=5pt}] 
	 	\ar[l,  "Z^\kappa"'] 	
	 	& p|w'_{-\frac{1}{2}}
	 	& p|w_{\frac{1}{2}}
        \ar[ur, color=gray!80, dashed, pos=0.2,"Z^{\xi'_1 - \Xi}"'{xshift=-2pt,yshift=5pt}] 
        \ar[l, "Z^{\kappa}"]
        &
        p|w'_\frac{1}{2}
	 	&[-1.5cm] \cdots
        \\[1cm]
        \cdots
	 	&
	 	 a_0 |x'_{-\frac{3}{2}}
	 	& a_0 |x_{-\frac{1}{2}}
        \ar[dr, color=gray!80, dashed, pos=0.8,"Z^{\xi'_1 - \Xi}"'{xshift=-2pt,yshift=5pt}] 
	 	\ar[l,  "Z^{\kappa}"'] 
	 	& a_0 |x'_{-\frac{1}{2}}
         \ar[llu,  "1"]
        \ar[llu, " 1", gray, transform canvas={shift={(-2cm,0cm)}},shorten >=0.3cm]
        \ar[llu,  -, gray, transform canvas={shift={(-4.5 cm,0cm)}},shorten >=2.8 cm]
	 	& a_0 |x_{\frac{1}{2}}
         \ar[r,  "Z^{ b}"']
        \ar[l, "Z^{\ell  + \theta}"]
	 	 \ar[llu,  "Z^ \omega "]
        &
        a_0 |x'_\frac{1}{2}
	 	& \cdots
        \\[0.1cm]
	 	&
	 	& 
	 	& {\color{gray!80} a_0|\tilde{x}'_{-\frac{1}{2}}}
	 	& 
        &
	 	& 
	 \end{tikzcd}
    \end{equation*} 
    \caption{ When $\lambda - 2\tau(K)=-1$. Top row: $\bigoplus_s E^{\diamond}_{s,\frac{\ell-3}{2}} \oplus F^{\diamond}_{s,\frac{\ell-3}{2}}$. Middle row: $\bigoplus_s J^{\diamond}_{s,\frac{\ell-3}{2}} \oplus M^{\diamond}_{s,\frac{\ell-3}{2}}$. Bottom row: $\bigoplus_s E^{\diamond}_{s,\frac{\ell-1}{2}} \oplus F^{\diamond}_{s,\frac{\ell-1}{2}}$. Arrows from the top to the middle row represent $\Phi^{K}$ and  arrows from the bottom to the middle row represent $\Phi^{-K}$. }
    \label{fig: shifting_f_pm_K} 
\end{figure}

\begin{figure}[hbtp!]
\captionsetup{width=\textwidth}
\subfigure[When $\varepsilon(K)=1, \eta>0$ and $\lambda-2\tau(K) = -1.$]{
\begin{tikzpicture}[scale=0.5]   
\begin{scope}
	\foreach \k in {0,8}{
	\foreach \j in {-5,5,15}{
		\fill[gray!20] (\j+-1,1.5+\k) rectangle (\j+1.5,0.5+\k);
		\fill[gray!20] (\j+-1,2.5+\k) rectangle (\j+2.5,1.5+\k);
		\fill[gray!20] (\j+-1,3+\k) rectangle (\j+3,2.5+\k);
		\fill[gray!20] (\j+-1,0.5+\k) rectangle (\j+0.5,-0.5+\k);
		\fill[gray!20] (\j+-1,-0.5+\k) rectangle (\j+-0.5,-1.5+\k);
	}
	\foreach \j in {-10,-5,...,10,15}
	{\foreach \i in {0,...,3}
		{\draw[thin, black!20!white]  (\j+\i-0.5, 3+\k) -- (\j+\i-0.5, -2+\k);}
		\foreach \i in {-1,...,3}
		{ \draw[thin, black!20!white]  (\j+3, \i-0.5+\k) -- (\j-1, \i-0.5+\k); }}
	\foreach \j in {-10,0,10}
	{\draw[thin, black!60!white] (\j-1, 2.5+\k) -- (\j+2.5, 2.5+\k);
		\draw[thin, black!60!white] (\j+2.5, 2.5+\k) -- (\j+2.5, -2+\k);
		\draw[thin, black!60!white] (\j+1.5, 1.5+\k) -- (\j+1.5, -2+\k);
		\draw[thin, black!60!white]  (\j+0.5, 0.5+\k) -- (\j+0.5, -2+\k);
		\draw[thin, black!60!white]  (\j+1.5, 1.5+\k) -- (\j-1, 1.5+\k);
		\draw[thin, black!60!white] (\j+0.5, 0.5+\k) -- (\j-1, 0.5+\k);
		\draw[thin, black!60!white] (\j-1, -0.5+\k) -- (\j-0.5, -0.5+\k);
		\draw[thin, black!60!white] (\j-0.5, -0.5+\k) -- (\j-0.5, -2+\k);
		\draw[thin, black!60!white] (\j-0.5, -1.5+\k) -- (\j-0.5, -2+\k);}}
\end{scope}  

 \foreach \k in {8}{
 \foreach \j in {-1,0,1}
     { \foreach \i in {0,1}
     {
     {\filldraw ({\j*10+1+5*\i}, 4.5) circle (2pt) node[] (o\i\j) {};}
     \filldraw ({\j*10+1+5*\i+2}, -\j-\i+\k+1) circle (2pt)
     node[] (am-1\i\j) {};
     \filldraw ({\j*10+1+5*\i}, -\j-\i+\k+1) circle (2pt) 
     node[] (am\i\j) {};
     \draw[-stealth] ($(am-1\i\j)$) -- ($(am\i\j)+(0.1,0)$);
     \ifnum \j>-1 
     \draw[color=brown, -stealth]   ($(am\i\j)$) to node[midway, right, xshift=-2pt]{{\tiny $1$}} ($(o\i\j)+(0,0.1)$);
     \fi
     }
     \ifnum\j>-1            \draw[red, bend left=20, -stealth] ($(am1\j)+ (-0.1,-0.1)$) to node[midway, above] {{\tiny $Z^\kappa$}} ($(am0\j) + (0.1,-0.1)$);
     \else
     \draw[red, bend left=20, -stealth] ($(am1\j)+ (-0.1,-0.1)$) to node[midway, below] {{\tiny $L_\sigma$}} ($(am0\j) + (0.1,-0.1)$); \fi
      \draw[red, bend left=10, -stealth] ($(o1\j)+ (-0.1,-0.1)$) to node[pos=0.7, above] {{\tiny $Z^\kappa$}} ($(o0\j) + (0.1,-0.1)$);
	   }}   
       \draw[color=brown,densely dashed, -stealth]   ($(am1-1)$) to node[midway,  right, xshift=-2pt]{{\tiny $L_Z$}} ($(o1-1)+(0,0.1)$);
                \draw[red, bend right=5, densely dashed, -stealth] ($(o1-1)+ (0.2,0)$) to node[pos=0.5, above] {{\tiny $Z^{\xi'_1 -\Xi}$}} ($(o00) + (-0.1,-0.1)$);
                \draw[red, bend right=5, densely dashed, -stealth] ($(o10)+ (0.2,0)$) to node[pos=0.5, above] {{\tiny $Z^{\xi'_1 -\Xi}$}} ($(o01) + (-0.1,-0.1)$);
                \draw[red, bend right=15, densely dashed, -stealth] ($(am1-1)+ (0.2,0)$) to node[pos=0.5, below] {{\tiny $L_\tau$}} ($(am00) + (-0.1,-0.1)$);
                \draw[red, bend right=15, densely dashed, -stealth] ($(am10)+ (0.2,0)$) to node[pos=0.5, below] {{\tiny $Z^{\xi'_1 -\Xi}$}} ($(am01) + (-0.1,-0.1)$);
      \foreach \j in {-1,0,1}
     { \foreach \i in {0,1}
     {\filldraw ({\j*10+1+5*\i}, 2-\j-\i) circle (2pt)
     node[] (a1\i\j) {};
     \filldraw ({\j*10+1+5*\i}, -\j-\i) circle (2pt) 
     node[] (a0\i\j) {};
\ifnum\j<1        \ifnum \i=1         \draw[densely dashed,-stealth] ($(a1\i\j)$) -- ($(a0\i\j)+(0,0.1)$);      \else \draw[-stealth] ($(a1\i\j)$) -- ($(a0\i\j)+(0,0.1)$); \fi \else \draw[-stealth] ($(a1\i\j)$) -- ($(a0\i\j)+(0,0.1)$); \fi
     \filldraw [color=green!70!black] ({\j*10+2+5*\i}, 2-\j-\i) circle (2pt) node[] (a1'\i\j) {};
     \draw [color=green!70!black, -stealth] ($(a1'\i\j)$) -- ($(a1\i\j)+(0.1,0)$);
     }
     \draw[red, bend left=20, -stealth] ($(a01\j)+ (-0.1,-0.1)$) to node[midway, below] {{\tiny $Z^{\ell+\theta}$}} ($(a00\j) + (0.1,-0.1)$);
	   }    
        \draw[color=brown, bend left=40, -stealth]   ($(a00-1)$) to node[midway, left, xshift=2pt]{{\tiny $1$}} ($(o0-1)+(0,-0.1)$);
         \draw[color=brown, bend left=20, -stealth]   ($(a01-1)$) to node[midway, left, xshift=2pt]{{\tiny $Z^ \omega $}} ($(o1-1)+(0,-0.1)$);
     \draw[red, bend right=20, -stealth] ($(a010) + (0.1,-0.1)$)  to node[midway, below] {{\tiny $Z^ \theta$}} ($(a001)+ (-0.1,-0.1)$);
     \draw[red, bend right=20, -stealth] ($(a01-1) + (0.1,-0.1)$)  to node[midway, below] {{\tiny $Z^ \theta$}} ($(a000)+ (-0.1,-0.1)$);
    \node [left] at (a00-1) {{\tiny$a_0$}};
    \node [above] at (a10-1) {{\tiny$a_1$}};
    \node [color=green!70!black,right] at (a1'0-1) {{\tiny$a'_1$}};
    \node [left] at (o0-1) {{\tiny$p|w'_{-\frac{3}{2}}$}};
     \node [above left] at (o1-1) {{\tiny$p|w_{-\frac{1}{2}}$}};
\node [left] at (am0-1) {{\tiny$a_m$}};
\node [below] at (am-10-1) {{\tiny$a_{m-1}$}};
    
     \node [] at ($(a00-1)+(.4,1)$) {{\tiny$U^2$}};
     \node [] at ($(a01-1)+(.7,1)$) {{\tiny$UL_Z$}};
     \node [] at ($(a000)+(.3,1)$) {{\tiny$U$}};
     \node [] at ($(a010)+(.5,1)$) {{\tiny$L_Z$}};
     \node [] at ($(a001)+(.3,1)$) {{\tiny$1$}};
     \node [] at ($(a011)+(.3,1)$) {{\tiny$1$}};
     \node [color=green!70!black] at ($(a10-1)+(0.6,-0.3)$) {{\tiny$1$}};
      \node [color=green!70!black] at ($(a11-1)+(0.6,0.3)$) {{\tiny$1$}};
      \node [color=green!70!black] at ($(a100)+(0.6,0.3)$) {{\tiny$1$}};
      \node [color=green!70!black] at ($(a110)+(0.6,0.3)$) {{\tiny$Z^{\omega}$}};
      \node [color=green!70!black] at ($(a101)+(0.8,0.3)$) {{\tiny$U$}};
      \node [color=green!70!black] at ($(a111)+(0.8,0.3)$) {{\tiny$U$}};
     \node [] at ($(am0-1)+(1,.3)$) {{\tiny$U$}};
     \node [] at ($(am1-1)+(1,.3)$) {{\tiny$UL_W$}};
     \node [] at ($(am00)+(0.75,.3)$) {{\tiny$U^2$}};
     \node [] at ($(am10)+(0.75,.3)$) {{\tiny$U$}};
     \node [] at ($(am01)+(0.75,.3)$) {{\tiny$U^2$}};
     \node [] at ($(am11)+(0.75,.3)$) {{\tiny$U$}};   
     \draw[blue] 
     ($(a00-1)+(-5pt,-5pt)$) rectangle ($(a00-1)+(5pt,5pt)$);
     \draw[blue] 
     ($(o1-1)+(-5pt,-5pt)$) rectangle ($(o1-1)+(5pt,5pt)$);
    \draw[violet] (a01-1) circle (5pt);
    \draw[violet] (a010) circle (5pt);
    \draw[violet] (a101) circle (5pt);
     \draw[yellow!50!gray, very thick, dashed]
        (18,5.5)-- (17,5.5)--(17,8)-- (15,8)-- (15,5.5)-- (12,5.5)-- (12,9)-- (10,9) -- (10,5.5) -- (7,5.5)-- (7,9)-- (5,9)-- (5,5.5)-- (2,5.5) -- (2,10) -- (0,10) -- (0,3.6) -- (18,3.6);
\end{tikzpicture}   
\label{subfig: prop_ending_epsilon>0_eta>0}
}
\subfigure[When $\varepsilon(K)=1, \eta<0$ and $\lambda-2\tau(K) = -1.$]{
\begin{tikzpicture}[scale=0.5]
\begin{scope}
	\foreach \k in {0,8}{
		\foreach \j in {-5,5,15}{
			\fill[gray!20] (\j+-1,1.5+\k) rectangle (\j+1.5,0.5+\k);
			\fill[gray!20] (\j+-1,2.5+\k) rectangle (\j+2.5,1.5+\k);
			\fill[gray!20] (\j+-1,3+\k) rectangle (\j+3,2.5+\k);
			\fill[gray!20] (\j+-1,0.5+\k) rectangle (\j+0.5,-0.5+\k);
			\fill[gray!20] (\j+-1,-0.5+\k) rectangle (\j+-0.5,-1.5+\k);
		}
		\foreach \j in {-10,-5,...,10,15}
		{\foreach \i in {0,...,3}
			{\draw[thin, black!20!white]  (\j+\i-0.5, 3+\k) -- (\j+\i-0.5, -2+\k);}
			\foreach \i in {-1,...,3}
			{ \draw[thin, black!20!white]  (\j+3, \i-0.5+\k) -- (\j-1, \i-0.5+\k); }}
		\foreach \j in {-10,0,10}
		{\draw[thin, black!60!white] (\j-1, 2.5+\k) -- (\j+2.5, 2.5+\k);
			\draw[thin, black!60!white] (\j+2.5, 2.5+\k) -- (\j+2.5, -2+\k);
			\draw[thin, black!60!white] (\j+1.5, 1.5+\k) -- (\j+1.5, -2+\k);
			\draw[thin, black!60!white]  (\j+0.5, 0.5+\k) -- (\j+0.5, -2+\k);
			\draw[thin, black!60!white]  (\j+1.5, 1.5+\k) -- (\j-1, 1.5+\k);
			\draw[thin, black!60!white] (\j+0.5, 0.5+\k) -- (\j-1, 0.5+\k);
			\draw[thin, black!60!white] (\j-1, -0.5+\k) -- (\j-0.5, -0.5+\k);
			\draw[thin, black!60!white] (\j-0.5, -0.5+\k) -- (\j-0.5, -2+\k);
			\draw[thin, black!60!white] (\j-0.5, -1.5+\k) -- (\j-0.5, -2+\k);}}
\end{scope}       
 \foreach \k in {8}{
 \foreach \j in {-1,0,1}
     { \foreach \i in {0,1}
     {
     {\filldraw ({\j*10+1+5*\i}, 4.5) circle (2pt) node[] (o\i\j) {};}
     \filldraw ({\j*10+1+5*\i+2}, -\j-\i+\k+1) circle (2pt)
     node[] (am-1\i\j) {};
     \filldraw ({\j*10+1+5*\i}, -\j-\i+\k+1) circle (2pt) 
     node[] (am\i\j) {};
     \draw[-stealth] ($(am-1\i\j)$) -- ($(am\i\j)+(0.1,0)$);
     \ifnum\j>-1
     \draw[color=brown, -stealth]   ($(am\i\j)$) to node[midway, right, xshift=-2pt]{{\tiny $1$}} ($(o\i\j)+(0,0.1)$);
     \fi
     }
     \ifnum\j=1            \draw[red, bend left=20, -stealth] ($(am1\j)+ (-0.1,-0.1)$) to node[midway, above] {{\tiny $Z^\kappa$}} ($(am0\j) + (0.1,-0.1)$);
     \else
     \draw[red, bend left=20, -stealth] ($(am1\j)+ (-0.1,-0.1)$) to node[midway, below] {{\tiny $Z^\kappa$}} ($(am0\j) + (0.1,-0.1)$); \fi
      \draw[red, bend left=10, -stealth] ($(o1\j)+ (-0.1,-0.1)$) to node[midway, above] {{\tiny $Z^\kappa$}} ($(o0\j) + (0.1,-0.1)$);
	   }}   
                \draw[red, bend right=5, densely dashed, -stealth] ($(o1-1)+ (0.2,0)$) to node[pos=0.5, above] {{\tiny $Z^{\xi'_1 -\Xi}$}} ($(o00) + (-0.1,-0.1)$);
                \draw[red, bend right=5, densely dashed, -stealth] ($(o10)+ (0.2,0)$) to node[pos=0.5, above] {{\tiny $Z^{\xi'_1 -\Xi}$}} ($(o01) + (-0.1,-0.1)$);
                   \draw[red, bend right=15, densely dashed, -stealth] ($(am1-1)+ (0.2,0)$) to node[pos=0.5, below] {{\tiny $Z^{\xi'_1 -\Xi}$}} ($(am00) + (-0.1,-0.1)$);
                \draw[red, bend right=15, densely dashed, -stealth] ($(am10)+ (0.2,0)$) to node[pos=0.5, below] {{\tiny $Z^{\xi'_1 -\Xi}$}} ($(am01) + (-0.1,-0.1)$);
      \foreach \j in {-1,0,1}
     { \foreach \i in {0,1}
     {\filldraw ({\j*10+1+5*\i}, 2-\j-\i) circle (2pt)
     node[] (a1\i\j) {};
     \filldraw ({\j*10+1+5*\i}, -\j-\i) circle (2pt) 
     node[] (a0\i\j) {};
\ifnum\j<1        \ifnum \i=1         \draw[densely dashed,-stealth] ($(a1\i\j)$) -- ($(a0\i\j)+(0,0.1)$);      \else \draw[-stealth] ($(a1\i\j)$) -- ($(a0\i\j)+(0,0.1)$); \fi \else \draw[-stealth] ($(a1\i\j)$) -- ($(a0\i\j)+(0,0.1)$); \fi
     \filldraw [color=green!70!black] ({\j*10+5*\i}, 2-\j-\i) circle (2pt) node[] (a1'\i\j) {};
     \draw [color=green!70!black, -stealth] ($(a1\i\j)$) -- ($(a1'\i\j)+(0.1,0)$);
     }
      \draw[color=brown,densely dashed, -stealth]   ($(am1-1)$) to node[midway,  right, xshift=-2pt]{{\tiny $L_Z$}} ($(o1-1)+(0,0.1)$);
     \draw[red, bend left=20, -stealth] ($(a01\j)+ (-0.1,-0.1)$) to node[midway, below] {{\tiny $Z^{\ell+\theta}$}} ($(a00\j) + (0.1,-0.1)$);
	   }    
        \draw[color=brown, bend right=60, -stealth]   ($(a00-1)$) to node[midway, right, xshift=-2pt]{{\tiny $1$}} ($(o0-1)+(0,-0.1)$);
         \draw[color=brown, bend right=20, -stealth]   ($(a01-1)$) to node[midway, right, xshift=-2pt]{{\tiny $Z^ \omega $}} ($(o1-1)+(0,-0.1)$);
     \draw[red, bend right=20, -stealth] ($(a010) + (0.1,-0.1)$)  to node[midway, below] {{\tiny $Z^ \theta$}} ($(a001)+ (-0.1,-0.1)$);
     \draw[red, bend right=20, -stealth] ($(a01-1) + (0.1,-0.1)$)  to node[midway, below] {{\tiny $Z^ \theta$}} ($(a000)+ (-0.1,-0.1)$);
    \node [left] at (a00-1) {{\tiny$a_0$}};
    \node [right] at ($(a10-1)+(-.1,.2)$) {{\tiny$a_1$}};
    \node [color=green!70!black,left] at (a1'0-1) {{\tiny$a'_1$}};
    \node [left] at (o0-1) {{\tiny$p|w'_{-\frac{3}{2}}$}};
     \node [above left] at (o1-1) {{\tiny$p|w_{-\frac{1}{2}}$}};
\node [left] at (am0-1) {{\tiny$a_m$}};
\node [below] at (am-10-1) {{\tiny$a_{m-1}$}};
    
     \node [] at ($(a00-1)+(-.4,1)$) {{\tiny$U^2$}};
     \node [] at ($(a01-1)+(-.7,1)$) {{\tiny$UL_Z$}};
     \node [] at ($(a000)+(.3,1)$) {{\tiny$U$}};
     \node [] at ($(a010)+(.5,1)$) {{\tiny$L_Z$}};
     \node [] at ($(a001)+(.3,1)$) {{\tiny$1$}};
     \node [] at ($(a011)+(.3,1)$) {{\tiny$1$}};
    \node [color=green!70!black] at ($(a10-1)+(-0.4,0.3)$) {{\tiny$1$}};
      \node [color=green!70!black] at ($(a11-1)+(-0.4,0.3)$) {{\tiny$1$}};
      \node [color=green!70!black] at ($(a100)+(-0.4,0.3)$) {{\tiny$1$}};
      \node [color=green!70!black] at ($(a110)+(-0.4,0.3)$) {{\tiny$1$}};
      \node [color=green!70!black] at ($(a101)+(-0.4,0.3)$) {{\tiny$1$}};
      \node [color=green!70!black] at ($(a111)+(-0.4,-0.3)$) {{\tiny$Z^{\omega}$}};
     \node [] at ($(am0-1)+(1,.3)$) {{\tiny$U$}};
     \node [] at ($(am1-1)+(1,.3)$) {{\tiny$UL_W$}};
     \node [] at ($(am00)+(0.75,.3)$) {{\tiny$U^2$}};
     \node [] at ($(am10)+(0.75,.3)$) {{\tiny$U$}};
     \node [] at ($(am01)+(0.75,.3)$) {{\tiny$U^2$}};
     \node [] at ($(am11)+(0.75,.3)$) {{\tiny$U$}};   
     \draw[blue] 
     ($(a00-1)+(-5pt,-5pt)$) rectangle ($(a00-1)+(5pt,5pt)$);
     \draw[blue] 
       ($(o1-1)+(-5pt,-5pt)$) rectangle ($(o1-1)+(5pt,5pt)$);
    \draw[violet] (a01-1) circle (5pt);
    \draw[violet] (a010) circle (5pt);
    \draw[violet] (a101) circle (5pt);
    \draw[violet] (a1'11) circle (5pt);
    \draw[red, densely dashed, in=165, out=85, -stealth] ($(a1'11)+ (0.1,0.1)$) to  ($(a1'11)+ (2.5,0.7)$);
    \draw[red, bend right=5, -stealth] ($(a111)+ (-0.1,0.1)$) to node[pos=0.5, above] {{\tiny $Z^{\ell+\theta}$}} ($(a101) + (0.1,0)$);
       \draw[red, bend left=5, -stealth] ($(a1'11)+ (-0.1,0)$) to node[pos=0.3, below] {{\tiny $Z^{\kappa}$}} ($(a1'01) + (0.1,-0.1)$);
        \draw[yellow!50!gray, very thick, dashed]
        (18,5.5)-- (17,5.5)--(17,8)-- (15,8)-- (15,5.5)-- (12,5.5)-- (12,9)-- (10,9) -- (10,5.5) -- (7,5.5)-- (7,9)-- (5,9)-- (5,5.5)-- (2,5.5) -- (2,10) -- (0,10) -- (0,3.6) -- (18,3.6);
\end{tikzpicture}   

\label{subfig: prop_ending_epsilon>0_eta<0}
}    
    \caption{
    A schematic illustration  of the proof of Proposition \ref{prop: ending_arrow_general_case}  when $n=2$ and $\omega-\theta>0$, drawn in the form of the diagram in Equation~\eqref{eq: ending_EFJM}.
    In each diagram, the solid dots in the middle row represent the free generators, alternating between those in
    $J^{\diamond}_{s,\frac{\ell-3}{2}}$ and $M^{\diamond}_{s,\frac{\ell-3}{2}}$, from left to right in  increasing order of $s$. We label the first two to provide a reference point. The horizontal alignment of the three rows is chosen so that the arrows of $\Phi^{K}$ and $\Phi^{-K}$ point vertically downwards and upwards, respectively. The region enclosed by the dashed line indicates the portion that can be truncated. As before, dotted arrows point to torsion elements, and
    the element $\alpha_0$ (resp.~$\alpha_1$) is defined as a linear combination of the boxed (resp.~circled) generators.
    }
    \label{fig: prop_ending_epsilon>0}
\end{figure}

We divide the proof of Proposition \ref{prop: ending_arrow_general_case} into two parts, depending on the sign of $\varepsilon(K).$
\begin{proof}[Proof of Proposition \ref{prop: ending_arrow_general_case} in the case $\varepsilon(K) = 1$] 
 We introduce a truncation to simplify the arguments. Let
 \[
Y:=\bigoplus_{s\in \bZ_{\geq 0}} a_m|\big(\scE_{s+\frac{1}{2},\frac{\ell-3}{2}} \oplus \scF_{s-\frac{1}{2},\frac{\ell-3}{2}}\big) \oplus p|\big(\scJ_{s+\frac{1}{2},\frac{\ell-3}{2}} \oplus \scM_{s-\frac{1}{2},\frac{\ell-3}{2}}\big).
\]
We now check that $Y$ is a subcomplex. Firstly, the length-0 differentials vanish on $Y$ since $a_m$ is a cycle in $\cCFK(K)^{\hat{\cR}}$ and similarly $\delta^1(p)=0$. We now check that $Y$ is preserved by $\Phi^{\pm \mu}$ and $\Phi^{\pm K}$. Since $f^{\mu}(\scE_{s+\frac{1}{2},\frac{\ell-3}{2}}) \subset \scF_{s+\frac{1}{2},\frac{\ell-3}{2}}$ and $f^{-\mu}(\scE_{s+\frac{1}{2},\frac{\ell-3}{2}}) \subset \scF_{s-\frac{1}{2},\frac{\ell-3}{2}}$, we have $\Phi^{\pm \mu}(Y) \subset Y.$ Since $\delta^1(a_m) =  p \otimes \sigma$, only $\Phi^{K}$ is nonzero on $Y$. By  Figure~\cite{CZZApp}*{Figures~5.1}, 
$f^{K}(\scE_{s+\frac{1}{2},\frac{\ell-3}{2}}) \subset \scJ_{s+\frac{1}{2},\frac{\ell-3}{2}}$ and $f^{K}(\scF_{s+\frac{1}{2},\frac{\ell-3}{2}}) \subset \scM_{s+\frac{1}{2},\frac{\ell-3}{2}}$,
 we have $\Phi^{K}(Y) \subset Y.$
 
 Since  by  Figure~\cite{CZZApp}*{Figures~5.1} (See also Equation~\eqref{eq: scE_scJ_actions}), the maps 
      \[    
\begin{tikzcd}
\Phi^K:&[-3em]
a_m|\scE^\diamond_{s+\frac{1}{2},\frac{\ell-3}{2}}
\arrow[r]
&[-1em]
p|\scJ^\diamond_{s+\frac{1}{2},\frac{\ell-3}{2}}
\\[-1em]
&
\cC^{\diamond}_{\frac{\ell}{2}-1}
\arrow[u,equal]
&
\cC^{\diamond}_{\frac{\ell}{2}-1}
\arrow[u,equal]
\end{tikzcd} \quad \text{and} \quad
\begin{tikzcd}
\Phi^K:&[-3em]
a_m|\scF^\diamond_{s-\frac{1}{2},\frac{\ell-3}{2}}
\arrow[r]
&[-1em]
p|\scM^\diamond_{s-\frac{1}{2},\frac{\ell-3}{2}}
\\[-1em]
&
\cS^{\diamond}
\arrow[u,equal]
&
\cS^{\diamond}
\arrow[u,equal]
\end{tikzcd}
\]
are both the identity map for $s\in \bZ_{\geq 0}$,  the subcomplex 
$Y$
is acyclic. 
Therefore,  we may replace $\bX^{\diamond}(P,K,\lambda)$ by the chain homotopy equivalent complex $\bX^{\diamond}(P,K,\lambda)/Y$,   when desired. 

Recall that $\bX^{\diamond}(P,K,\lambda)$ decomposes as mapping cone of its free part and torsion part. This induces 
 similar  decompositions
\[
Y= \Cone \big( Y^{\free} \xrightarrow{\partial} Y^{\Tor} \big) \]
and 
\[
\bX^{\diamond}(P,K,\lambda)/Y = \Cone \big( \bX^{\free}(P,K,\lambda)/Y^{\free} \xrightarrow{\partial} \bX^{\Tor}(P,K,\lambda)/Y^{\Tor} \big). \]
  In $\bX^{\diamond}(P,K,\lambda)/Y$, among the generators $p|w_{i+\frac{1}{2}}$ and $p|w'_{i-\frac{1}{2}}$, only those with $i<0$ are present.

In the following proof, we use $\partial$ for the differential on $\bX^{\diamond}(P,K,\lambda)$, $\partial^{\free}$ for the differential on $\bX^{\free}(P,K,\lambda)$ 
and $\bar{\partial}$ for the differential on $\bX^{\diamond}(P,K,\lambda)/Y$. We proceed case by case.

\medskip
\noindent\textbf{Case \eqref{it: ending_general_epsilon>0_case_1}: when $\varepsilon(K)=1$ and $\eta>0$.} 
There is an arrow weighted by $W^{\eta}$ from  $a'_1 $ to $ a_1$, followed by an arrow weighted by $Z^{n}$ from  $a_1 $ to $ a_0$ in $\cCFK(K)$. By Lemma \ref{lem: snake_structure}, we have $n>1$.
\begin{enumerate}[label=$($1.\arabic*$)$, ref=$($1.\arabic*$)$]
    \item \label{it: epsilon>0_eta>0_lambda-2tau<0} 
    \textbf{When $\lambda - 2\tau(K) < 0$.}
    See Figure~\ref{fig: prop_ending_epsilon>0}\subref{subfig: prop_ending_epsilon>0_eta>0} for an example when $\lambda - 2\tau(K) = -1$ and $\omega-\theta>0$.
   If $\lambda - 2\tau(K) < -1,$ the end points of 
    the arrows of $\Phi^{-K}$ are shifted to the left,  while the rest of the diagram remains unchanged.
Similar to Case \eqref{it: non_ending_braided_case_2} and \eqref{it: non_ending_general_case_4} in the non-ending arrow case, we consider two cases depending on the sign of $\omega - \theta = \Xi+1$ as follows. 
\begin{enumerate}[label=\roman*$)$, ref=\roman*$)$, widest=iii]
    \item \label{it: epsilon>0_eta>0_lambda<0_case_1} \textbf{Suppose 
    $  \omega - \theta \leq 0$.} 
   Set \[\alpha_0 = a_0 | x'_{\frac{1}{2}}, \qquad \alpha_1 = a_0 \Big| \sum_{i=1}^{n-1}  x_{i + \frac{1}{2}} Z^{\ell (n-1-i)} + a_1 | x'_{-\frac{1}{2}} Z^ \theta \]
  and we claim:
  \begin{enumerate}[label=(\alph*), ref=\alph*]
 \item \label{it: alpha0_cycle_epsilon>0_eta>0_lambda<0_case_1}$\partial \alpha_0 =0$;
    \item \label{it: Zn_claim_epsilon>0_eta>0_lambda<0_case_1}$\partial \alpha_1 =  \alpha_0 Z^{\ell (n-1) + \theta}$;
    \item \label{it: shortest_arrow_claim_epsilon>0_eta>0_lambda<0_case_1} $\{\alpha_0,\alpha_1\}$ generates a direct summand of the chain complex $\bX^{\diamond}(P,K,\lambda)$. 
\end{enumerate} 

Since $\Phi^{-K}$ vanishes for both $\alpha_0$ and $\alpha_1$ by Equation~\eqref{eq: Phi-K_arrow},    
the claims \eqref{it: alpha0_cycle_epsilon>0_eta>0_lambda<0_case_1} and \eqref{it: Zn_claim_epsilon>0_eta>0_lambda<0_case_1} are straightforward to verify. 
To prove Claim \eqref{it: shortest_arrow_claim_epsilon>0_eta>0_lambda<0_case_1}, observing that 
\begin{align*}
\partial(a_0|x_{\frac{1}{2}}) &=  p|w_{\frac{1}{2}+\lambda-2\tau(K)} Z^{\omega} + a_0|x'_{-\frac{1}{2}} Z^{\ell + \theta} + a_0|x'_{\frac{1}{2}} Z^{\theta}\\
&= \big( p|w_{\frac{1}{2}+\lambda-2\tau(K)}  + a_0|(x'_{-\frac{1}{2}} Z^{\kappa}   +  x'_{\frac{1}{2}} Z^{\theta-\omega}) \big) Z^{\omega},
\end{align*}
we perform a change of basis as follows:
\begin{align*}
a_0 | x_{n - \frac{1}{2}} &\mapsto \alpha_1 \\
p|w_{\frac{1}{2}+\lambda-2\tau(K)} &\mapsto p|w_{\frac{1}{2}+\lambda-2\tau(K)}  +   a_0 |( x'_{-\frac{1}{2}}Z^{\kappa} +   x'_{\frac{1}{2}} Z^{ \theta - \omega})\\
   a_0 | x'_{i-\frac{1}{2}} &\mapsto a_0 | (x'_{i-\frac{1}{2}} +  x'_{i-\frac{3}{2}} Z^{\ell })     \qquad i= 2,\dots, n-1.
\end{align*}
The remainder of the basis is unchanged. In the resulting basis, by Lemma \ref{lem: horizontal_chain}, the only arrow pointing to $\alpha_0$ is from $\alpha_1$. Claim \eqref{it: shortest_arrow_claim_epsilon>0_eta>0_lambda<0_case_1} follows. We conclude that
  $\Ord(P(K,\lambda)) \geq \ell(n-1) + \theta$.
   \item \label{it: epsilon>0_eta>0_lambda<0_case_2} \textbf{ Suppose 
    $   \omega - \theta > 0$.}
    Set 
    \[\alpha_0 = p|w_{\frac{1}{2}+\lambda - 2\tau(K)} +  a_0 | x'_{-\frac{1}{2}} Z^\kappa, \qquad   \alpha_1 =    a_0 \Big| \sum_{i=1}^{n}  x_{i - \frac{1}{2}} Z^{\ell (n-i)} + a_1 | x'_{-\frac{1}{2}} Z^ \theta\]
    and we claim:
      \begin{enumerate}[label=(\alph*), ref=\alph*]
 \item \label{it: alpha0_cycle_epsilon>0_eta>0_lambda<0_case_2}$\partial \alpha_0 Z^{\Xi}=0$;
    \item \label{it: Zn_claim_epsilon>0_eta>0_lambda<0_case_2}$\partial \alpha_1 =  \alpha_0 Z^{\ell (n-1) + \omega}$;
    \item \label{it: shortest_arrow_claim_epsilon>0_eta>0_lambda<0_case_2} $\{\alpha_0,\alpha_1\}$ generates a direct summand of the chain complex $\bX^{\free}(P,K,\lambda)$. 
\end{enumerate} 
Start with Claim \eqref{it: alpha0_cycle_epsilon>0_eta>0_lambda<0_case_2}. 
Since $\delta^1(p)=0,$  the only outgoing arrows from $p|w_{i+\frac{1}{2}}$  are the arrows of $\Phi^{\mu} + \Phi^{-\mu} = \bI | (L_\sigma+L_\tau).$ By Lemma \ref{lem: C_S_structure_maps},
\[
(L_\sigma+L_\tau)(w_{i+\frac{1}{2}}) = \epsilon \cdot \tilde{w}'_{i+\frac{1}{2}} Z^{\xi'-\Xi}  + w'_{i-\frac{1}{2}} Z^\kappa \qquad i
\in \bZ
\] 
where $\epsilon \in \{0,1\}$ is nonzero if and only if $\Xi>0.$ Therefore 
\begin{equation}
\partial(p|w_{i+\frac{1}{2}}) = \epsilon \cdot p|\tilde{w}'_{i+\frac{1}{2}} Z^{\xi'-\Xi}  + p|w'_{i-\frac{1}{2}} Z^\kappa \qquad i
\in \bZ
\end{equation}
and we compute
\begin{align*}
\partial \alpha_0 &= \partial(p|w_{\frac{1}{2}+\lambda - 2\tau(K)}) +  \partial(a_0 | x'_{-\frac{1}{2}}) Z^\kappa \\
&=  \epsilon \cdot p|\tilde{w}'_{\frac{1}{2} + \lambda - 2\tau(K)} Z^{\xi'-\Xi}  + p|w'_{-\frac{1}{2} + \lambda - 2\tau(K)} Z^\kappa + p|w'_{-\frac{1}{2} + \lambda - 2\tau(K)} Z^\kappa\\
&= \epsilon \cdot p|\tilde{w}'_{\frac{1}{2} + \lambda - 2\tau(K)} Z^{\xi'-\Xi}.
\end{align*}
Since $\tilde{w}'_{\frac{1}{2} + \lambda - 2\tau(K)} Z^{\xi'} =0$ in $\cS^{\diamond}$, Claim \eqref{it: alpha0_cycle_epsilon>0_eta>0_lambda<0_case_2} follows.

To prove Claim \eqref{it: Zn_claim_epsilon>0_eta>0_lambda<0_case_2}, we compute
\begin{align*}
    \partial \alpha_1 &=    \sum_{i=1}^n  \partial (a_0 | x_{i - \frac{1}{2}}) Z^{\ell (n-i)} + \partial ( a_1 | x'_{-\frac{1}{2}}) Z^ \theta    \\
    &= \Big( \partial (a_0 | x_{\frac{1}{2}}) Z^{\ell (n-1)} +  \sum_{i=2}^n  \partial (a_0 | x_{i - \frac{1}{2}}) Z^{\ell (n-i)} \Big) + \partial ( a_1 | x'_{-\frac{1}{2}}) Z^ \theta   \\
    &= \big( a_0|x'_{\frac{1}{2}} Z^{\theta} + p|w_{\frac{1}{2}+\lambda-2\tau(K)} Z^{\omega} + a_0|x'_{-\frac{1}{2}} Z^{\ell + \theta} \big) Z^{\ell (n-1)} + a_0 | x'_{\frac{1}{2}}Z^{\ell (n-1) + \theta}\\
     &= p|w_{\frac{1}{2}+\lambda-2\tau(K)} Z^{\ell (n-1)+\omega} + a_0|x'_{-\frac{1}{2}} Z^{\ell n + \theta}  \\
     &=  \alpha_0 Z^{\ell (n-1)+\omega}.
\end{align*}
To prove Claim \eqref{it: shortest_arrow_claim_epsilon>0_eta>0_lambda<0_case_2}, perform the change of basis 
\begin{align*}
p|w_{\frac{1}{2}+\lambda-2\tau(K)} &\mapsto \alpha_0 \hspace{3em}
a_0 | x_{n - \frac{1}{2}} \mapsto \alpha_1 \\
   a_0 | x'_{i-\frac{1}{2}} &\mapsto a_0 | (x'_{i-\frac{1}{2}} +  x'_{i-\frac{3}{2}} Z^{\ell })     \qquad i= 2,\dots, n-1\\
   a_0 | x'_{\frac{1}{2}} &\mapsto  a_0 | x'_{\frac{1}{2}}  + p|w_{\frac{1}{2}+\lambda-2\tau(K)} Z^{\omega-\theta } +    a_0 | x'_{-\frac{1}{2}}Z^{\ell}.
\end{align*}
    The remainder of the basis is unchanged.
By Lemma \ref{lem: horizontal_chain},  $\{ a_0 | x_{i-\frac{1}{2}},  a_0 |(x'_{i-\frac{1}{2}} + x'_{i-\frac{3}{2}} Z^{\ell })\}$ for $i=2,\dots,n-1$ generates a direct summand of the chain complex $\bX^{\diamond}(P,K,\lambda)$.  

 Similarly, $\{a_0|x_{\frac{1}{2}}, a_0|x'_{\frac{1}{2}}  + p|w_{\frac{1}{2}+\lambda-2\tau(K)} Z^{\omega -\theta} + a_0|x'_{-\frac{1}{2}} Z^{\ell}\}$  generates a direct summand of the chain complex $\bX^{\diamond}(P,K,\lambda)$. 
Since $\frac{1}{2}+\lambda - 2\tau(K)<0$,  by Lemma \ref{lem: middlerow_chain}, $p|w_{\frac{1}{2} + \lambda - 2\tau(K)}$ is not in the image of $\Phi^K$. It follows that
in the resulting basis,
     the only arrow pointing to  $\alpha_0$ comes from $\alpha_1$. Hence Claim \eqref{it: shortest_arrow_claim_epsilon>0_eta>0_lambda<0_case_2} follows from $\partial^2=0$ and the fact that $\alpha_0$ is free. 

Having proved all three claims, we now apply Lemma \ref{lem: torsion_order_criterion} to compute the torsion order. Letting $A= \bX^{\free}(P,K,\lambda), B= \bX^{\Tor}(P,K,\lambda)$ and $f=\partial$,
we conclude that $\Ord(P(K,\lambda)) \geq   \ell (n-1) + \omega -\Xi = \ell (n - 1) + \theta + 1.$
\item[ii$')$]\makeatletter\protect\def\@currentlabel{ii$')$}\makeatother \label{it: epsilon>0_eta>0_lambda<0_specialcase}
\textbf{Suppose $\omega-\theta>0$ and $\lambda -2\tau(K)=-1.$} In this special case of \ref{it: epsilon>0_eta>0_lambda<0_case_2},  the bound can be improved by working over $\bX^{\diamond}(P,K,\lambda)/Y$. Following the same construction as in subcase \ref{it: epsilon>0_eta>0_lambda<0_case_2}, since $p|\scM_{-\frac{1}{2},\frac{\ell-3}{2}} $ lies in $Y$,  we have
    \begin{enumerate}[label=(\alph*{$'$}),ref=\alph*{$'$}]
 \item \label{it: alpha0_cycle_epsilon>0_eta>0_lambda<0_specialcase}$\bar{\partial} \alpha_0=0$;
    \item  \label{it: Zn_claim_epsilon>0_eta>0_lambda<0_specialcase}$\partial \alpha_1 =  \alpha_0 Z^{\ell (n-1) + \omega}$ (and hence the same holds for $\bar \d \a_1$);
    \item \label{it: shortest_arrow_claim_epsilon>0_eta>0_lambda<0_specialcase} $\{\alpha_0,\alpha_1\}$ generates a direct summand of the chain complex $\bX^{\diamond}(P,K,\lambda)/Y$. 
    \end{enumerate}
  We conclude that $\Ord(P(K,\lambda)) \geq   \ell (n-1) + \omega.$
     \end{enumerate}

\item \label{it: epsilon>0_eta>0_lambda-2tau>-1} \textbf{When $\lambda - 2\tau(K) \geq 0$.}
We work over $\bX^{\diamond}(P,K,\lambda)/Y$. 
Set \begin{align*}
\alpha_0 = a_0 | x'_{-\frac{1}{2}}, \qquad \alpha_1 =       a_0 \Big| \sum_{i=1}^n x_{i - \frac{1}{2}} Z^{\ell (n-i)}  + a_1 | x'_{-\frac{1}{2}} Z^ \theta.\end{align*}
Since $\frac{1}{2}+\lambda-2\tau(K)>0,$ in $\bX^{\diamond}(P,K,\lambda)/Y$,  we have \begin{align*}
\Phi^{-K}(a_0|x'_{-\frac{1}{2}})  = \Phi^{-K}(a_0|x_{\frac{1}{2}}) =  0. \end{align*}
As a result,   each term  appearing in $ \alpha_0 $ and $ \alpha_1$ is mapped by $\bar\partial$ into $\bigoplus_s E_{s,\frac{\ell-1}{2}}\oplus F_{s,\frac{\ell-1}{2}}.$
We omit the proofs of the following claims, as they are essentially the same as that of Case \eqref{it: non_ending_general_case_1} in the proof of Proposition \ref{prop: non_ending_general_case}.
      \begin{enumerate}[label=(\alph*), ref=\alph*]
 \item \label{it: alpha0_cycle_epsilon>0_eta>0_lambda>0}$\bar\partial \alpha_0 =0$;
    \item \label{it: Zn_claim_epsilon>0_eta>0_lambda>0}$\bar\partial \alpha_1 =  \alpha_0 Z^{\ell n + \theta}$;
    \item \label{it: shortest_arrow_claim_epsilon>0_eta>0_lambda>0} $\{\alpha_0,\alpha_1\}$ generates a direct summand of the chain complex  $\bX^{\diamond}(P,K,\lambda)/Y$. 
\end{enumerate} 
We conclude that $\Ord(P(K,\lambda)) \geq \ell n + \theta.$ 
\end{enumerate}

\medskip
\noindent\textbf{Case \eqref{it: ending_general_epsilon>0_case_2}: when $\varepsilon(K)=1$ and $\eta<0$.}   We have  $\delta^1 (a_1) = a'_1 \otimes W^{-\eta} + a_0 \otimes Z^{n}$. See Figure~\ref{fig: prop_ending_epsilon>0}\subref{subfig: prop_ending_epsilon>0_eta<0} for an example when $\lambda - 2\tau(K) = -1$ and $\omega-\theta>0$.

The proof follows by combining the arguments from the previous case with those of
Case \eqref{it: non_ending_general_case_3} in the proof of Proposition \ref{prop: non_ending_general_case};  
we sketch only the main steps.
\begin{enumerate}[label=(2.\arabic*), ref=(2.\arabic*)]
    \item \label{it: epsilon>0_eta<0_lambda-2tau<0} \textbf{When $\lambda - 2\tau(K) < 0$.}
    Consider two cases depending on the sign of $\omega - \theta = \Xi+1$ as follows.
    \begin{enumerate}[label=\roman*$)$, ref=\roman*$)$]
        \item \label{it: epsilon>0_eta<0_lambda<0_case_1} \textbf{Suppose 
    $  \omega - \theta \leq 0$.} This is the case where the assumption $n>1$ is necessary.  
Set 
\begin{align*}
\alpha_0 &= a_0|x'_{\frac{1}{2}},\\
\alpha_1 &= \left( a_0 \Big| \sum_{i=1}^{n-1}  x_{i + \frac{1}{2}} Z^{\ell (n-1-i)} + a_1|x'_{-\frac{1}{2}} Z^ \theta  \right) Z^{\max \{\kappa - \theta, 0 \} } + a'_1|x_{\eta+\frac{1}{2}} Z^{\max \{ \theta - \kappa, 0 \}}
\end{align*}
  and we have:
  \begin{enumerate}[label=(\alph*), ref=\alph*]
 \item \label{it: alpha0_cycle_epsilon>0_eta<0_lambda<0_case_1}$\partial \alpha_0 =0$;
    \item \label{it: Zn_claim_epsilon>0_eta<0_lambda<0_case_1}$\partial \alpha_1 =  \alpha_0 Z^{\ell (n-1) + \max\{\kappa,\theta \}}$;
    \item \label{it: shortest_arrow_claim_epsilon>0_eta<0_lambda<0_case_1} $\{\alpha_0,\alpha_1\}$ generates a direct summand of the chain complex $\bX^{\diamond}(P,K,\lambda)$. 
\end{enumerate} 
       We conclude that
        $\Ord(P(K,\lambda))  \geq \ell (n-1) + \max\{\kappa,\theta \}.$ 
\item \label{it: epsilon>0_eta<0_lambda<0_case_2} \textbf{Suppose 
    $  \omega - \theta > 0$.}  
Set
       \begin{align*}
    \alpha_0 &= p|w_{\frac{1}{2}+\lambda-2\tau(K)} +  a_0|x'_{-\frac{1}{2}}Z^\kappa, \\
    \alpha_1 &=  \left( a_0 \Big| \sum_{i=1}^{n}  x_{i - \frac{1}{2}} Z^{\ell (n-i)} + a_1|x'_{-\frac{1}{2}} Z^ \theta \right) Z^{\max \{\kappa-\theta, 0 \}} + a'_1|x_{\eta+\frac{1}{2}} Z^{\max \{\theta - \kappa, 0 \}}.
\end{align*}
Then we have
\begin{enumerate}[label=(\alph*), ref=\alph*]
\item $\partial \alpha_0 Z^{\Xi} = 0$;
    \item  $\partial^{\free} \alpha_1 =  \alpha_0 Z^{\ell (n-1) + \omega - \theta + \max\{\kappa, \theta \}}$;
    \item  $\{ \alpha_0, \alpha_1 \}$ generates a summand in $\bX^{\free}(P,K,\lambda)$. 
\end{enumerate}  
 It follows from Lemma \ref{lem: torsion_order_criterion} that
        $\Ord(P(K,\lambda))  \geq \ell (n-1) + \omega - \theta + \max\{\kappa, \theta \} - \Xi = \ell (n-1)  + \max\{\kappa, \theta \} + 1.$ 
\item[ii$')$]\makeatletter\protect\def\@currentlabel{ii$')$}\makeatother \label{it: epsilon>0_eta<0_lambda<0_specialcase} \textbf{Suppose 
    $  \omega - \theta > 0$ and $\lambda -2 \tau(K) =-1$.}  This is a special case of \ref{it: epsilon>0_eta<0_lambda<0_case_2}.
   Following the same construction, and working over $\bX^{\diamond}(P,K,\lambda)/Y$, we have
\begin{enumerate}[label=(\alph*{$'$}),ref=\alph*{$'$}]
\item $\bar\partial \alpha_0 = 0$;
    \item  $\bar{\partial}^{\free} \alpha_1 =  \alpha_0 Z^{\ell (n-1) + \omega - \theta + \max\{\kappa, \theta \}}$, where $\bar{\partial}^{\free}$ is the differential on $\bX^{\free}(P,K,\lambda)/Y^{\free}$;
    \item  $\{ \alpha_0, \alpha_1 \}$ generates a summand in $\bX^{\free}(P,K,\lambda)/Y^{\free}$. 
\end{enumerate}  
In Lemma \ref{lem: torsion_order_criterion}, by setting $A=\bX^{\free}(P,K,\lambda)/Y^{\free}$ and $B=\bX^{\Tor}(P,K,\lambda)/Y^{\Tor}$,
we obtain that
        $\Ord(P(K,\lambda))  \geq \ell (n-1) + \omega - \theta + \max\{\kappa, \theta \}.$ 
    \end{enumerate}
\item \label{it: epsilon>0_eta<0_lambda-2tau>-1} \textbf{When $\lambda - 2\tau(K) \geq 0$.} Working over $\bX^{\diamond}(P,K,\lambda)/Y$, the proof is essentially the same as that of Case \eqref{it: non_ending_general_case_3} in the proof of Proposition \ref{prop: non_ending_general_case}.
Set   
\begin{align*}
\alpha_0 &= a_0 | x'_{-\frac{1}{2}},  \\
     \alpha_1 &= \left(    a_0 \Big| \sum_{i=1}^n    x_{i - \frac{1}{2}} Z^{\ell (n-i)} + a_1 | x'_{-\frac{1}{2}}  Z^ \theta \right) Z^{\max\{\kappa - \theta, 0 \}} +  a'_1|x_{\eta+\frac{1}{2}}Z^{\max\{\theta - \kappa, 0 \}}. 
\end{align*}
Since $\frac{1}{2}+\lambda-2\tau(K)>0,$ $\Phi^{-K}$ vanishes for each term in $\alpha_0$ and $\alpha_1$ in $\bX^{\diamond}(P,K,\lambda)/Y$. We obtain
\begin{enumerate}[label=(\alph*), ref=\alph*]
 \item $\bar\partial \alpha_0 =0$;
    \item $\partial^{\free} \alpha_1 =  \alpha_0 Z^{\ell n + \max\{\kappa, \theta \}}$;
    \item $\{\alpha_0,\alpha_1\}$ generates a direct summand of the chain complex $\bX^{\free}(P,K,\lambda)/Y^{\free}$,
\end{enumerate} 
In Lemma \ref{lem: torsion_order_criterion}, by setting $A=\bX^{\free}(P,K,\lambda)/Y^{\free}$ and $B=\bX^{\Tor}(P,K,\lambda)/Y^{\Tor}$, we conclude that  $\Ord(P(K,\lambda))  \geq \ell n + \max\{\kappa, \theta \}.$ 
\end{enumerate}
\end{proof}
This concludes the case when $\varepsilon(K) = 1.$
In the following, we consider  the case when $\varepsilon(K) = -1.$

\begin{figure}[hbtp!]
	\captionsetup{width=\textwidth}
	\subfigure[When $\varepsilon(K)=-1, \eta>0$ and $\lambda-2\tau(K) = 1.$]{
		\begin{tikzpicture}[scale=0.5]
			\begin{scope}
				\foreach \k in {0,8}{
					\foreach \j in {-5,5,15}{
						\fill[gray!20] (\j+-1,1.5+\k) rectangle (\j+1.5,0.5+\k);
						\fill[gray!20] (\j+-1,2.5+\k) rectangle (\j+2.5,1.5+\k);
						\fill[gray!20] (\j+-1,3+\k) rectangle (\j+3,2.5+\k);
						\fill[gray!20] (\j+-1,0.5+\k) rectangle (\j+0.5,-0.5+\k);
						\fill[gray!20] (\j+-1,-0.5+\k) rectangle (\j+-0.5,-1.5+\k);
					}
					\foreach \j in {-10,-5,...,10,15}
					{\foreach \i in {0,...,3}
						{\draw[thin, black!20!white]  (\j+\i-0.5, 3+\k) -- (\j+\i-0.5, -2+\k);}
						\foreach \i in {-1,...,3}
						{ \draw[thin, black!20!white]  (\j+3, \i-0.5+\k) -- (\j-1, \i-0.5+\k); }}
					\foreach \j in {-10,0,10}
					{\draw[thin, black!60!white] (\j-1, 2.5+\k) -- (\j+2.5, 2.5+\k);
						\draw[thin, black!60!white] (\j+2.5, 2.5+\k) -- (\j+2.5, -2+\k);
						\draw[thin, black!60!white] (\j+1.5, 1.5+\k) -- (\j+1.5, -2+\k);
						\draw[thin, black!60!white]  (\j+0.5, 0.5+\k) -- (\j+0.5, -2+\k);
						\draw[thin, black!60!white]  (\j+1.5, 1.5+\k) -- (\j-1, 1.5+\k);
						\draw[thin, black!60!white] (\j+0.5, 0.5+\k) -- (\j-1, 0.5+\k);
						\draw[thin, black!60!white] (\j-1, -0.5+\k) -- (\j-0.5, -0.5+\k);
						\draw[thin, black!60!white] (\j-0.5, -0.5+\k) -- (\j-0.5, -2+\k);
						\draw[thin, black!60!white] (\j-0.5, -1.5+\k) -- (\j-0.5, -2+\k);}}
			\end{scope}       
			\foreach \k in {8}{
				\foreach \j in {-1,0,1}
				{ \foreach \i in {0,1}
					{
						{\filldraw ({\j*10+1+5*\i}, 4.5) circle (2pt) node[] (o\i\j) {};}
						\filldraw [color=green!70!black]({\j*10+1+5*\i-2.1}, -\j-\i+\k+2) circle (2pt)
						node[] (am-1'\i\j) {};
						\filldraw ({\j*10+1+5*\i-2.1}, -\j-\i+\k+1) circle (2pt)
						node[] (am-1\i\j) {};
						\filldraw ({\j*10+1+5*\i}, -\j-\i+\k+1) circle (2pt) 
						node[] (am\i\j) {};
						\draw[-stealth] ($(am\i\j)$) -- ($(am-1\i\j)+(0.1,0)$);
						\ifnum \j>-1
						\draw[color=brown, -stealth]   ($(am\i\j)$) to node[midway, left, xshift=2pt]{{\tiny $1$}} ($(o\i\j)+(0,0.1)$);\fi
						\draw [color=green!70!black, -stealth] ($(am-1'\i\j)$) -- ($(am-1\i\j)+(0,0.1)$);
					}
					\draw[red, bend left=20, -stealth] ($(am-11\j)+ (-0.1,-0.1)$) to node[pos=0.38, below] {{\tiny $L_\tau$}} ($(am-10\j) + (0.1,-0.1)$);
					\ifnum\j>-1
					\draw[red, bend left=30, -stealth] ($(am1\j)+ (-0.1,-0.1)$) to node[pos=0.42, below] {{\tiny $Z^\kappa$}} ($(am0\j) + (0.1,-0.1)$);
					\else
					\draw[red, bend left=30, -stealth] ($(am1\j)+ (-0.1,-0.1)$) to node[pos=0.42, below] {{\tiny $L_\tau$}} ($(am0\j) + (0.1,-0.1)$); 
					\fi
					\draw[red, bend left=10, -stealth] ($(o1\j)+ (-0.1,-0.1)$) to node[midway, above] {{\tiny $Z^\kappa$}} ($(o0\j) + (0.1,-0.1)$);
			}}   
			\draw[red, opacity=0.7, bend right=30, densely dashed, -stealth] ($(am1-1)+ (0.1,-0.1)$) to node[pos=0.52, below] {{\tiny $L_\sigma$}} ($(am00) + (-0.1,-0.1)$);
			\draw[red, opacity=0.7, bend right=30, densely dashed, -stealth] ($(am-11-1)+ (0.1,-0.1)$) to node[pos=0.62, below] {{\tiny $L_\sigma$}} ($(am-100) + (-0.1,-0.1)$);
			\draw[red, opacity=0.7, bend right=30, densely dashed, -stealth] ($(am10)+ (0.1,-0.1)$) to node[pos=0.52, below] {{\tiny $Z^{\xi'-\Xi}$}} ($(am01) + (-0.1,-0.1)$);
			\draw[red, opacity=0.7, bend right=30, densely dashed, -stealth] ($(am-110)+ (0.1,-0.1)$) to node[pos=0.53, below] {{\tiny $L_\sigma$}} ($(am-101) + (-0.1,-0.1)$);
			\draw[red, opacity=0.7, bend right=10, densely dashed, -stealth] ($(am11)+ (0.1,-0.1)$) to  ($(am11)+ (2.5,-0.3)$);
			\draw[red, opacity=0.7, bend right=15, densely dashed, -stealth] ($(am-111)+ (0.1,-0.1)$) to  ($(am-111) + (4.55,-0.8)$);
			
			\draw[red, bend right=5, densely dashed, -stealth] ($(o1-1)+ (0.2,0)$) to node[pos=0.5, above] {{\tiny $Z^{\xi'_1 -\Xi}$}} ($(o00) + (-0.1,-0.1)$);
			\draw[red, bend right=5, densely dashed, -stealth] ($(o10)+ (0.2,0)$) to node[pos=0.5, above] {{\tiny $Z^{\xi'_1 -\Xi}$}} ($(o01) + (-0.1,-0.1)$);
			\foreach \j in {-1,0,1}
			{ \foreach \i in {0,1}
				{\filldraw ({\j*10+1+5*\i}, 2-\j-\i) circle (2pt)
					node[] (a1\i\j) {};
					\filldraw ({\j*10+1+5*\i}, -\j-\i) circle (2pt) 
					node[] (a0\i\j) {};
					\ifnum\j<1       \ifnum \i=1         \draw[densely dashed,-stealth] ($(a1\i\j)$) -- ($(a0\i\j)+(0,0.1)$);      \else \draw[-stealth] ($(a1\i\j)$) -- ($(a0\i\j)+(0,0.1)$); \fi \else \draw[-stealth] ($(a1\i\j)$) -- ($(a0\i\j)+(0,0.1)$); \fi
					\filldraw [color=green!70!black] ({\j*10+2+5*\i}, -\j-\i) circle (2pt) node[] (a0'\i\j) {};
					\draw [color=green!70!black, -stealth] ($(a0'\i\j)$) -- ($(a0\i\j)+(0.1,0)$);
				}
				\draw[red, bend left=20, -stealth] ($(a01\j)+ (-0.1,-0.1)$) to node[midway, below] {{\tiny $Z^{\ell+\theta}$}} ($(a00\j) + (0.1,-0.1)$);
			}    
			\foreach \j in {-1,0,1}
			{ \foreach \i in {0,1}
				{
					\ifnum\j<1
					\draw[color=brown, bend right=15,  -stealth]   ($(a1\i\j)$) to node[midway, right, xshift=2pt]{{\tiny $1$}} ($(o\i\j)+(0,-0.1)$); \fi}}
			\draw[color=brown,densely dashed, -stealth]   ($(am1-1)$) to node[pos=0.4, left, xshift=2pt]{{\tiny $L_Z$}} ($(o1-1)+(0,0.1)$);
			\draw[color=red,densely dashed, bend right=10, -stealth]   ($(o11)+(0.1,0)$) to node[pos=0.6,  above, xshift=-2pt]{{\tiny $Z^{\xi'_1 -\Xi}$}} ($(o11)+(2.5,0)$);
			\draw[color=brown, bend right=10,  -stealth]   ($(a101)$) to node[midway, right, xshift=2pt]{{\tiny $1$}} ($(o01)+(0,-0.1)$);
			\draw[color=brown, bend right=5, -stealth]   ($(a111)$) to node[midway, left, xshift=2pt]{{\tiny $Z^ \omega $}} ($(o11)+(0,-0.1)$);
			\draw[red, bend right=20, -stealth] ($(a010) + (0.1,-0.1)$)  to node[midway, below] {{\tiny $Z^ \theta$}} ($(a001)+ (-0.1,-0.1)$);
			\draw[red, bend right=20, -stealth] ($(a01-1) + (0.1,-0.1)$)  to node[midway, below] {{\tiny $Z^ \theta$}} ($(a000)+ (-0.1,-0.1)$);
			
			\draw[red, bend left=10, -stealth] ($(a111) + (-0.1,0)$)  to node[pos=0.55, above] {{\tiny $Z^{\ell+\theta}$}} ($(a101)+ (0.1,-0.1)$);
			\draw[red, bend right=5, densely dashed, -stealth] ($(a110)+ (0.2,0)$) to node[pos=0.5, above,yshift=-1pt] {{\tiny $Z^{\xi'_1 -\Xi}$}} ($(a101) + (-0.1,-0.1)$);
			\draw[red, bend right=5, densely dashed, -stealth] ($(a11-1)+ (0.2,0)$) to node[pos=0.5, above,yshift=-1pt] {{\tiny $Z^{\xi'_1 -\Xi}$}} ($(a100) + (-0.1,-0.1)$);
			\draw[red, bend left=10, -stealth] ($(a11-1)+ (-0.1,0)$) to node[midway, above] {{\tiny $Z^\kappa$}} ($(a10-1) + (0.1,-0.1)$);
			\draw[red, bend left=10, -stealth] ($(a110)+ (-0.1,0)$) to node[midway, above] {{\tiny $Z^\kappa$}} ($(a100) + (0.1,-0.1)$);
			
			\node [left] at (a00-1) {{\tiny$a_1$}};
			\node [left] at (a10-1) {{\tiny$a_0$}};
			\node [color=green!70!black,right] at (a0'0-1) {{\tiny$a'_1$}};
			\node [above] at (o0-1) {{\tiny$p|w'_{-\frac{3}{2}}$}};
			\node [above] at (o1-1) {{\tiny$p|w_{-\frac{1}{2}}$}};
			\node [right] at ($(am0-1)+(-0.1,0.2)$) {{\tiny$a_m$}};
			\node [below] at ($(am-10-1)+(-0.2,-0.3)$) {{\tiny$a_{m-1}$}};
			
			\node [] at ($(a00-1)+(-.4,1)$) {{\tiny$U^2$}};
			\node [] at ($(a01-1)+(-.7,1)$) {{\tiny$UL_Z$}};
			\node [] at ($(a000)+(-.3,1)$) {{\tiny$U$}};
			\node [] at ($(a010)+(-.5,1)$) {{\tiny$L_Z$}};
			\node [] at ($(a001)+(-.3,1)$) {{\tiny$1$}};
			\node [] at ($(a011)+(-.3,1)$) {{\tiny$1$}};
			\foreach \j in {-1,0,1}
			{ \foreach \i in {0,1}{
					\node [color=green!70!black] at ($(am-1\i\j)+(-0.3,0.5)$) {{\tiny$U$}};
					\node [color=green!70!black] at ($(a0\i\j)+(0.8,0.3)$) {{\tiny$U$}};
			}}
			\node [] at ($(am0-1)+(-1,.2)$) {{\tiny$1$}};
			\node [] at ($(am1-1)+(-1,.2)$) {{\tiny$1$}};
			\node [] at ($(am00)+(-1,.2)$) {{\tiny$1$}};
			\node [] at ($(am10)+(-1,.2)$) {{\tiny$L_W$}};
			\node [] at ($(am01)+(-1.1,.2)$) {{\tiny$U$}};
			\node [] at ($(am11)+(-1.1,.2)$) {{\tiny$U$}};  
			\draw[blue] 
			($(a00-1)+(-5pt,-5pt)$) rectangle ($(a00-1)+(5pt,5pt)$);
			\draw[violet] (a01-1) circle (5pt);
			\draw[violet] (a010) circle (5pt);
			\draw[violet] (a101) circle (5pt);
			\draw[yellow!50!gray, very thick, dashed]
			(18,8) -- (15.2,8) -- (15.2,5.5) -- (12.4,5.5) -- (12.4,9) -- (10.2,9) -- (10.2,3.6) -- (18,3.6);
		\end{tikzpicture}    \label{subfig: prop_ending_epsilon<0_eta>0}
	}
	\subfigure[When $\varepsilon(K)=-1, \eta<0$ and $\lambda-2\tau(K) = 0.$]{
		\begin{tikzpicture}[scale=0.5]
		\begin{scope}
			\foreach \k in {0,8}{
				\foreach \j in {-5,5,15}{
					\fill[gray!20] (\j+-1,1.5+\k) rectangle (\j+1.5,0.5+\k);
					\fill[gray!20] (\j+-1,2.5+\k) rectangle (\j+2.5,1.5+\k);
					\fill[gray!20] (\j+-1,3+\k) rectangle (\j+3,2.5+\k);
					\fill[gray!20] (\j+-1,0.5+\k) rectangle (\j+0.5,-0.5+\k);
					\fill[gray!20] (\j+-1,-0.5+\k) rectangle (\j+-0.5,-1.5+\k);
				}
				\foreach \j in {-10,-5,...,10,15}
				{\foreach \i in {0,...,3}
					{\draw[thin, black!20!white]  (\j+\i-0.5, 3+\k) -- (\j+\i-0.5, -2+\k);}
					\foreach \i in {-1,...,3}
					{ \draw[thin, black!20!white]  (\j+3, \i-0.5+\k) -- (\j-1, \i-0.5+\k); }}
				\foreach \j in {-10,0,10}
				{\draw[thin, black!60!white] (\j-1, 2.5+\k) -- (\j+2.5, 2.5+\k);
					\draw[thin, black!60!white] (\j+2.5, 2.5+\k) -- (\j+2.5, -2+\k);
					\draw[thin, black!60!white] (\j+1.5, 1.5+\k) -- (\j+1.5, -2+\k);
					\draw[thin, black!60!white]  (\j+0.5, 0.5+\k) -- (\j+0.5, -2+\k);
					\draw[thin, black!60!white]  (\j+1.5, 1.5+\k) -- (\j-1, 1.5+\k);
					\draw[thin, black!60!white] (\j+0.5, 0.5+\k) -- (\j-1, 0.5+\k);
					\draw[thin, black!60!white] (\j-1, -0.5+\k) -- (\j-0.5, -0.5+\k);
					\draw[thin, black!60!white] (\j-0.5, -0.5+\k) -- (\j-0.5, -2+\k);
					\draw[thin, black!60!white] (\j-0.5, -1.5+\k) -- (\j-0.5, -2+\k);}}
		\end{scope}    
			\foreach \k in {8}{
				\foreach \j in {-1,0,1}
				{ \foreach \i in {0,1}
					{
						{\filldraw ({\j*10+1+5*\i}, 4.5) circle (2pt) node[] (o\i\j) {};}
						\filldraw [color=green!70!black]({\j*10+1+5*\i-2.1}, -\j-\i+\k+1) circle (2pt)
						node[] (am-1'\i\j) {};
						\filldraw ({\j*10+1+5*\i-2.1}, -\j-\i+\k+2) circle (2pt)
						node[] (am-1\i\j) {};
						\filldraw ({\j*10+1+5*\i}, -\j-\i+\k+2) circle (2pt) 
						node[] (am\i\j) {};
						\draw[-stealth] ($(am\i\j)$) -- ($(am-1\i\j)+(0.1,0)$);
						\ifnum \j>0
						\draw[color=brown, -stealth]   ($(am\i\j)$) to node[midway, left, xshift=2pt]{{\tiny $1$}} ($(o\i\j)+(0,0.1)$);\fi
						\draw [color=green!70!black, -stealth] ($(am-1\i\j)$) -- ($(am-1'\i\j)+(0,0.1)$);
					}
					\draw[red, bend left=20, -stealth] ($(am-11\j)+ (-0.1,-0.1)$) to node[pos=0.35, below] {{\tiny $L_\tau$}} ($(am-10\j) + (0.1,-0.1)$);
					\ifnum \j>0
					\draw[red, bend left=30, -stealth] ($(am1\j)+ (-0.1,-0.1)$) to node[pos=0.72, above] {{\tiny $Z^{\kappa}$}} ($(am0\j) + (0.1,-0.1)$);
					\else
					\draw[red, bend left=30, -stealth] ($(am1\j)+ (-0.1,-0.1)$) to node[pos=0.6, above] {{\tiny $L_\tau$}} ($(am0\j) + (0.1,-0.1)$); 
					\fi 
					\draw[red, bend left=10, -stealth] ($(o1\j)+ (-0.1,-0.1)$) to node[midway, above] {{\tiny $Z^\kappa$}} ($(o0\j) + (0.1,-0.1)$);
			}}   
			\draw[red, opacity=0.7, bend left=30, densely dashed, -stealth] ($(am1-1)+ (0.1,0.1)$) to node[pos=0.52, above] {{\tiny $L_\sigma$}} ($(am00) + (-0.1,0.1)$);
			\draw[red, opacity=0.7, bend left=35, densely dashed, -stealth] ($(am-11-1)+ (0.1,0.1)$) to node[pos=0.42, above] {{\tiny $L_\sigma$}} ($(am-100) + (-0.1,0.1)$);
			\draw[red, opacity=0.7, bend left=30, densely dashed, -stealth] ($(am10)+ (0.1,0.1)$) to node[pos=0.52, above,yshift=-0.07cm] {{\tiny $L_\sigma$}} ($(am01) + (-0.1,0.1)$);
			\draw[red, opacity=0.7, bend left=30, densely dashed, -stealth] ($(am-110)+ (0.1,0.1)$) to node[pos=0.42, above,yshift=-0.05cm] {{\tiny $L_\sigma$}} ($(am-101) + (-0.1,0.1)$);
			\draw[red, opacity=0.7, bend right=10, densely dashed, -stealth] ($(am11)+ (0.1,-0.1)$) to  ($(am11)+ (2.5,-0.3)$);
			\draw[red, opacity=0.7, bend right=15, densely dashed, -stealth] ($(am-111)+ (0.1,-0.1)$) to  ($(am-111) + (4.55,-0.8)$);
			
			\draw[red, bend right=5, densely dashed, -stealth] ($(o1-1)+ (0.2,0)$) to node[pos=0.5, above] {{\tiny $Z^{\xi'_1 -\Xi}$}} ($(o00) + (-0.1,-0.1)$);
			\draw[red, bend right=5, densely dashed, -stealth] ($(o10)+ (0.2,0)$) to node[pos=0.5, above] {{\tiny $Z^{\xi'_1 -\Xi}$}} ($(o01) + (-0.1,-0.1)$);
			\foreach \j in {-1,0,1}
			{ \foreach \i in {0,1}
				{\filldraw ({\j*10+1+5*\i}, 2-\j-\i) circle (2pt)
					node[] (a1\i\j) {};
					\filldraw ({\j*10+1+5*\i}, -\j-\i) circle (2pt) 
					node[] (a0\i\j) {};
					\ifnum\j<1        \ifnum \i=1         \draw[densely dashed,-stealth] ($(a1\i\j)$) -- ($(a0\i\j)+(0,0.1)$);      \else \draw[-stealth] ($(a1\i\j)$) -- ($(a0\i\j)+(0,0.1)$); \fi \else \draw[-stealth] ($(a1\i\j)$) -- ($(a0\i\j)+(0,0.1)$); \fi
					\filldraw [color=green!70!black] ({\j*10+5*\i}, -\j-\i) circle (2pt) node[] (a0'\i\j) {};
					\draw [color=green!70!black, -stealth] ($(a0\i\j)$) -- ($(a0'\i\j)+(0.1,0)$);
					{ \ifnum \j>-1
						\draw[red, bend left=20, -stealth] ($(a01\j)+ (-0.1,-0.1)$) to node[midway, below] {{\tiny $Z^{\ell+\theta}$}} ($(a00\j) + (0.1,-0.1)$);\fi}
				}
			}    
			\foreach \j in {-1,0,1}
			{ \foreach \i in {0,1}
				{
					\ifnum\j<1
					\draw[color=brown, bend right=15,  -stealth]   ($(a1\i\j)$) to node[midway, right, xshift=2pt]{{\tiny $1$}} ($(o\i\j)+(0,-0.1)$); \fi}}
			\draw[color=brown,densely dashed, -stealth]   ($(am10)$) to node[pos=0.4, right, xshift=-2pt]{{\tiny $L_Z$}} ($(o10)+(0,0.1)$);
			\draw[color=red,densely dashed, bend right=10, -stealth]   ($(o11)+(0.1,0)$) to node[pos=0.6,  above, xshift=-2pt]{{\tiny $Z^{\xi'_1 -\Xi}$}} ($(o11)+(2.5,0)$);
			\draw[color=brown, bend right=10,  -stealth]   ($(a101)$) to node[midway, right, xshift=2pt]{{\tiny $1$}} ($(o01)+(0,-0.1)$);
			\draw[color=brown, bend right=5, -stealth]   ($(a111)$) to node[midway, left, xshift=2pt]{{\tiny $Z^ \omega $}} ($(o11)+(0,-0.1)$);
			\draw[red, bend right=20, -stealth] ($(a010) + (0.1,-0.1)$)  to node[midway, below] {{\tiny $Z^ \theta$}} ($(a001)+ (-0.1,-0.1)$);
			\draw[red, bend left=25, -stealth] ($(a01-1) + (0.1,0.1)$)  to node[midway, below] {{\tiny $Z^ \theta$}} ($(a000)+ (-0.1,0.1)$);
			\draw[red, bend right=20, -stealth] ($(a01-1)+ (-0.1,0.1)$) to node[midway, below] {{\tiny $Z^{\ell+\theta}$}} ($(a00-1) + (0.1,0.1)$);
			\draw[red, bend left=20, -stealth] ($(a0'1-1)+ (-0.1,-0.1)$) to node[midway, below] {{\tiny $Z^{\kappa}$}} ($(a0'0-1) + (0.1,-0.1)$);
			
			\draw[red, bend left=10, -stealth] ($(a111) + (-0.1,0)$)  to node[pos=0.55, above] {{\tiny $Z^{\ell+\theta}$}} ($(a101)+ (0.1,-0.1)$);
			\draw[red, bend right=5, densely dashed, -stealth] ($(a110)+ (0.2,0)$) to node[pos=0.5, above,yshift=-1pt] {{\tiny $Z^{\xi'_1 -\Xi}$}} ($(a101) + (-0.1,-0.1)$);
			\draw[red, bend right=5, densely dashed, -stealth] ($(a11-1)+ (0.2,0)$) to node[pos=0.5, above,yshift=-1pt] {{\tiny $Z^{\xi'_1 -\Xi}$}} ($(a100) + (-0.1,-0.1)$);
			\draw[red, bend left=10, -stealth] ($(a11-1)+ (-0.1,0)$) to node[midway, above] {{\tiny $Z^\kappa$}} ($(a10-1) + (0.1,-0.1)$);
			\draw[red, bend left=10, -stealth] ($(a110)+ (-0.1,0)$) to node[midway, above] {{\tiny $Z^\kappa$}} ($(a100) + (0.1,-0.1)$);
			
			\draw[red, densely dashed, in=-135, out=-95, -stealth] ($(a0'1-1)+ (0.1,-0.1)$) to node[pos=0.7, below] {{\tiny $Z_{\xi'_1-\Xi}$}} ($(a0'00) + (-0.1,-0.1)$);
			
			\node [below] at ($(a00-1)+(0.15,0)$) {{\tiny$a_1$}};
			\node [left] at (a10-1) {{\tiny$a_0$}};
			\node [color=green!70!black,left] at (a0'0-1) {{\tiny$a'_1$}};
			\node [above] at (o0-1) {{\tiny$p|w'_{-\frac{5}{2}}$}};
			\node [above] at (o1-1) {{\tiny$p|w_{-\frac{3}{2}}$}};
			\node [right] at ($(am0-1)+(-0.1,0.2)$) {{\tiny$a_m$}};
			\node [above] at ($(am-10-1)+(-0.2,-0.1)$) {{\tiny$a_{m-1}$}};
			
			\node [] at ($(a00-1)+(-.4,1)$) {{\tiny$U^2$}};
			\node [] at ($(a01-1)+(-.7,1)$) {{\tiny$UL_Z$}};
			\node [] at ($(a000)+(-.3,1)$) {{\tiny$U$}};
			\node [] at ($(a010)+(-.5,1)$) {{\tiny$L_Z$}};
			\node [] at ($(a001)+(-.3,1)$) {{\tiny$1$}};
			\node [] at ($(a011)+(-.3,1)$) {{\tiny$1$}};
			\node [color=green!70!black] at ($(a00-1)+(-0.6,0.3)$) {{\tiny$1$}};
			\node [color=green!70!black] at ($(a01-1)+(-0.4,-0.3)$) {{\tiny$Z^{\omega}$}};
			\node [color=green!70!black] at ($(a000)+(-0.7,-0.3)$) {{\tiny$U$}};
			\node [color=green!70!black] at ($(a010)+(-0.6,0.3)$) {{\tiny$U$}};
			\node [color=green!70!black] at ($(a001)+(-0.8,0.3)$) {{\tiny$U$}};
			\node [color=green!70!black] at ($(a011)+(-0.8,0.3)$) {{\tiny$U$}};
			\foreach \j in {-1,0,1}
			{
				\node [color=green!70!black] at ($(am-10\j)+(-0.3,-0.5)$) {{\tiny$U$}};
				\node [color=green!70!black] at ($(am-11\j)+(-0.3,-0.6)$) {{\tiny$U$}};
			}
			\node [] at ($(am0-1)+(-1,.2)$) {{\tiny$1$}};
			\node [] at ($(am1-1)+(-1,.2)$) {{\tiny$1$}};
			\node [] at ($(am00)+(-1,.2)$) {{\tiny$1$}};
			\node [] at ($(am10)+(-1,.2)$) {{\tiny$1$}};
			\node [] at ($(am01)+(-1.1,.2)$) {{\tiny$1$}};
			\node [] at ($(am11)+(-1.1,.2)$) {{\tiny$L_W$}};  
			\draw[blue] 
			($(a00-1)+(-5pt,-5pt)$) rectangle ($(a00-1)+(5pt,5pt)$);
			\draw[blue] 
			($(a0'1-1)+(-5pt,-5pt)$) rectangle ($(a0'1-1)+(5pt,5pt)$);
			\draw[violet] (am-111) circle (5pt);
			\draw[violet] (am01) circle (5pt);
			\draw[violet] (a01-1) circle (5pt);
			\draw[violet] (a010) circle (5pt);
			\draw[violet] (a101) circle (5pt);
		\end{tikzpicture}    \label{subfig: prop_ending_epsilon<0_eta<0}
	}
	\caption{
		A linear sum of the boxed (resp.~circled) elements represents 
		$\alpha_0$ (resp.~$\alpha_1$)  defined in the proof of Proposition \ref{prop: ending_arrow_general_case}. 
	}
	\label{fig: prop_ending_epsilon<0}
\end{figure}

\begin{proof}[Proof of Proposition \ref{prop: ending_arrow_general_case} in the case $\varepsilon(K) = -1$, when $\varepsilon(K)=-1$. ] 
Recall that in this case the ending generator of the standard complex, $a_0$, has an outgoing arrow weighted by $Z^n$ to $a_1$.  The generator $a_1$ in turn admits either an incoming or an outgoing arrow weighted by $W^{|\eta|}$ for some nonzero integer $\eta$.
 See Figure \ref{fig: prop_ending_epsilon<0} for some examples. Similar to the proof in the case $\varepsilon(K) = 1$,  we introduce a truncation by identifying an acyclic subcomplex, which we again denote by $Y$ by an abuse of notation. We set
\[
Y:=\bigoplus_{s\in \bZ_{\geq 0}} a_m|\big(\scE^\diamond_{s+\frac{3}{2},\frac{\ell-3}{2}} \oplus \scF^\diamond_{s+\frac{1}{2},\frac{\ell-3}{2}}\big) \oplus p|\big(\scJ^\diamond_{s+\frac{3}{2},\frac{\ell-3}{2}} \oplus \scM^\diamond_{s+\frac{1}{2},\frac{\ell-3}{2}}\big).
\]
By Equation~\eqref{eq: scE_scF_actions}, the length-$0$ differential vanishes on
$a_m|\big(\scE_{s+\frac{3}{2},\frac{\ell-3}{2}} \oplus \scF_{s+\frac{1}{2},\frac{\ell-3}{2}}\big)$
for any $s \in \bZ_{\geq 0}$ and therefore $Y$
is a subcomplex of $\bX^{\diamond}(P,K,\lambda)$. Since $\Phi^K$ restricts to the canonical isomorphism between  
$a_m|\big(\scE_{s+\frac{3}{2},\frac{\ell-3}{2}} \oplus \scF_{s+\frac{1}{2},\frac{\ell-3}{2}}\big)$ and $p|\big(\scJ^\diamond_{s+\frac{3}{2},\frac{\ell-3}{2}} \oplus \scM^\diamond_{s+\frac{1}{2},\frac{\ell-3}{2}}\big) $ when $s \in \bZ_{\geq 0}$, $Y$ is acyclic. Therefore   $\bX^{\diamond}(P,K,\lambda)/Y$ is chain homotopy equivalent to   $\bX^{\diamond}(P,K,\lambda)$. There is a decomposition
\[
Y= \Cone \big( Y^{\free} \xrightarrow{\partial} Y^{\Tor} \big). \]
In $\bX^{\diamond}(P,K,\lambda)/Y$, among the generators $p|w_{i+\frac{1}{2}}$ and $p|w'_{i-\frac{1}{2}}$, only those with $i<1$ are present.

In the following proof, we use $\partial$ for the differential on $\bX^{\diamond}(P,K,\lambda)$, $\partial^{\free}$ for the differential on $\bX^{\free}(P,K,\lambda)$  and $\bar\partial$ for the differential on $\bX^{\diamond}(P,K,\lambda)/Y$.

\noindent\textbf{Case \eqref{it: ending_general_epsilon<0_eta>0}: when $\varepsilon(K)=-1$ and $\eta>0$.} 
 There are two incoming arrows to $a_1,$ one weighted by $W^\eta$ from $a'_1$  and another weighted by $Z^n$ from $a_0$. 

Set 
\[\alpha_0 = a_1 | x'_{-\frac{1}{2}}.\]  
Since $\delta^1(a_1)=0$, we have $\partial \alpha_0=0.$
We then consider three cases according to the value of $\lambda - 2\tau(K)$ and define $\alpha_1$ in each case.
\begin{enumerate}[label=$($3.\arabic*$)$, ref=$($3.\arabic*$)$] \item 
 \label{it: epsilon<0_eta>0_lambda-2tau<0} 
\textbf{When $\lambda - 2\tau(K) < 0$.}
Set \[
\alpha_1 =   \left( a_1\Big|\sum^{n}_{i=1} x_{i-\frac{1}{2}}  Z^{\ell(n-i)} + a_0 | x'_{-\frac{1}{2}} Z^ \theta \right) Z^{\max\{\kappa-\theta,0\}} + p|w_{\lambda-2\tau(K)+\frac{1}{2}} Z^{\max\{\theta - \kappa,0\}}
\]
and we claim
  \begin{enumerate}[label=(\alph*), ref=\alph*]
    \item \label{it: Zn_claim_epsilon<0_eta>0_lambda<0}$\partial^{\free} \alpha_1 =  \alpha_0 Z^{\ell n + \max\{\kappa,\theta\}}$;
    \item \label{it: shortest_epsilon<0_eta>0_lambda<0} $\{\alpha_0,\alpha_1\}$ generates a direct summand of the chain complex $\bX^{\free}(P,K,\lambda)$. 
\end{enumerate} 
Claim \eqref{it: Zn_claim_epsilon<0_eta>0_lambda<0} is straightforward to verify. To prove Claim \eqref{it: shortest_epsilon<0_eta>0_lambda<0}, there are two cases depending on the sign of $\kappa-\theta.$ We demonstrate the case when $\kappa>\theta.$ The other case is similar and left to the reader. Perform the change of basis as follows:
\begin{align*}
 p|w_{\lambda-2\tau(K)+\frac{1}{2}} &\mapsto \alpha_1 \\
 p|w'_{\lambda-2\tau(K)-\frac{1}{2}}  &\mapsto  p|w'_{\lambda-2\tau(K)-\frac{1}{2}} + a_1 | x'_{n-\frac{1}{2}} \\
  a_1 | x'_{n+\frac{1}{2}}  &\mapsto  a_1 | x'_{n+\frac{1}{2}} + a_0 | x'_{-\frac{1}{2}}Z^{\ell + \theta} +   p|w_{\lambda-2\tau(K)+\frac{1}{2}}Z^{\omega} \\
   a_1 | x'_{i-\frac{1}{2}} &\mapsto a_1 |( x'_{i-\frac{1}{2}} +  x'_{i-\frac{3}{2}} Z^{\ell })     \qquad i= 1,\dots, n. 
\end{align*}
The remainder of the basis is unchanged. Since $\lambda - 2\tau(K) -\frac{1}{2}<-1$, $p|w'_{\lambda-2\tau(K)-\frac{1}{2}}$ is not in the image of $\Phi^K$. Therefore $\{a_0 | x'_{-\frac{1}{2}}, p|w'_{\lambda-2\tau(K)-\frac{1}{2}} + a_1 | x'_{n-\frac{1}{2}}\}$ generates a direct summand of $\bX^{\diamond}(P,K,\lambda)$. Similarly, $\{a_0 | x'_{\frac{1}{2}},a_1 | x'_{n+\frac{1}{2}} + a_0 | x'_{-\frac{1}{2}}Z^{\ell + \theta} +   p|w_{\lambda-2\tau(K)+\frac{1}{2}}Z^{\omega}\}$ generates a direct summand of $\bX^{\free}(P,K,\lambda)$. And as before, $\{a_1 | x_{i-\frac{1}{2}}, a_1 |( x'_{i-\frac{1}{2}} +  x'_{i-\frac{3}{2}} Z^{\ell })\}$ for $i=1,\dots, n$ generates a direct summand of $\bX^{\diamond}(P,K,\lambda)$. In the resulting basis, the only arrow pointing to $\alpha_0$ comes from $\alpha_1$. Claim \eqref{it: shortest_epsilon<0_eta>0_lambda<0} therefore follows. 

We conclude that $\Ord(P(K,\lambda))\geq \ell n + \max\{\kappa,\theta\}.$

\item  \label{it: epsilon<0_eta>0_lambda-2tau=0} \textbf{When $\lambda - 2\tau(K) = 0$.}
Set
\begin{align*}
\alpha_1 =   \left( a_1\Big|\sum^{n}_{i=1} x_{i-\frac{1}{2}}  Z^{\ell(n-i)} + a_0 | x'_{-\frac{1}{2}} Z^{\theta} \right) Z^{\max\{\kappa'-\theta,0\}} \\
+ a_m | y'_{-\frac{1}{2}} Z^{\max\{\kappa',\theta\}} + a_{m-1}|y_{-n+\frac{1}{2}} Z^{\max\{\theta - \kappa',0\}}
\end{align*}
where $\kappa'= R_{\frac{\ell}{2}-2} - g_3(P) + \frac{\ell}{2}.$  See Figure \ref{fig: prop_ending_epsilon<0}\subref{subfig: prop_ending_epsilon<0_eta<0} for a schematic illustration of $\alpha_1$ in a similar case. We claim
  \begin{enumerate}[label=(\alph*), ref=\alph*]
    \item \label{it: Zn_claim_epsilon<0_eta>0_lambda=0}$\partial^{\free} \alpha_1 =  \alpha_0 Z^{\ell n + \max\{\kappa',\theta\}}$;
    \item \label{it: shortest_epsilon<0_eta>0_lambda=0} $\{\alpha_0,\alpha_1\}$ generates a direct summand of the chain complex $\bX^{\free}(P,K,\lambda)$. 
\end{enumerate} 
We first prove Claim \eqref{it: Zn_claim_epsilon<0_eta>0_lambda=0}.
Since $\delta^1(a_{m-1})=0,$ the only outgoing arrows from $a_{m-1}|y_s$ are the arrows of $\Phi^{\mu}+\Phi^{-\mu}= \bI | (L_\sigma + L_\tau)$. By a similar argument as in the proof of Lemma \ref{lem: C_S_structure_maps}, we obtain that for $s<0,$ 
\[
L_{\tau} (y_s) = L_{\tau} (\xs^{\frac{\ell}{2}-2}_0) = y'_{s-1}\otimes Z^{\kappa'}
\]
and $L_{\sigma} (y_s)$ is torsion, possibly zero. Therefore
\[
\partial^{\free} (a_{m-1}|y_{-n+\frac{1}{2}}) = a_{m-1}|y'_{-n-\frac{1}{2}} Z^{\kappa'}.
\]
Since $\delta^1(y_m) = y_{m-1} \otimes W^n + p \otimes \sigma$ and $m_{1|1|0}(W^n,y'_s)=y'_{s-n}$ for $s<0$ by Equation~\eqref{eq: scE_scF_actions}, we have
\[
\partial (a_{m}|y'_{-\frac{1}{2}}) = a_{m-1}|y'_{-n-\frac{1}{2}} + p|w'_{-\frac{1}{2}}.
\]
We compute
\begin{align*}
    \partial^{\free} \alpha_1 &= \partial \left( a_1\Big|\sum^{n}_{i=1} x_{i-\frac{1}{2}}  Z^{\ell(n-i)} + a_0 | x'_{-\frac{1}{2}} Z^{\theta} \right) Z^{\max\{\kappa'-\theta,0\}} \\
&+ \big( a_{m-1}|y'_{-n-\frac{1}{2}} + p|w'_{-\frac{1}{2}}\big) Z^{\max\{\kappa',\theta\}} + a_{m-1}|y'_{-n-\frac{1}{2}} Z^{\kappa'} \cdot Z^{\max\{\theta - \kappa',0\}}\\
&=\big(\alpha_0 Z^{\ell n + \theta} +  p|w'_{-\frac{1}{2}} Z^{\theta} \big) Z^{\max\{\kappa'-\theta,0\}} + p|w'_{-\frac{1}{2}} Z^{\max\{\kappa',\theta\}}\\
&=\alpha_0 Z^{\ell n + \max\{\kappa',\theta\}},
\end{align*}
proving Claim \eqref{it: Zn_claim_epsilon<0_eta>0_lambda=0}. To prove of Claim \eqref{it: shortest_epsilon<0_eta>0_lambda=0}, 
there are two cases depending on the sign of $\kappa'-\theta$. We demonstrate the case when $\kappa' \geq \theta$ and leave the other case to the reader.
First, observe that the only incoming arrows to $a_{m-1}|y'_{-n-\frac{1}{2}}$ are the length-$0$ differential from $a_{m}|y'_{\frac{1}{2}}$ and the arrow of $\Phi^{-\mu}$ from $a_{m-1}|y_{-n+\frac{1}{2}}$. Indeed, the only other incoming arrows are the arrow of $\Phi^{\mu}$ and the length-$0$ differentials. 
Following from the above discussion,  $\Phi^{\mu} (a_{m-1}|y_{-n-\frac{1}{2}})$ is torsion;  the length-$0$ differential induced by the incoming arrow weighted by $Z^{\eta}$ is zero, since $m_{1|1|0}(Z^{\eta},y'_s)=0$ for $s<-1$ by Equation~\eqref{eq: scE_scF_actions}.
Perform the following change of basis:
\begin{align*}
 a_{m-1}|y_{-n+\frac{1}{2}} &\mapsto \alpha_1 \\
 a_{m-1}|y'_{-n-\frac{1}{2}} &\mapsto a_{m-1}|y'_{-n-\frac{1}{2}} + p|w'_{-\frac{1}{2}} \\
 p|w'_{-\frac{1}{2}}  &\mapsto  p|w'_{-\frac{1}{2}} + a_1 | x'_{n-\frac{1}{2}} \\
   a_1 | x'_{i-\frac{1}{2}} &\mapsto a_1 |( x'_{i-\frac{1}{2}} +  x'_{i-\frac{3}{2}} Z^{\ell })     \qquad i= 1,\dots, n. 
\end{align*}
The remainder of the basis is unchanged. Then $\{a_{m}|y'_{\frac{1}{2}}, a_{m-1}|y'_{-n-\frac{1}{2}} + p|w'_{-\frac{1}{2}}\}$ and $\{ a_0 | x'_{-\frac{1}{2}}, p|w'_{-\frac{1}{2}} + a_1 | x'_{n-\frac{1}{2}}\}$ each generates a direct summand of $\bX^{\diamond}(P,K,\lambda)$. In the resulting basis, the only arrow pointing to $\alpha_0$ comes from $\alpha_1$ and Claim  \eqref{it: shortest_epsilon<0_eta>0_lambda=0} follows.

We conclude that $\Ord(P(K,\lambda))\geq \ell n + \max\{\kappa',\theta\} \geq \ell n + \theta.$
\item \label{it: epsilon<0_eta>0_lambda-2tau>0} \textbf{When $\lambda - 2\tau(K) > 0$.} Working over $\bX^{\diamond}(P,K,\lambda)/Y$, 
  we set 
  \[\alpha_1 =        a_1\Big|\sum^{n}_{i=1} x_{i-\frac{1}{2}} Z^{\ell(n-i)} + a_0 | x'_{-\frac{1}{2}}  Z^{\theta}\]
 and claim
  \begin{enumerate}[label=(\alph*), ref=\alph*]
    \item $\bar\partial \alpha_1 =  \alpha_0 Z^{\ell n + \theta}$;
    \item  $\{\alpha_0,\alpha_1\}$ generates a direct summand of the chain complex $\bX^{\diamond}(P,K,\lambda)/Y$. 
\end{enumerate} 
Since $\Phi^{-K}(a_0 | x'_{-\frac{1}{2}}) = 0$ in $\bX^{\diamond}(P,K,\lambda)/Y$, the proof is essentially the same as that of Case \eqref{it: non_ending_general_case_1} in the proof of Proposition \ref{prop: non_ending_general_case} and therefore we omit it here.
  We conclude that
$\Ord(P(K,\lambda))\geq \ell n + \theta.$
\end{enumerate}

\medskip
\noindent\textbf{Case \eqref{it: ending_general_epsilon<0_eta<0}: when $\varepsilon(K)=-1$ and $\eta<0$.} 
 There is an arrow weighted by $Z^n$ from $a_0$ to $a_1$ followed by an arrow weighted by $W^{-\eta}$ from $a_1$ to $a'_1$. By Lemma \ref{lem: snake_structure} we have $n>1$.

The proof follows by combining  arguments from the previous case and Case \eqref{it: non_ending_general_case_2} in the proof of Proposition \ref{prop: non_ending_general_case}. We sketch out the mains steps, leaving the details for the reader to fill out.
\begin{enumerate}[label=$($4.\arabic*$)$, ref=$($4.\arabic*$)$] 
 \item \label{it: epsilon<0_eta<0_lambda-2tau<0} \textbf{When $\lambda - 2\tau(K) <0$.}
We consider two cases depending on the sign of $\omega - \theta =  \Xi +1$, as follows.
\begin{enumerate} [label=\roman*$)$]
    \item \label{it: epsilon<0_eta<0_lambda<0_case_1} \textbf{Suppose $\omega - \theta  \leq 0$.}  Set 
    \begin{align*}
        \alpha_0 &= a_1|x'_{\frac{1}{2}},\\
        \alpha_1 &=   \left(  a_1 \Big|\sum_{i=1}^{n-1} x_{i + \frac{1}{2}}Z^{\ell (n-1-i)} + a_0 | x'_{-\frac{1}{2}} Z^ \theta \right) Z^{\max\{\kappa-\theta,0\}} + p|w_{\lambda-2\tau(K)+\frac{1}{2}} Z^{\max\{\theta - \kappa,0\}}
    \end{align*}
    and we have 
     \begin{enumerate}[label=(\alph*), ref=\alph*]
      \item $\partial \alpha_0 =  0$;
    \item $\partial^{\free} \alpha_1 =  \alpha_0 Z^{\ell (n-1) + \max\{\kappa,\theta\}}$;
    \item  $\{\alpha_0,\alpha_1\}$ generates a direct summand of the chain complex $\bX^{\free}(P,K,\lambda)$. 
\end{enumerate} 
We conclude that $\Ord(P(K,\lambda))\geq \ell (n-1) + \max\{\kappa,\theta\}.$
 \item \label{it: epsilon<0_eta<0_lambda<0_case_2} \textbf{Suppose $\omega - \theta  > 0$.} Set
 \begin{align*}
        \alpha_0 &= a'_1|x_{\eta+\frac{1}{2}} +  a_1|x'_{-\frac{1}{2}}Z^\kappa,\\
        \alpha_1 &=   \left( a_1\Big|\sum^{n}_{i=1} x_{i-\frac{1}{2}}  Z^{\ell(n-i)} + a_0 | x'_{-\frac{1}{2}} Z^ \theta \right) Z^{\max\{\kappa-\theta,0\}} + p|w_{\lambda-2\tau(K)+\frac{1}{2}} Z^{\max\{\theta - \kappa,0\}}
    \end{align*}
 and we have 
     \begin{enumerate}[label=(\alph*), ref=\alph*]
      \item $\partial \alpha_0 Z^{\Xi}=  0$;
    \item $\partial^{\free} \alpha_1 =  \alpha_0 Z^{\ell (n-1) + \omega -\theta + \max\{\kappa,\theta\}}$;
    \item  $\{\alpha_0,\alpha_1\}$ generates a direct summand of the chain complex $\bX^{\free}(P,K,\lambda)$. 
\end{enumerate} 
By Lemma \ref{lem: torsion_order_criterion}, we conclude that $\Ord(P(K,\lambda))\geq \ell (n-1) + \omega -\theta + \max\{\kappa,\theta\} -\Xi = \ell (n-1) + \max\{\kappa,\theta\} + 1.$
\end{enumerate}

\item \label{it: epsilon<0_eta<0_lambda-2tau=0} \textbf{When $\lambda - 2\tau(K) =0$.}
\begin{enumerate} [label=\roman*$)$]
    \item \label{it: epsilon<0_eta<0_lambda=0_case_1} \textbf{Suppose $\omega -\theta \leq 0$.} 
  \begin{align*}
        \alpha_0 &= a_1|x'_{\frac{1}{2}},\\
        \alpha_1 &=   \left(  a_1 \Big|\sum_{i=1}^{n-1} x_{i + \frac{1}{2}}Z^{\ell (n-1-i)} + a_0 | x'_{-\frac{1}{2}} Z^{\theta} \right) Z^{\max\{\kappa'-\theta,0\}} \\
&+ a_m | y'_{-\frac{1}{2}} Z^{\max\{\kappa',\theta\}} + a_{m-1}|y_{-n+\frac{1}{2}} Z^{\max\{\theta - \kappa',0\}}
    \end{align*}
    where $\kappa'= R_{\frac{\ell}{2}-2} - g_3(P) + \frac{\ell}{2}.$ We claim
  \begin{enumerate}[label=(\alph*), ref=\alph*]
  \item $\partial \alpha_0 = 0$
    \item $\partial^{\free} \alpha_1 =  \alpha_0 Z^{\ell (n-1) + \max\{\kappa',\theta\}}$;
    \item  $\{\alpha_0,\alpha_1\}$ generates a direct summand of the chain complex $\bX^{\free}(P,K,\lambda)$. 
\end{enumerate} 
We conclude that $\Ord(P(K,\lambda))\geq \ell (n-1)  + \max\{\kappa',\theta\} \geq \ell (n-1) + \theta.$
    \item \label{it: epsilon<0_eta<0_lambda=0_case_2} \textbf{Suppose $\omega -\theta > 0$. }
  \begin{align*}
        \alpha_0 &= a'_1|x_{\eta+\frac{1}{2}} +  a_1|x'_{-\frac{1}{2}}Z^\kappa,\\
        \alpha_1 &=   \left( a_1\Big|\sum^{n}_{i=1} x_{i-\frac{1}{2}}  Z^{\ell(n-i)} + a_0 | x'_{-\frac{1}{2}} Z^{\theta} \right) Z^{\max\{\kappa'-\theta,0\}} \\
&+ a_m | y'_{-\frac{1}{2}} Z^{\max\{\kappa',\theta\}} + a_{m-1}|y_{-n+\frac{1}{2}} Z^{\max\{\theta - \kappa',0\}}
    \end{align*}
    and we have
  \begin{enumerate}[label=(\alph*), ref=\alph*]
    \item $\partial \alpha_0 Z^{\Xi}=  0$;
    \item $\partial^{\free} \alpha_1 =  \alpha_0 Z^{\ell (n-1) + \omega -\theta + \max\{\kappa',\theta\}}$;
    \item  $\{\alpha_0,\alpha_1\}$ generates a direct summand of the chain complex $\bX^{\free}(P,K,\lambda)$. 
\end{enumerate} 
By Lemma \ref{lem: torsion_order_criterion}, we conclude that $\Ord(P(K,\lambda))\geq \ell (n-1)  + \omega -\theta + \max\{\kappa',\theta\} -\Xi = \ell (n-1) + \max\{\kappa',\theta\} + 1 \geq \ell (n-1) + \theta + 1.$
\end{enumerate}

\item \label{it: epsilon<0_eta<0_lambda-2tau>0} \textbf{When $\lambda - 2\tau(K) >0$.}
   \begin{enumerate} [label=\roman*$)$]
    \item \label{it: epsilon<0_eta<0_lambda>0_case_1} \textbf{Suppose $\omega -\theta \leq 0$.} Working over $\bX^{\diamond}(P,K,\lambda)/Y$, we set
 \begin{align*}
        \alpha_0 = a_1|x'_{\frac{1}{2}},\qquad
        \alpha_1 =      a_1 \Big| \sum_{i=1}^{n-1}  x_{i + \frac{1}{2}}Z^{\ell (n-1-i)} + a_0 | x'_{-\frac{1}{2}} Z^{\theta} 
    \end{align*}
and claim 
 \begin{enumerate}[label=(\alph*), ref=\alph*]
    \item $\partial \alpha_0 =  0$;
    \item $\bar \partial \alpha_1 =  \alpha_0 Z^{\ell (n-1)   + \theta}$;
    \item  $\{\alpha_0,\alpha_1\}$ generates a direct summand of the chain complex $\bX^{\diamond}(P,K,\lambda)/Y$. 
\end{enumerate} 
We conclude that $\Ord(P(K,\lambda))\geq \ell (n-1)  + \theta.$
 \item \label{it: epsilon<0_eta<0_lambda>0_case_2} \textbf{Suppose $\omega -\theta > 0$.}
 Set
 \begin{align*}
        \alpha_0 = a'_1|x_{\eta+\frac{1}{2}} +  a_1|x'_{-\frac{1}{2}}Z^\kappa,\qquad
        \alpha_1 =    a_1\Big|\sum^{n}_{i=1} x_{i-\frac{1}{2}}  Z^{\ell(n-i)} + a_0 | x'_{-\frac{1}{2}} Z^{\theta}
    \end{align*}
and we have
 \begin{enumerate}[label=(\alph*), ref=\alph*]
    \item $\partial \alpha_0 Z^{\Xi}=  0$;
    \item $\bar \partial \alpha_1 =  \alpha_0 Z^{\ell (n-1) + \omega}$;
    \item  $\{\alpha_0,\alpha_1\}$ generates a direct summand of the chain complex $\bX^{\free}(P,K,\lambda)/Y^{\free}$. 
\end{enumerate} 
We conclude that $\Ord(P(K,\lambda))\geq \ell (n-1)  + \omega - \Xi =\ell (n-1) + \theta + 1.$
    \end{enumerate}
    \end{enumerate}
\end{proof}

   In previous results, such as Proposition~\ref{prop: ending_arrow_general_case} we made some assumptions which excluded the case that $K=T_{2,3}$, so we handle this now. (Compare Remark~\ref{rem:exclude-T23}). See Definition \ref{def: shorthand} for the definitions of $\omega$ and $\kappa$.
\begin{thm}\label{thm: tor_ord_T_23}
Suppose $P$ is  L-space satellite pattern. For any $\lambda \in \bZ,$ we have
    \[\Ord(P(T_{2,3},\lambda)) \geq \begin{cases}
        \min\{ \max\{\ell,\omega\}, \max\{\theta,\kappa\} + 1\} &\ell >0 \\
        \omega &\ell = 0.
    \end{cases} \] 
\end{thm}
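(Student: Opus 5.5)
We follow the same scheme as in the proof of Proposition~\ref{prop: ending_arrow_general_case}. We build explicit free generators $\alpha_0,\alpha_1$ in the model $\bX^{\diamond}(P,T_{2,3},\lambda)_{\bF[Z]}$ from Equation~\eqref{eq: CFK_PK_Z}, or in its truncation $\bX^{\diamond}/Y$, and then apply Lemma~\ref{lem: torsion_order_criterion} with $A=\bX^{\free}$ and $B=\bX^{\Tor}$. For $T_{2,3}$ the complex $\cCFK^{\hat\cR}$ is the staircase $\cC(1,-1)$. Its generators are $a_0,a_1,a_2$, with $a_1\to a_0$ weighted by $Z$ and $a_1\to a_2$ weighted by $W$, so $a_m=a_2$, $\varepsilon=1$, $n=1$ and $\eta=-1$. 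This is exactly the configuration excluded from Case~\eqref{it: ending_general_epsilon>0_case_2} when $\lambda-2\tau(K)<0$, since there $n>1$ was needed in subcase \ref{it: epsilon>0_eta<0_lambda<0_case_1}. For $\lambda-2\tau(T_{2,3})=\lambda-2\ge 0$, Case~\eqref{it: ending_general_epsilon>0_case_2} still applies verbatim and gives $\Ord\ge \ell+\max\{\kappa,\theta\}$. This already dominates both claimed bounds, because $\ell\ge \kappa$ and $\omega\le \ell+\theta$ (using $\ell-\kappa=\omega-\theta$). The remaining work is therefore the regime $\lambda\le 1$.

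For $\lambda\le 1$ I would split on the sign of $\omega-\theta=\Xi+1$, as in Case~\ref{it: epsilon>0_eta<0_lambda-2tau<0}. If $\omega-\theta>0$, subcase \ref{it: epsilon>0_eta<0_lambda<0_case_2} never used $n>1$. Taking $n=1$ there, with $\alpha_0=p|w_{\frac12+\lambda-2\tau}+a_0|x'_{-\frac12}Z^{\kappa}$ and $\alpha_1$ as defined there, gives $\Ord\ge \max\{\kappa,\theta\}+1$. If $\omega-\theta\le 0$, the subcase-\ref{it: epsilon>0_eta<0_lambda<0_case_1} generators degenerate, since the sum over $x_{i+\frac12}$ is empty. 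I would instead take $\alpha_0=a_0|x'_{-\frac12}$ and look for an $\alpha_1$ whose boundary is a power of $Z$ times $\alpha_0$. The natural candidate combines $a_0|x_{\frac12}$ with $p|w_{\frac12+\lambda-2\tau}$ (whose image under $\Phi^{-K}$ carries $Z^\omega$) and $a_1|x'_{-\frac12}$, $a_1'|x_{\eta+\frac12}$. Cancelling the $p$-components should produce an arrow of length roughly $\max\{\ell,\omega\}$ into $\alpha_0$, up to the correction terms. In every case the claimed bound $\min\{\max\{\ell,\omega\},\max\{\theta,\kappa\}+1\}$ is at most what is obtained, which is why the statement takes the minimum of the two outcomes.

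When $\ell=0$ we have $\theta\le\omega$ and $\kappa=0$, and the bound reduces to $\omega$. Here $\Phi^{-K}(a_0|x_{\frac12})=p|w_{\cdot}Z^{\omega}$, and the $L_\sigma,L_\tau$ arrows from $x_{\frac12}$ collapse because the winding is zero. So I would use $\alpha_0=p|w_{\frac12+\lambda-2\tau}$ (corrected by $a_0|x'$ terms) and $\alpha_1=a_0|x_{\frac12}$, possibly after passing to $\bX^{\diamond}/Y$ when $\lambda-2\tau=-1$, as in subcase ii$'$). Lemma~\ref{lem: middlerow_chain} ensures $\alpha_0$ is not hit by $\Phi^K$.

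The main obstacle is the direct-summand claim (c) in the case $\omega-\theta\le0$ with $\lambda$ small. There $p|w$ and $p|w'$ receive arrows from both $\Phi^{K}$ (from $a_2$) and $\Phi^{-K}$ (from $a_0$). Because $n=1$, the truncation $Y$ no longer separates these contributions, so the change of basis must be chosen carefully. Lemma~\ref{lem: middlerow_chain} together with the shift of $\Phi^{-K}$ by $\lambda-2\tau$ is what I would rely on, and I would treat $\lambda-2\tau\in\{-1,0\}$ separately, as in subcases ii$'$).
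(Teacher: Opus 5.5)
\textbf{Where the proposal breaks.} Your handling of $\lambda\ge 2$ (Case (2.2) of Proposition~\ref{prop: ending_arrow_general_case}, applied verbatim) and of $\lambda\le 1$ with $\omega-\theta>0$ (subcase ii) with $n=1$) matches the paper. The gap is in the one regime that needs a new argument: $\lambda-2\tau(T_{2,3})<0$ with $\omega-\theta\le 0$.

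There you take $\alpha_0=a_0|x'_{-\frac{1}{2}}$, but this is not a cycle. Since $a_0$ is the ending generator, $\delta^1(a_0)=p\otimes T^{\lambda-2\tau(K)}\tau$, and by \eqref{eq: Phi-K_arrow} we get $\partial(a_0|x'_{-\frac{1}{2}})=p|w'_{-\frac{1}{2}+\lambda-2\tau(K)}$. This is a free generator, and it is not in $Y$. So no $\alpha_1$ with $\partial\alpha_1=\alpha_0Z^N$ can exist: $\partial^2\alpha_1$ would equal $p|w'_{-\frac{1}{2}+\lambda-2\tau(K)}Z^N\neq 0$.

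Likewise, you cannot cancel the component $p|w_{\frac{1}{2}+\lambda-2\tau(K)}Z^{\omega}$ of $\partial(a_0|x_{\frac{1}{2}})$ by adding terms to $\alpha_1$. By Lemma~\ref{lem: middlerow_chain}, for $\lambda-2\tau(K)<0$ the only arrow into $p|w_{\frac{1}{2}+\lambda-2\tau(K)}$ is the $\Phi^{-K}$ arrow from $a_0|x_{\frac{1}{2}}$. Adding $p|w$ itself to $\alpha_1$ only produces $p|w'$ terms. So the arrow of length ``roughly $\max\{\ell,\omega\}$'' you anticipate has no construction behind it.

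\textbf{The missing idea.} Absorb the $p$-component into $\alpha_0$ rather than cancel it. Keep the subcase ii) generators:
\[
\alpha_0=p|w_{\frac{1}{2}+\lambda-2\tau(K)}+a_0|x'_{-\frac{1}{2}}Z^{\kappa},\qquad
\alpha_1=\bigl(a_0|x_{\frac{1}{2}}+a_1|x'_{-\frac{1}{2}}Z^{\theta}\bigr)Z^{\max\{\kappa-\theta,0\}}+a_1'|x_{-\frac{1}{2}}Z^{\max\{\theta-\kappa,0\}}.
\]
When $\omega-\theta\le 0$ one has $\Xi\le -1$, so Lemma~\ref{lem: C_S_structure_maps} gives $L_\sigma(\xs^{\ell/2-1}_0)=0$. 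Hence the torsion term $\epsilon\, p|\tilde{w}'Z^{\xi'_1-\Xi}$ in $\partial\alpha_0$ disappears and $\alpha_0$ is an honest cycle. The same computation as in subcase ii) gives $\partial^{\free}\alpha_1=\alpha_0Z^{\omega-\theta+\max\{\kappa,\theta\}}$. Lemma~\ref{lem: torsion_order_criterion} then applies with $j=0$ (no loss of $\Xi$). Using $\ell-\kappa=\omega-\theta$, this gives
\[
\Ord\ge\omega-\theta+\max\{\kappa,\theta\}=\max\{\ell,\omega\}.
\]
This is what the paper does. Taking the minimum with the subcase ii) bound $\max\{\kappa,\theta\}+1$ and the Case (2.2) bound yields the theorem.

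\textbf{The $\ell=0$ case.} Your $\ell=0$ paragraph actually contains this $\alpha_0$, but $\ell=0$ is not a separate case. The identity $\ell-\kappa=\omega-\theta$ gives $\theta=\omega+\kappa\ge\omega$, so $\ell=0$ always lies in the regime just described. Your claim $\theta\le\omega$ holds only if $\kappa=0$, which you do not justify and do not need. Also, the arrows from $a_0|x_{\frac{1}{2}}$ to $a_0|x'_{\pm\frac{1}{2}}$ are weighted by $Z^{\theta}$; they do not collapse. So $\alpha_1=a_0|x_{\frac{1}{2}}$ alone does not work, and you need the correction terms involving $a_1$ and $a_1'$.

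\textbf{The flagged obstacle.} The difficulty you flag, arrows from $\Phi^K$ into $p|w$, does not arise here. For $\lambda-2\tau(K)<0$, the generator $p|w_{\frac{1}{2}+\lambda-2\tau(K)}$ is not in the image of $\Phi^K$.
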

\begin{proof}
The proof of Case \ref{it: epsilon>0_eta<0_lambda-2tau<0} in Proposition \ref{prop: ending_arrow_general_case} applies verbatim except when $\lambda <2 \tau(T_{2,3}) = 2$ and $\omega - \theta <0$. In this situation, the construction in the subcase \ref{it: epsilon>0_eta<0_lambda<0_case_1} is no longer available, so we instead adopt the construction from the subcase \ref{it: epsilon>0_eta<0_lambda<0_case_2}. Explicitly, we set
       \begin{align*}
    \alpha_0 &= p|w_{\frac{1}{2}+\lambda-2\tau(K)} +  a_0|x'_{-\frac{1}{2}}Z^\kappa, \\
    \alpha_1 &=   \left(  a_0 | x_{ \frac{1}{2}}  + a_1|x'_{-\frac{1}{2}} Z^ \theta \right) Z^{\max \{\kappa-\theta, 0 \}} + a'_1|x_{-\frac{1}{2}} Z^{\max \{\theta - \kappa, 0 \}}.
\end{align*}
Then we have
\begin{enumerate}[label=(\alph*), ref=\alph*]
\item \label{it: T_23_alpha0_cycle} $\partial \alpha_0  = 0$;
    \item  $\partial^{\free} \alpha_1 =  \alpha_0 Z^{ \omega - \theta + \max\{\kappa, \theta \}}$;
    \item  $\{ \alpha_0, \alpha_1 \}$ generates a summand in $\bX^{\free}(P,K,\lambda)$. 
\end{enumerate}  
Compared to the claims in the subcase \ref{it: epsilon>0_eta<0_lambda<0_case_2} in Case \ref{it: epsilon>0_eta<0_lambda-2tau<0}, since $\Xi<0,$ $\alpha_0$ is now a cycle.  
We obtain
        $\Ord(P(K,\lambda))  \geq  \omega - \theta + \max\{\kappa, \theta \}  =  \max\{\kappa+ \omega - \theta, \omega  \} =\max\{\ell, \omega  \}.$ 
        
Compare the above bound with the ones obtained in the remaining subcases: in subcase \ref{it: epsilon>0_eta<0_lambda<0_case_2} in Case \ref{it: epsilon>0_eta<0_lambda-2tau<0}, the bound is $\max\{\theta,\kappa\} + 1$ and in Case \ref{it: epsilon>0_eta<0_lambda-2tau>-1} the bound is $\max\{\theta,\kappa\} + \ell$. When $\ell >0,$ we obtain the desired bound; when $\ell =0,$ since by Definition \ref{def: shorthand}, $\theta = \omega + \kappa - \ell = \omega + \kappa,$ we also obtain the desired bound. 
\end{proof}
For braided L-space satellite patterns, the bounds in Proposition \ref{prop: ending_arrow_general_case} can be improved, as follows. Again the following proposition does not fully recover the bounds in the second half of \cite[Proposition 3.1]{HLPUnknotting}. However, it suffices for the proof of the unknotting number bound on the iterated braided L-space satellite patterns stated in Theorem \ref{thm: iterate_braided}.
\begin{prop}\label{prop: ending_arrow_braided_case}
     Suppose  the ending arrow of the standard complex in $\cCFK(K)$ is weighted by $Z^n$ for some  $n>0$. Let $P$ be a braided L-space pattern with $\omega = R_{\frac{\ell}{2}}-R_{\frac{\ell}{2}-1}$. Given $\lambda \in \Z$, we have the following  
     \begin{itemize}
    \item  Suppose $\varepsilon(K)=1$. 
    \medskip
     \begin{enumerate}[label=(\arabic*)]
    \item \label{it: ending_braided_case_1} When $\eta>0$,
        \begin{align*}
            \Ord(P(K, \lambda)) \geq \begin{cases}
            \ell (n -1)  + 1   \quad &\text{if} \quad  \lambda - 2\tau(K) \leq -2 \\
            \ell (n -1) + \omega   \quad &\text{if} \quad  \lambda - 2\tau(K) = -1 \\
             \ell n      \quad &\text{if} \quad  \lambda - 2\tau(K) \geq 0;
            \end{cases}
        \end{align*}
     \item \label{it: ending_braided_case_2} When $\eta<0$,
        \begin{align*}
            \Ord(P(K, \lambda)) \geq \begin{cases}
            \ell n -\omega  + 1   \quad &\text{if} \quad  \lambda - 2\tau(K) \leq -2 \\
            \ell n   \quad &\text{if} \quad  \lambda - 2\tau(K) = -1 \\
             \ell (n+1) - \omega      \quad &\text{if} \quad  \lambda - 2\tau(K) \geq 0.
            \end{cases}
            \end{align*}
    \end{enumerate}
\item
    Suppose $\varepsilon(K)=-1$. 
    \medskip
    \begin{enumerate}[start=3,label=(\arabic*)]
     \item \label{it: ending_braided_case_3} When  $\eta>0$,
        \begin{align*}
            \Ord(P(K, \lambda)) \geq          
             \begin{cases}
            \ell (n+1) -\omega   \quad &\text{if} \quad  \lambda - 2\tau(K) < 0 \\
             \ell n     \quad &\text{if} \quad  \lambda - 2\tau(K) \geq 0;
            \end{cases}.
        \end{align*}
      \item \label{it: ending_braided_case_4} When  $\eta<0$,
        \begin{align*}
            \Ord(P(K, \lambda)) \geq      
             \begin{cases}
            \ell n  - \omega + 1   \quad &\text{if} \quad  \lambda - 2\tau(K) < 0 \\
             \ell (n-1)  + 1   \quad &\text{if} \quad  \lambda - 2\tau(K) \geq 0.
            \end{cases}
            \end{align*}
    \end{enumerate}
    \end{itemize}
\end{prop}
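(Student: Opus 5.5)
The plan is to deduce each case from Proposition~\ref{prop: ending_arrow_general_case}, specializing the invariants using Lemma~\ref{lem: braided_property}. For a braided L-space pattern we have $\theta=0$, $\Xi=\omega-1\ge 0$ and $\kappa=\ell-\omega\ge 0$, so $\omega-\theta=\omega>0$. In every case of the proof of Proposition~\ref{prop: ending_arrow_general_case} that splits on the sign of $\omega-\theta$, only the subcase ii) (respectively ii$'$)) therefore occurs. I will reread those subcases, which gave the sharper bounds with the extra ``$+1$'' or ``$+\omega$'' contribution, and substitute $\theta=0$ and $\kappa=\ell-\omega$. As a result, the hypothesis $n>1$ (needed only in subcase i)) disappears, exactly as in the proof of Proposition~\ref{prop: non_ending_braided_case}.

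\textbf{The case $\varepsilon(K)=1$.}
\begin{itemize}
\item For $\eta>0$:
 \begin{itemize}
 \item If $\lambda-2\tau(K)\le -2$, subcase ii) of Case~\ref{it: epsilon>0_eta>0_lambda-2tau<0} gives $\ell(n-1)+\theta+1=\ell(n-1)+1$.
 \item If $\lambda-2\tau(K)=-1$, the truncated subcase ii$'$) gives $\ell(n-1)+\omega$.
 \item If $\lambda-2\tau(K)\ge 0$, Case~\ref{it: epsilon>0_eta>0_lambda-2tau>-1} gives $\ell n+\theta=\ell n$.
 \end{itemize}
\item For $\eta<0$:
 \begin{itemize}
 \item If $\lambda-2\tau(K)\le -2$, subcase ii) of Case~\ref{it: epsilon>0_eta<0_lambda-2tau<0} gives $\ell(n-1)+\max\{\kappa,\theta\}+1=\ell(n-1)+\ell-\omega+1=\ell n-\omega+1$.
 \item If $\lambda-2\tau(K)=-1$, subcase ii$'$) gives $\ell(n-1)+\omega-\theta+\max\{\kappa,\theta\}=\ell(n-1)+\omega+\ell-\omega=\ell n$.
 \item If $\lambda-2\tau(K)\ge0$, Case~\ref{it: epsilon>0_eta<0_lambda-2tau>-1} gives $\ell n+\kappa=\ell(n+1)-\omega$.
 \end{itemize}
\end{itemize}

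\textbf{The case $\varepsilon(K)=-1$.}
\begin{itemize}
\item For $\eta>0$:
 \begin{itemize}
 \item If $\lambda-2\tau(K)<0$, Case~\ref{it: epsilon<0_eta>0_lambda-2tau<0} gives $\ell n+\max\{\kappa,\theta\}=\ell n+\ell-\omega=\ell(n+1)-\omega$.
 \item If $\lambda-2\tau(K)\ge 0$, the bound $\ell n+\theta=\ell n$ follows from Cases~\ref{it: epsilon<0_eta>0_lambda-2tau=0} and~\ref{it: epsilon<0_eta>0_lambda-2tau>0}.
 \end{itemize}
\item For $\eta<0$:
 \begin{itemize}
 \item If $\lambda-2\tau(K)<0$, subcase ii) of Case~\ref{it: epsilon<0_eta<0_lambda-2tau<0} gives $\ell(n-1)+\kappa+1=\ell n-\omega+1$.
 \item If $\lambda-2\tau(K)\ge 0$, subcase ii) of Cases~\ref{it: epsilon<0_eta<0_lambda-2tau=0} and~\ref{it: epsilon<0_eta<0_lambda-2tau>0} give at least $\ell(n-1)+\theta+1=\ell(n-1)+1$.
 \end{itemize}
\end{itemize}

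\textbf{Main obstacle.} The delicate point is checking that subcase ii) and ii$'$) constructions genuinely apply in all these situations without the extra hypotheses that earlier subcases carried. This involves three checks.
\begin{itemize}
\item For $\Xi=0$ ($\omega=1$), the element $\alpha_0$ is an honest cycle and Lemma~\ref{lem: torsion_order_criterion} is applied with $j=0$. The bound $\ell(n-1)+\omega-\Xi$ still equals the stated value.
\item In the $\varepsilon=1$, $\eta<0$ case with $n=1$, which is allowed here, we must confirm that the sum over $i$ in $\alpha_1$ is nonempty and the change of basis is valid. This is where the $T_{2,3}$ exclusion was needed in general; the braided hypothesis $\omega>0$ is what makes subcase ii) available, so $T_{2,3}$ need not be excluded. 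I will spell this out, comparing with the proof of Theorem~\ref{thm: tor_ord_T_23}.
\item In the $\lambda-2\tau(K)=-1$ cases, the generator $p|w'_{-1/2}$ lies in the acyclic subcomplex $Y$. This is what kills $\bar\partial\alpha_0$ and upgrades the bound by $\Xi$.
\end{itemize}
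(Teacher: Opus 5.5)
Your proposal is correct and follows the paper's proof essentially verbatim. You specialize Proposition~\ref{prop: ending_arrow_general_case} via $\theta=0$, $\Xi=\omega-1\ge 0$ and $\kappa=\ell-\omega$, observe that only the subcases ii) and ii$'$) survive (so the hypothesis $n>1$ disappears), and rewrite each bound. One small precision: in the $\lambda-2\tau(K)=-1$ cases, the term removed by the truncation is the torsion element $p|\tilde{w}'_{-1/2}$, which lies in $Y$ together with all of $p|\scM^{\diamond}_{-1/2,(\ell-3)/2}$.
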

\begin{proof}
Since $P$ is a braided L-space satellite operator,
by Definition \ref{def: shorthand} and Lemma \ref{lem: braided_property}, $\Xi \geq 0, \theta=0$ and $\kappa = \ell - \omega + \theta = \ell - \omega.$
    Compared to the proof of Proposition \ref{prop: ending_arrow_general_case}:
    \begin{enumerate}[label=$($\arabic*$)$]
        \item \textbf{Suppose $\varepsilon(K)=1$ and $\eta>0$:}
         When $\lambda - 2 \tau(K)< 0$, since $\Xi \geq 0,$
         only the subcase \ref{it: epsilon>0_eta>0_lambda<0_case_2} in Case \ref{it: epsilon>0_eta>0_lambda-2tau<0} is possible. Since $\theta=0,$ we obtain $\Ord(P(K, \lambda)) \geq \ell(n-1)  + 1$.

       Moreover,
          when $\lambda - 2 \tau(K)= -1$, we are in the special case \ref{it: epsilon>0_eta>0_lambda<0_specialcase} in Case \ref{it: epsilon>0_eta>0_lambda-2tau<0}, therefore $\Ord(P(K, \lambda)) \geq \ell(n-1)  + \omega$.
    
     When $\lambda - 2 \tau(K)\geq 0$, the original bound from Case \ref{it: epsilon>0_eta>0_lambda-2tau>-1} is unchanged.
         \item \textbf{Suppose $\varepsilon(K)=1$ and $\eta<0$:}
          When $\lambda - 2 \tau(K) <0$, only the subcase \ref{it: epsilon>0_eta<0_lambda<0_case_2} in Case \ref{it: epsilon>0_eta<0_lambda-2tau<0} is possible. Therefore $\Ord(P(K, \lambda)) \geq \ell(n-1)  + \kappa + 1 =\ell n -\omega +1$ and condition $n>1$ is no longer necessary. 
         
        Moreover, when $\lambda - 2 \tau(K) = -1$,  we are in the special case \ref{it: epsilon>0_eta<0_lambda<0_specialcase} in Case \ref{it: epsilon>0_eta<0_lambda-2tau<0}. Therefore $\Ord(P(K, \lambda)) \geq \ell(n-1)  + \omega + \kappa =\ell n$.
  
        When $\lambda - 2 \tau(K)\geq 0$, the original bound from Case \ref{it: epsilon>0_eta<0_lambda-2tau>-1} is unchanged. We have  $\Ord(P(K, \lambda)) \geq \ell n + \kappa=\ell (n+1) -\omega.$
  \item \textbf{Suppose $\varepsilon(K)=-1$ and $\eta>0$:}
         These are the same bounds as those obtained in Case \eqref{it: ending_general_epsilon<0_eta>0} in Proposition \ref{prop: ending_arrow_general_case}, after rewriting them slightly using $\theta=0$ and $\kappa = \ell - \omega.$
 \item \textbf{Suppose $\varepsilon(K)=-1$ and $\eta<0$:}
         Since $\Xi \geq 0,$
         in all three cases \ref{it: epsilon<0_eta<0_lambda-2tau<0}, \ref{it: epsilon<0_eta<0_lambda-2tau=0} and  \ref{it: epsilon<0_eta<0_lambda-2tau>0}, only the subcases numbered by \ref{it: epsilon<0_eta<0_lambda<0_case_2} are possible. 
   Thus we obtain the bounds after rewriting them slightly.
   \end{enumerate}
\end{proof}
We require one more lemma before proving Theorem \ref{thm: tor_ord}.
\begin{lem}\label{lem: Ord=1_cases}
    For a knot $K \subset S^3$, suppose $\Ord(K)=1$. Then either
    \begin{itemize}
        \item $\cCFK(K)^{\hat{\cR}}$ contains a non-ending arrow  of type $(+,-)$ in the sense of Definition~\ref{def:non-ending-types}; or
        \item $K=\pm T_{2,3}.$
    \end{itemize}
\end{lem}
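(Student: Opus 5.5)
We would start from Lemma~\ref{lem: cfk_ord=1}: since $\Ord(K)=1$, we have $\cCFK(K)^{\hat\cR}\simeq \cC\oplus \cB_1\oplus\cdots\oplus\cB_k$, where $\cC$ is a positive or negative staircase with every step of length one, and each $\cB_i$ is a length-one box. The proof then splits into two cases, according to whether $k\ge 1$ or $k=0$.

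\textbf{Case $k\ge 1$.} We read off a non-ending arrow of type $(+,-)$ directly from a box. Take $\cB_1$ with generators $X_1,\dots,X_4$ and the orientations $\vec\sigma=(1,1,-1,-1)$ from Definition~\ref{def:local-system-complex}. From these signs there is a generator with two outgoing arrows, $W$ and $Z$, and a generator with two incoming arrows.

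Choose the $Z$-arrow from $X_3$ to $X_2$ (or whichever pair the sign convention dictates) as the $Z^1$-arrow from $a_1$ to $a_0$. Then check the following:
\begin{itemize}
\item $a_0$ receives the remaining $W$-arrow from its other neighbour, so $\eta_0>0$;
\item $a_1$ sends its $W$-arrow out, so $\eta_1<0$.
\end{itemize}
In a box the source of the $Z$-arrow is the generator with two outgoing arrows and the target is the one with two incoming arrows, so this gives exactly type $(+,-)$. The direct-summand conditions in Definition~\ref{def:non-ending-types} hold automatically, because the box is itself a summand. Local systems never contain ending arrows, so the arrow is non-ending.

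\textbf{Case $k=0$.} Here $\cCFK(K)^{\hat\cR}\simeq\cC$ is a staircase with all steps of length one. If $\cC$ has length $m\ge 4$, i.e.\ at least two $Z$-steps, then the $Z$-steps after the first are non-ending. In a positive staircase $\cC(1,-1,1,-1,\dots)$ every $Z$-arrow points $a_{2j-1}\to a_{2j-2}$, and the $W$-arrows point $a_{2j-1}\to a_{2j}$. For an interior $Z$-arrow, $a_0=a_{2j-2}$ then has an outgoing $W$-arrow to $a_{2j-3}$, giving $\eta_0<0$, and $a_1$ also has an outgoing $W$, giving $\eta_1<0$. That is type $(-,-)$, not $(+,-)$.

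So a staircase alone cannot yield type $(+,-)$, and we must rule out long staircases by another route. The key tool is Lemma~\ref{lem: snake_structure}: three consecutive arrows in one direction force the middle one to have length at least $2$. A positive staircase of length $m\ge 4$ contains the pattern $a_3\to a_2$ ($Z$), $a_3\to a_4$ ($W$), which is not a same-direction chain, so the lemma does not apply directly. Instead we use its second clause: if $a_0$ is the ending generator, then $n_1>1$. Apply it to the chain $a_2\to a_1$ ($W$), $a_1\to a_0$ ($Z$). In $\cC(n_1,-n_2,\dots)$ we have $a_1\to a_0$ via $Z$ and $a_1\to a_2$ via $W$, so the clause only bites for $\cC(n_1,n_2,\dots)$. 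This suggests the argument does not go through the lemma.

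We would instead argue via Petkova's thin classification together with the known fact that a thin complex whose staircase has length $m>2$ would need $|\tau|\ge 2$ and hence boxes. The simplest route: thinness plus the symmetry $\sigma_in_i=-\sigma_{m+1-i}n_{m+1-i}$ and the Alexander grading constraint make $\cCFK(K)$ the complex of $\pm T_{2,3}$ exactly when $m=2$, and $m=0$ is the unknot, which is excluded since $\Ord=1$. For $m\ge4$ with $k=0$, we expect a total-rank or $\delta$-grading parity argument (rank of $\widehat{HFK}$ is odd and grows) to be incompatible with integrality of the grading. Alternatively, we would simply find a $(+,-)$ arrow inside the staircase at an interior position after reversing the orientation convention for negative staircases.

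\textbf{Main obstacle.} This last case, excluding or handling long thin staircases with no boxes, is the expected difficulty. We would first check whether in a negative staircase the interior $Z$-arrows come out as type $(+,-)$. If so, the case $m\ge4$ yields the first alternative, and only $m=2$, namely $\pm T_{2,3}$, remains.
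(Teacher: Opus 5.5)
Your skeleton (Lemma~\ref{lem: cfk_ord=1}, then boxes versus staircase length) matches the paper's. However, the staircase step is miscomputed, the trefoil-detection input is missing, and your box claim is false. The paper's proof makes the same false box claim.

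In the staircase case, your orientation error is what creates your ``main obstacle.'' By your own rule ($W$-arrows point $a_{2j-1}\to a_{2j}$), the $W$-arrow between $a_{2j-3}$ and $a_{2j-2}$ points $a_{2j-3}\to a_{2j-2}$. So it is \emph{incoming} at $a_{2j-2}$, giving $\eta_0>0$, and the interior $Z$-arrow $a_{2j-1}\to a_{2j-2}$ has type $(+,-)$, not $(-,-)$. More generally, in any staircase, positive or negative, the generators alternate between sources and sinks, and every $Z$-arrow runs from a source to a sink. Hence once $\cC$ has at least five generators, the $Z$-arrow not incident to the ending generator is a non-ending arrow of type $(+,-)$; this is the paper's argument. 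Your fallback of ruling out box-free staircases with $m\ge 4$ cannot work: $T_{2,5}$ is alternating with $\Ord=1$ and $\cCFK(T_{2,5})^{\hat\cR}=\cC(1,-1,1,-1)$. Also, when $m=2$ and $k=0$ you only get $\cCFK(K)^{\hat\cR}\simeq\cCFK(\pm T_{2,3})^{\hat\cR}$. Concluding $K=\pm T_{2,3}$ needs the theorem that knot Floer homology detects the trefoils (Ghiggini), which you never invoke; thinness and symmetry identify only the complex, not the knot.

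Your box case fails, and here you are repeating the paper's own assertion. In a length-one box, the generator with two outgoing arrows and the one with two incoming arrows are diagonally opposite, so no $Z$-arrow joins them. With $\vec\sigma=(1,1,-1,-1)$ one gets $\d X_2=X_1W+X_3Z$, $\d X_1=X_4Z$ and $\d X_3=X_4W$. The $Z$-arrows $X_2\to X_3$ and $X_1\to X_4$ therefore have types $(-,-)$ and $(+,+)$, and no box contains a $(+,-)$ arrow. Consequently the lemma as literally stated fails, for example for the figure-eight knot (a one-generator staircase plus one box, with $\Ord=1$). What is true, and all that is used later, is that a box supplies a non-ending $Z^1$-arrow of type $(+,+)$. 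Case~(1) of Proposition~\ref{prop: non_ending_general_case} gives from it the same bound $\ell+\theta$ needed in Theorem~\ref{thm: tor_ord}, and $(+,+)$ is one of the admissible types in Proposition~\ref{prop: longest_arrow}. So the first alternative should read ``type $(+,+)$ or $(+,-)$,'' and in the box case you should exhibit the arrow $X_1\to X_4$.
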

\begin{proof}
    When $\Ord(K)=1$,
    by Lemma \ref{lem: cfk_ord=1},  $\cCFK(K)^{\hat{\cR}}\simeq \cC \oplus \cB_1 \oplus \cdots  \oplus \cB_k$ for some $k\geq 0$, where $\cC$ is a positive or negative staircase   and  each $\cB_i$ is a length-one box.  If $k>0,$ since 
    each $\cB_i$ contains an arrow   of type $(+,-)$,  the claim holds. Therefore we may assume $k=0$. 
    If  $\cC$ has more than $3$ generators, then it necessarily contains a non-ending arrow  of type $(+,-)$, and the claim holds. Otherwise,  we must have $\cCFK(K)^{\hat{\cR}} \simeq \cCFK(\pm T_{2,3})^{\hat{\cR}}$.
      By \cite{Ghigginidetect}, the knot Floer complex detects $\pm T_{2,3}.$  Therefore $K=\pm T_{2,3}$  in this case, proving the claim. 
\end{proof}
We are now ready to prove Theorem \ref{thm: tor_ord}, which we recall below.
\TorOrd*
\begin{proof} Since knot Floer homology is insensitive to reversing the string orientation, we may assume (as we have done throughout the paper) that $\ell\ge 0$.

If $\Ord(K)> 1,$
the bound follows from Proposition \ref{prop: non_ending_general_case} and \ref{prop: ending_arrow_general_case}.

   We consider the case when $\Ord(K)=1$. Since $K\neq T_{2,3},$ by Lemma \ref{lem: Ord=1_cases},  either $\cCFK(K)^{\hat{\cR}}$ contains a non-ending arrow  of type $(+,-)$ or $K=-T_{2,3}.$ In the first case, 
     by Case \eqref{it: non_ending_general_case_3} in Proposition \ref{prop: non_ending_general_case} we have $\Ord(P(K,\lambda)) \geq \ell + \theta$. 
    In the second case,  $\varepsilon(K)=-1$ and $\cCFK(K)^{\hat{\cR}}$ contains an ending arrow with $\eta > 0.$ By Case \eqref{it: ending_general_epsilon<0_eta>0} in Proposition \ref{prop: ending_arrow_general_case}, $\Ord(P(K,\lambda)) \geq \ell + \theta$. This completes the proof.
\end{proof}

By a similar argument, we have the following.
\TorOrdBraid*
\begin{proof}
   Follows by a similar argument as that in the proof of Theorem \ref{thm: tor_ord}, using Proposition \ref{prop: non_ending_braided_case} and \ref{prop: ending_arrow_braided_case}.
\end{proof}

\section{Proof of the bounds on iterated satellites}\label{sec: iterate}
In this section we will obtain a bound for iterated satellites $P_k \circ \cdots \circ P_1(K)$, focusing on the case when each $P_i$ is a braided L-space pattern.

For iterated L-space satellite operators in general, an unknotting number bound follows immediately by repeatedly applying Theorem \ref{thm: tor_ord}, as discussed in the introduction; see Theorem \ref{thm: iterate_general}. However, this bound is clearly not optimal. For example, in the case of cabling, where we have $\theta_i=0$,  the bound in Theorem \ref{thm: iterate_general} is strictly weaker than the bound given in \cite[Theorem 1.1]{HLPUnknotting}.

To improve the initial bound, we use an argument similar to that in \cite{HLPUnknotting}, showing that the type of the input arrow determines the type of the output arrow. This is achieved by keeping track of arrows weighted by positive powers of $W$ in the proofs of Propositions \ref{prop: non_ending_general_case} and \ref{prop: ending_arrow_general_case}. We therefore require a version of the construction in Section \ref{subsec: complex_Pk} with coefficients in $\hat\cR=\bF[W,Z]/U$ rather than $\bF[Z]$.

 We use the formula
 \begin{equation}\label{eq: CFK_PK_hatR}
     \cCFK(P(K,\lambda))^{\hat\cR} \simeq \cX_{\lambda}(K)^{\hat\cK} \boxtimes {}_{\hat\cK}\cH_{-}^{\hat\cK}\boxtimes {}_{\hat\cK} \cX^{\diamond}(L_P)^{\hat\cR}.
 \end{equation}
Denote the complex on the right hand side $\hat\bX(P,K,\lambda)^{\hat{\cR}}$. Compared to $\bX^{\diamond}(P,K,\lambda)$, we replace each $\cC^{\diamond}_t$ by $\hat\cC_t:=\cC_t/(U)$ and each $\cS^{\diamond}$ by $\hat\cS:=\cS/(U)$. The standard basis of $\hat\bX(P,K,\lambda)$
consists of those of  $\bX^{\diamond}(P,K,\lambda)$ with the addition of elements $\ys^t_i \in \hat\cC_t$ for $i=1,\dots,k_t$  and $\ys'_i \in \hat\cS$ for $i=1,\dots,k$, such that
\begin{align*}
\delta^1(\ys^t_i) &= \xs^t_{i-1} \otimes W^{\zeta^t_{i}} + \xs^t_i \otimes Z^{\xi^t_i} \quad i=1,\dots,k_t \quad \text{and} \\
\delta^1(\ys'_i) &= \xs'_{i-1} \otimes W^{\zeta'_{i}} + \xs'_i \otimes Z^{\xi'_i} \quad i=1,\dots,k  
\end{align*}
for some positive integer $\zeta^t_i$ and $\zeta'_i$. 

Each $\hat\cC_t$ is the positive staircase
$\cC(\zeta^t_1,-\xi^t_1,\dots,\zeta^t_{k_t},-\xi^t_{k_t})$
with generators
$\xs^t_0,\ys^t_1,\xs^t_1,\dots,\ys^t_{k_t},\xs^t_{k_t}$, order according to the definition of the standard complex.
Similarly, $\hat\cS$ is the positive staircase
$\cC(\zeta'_1,-\xi'_1,\dots,\zeta'_{k},-\xi'_{k})$
with generators
$\xs'_0,\ys'_1,\xs'_1,\dots,\ys'_{k},\xs'_{k}$, order according to the definition of the standard complex.
Since $\hat\cS \simeq \cCFK(P)$, the symmetry of the knot Floer complex implies that
$\zeta'_i = \xi'_{k+1-i}$.
Moreover, since $P \subset S^3$ is an L-space knot,
\cite[Theorem 7]{HeddenWatson} implies that $\zeta'_1=1$.

We give the generators $\xs^t_i$ and $\xs'_i$ \emph{algebraic grading} $0$ and  $\ys^t_i$ and $\ys'_i$ algebraic grading $1$.

The complex $\hat\bX(P,K,n)^{\hat{\cR}}$ can similarly be described as a 2-dimensional hypercube of the form 
\begin{equation*}
\hat\bX(P,K,n)^{\hat{\cR}}=\begin{tikzcd}[column sep=1.1cm, row sep=1.1cm]
 \hat\bE \ar[r, "\Phi^{\mu}+\Phi^{-\mu}"] \ar[d, "\Phi^{K}+\Phi^{-K}",swap] \ar[dr,dashed,"*"] & \hat\bF\ar[d, "\Phi^{K}+\Phi^{-K}"]\\
\hat\bJ \ar[r, swap,"\Phi^{\mu}+\Phi^{-\mu}"]& \hat\bM
\end{tikzcd}
\end{equation*}
where $*$ denotes a sum of four maps $\Phi^{K,\mu},\Phi^{K,-\mu},\Phi^{-K,\mu}$ and $\Phi^{-K,-\mu}$. Compared to $\bX(P,K,n)_{\bF[Z]}$, aside from the addition of the diagonal arrow, each $\Phi^{\pm\mu}$ and $\Phi^{\pm K}$ also contains extra terms. 
As before, $\Phi^{\pm\mu} = \bI_{\cX_{\lambda}(K)} \boxtimes f^{\pm \mu}$ and $\Phi^{\pm K} = \bI_{\cX_{\lambda}(K)} \boxtimes f^{\pm K}.$
Since our argument only involves $\hat\bE \oplus \hat\bF$, in the following we give the definitions of the maps \[f^{\pm \mu}: \bigoplus_s \hat\scE_{s,t} \to \bigoplus_s \hat\scF_{s,t}\] 
where
\begin{equation}\label{eq: hatscE-s-def}
\hat\scE_{s,t}=\begin{cases}
    \hat\cC_{t+\frac{1}{2}} &\text{ if }s>0\\
    \hat\cC_{t-\frac{1}{2}} &\text{ if } s<0\\
\end{cases}
\end{equation}
and $\hat\scF_{s,t} = \hat\cS$. See Figure \ref{fig:the-maps-f-pm-mu} for a schematic drawing of $f^{\pm \mu}$, and we refer the reader to \cite[Section 9.2]{CZZ} for the definitions of the other maps. 
\begin{figure}[h]
\[
\begin{tikzcd}[labels=description, row sep=2cm] \hat\scE_{*,t} 
\ar[d, "f^{\mu}"]
\\ 
\hat\scF_{*,t}
\end{tikzcd}
\hspace{-.3cm}
=
\hspace{-.3cm}\begin{tikzcd}[column sep=.8cm, row sep=0cm]
\cdots
&[-1cm]\hat\scE_{-\frac{5}{2},t}
	\ar[d,"L_\sigma",  pos=.4]
& \hat\scE_{-\frac{3}{2},t}
	 \ar[l, "W|1"']
	\ar[d,"L_\sigma",  pos=.4]
&\hat\scE_{-\frac{1}{2},t}
	\ar[r, bend left,out=30,in=150, "Z|L_Z"]
	 \ar[l, "W|1"']
	\ar[d,"L_\sigma",  pos=.4]
	\ar[loop above,looseness=13, "{(W,Z)|h_{W,Z}}"{description}]
	\ar[dr,pos=.68, "Z|h_{\sigma,Z}"{description}, end anchor={[xshift=1ex]}]
&[1.2cm] \hat\scE_{\frac{1}{2},t}
	\ar[r, "Z|1"]
	\ar[l, out=-160,in=-20,pos=.51,  "W|L_W"'{yshift=1pt}]
	\ar[d,"L_\sigma", pos=.4]
	\ar[loop above, looseness=13, "{(Z,W)|h_{Z,W}}"{description}]
	\ar[dl,  pos=.27,  "W|h_{\sigma,W}"{description},crossing over, end anchor={[xshift=-1ex]}]
& \hat\scE_{\frac{3}{2},t}
	\ar[r, "Z|1"]	 
	\ar[d,"L_\sigma",  pos=.4]
& \hat\scE_{\frac{5}{2},t}
	\ar[d,"L_\sigma", pos=.4]
&[-1cm] \cdots
\\[1.8 cm]
\cdots
&[.4cm]
 \hat\scF_{-\frac{5}{2},t}
& \hat\scF_{-\frac{3}{2},t} 
	 \ar[l, "W|1"']
&\hat\scF_{-\frac{1}{2},t}
	\ar[r, "Z|1"]
	 \ar[l, "W|1"']
&\hat\scF_{\frac{1}{2},t}
	\ar[r, "Z|1"] 
&\hat\scF_{\frac{3}{2},t}
	\ar[r, "Z|1"]
&\hat\scF_{\frac{5}{2},t}
&\cdots 
 \end{tikzcd}
 \]
 \[\begin{tikzcd}[row sep=2cm] \hat\scE_{*,t} 
\ar[d, "f^{-\mu}"{description}]
\\ 
\hat\scF_{*,t}
\end{tikzcd}
\hspace{-.25cm}=
\hspace{-.4cm}\begin{tikzcd}[column sep=1.3 cm, row sep=0cm]
\cdots
&[-1.5 cm]\hat\scE_{-\frac{5}{2},t}
&[-.3 cm] \hat\scE_{-\frac{3}{2},t}
	 \ar[l, "W|1"']
	\ar[dl, "L_\tau"',  pos=.6]
 & \hat\scE_{-\frac{1}{2},t}
 	\ar[r, bend left,out=30,in=150, "Z|L_Z"]
 	 \ar[l, "W|1"']
 	\ar[dl, "L_\tau"',  pos=.6]
 	\ar[loop above,looseness=13, "{(W,Z)|h_{W,Z}}"{description}]
 &[1 cm] \hat\scE_{\frac{1}{2},t}
 	\ar[r, "Z|1"]
 	\ar[l, out=-160,in=-20,pos=.51,  "W|L_W"'{yshift=1pt}]
 	\ar[dl, "L_\tau", pos=.6]
 	\ar[loop above, looseness=13, "{(Z,W)|h_{Z,W}}"{description}]
 	\ar[dll, pos=.37, "\substack{ W|h_{\tau,W}}"{description}, shorten =2mm,end anchor={[xshift=-2ex]}, start anchor={[xshift=2ex]} ]
 & \hat\scE_{\frac{3}{2},t}
 	\ar[dl, "L_\tau"',  pos=.6]
 &[-1.6cm] \cdots
 \\[1.9 cm]
 \cdots
 &[.4cm] \hat\scF_{ -\frac{5}{2},t} 	 
 & \hat\scF_{ -\frac{3}{2},t} 	 
 	 \ar[l, "W|1"']
 &\hat\scF_{-\frac{1}{2},t}
 	\ar[r, "Z|1"]
 	 \ar[l, "W|1"']
  	\ar[from=u,pos=.21, "\substack{Z|h_{\tau,Z}}"{description},crossing over]
 &\hat\scF_{\frac{1}{2},t}
 	\ar[r, "Z|1"]	 
 &\hat\scF_{\frac{3}{2},t}	 
 &\cdots 
  \end{tikzcd}
\]
\caption{Schematics of the maps $f^{\pm \mu}$}
\label{fig:the-maps-f-pm-mu}
\end{figure}
Here
$h_{W,Z},h_{Z,W}\colon \hat\cC_{t} \to \hat\cC_{t}$ are maps increasing the algebraic grading by $1$ and
satisfying
\[
\d(h_{W,Z})=L_W\circ L_Z
\qquad \text{and} \qquad
\d(h_{Z,W})=L_Z\circ L_W.
\]
The maps
$h_{\sigma,W}, h_{\sigma,Z}, h_{\tau,W}, h_{\tau,Z}
\colon \hat\cC_{t} \to \hat\cS$ increasing the algebraic grading by $1$
satisfy analogous relations. For example,
\[
\d(h_{\sigma,W})=L_{\sigma}\circ L_W,
\]
and similarly for the other three maps. 
\begin{define}
 Denote the generator 
 $\ys^{t}_1\in \hat\scE_{s,\frac{\ell-1}{2}} $ by $ \hat y_{s}$, where $t = \frac{\ell}{2}-1$ or $\frac{\ell}{2}$ depending on $s$ as in Equation~\eqref{eq: hatscE-s-def}; denote the generator
 $\ys'_1\in \hat\scF_{s,\frac{\ell-1}{2}}=\hat \cS $ by $ \hat y'_{s}$.
  By an abuse of notation, we continue to use the  previously defined shorthand notations $x_s$ and $x'_s$ in Definition \ref{def: xs-shorthand}, viewing them now as generators in $\hat\scE_{s,\frac{\ell-1}{2}}$ and $\hat\scF_{s,\frac{\ell-1}{2}}$, respectively.
\end{define}

Unlike in \cite{HLPUnknotting}, we are generally unable to determine the type of the output arrow explicitly. Instead, we exploit the specific features of our construction to gain partial control over the resulting arrow, narrowing the possibilities to a few cases. This yields the following lemma.
\begin{prop} \label{prop: longest_arrow}
Let $P$ be a braided L-space satellite pattern with $\ell>1$. 
    Suppose that $\Ord(K)=n$   
     and that $\cCFK(K)$ contains an arrow weighted by $Z^n$ in its standard complex or in one of the local system complexes, of one of the following types: 
    \begin{itemize}
        \item  a non-ending arrow of type $(+,+)$; or
        \item  a non-ending arrow of type $(+,-)$; or
        \item    an ending arrow with $\eta>0$ and $\varepsilon(K)=-1$.
    \end{itemize} 
    Then one of the following holds:

\begin{enumerate}[label=(\roman*)]
\item  $\Ord(P(K,\lambda))>\ell n$.
\item $\cCFK(P(K,\lambda))$ contains an arrow weighted by $Z^{\ell n}$ in its standard complex or one of its local system complexes, which is again of one of the three types listed above.
\end{enumerate}
\end{prop}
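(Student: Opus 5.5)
We would prove Proposition~\ref{prop: longest_arrow} by rerunning the constructions from Propositions~\ref{prop: non_ending_braided_case} and~\ref{prop: ending_arrow_braided_case} for the three listed input types. This time we work in the $\hat\cR$-complex $\hat\bX(P,K,\lambda)^{\hat\cR}$ of Equation~\eqref{eq: CFK_PK_hatR} and keep track of the $W$-weighted arrows.

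The first step is to specialize to the braided case. There $\theta=0$, $\kappa=\ell-\omega$ and $\Xi=\omega-1\ge 0$, by Lemma~\ref{lem: braided_property}. In each of the three cases the pairs $(\alpha_0,\alpha_1)$ built earlier satisfy $\partial\alpha_1=\alpha_0 Z^{N}$, with:
\begin{itemize}
\item $N=\ell n$ for a non-ending $(+,+)$ arrow;
\item $N=\ell n+\kappa$ for a non-ending $(+,-)$ arrow;
\item $N=\ell n+\kappa$ or $N=\ell n+\max\{\kappa',0\}$ for an ending arrow with $\varepsilon(K)=-1$, $\eta>0$, depending on the sign of $\lambda-2\tau(K)$.
\end{itemize}
In each case $\{\alpha_0,\alpha_1\}$ splits off as a summand, up to the mapping-cone correction of Lemma~\ref{lem: torsion_order_criterion}. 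So if $N>\ell n$ (for instance $\kappa>0$, $\kappa'>0$, or a cone correction that only helps), alternative (i) holds. It remains to treat the case where an arrow of length exactly $\ell n$ is produced. Then $\alpha_0,\alpha_1$ are honest free generators joined by a $Z^{\ell n}$ arrow, and we must show this arrow has one of the three types.

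The key observation is that all three admissible types share one feature. The target of the $Z$-arrow (the bottom generator) receives an incoming $W$-arrow. The excluded configurations violate this:
\begin{itemize}
\item in types $(-,\pm)$ the target emits a $W$-arrow;
\item for an ending arrow with $\varepsilon=1$ the target is the ending generator;
\item for an ending arrow with $\varepsilon=-1$, $\eta<0$ the target emits a $W$-arrow.
\end{itemize}
Conversely, if the target has an incoming $W$-arrow, the source either has an adjacent $W$-arrow (types $(+,\pm)$) or is the ending generator, and the latter is the admissible ending case. So the plan is to lift $\alpha_0$ and $\alpha_1$ to $\hat\bX$ and prove that $\alpha_0$ has an incoming $W$-arrow in a reduced basis adapted to the decomposition of Theorem~\ref{thm:standard_decomp}. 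In the cases at hand, $\alpha_0$ is $a_0|x'_{-\frac12}$ or $a_1|x'_{-\frac12}$, i.e.\ a free generator $x'$ of $\hat\cS$ at $s=-\tfrac12$.

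To produce the $W$-arrow, we use the staircase $\hat\cS\simeq\cCFK(P)$. Since $P$ is an L-space knot, \cite[Theorem 7]{HeddenWatson} gives $\zeta'_1=1$, so $\delta^1(\ys'_1)=\xs'_0\otimes W+\xs'_1\otimes Z^{\xi'_1}$. Tensoring with the same $a$ (which has $\delta^1$ vanishing in the relevant slots) gives an arrow $a|\hat y'_{-\frac12}\to a|x'_{-\frac12}$ weighted by $W$. We would then verify, using Figure~\ref{fig:the-maps-f-pm-mu} and Lemma~\ref{lem: horizontal_chain}, three things:
\begin{itemize}
\item the changes of basis used earlier (which only modify algebraic-grading-$0$ generators by $Z$-multiples) preserve this arrow;
\item the extra terms $h_{\sigma,W},h_{\tau,W}$ and the diagonal maps $*$ do not cancel it;
\item no outgoing $W$-arrow from $\alpha_0$ appears.
\end{itemize}
The last point should follow since $m_{1|1|0}(W,x'_{-\frac12})$ lands in $s=-\tfrac32$ with coefficient $1$ only along the $\hat\scF$ row, where it is absorbed by the $\delta^1(a)$ analysis. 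A short argument is then needed that the $\hat\cR$-reduced model really contains $\{\hat y',\alpha_0,\alpha_1,\dots\}$ consecutively in one summand, so that the "type" is well defined.

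The main obstacle is the case $g_3(P)=0$, which can occur for braided patterns with $\ell>1$. Then $\hat\cS$ has a single generator and there is no $\hat y'$. There we would instead look for a $W$-arrow into $\alpha_0$ coming from $a_0|x_{\frac12}$ via $L_W(x_{\frac12})=x_{-\frac12}Z^{\omega}$ combined with $L_\tau$, or from $\hat\cC_{\ell/2-1}$, using Proposition~\ref{prop: braided_property}. If no such arrow exists, we would try to show that $\kappa>0$ forces alternative (i). Controlling the interaction of the $W$-arrow with the torsion part $\bX^{\Tor}$ in the $\omega>1$ subcases is the other delicate point.
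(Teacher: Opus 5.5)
Your proposal has a genuine gap, and it sits exactly at the step you call ``a short argument''. Being an arrow of type $(+,+)$, $(+,-)$, or an ending arrow with $\eta>0$ and $\varepsilon=-1$ only makes sense inside a decomposition of $\cCFK(P(K,\lambda))^{\hat\cR}$ into a standard complex and local system complexes as in Theorem~\ref{thm:standard_decomp}. That decomposition exists only up to homotopy equivalence, via the immersed-curve picture. The model $\hat\bX(P,K,\lambda)^{\hat\cR}$ is neither reduced nor presented in that form.

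Moreover, the splitting of $\{\alpha_0,\alpha_1\}$ you want to reuse was proved only over $\bF[Z]$ (with $W=0$), and in the $(+,-)$ and $\varepsilon=-1$ cases only inside $\bX^{\free}$. Over $\hat\cR$, the changes of basis, the $W$-weighted components of $\Phi^{\pm\mu}$, and the $h$-maps can connect $\alpha_0,\alpha_1$ to the rest of the complex. So a $W$-arrow into $a|x'_{-\frac12}$ in the model basis does not show that, after passing to the decomposition, some $Z^{\ell n}$-arrow ends at a sink. There is no reason that sink should even be the image of $\alpha_0$. Your plan gives no mechanism for crossing from the model to the decomposition, and that crossing is the whole difficulty.

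The missing idea is to replace the basis-level claim by a basis-independent homological one.
\begin{itemize}
\item Define $\Ord'(\cC)$ as the largest $N$ for which there is a cycle $x$ such that $[x]_{Z=0}$ is nonzero and torsion, and $[x]_{W=0}$ has exact $Z$-order $N$.
\item Prove that in any standard/local-system decomposition, either $\Ord(\cC)>\Ord'(\cC)$ or some sink receives a $Z$-arrow of length $\Ord'(\cC)$. To see this, write $x=x_0+x_W+x_Z$. If $x_0=0$, or if $Z^{N-1}[x_Z]_{W=0}\neq0$, then $\Ord>N$. Otherwise $x_0$ is a sum of sinks, and one of them receives a $Z^N$-arrow.
\end{itemize}

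With this in hand, you only need three facts about the model. First, $a|x'_{-\frac12}$ is a cycle. Second, its class mod $W$ has order exactly $\ell n$, which follows from your earlier constructions together with $\Ord=\ell n$. Third, its class mod $Z$ is nonzero and killed by $W$.

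Nonvanishing mod $Z$ holds because no unit-weighted arrow hits $a|x'_{-\frac12}$. The $\Phi^{-\mu}$-arrow from $a|x_{\frac12}$ carries $Z^{\ell}$, and the $h$-terms land in algebraic grading $1$.

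For the killing by $W$, split on $\Xi>0$ versus $\Xi=0$, not on $g_3(P)$:
\begin{itemize}
\item If $\Xi>0$, then $g_3(P)>0$ and $L_\sigma$ cannot hit $x'_{-\frac12}$ for grading reasons. Then $a|\hat y'_{-\frac12}$ kills $W\cdot[a|x'_{-\frac12}]$, using $\zeta'_1=1$.
\item If $\Xi=0$ (this includes your obstacle $g_3(P)=0$), the grading computation in Lemma~\ref{lem: C_S_structure_maps} gives $L_\sigma(x_{-\frac12})=x'_{-\frac12}\otimes W$. The competing $L_\tau$-arrow carries $Z^{\kappa}=Z^{\ell-1}$, which vanishes mod $Z$ precisely because $\ell>1$.
\end{itemize}

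Your fallback for $g_3(P)=0$, hoping $\kappa>0$ forces alternative (i), also fails for type $(+,+)$. There the constructed arrow has length exactly $\ell n$ regardless of $\kappa$.
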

The proof of Proposition~\ref{prop: longest_arrow} relies on Lemma \ref{lem: alpha0_hori} and Lemma \ref{lem: refined_tor}. We start with Lemma \ref{lem: alpha0_hori}, which provides partial information about the structure of $\hat\bX(P,K,\lambda)$ through explicit computations.

\begin{lem}\label{lem: alpha0_hori}
Let $a$ be a generator in a reduced basis of  $\cCFK(K)^{\hat{\cR}}$ such that $\delta^1(a)=0$ in $\cX_{\lambda}(K)^{\hat{\cK}}$.
Given a braided L-space satellite pattern $P$ with $\ell>1$, then
\begin{itemize}
    \item $ a| x'_{-\frac{1}{2}}$ is a cycle in $\cCFK(P(K))$ (recall that this lies in $\hat{\bF}$, the product of the $F_{s,t}$ complexes);
    \item  $[a| x'_{-\frac{1}{2}}] \neq 0$ and  $[a| x'_{-\frac{1}{2}}]W =0$ in $H_*(\cCFK(P(K))/(Z))$.
\end{itemize}
\end{lem}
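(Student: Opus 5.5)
The plan is to argue directly in the hypercube model $\hat\bX(P,K,\lambda)^{\hat\cR}$ of Equation~\eqref{eq: CFK_PK_hatR}, localizing at the generator $a|x'_{-\frac{1}{2}}\in \hat\bF$.

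\emph{The cycle claim.} Since $\delta^1(a)=0$ in $\cX_\lambda(K)^{\hat\cK}$, there are no length-$0$ differentials out of $a|x'_{-\frac12}$ coming from $\cCFK(K)$, and no $\Phi^{\pm K}$ terms, because those require $\delta^1_{\sigma,\tau}(a)\neq 0$. The only remaining contribution is the internal differential of $\hat\scF_{-\frac12,\frac{\ell-1}{2}}=\hat\cS$ applied to $x'_{-\frac12}=\xs'_0$. In the positive staircase $\hat\cS$, the generator $\xs'_0$ is the ending generator and has no outgoing arrows. Hence $a|x'_{-\frac12}$ is a cycle.

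\emph{Vanishing after multiplying by $W$.} We produce an explicit primitive. In $\hat\scE_{*,\frac{\ell-1}{2}}$, consider $a|x_{\frac12}$. Its differential consists of two parts. The $\Phi^{\mu}$ part is $a|L_\sigma(x_{\frac12})=a|x'_{\frac12}Z^{\theta}=a|x'_{\frac12}$, since $\theta=0$ for braided patterns by Lemma~\ref{lem: braided_property}. The $\Phi^{-\mu}$ part is $a|L_\tau(x_{\frac12})=a|x'_{-\frac12}Z^{\ell}$. Modulo $Z$, the latter vanishes because $\ell>1$, so the differential mod $Z$ is $a|x'_{\frac12}$ together with possible $h$-terms.

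This alone does not give $W\cdot x'_{-\frac12}$. So we work inside $\hat\scF$: the $W|1$ arrow from $\hat\scF_{\frac12}$ is absent, but $\hat\scF_{-\frac12}$ carries a $W$ action. Instead we take $a|\hat y'_{-\frac12}$, where $\hat y'=\ys'_1\in\hat\cS$. Its differential is $\xs'_0 W^{\zeta'_1}+\xs'_1Z^{\xi'_1}$, and $\zeta'_1=1$ by Hedden--Watson. Reducing mod $Z$ gives $\d(a|\hat y'_{-\frac12})\equiv a|x'_{-\frac12}W$. This uses $g_3(P)>0$, which holds since $\ell>1$ and $P$ is braided: a braid closure with at least two strands that is a knot is nontrivial, or we can argue via $\omega>0$ in Proposition~\ref{prop: braided_property}. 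We still need to check that $a|\hat y'_{-\frac12}$ has no other outgoing arrows mod $Z$. Length-$0$ terms vanish because $\delta^1(a)=0$, $\Phi^{\pm K}$ terms vanish for the same reason, and $\hat\bF$ is a terminal vertex in the $\mu$ direction. Therefore $[a|x'_{-\frac12}]W=0$.

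\emph{Nonvanishing.} This is the main obstacle. We must show $a|x'_{-\frac12}$ is not a boundary in $\cCFK(P(K))/(Z)$. The approach is to list all possible arrows into $a|x'_{-\frac12}$ mod $Z$, as in Lemma~\ref{lem: horizontal_chain}. These are:
\begin{itemize}
\item $\Phi^{-\mu}$ from $a|x_{\frac12}$, weighted $Z^\ell$, which vanishes mod $Z$;
\item $\Phi^{\mu}$ from $a|x_{-\frac12}$, which by Lemma~\ref{lem: C_S_structure_maps} lands in torsion generators or is zero;
\item length-$0$ terms from generators $b$ with $\delta^1(b)\ni a\otimes r$, where $m_{1|1|0}(r,x'_s)$ contributes only through $Z$-powers, or through $W$-powers with $s>0$ landing at $\hat\scF_{s-\eta}$;
\item the internal arrow $W$ from $\hat y'_{-\frac12}$ computed above, and $h_{\sigma,W}$, $h_{\tau,W}$ diagonal terms.
\end{itemize}
We then use an Alexander/$\gr_{\ws}$ grading argument. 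The element $a|x'_{-\frac12}$ has maximal $\gr_{\ws}$ among elements of its Alexander grading in the $\hat\bF$ column, because $\xs'_0$ is the top generator with $\gr_{\ws}=0$. Any $W$-weighted incoming arrow would have to come from a generator with $\gr_{\ws}$ larger by $2\cdot(\text{power})-1\ge 1$. We exclude this by comparing with $H$-function gradings from Equation~\eqref{eq: gradingxs}, since $\xs^t_0$ has $\gr_{\ws}\le 0$ with equality only for $t\ge \ell/2$. Alternatively, and more cleanly, we show that the coefficient of $a|x'_{-\frac12}$ in any boundary mod $(Z,W)$ vanishes, so its class survives to $\widehat{HFK}$ mod $W$. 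This reduces to noting that every arrow into it carries a positive power of $W$ or $Z$.
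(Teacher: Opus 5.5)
\textbf{Gap: the primitive for $[a|x'_{-\frac12}]W=0$ need not exist.} Your cycle argument matches the paper's. So does your nonvanishing strategy, which shows that no arrow with unit coefficient enters $a|x'_{-\frac12}$ modulo $Z$. The problem is your primitive $a|\hat y'_{-\frac12}$: it requires a generator $\ys'_1$ in $\hat\cS$, i.e.\ $g_3(P)>0$, and neither of your justifications for this holds.

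\begin{itemize}
\item A closed braid on $\ell\ge 2$ strands can be the unknot. The $(p,1)$-cable pattern is a braided L-space pattern with $\ell=p$ (here $L_P$ is the torus link $T(2,2p)$). But $P(U)=T_{p,1}$ is the unknot, so $\hat\cS$ is spanned by $\xs'_0$ alone and $\hat y'_{-\frac12}$ does not exist.
\item The fact $\omega>0$ only gives $\Xi=\omega-1\ge 0$. Lemma~\ref{lem: C_S_structure_maps} yields $g_3(P)>0$ only when $\Xi>0$, and $g_3(P)=0$ forces $\Xi=0$.
\end{itemize}
A warning sign is that your argument never genuinely uses $\ell>1$: the $Z^{\ell}$-weighted $\Phi^{-\mu}$ arrow out of $a|x_{\frac12}$ already vanishes modulo $Z$ when $\ell=1$.

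\textbf{The fix: split on $\Xi$, as the paper does.} For $\Xi>0$, your primitive is exactly the paper's. For $\Xi=0$, use $a|x_{-\frac12}$ instead.
\begin{itemize}
\item By the grading computation in the proof of Lemma~\ref{lem: C_S_structure_maps}, the $\bI|L_\sigma$ arrow out of $a|x_{-\frac12}$ is $W$-weighted: $L_\sigma(\xs^{\ell/2-1}_0)=\xs'_0 W$ modulo $Z$.
\item The $\bI|L_\tau$ arrow is $L_\tau(\xs^{\ell/2-1}_0)=\xs'_0 Z^{\kappa}$ with $\kappa=\ell-\Xi-1=\ell-1\ge 1$. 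This is exactly where $\ell>1$ is needed.
\item Since $\delta^1(a)=0$, there are no length-$0$ terms, $h$-terms, or $\Phi^{\pm K}$ terms out of $a|x_{-\frac12}$.
\end{itemize}
Hence $\d(a|x_{-\frac12})\equiv a|x'_{-\frac12}W \pmod Z$.

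\textbf{A smaller point on nonvanishing.} The diagonal $h_{\sigma,W}$, $h_{\tau,W}$ terms you list are not excluded by their weights. They are excluded because the $h$-maps raise the algebraic grading by one, so they can never hit $\xs'_0$, which lies in algebraic grading $0$.
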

\begin{proof}
The first bullet point 
 follows from $\delta^1(a)=0$ and $\delta^1_1(x'_{-\frac{1}{2}})=0$. 
 
To prove the second  bullet point, we analyze all possible incoming arrows to
$a|x'_{-\frac{1}{2}}$
in $\hat\bX(P,K,\lambda)$.
The images of $\Phi^{\pm K}$ and $\Phi^{\pm K,\pm\mu}$ are not contained in $\hat{\bF}$ and hence the only possible differentials pointing to $a|x_{-\frac{1}{2}}'$ are from $\Phi^{\pm \mu}$ or from the internal differential of $\hat{\bF}$. 

We first consider the internal arrows in $\hat{\bF}$. Since the basis of $\cCFK(K)^{\hat{R}}$ is reduced, every component of the differential of $\cCFK(K)^{\hat{\cR}}$ is weighted by a power of $W$ or $Z$. We observe firstly that there are no differentials in the box tensor product pointing to $a|x_{-\frac{1}{2}}$ contributed by a single application of the differential of $\cCFK(K)^{\hat{\cR}}$ because there are no components of $\delta_2^1(W,-)$ or $\delta_2^1(Z,-)$ which point to $x'_{-\frac{1}{2}}\in \hat{\scF}_{-\frac{1}{2},\frac{\ell-1}{2}}$ (see Figure~\ref{fig:the-maps-f-pm-mu}). 

Next, we observe that there are no components of the differential of the box tensor product pointing to $a|x_{-\frac{1}{2}}$ which are contributed by two applications of the differential of $\cCFK(K)^{\hat{\cR}}$ because these maps all involve a factor of one of
\[
h_{Z,W},\ h_{W,Z},\ h_{\sigma,W},\ h_{\sigma,Z},\ h_{\tau,W},
\text{ and } h_{\tau,Z},
\]
and these maps have image in algebraic grading $1$, while
$x'_{-\frac{1}{2}}$ lies in algebraic grading 0.

We now consider the map $\Phi^{-\mu}$. This map is shown schematically in Figure~\ref{fig:the-maps-f-pm-mu}. If such an arrow points to $a|x'_{-\frac{1}{2}}$, it must originate from $a|x_{\frac{1}{2}}$. However,  by Lemma~\ref{lem: C_S_structure_maps}, the arrow of $\bI|L_\tau$ originating from
$a|x_{\frac{1}{2}}$ is weighted by $Z^{\ell+\theta}=Z^{\ell}$ (recall from Lemma~\ref{lem: braided_property} that $\theta=0$ for braided L-space satellite patterns). Since $\ell>1$, this arrow vanishes in
$\hat\bX(P,K,\lambda)/(Z)$.

Thus, the only remaining possibilities are the internal differentials of
$\hat\cS$ and the arrows of the form $\bI|L_\sigma$ coming from $\Phi^{\mu}$.  
We divide the argument into two cases depending on $\Xi$. Recall from Lemma~\ref{lem: braided_property} that  $\Xi \geq 0$ for braided L-space satellite patterns.

If $\Xi>0,$  then  by Lemma \ref{lem: C_S_structure_maps}, $g_3(P)>0$, and by \cite[Theorem 7]{HeddenWatson} the internal arrow of $\hat\cS$ from $\ys'_1$ to $\xs'_0$ is weighted by $W$. On the other hand,
following a similar argument as in the proof of Lemma \ref{lem: C_S_structure_maps},
by Equation~\eqref{eq: gradingxs} and the definition of $\hat\cC_{\frac{\ell}{2}-1}$,  for any basis element  $\ws \in \hat\cC_{\frac{\ell}{2}-1}$ we  have 
\[\gr_{\zs}(\ws)\ge \gr_{\zs}(\xs_0^{\ell/2-1}) = -2 R_{\frac{\ell}{2}-1} - \ell.\]
 Therefore by Equation \eqref{eq: grading_shift}
\[
\gr_{\zs}(L_\sigma(\ws)) \geq  -2 R_{\frac{\ell}{2}-1} + \ell -2 = \gr_{\zs}(\xs'_0) + 2\Xi.
\]
Since $\Xi>0,$ we conclude that  $\im(L_\sigma)$ does not contain $x'_{-\frac{1}{2}} \cdot b$ for any $b \in \hat \cR$.

Therefore the only incoming arrow to $ a| x'_{-\frac{1}{2}}$ is from $a| \hat{y}'_{-\frac{1}{2}}$.
  Since $\delta^1(a)=0$, the only outgoing arrows from $a| \hat{y}'_{-\frac{1}{2}}$ are the internal arrows of $\hat\cS$. Therefore we obtain
\[ \partial_{\hori} (a| \hat{y}'_{-\frac{1}{2}}) = a| x'_{-\frac{1}{2}}W \]
where $\d_{\hori}$ consists of the components of the differential of $\hat{\bX}(P,K,n)^{\hat{\cR}}$ which are weighted by powers of $W$.

If $\Xi=0,$ then by the computation of the gradings of $L_{\sigma}(\xs^{\ell/2-1}_0)$ in the proof of Lemma \ref{lem: C_S_structure_maps}, the arrow of $\bI | L_\sigma$ from $a| x_{-\frac{1}{2}}$ is weighted by $W.$ In fact, we will show 
\[\partial_{\hori} (a| x_{-\frac{1}{2}}) = a| x'_{-\frac{1}{2}}W.\]
Since $\delta^1(a)=0$,  aside from the arrow of $\bI | L_\sigma$,  the only other possible outgoing arrow from $a| x_{-\frac{1}{2}}$ is from $\bI | L_\tau$. 
By Lemma \ref{lem: C_S_structure_maps}, $L_\tau(x_{-\frac{1}{2}})$ has coefficient  $Z^\kappa$. Since $\kappa = \ell - \Xi - 1 = \ell - 1 > 0,$  such an arrow vanishes after setting $Z=0$, proving the claim.  

Therefore in each case, in $\hat\bX(P,K,\lambda)/(Z)$ there is no arrow weighted by $1$ pointing to  $ a| x'_{-\frac{1}{2}}$. (In the case when $\Xi=0,$ there is possibly an internal arrow of $\hat\cS$ pointing to  $ a| x'_{-\frac{1}{2}}$. But such an arrow is weighted by $W$.) This proves that  $[a| x'_{-\frac{1}{2}}] \neq 0$ in $H_*(\cCFK(P(K))/(Z))$. Moreover, in each case we identified an element in $\hat\bX(P,K,\lambda)/(Z)$ with $\partial_{\hori}$ equal to $a| x'_{-\frac{1}{2}}W$, proving that  $[a| x'_{-\frac{1}{2}}]W =0$ in $H_*(\cCFK(P(K))/(Z))$.
\end{proof}

We now introduce a technical notion, which we call the \emph{refined torsion order} of a knot like complex $\cC$. We introduce this to help deal with the following challenge: our bounds on the torsion order in the proofs of Proposition~\ref{prop: non_ending_general_case} and~\ref{prop: non_ending_braided_case} make use of the structure of the standard complex representative of $\cC$. However, our model $\hat{\bX}(P,K,n)^{\hat{\cR}}$ is not naturally presented as a standard complex, so there is some challenge in iterating the stronger form of our bounds. Therefore we introduce a refinement of the torsion order, denoted $\Ord'(K)$, which is basis independent and will help bridge this gap.

Let $\cC$ be a free complex over $\hat{\cR}$. If $x\in \cC$ is a cycle, write $[x]_{W=0}$ and $[x]_{Z=0}$ for the images of $x$ in $H_*(\cC/W)$ and $H_*(\cC/Z)$ respectively.

\begin{define} Suppose that $\cC$ is a knot like complex.  Define the \emph{refined torsion order} $\Ord'(\cC)\in \N\cup \{\infty\}$ as follows. Let $\Ord'(\cC)$ be the maximal value of $N$ such that there is a cycle $x\in \cC$ such $[x]_{Z=0}$ is non-zero and torsion, and $Z^{N-1}[x]_{W=0}\neq 0$ and $Z^{N}[x]_{W=0}=0$.
 \end{define}

Now consider the case that $\cC$ is a standard complex or local system complex. Let $x$ be one of the basis elements of $\cC$. We say that $x$ is a \emph{sink} if there is a $Z$-weighted arrow and a $W$-weighted arrow pointing to $x$.
 
 \begin{lem}\label{lem: refined_tor} Suppose that $\cC$ is a knot like complex over $\hat{\cR}$ which is decomposed as a sum of standard complexes and local system complexes. Suppose further that $\Ord'(\cC)$ is finite. Then either $\Ord(\cC)>\Ord'(\cC)$, or there is a sink $x$ in $\cC$ whose $Z$ arrow is weighted by $\Ord'(\cC)$.
 \end{lem}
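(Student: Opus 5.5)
We work directly with the given decomposition $\cC=\cC_0\oplus\cL_1\oplus\cdots\oplus\cL_k$ and set $N=\Ord'(\cC)$. Let $x$ be a cycle realizing $N$: $[x]_{Z=0}$ is nonzero and torsion, and $Z^{N-1}[x]_{W=0}\neq 0=Z^N[x]_{W=0}$. Suppose $\Ord(\cC)\le N$; we will produce a sink whose $Z$-arrow has length exactly $N$.

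\textbf{Step 1: homology in the decomposed model.} Since the decomposition is a direct sum, $H_*(\cC/W)$ and $H_*(\cC/Z)$ split over the summands. In each summand we write them down explicitly, as in the computation of $H_*(\cC/(Z))$ and $H_*(\cC/(W))$ for standard complexes. Each $Z$-arrow of length $n$ between generators $b\to c$ gives a torsion summand $\bF[Z]/Z^n$ of $H_*(\cC/W)$ generated by $[c]_{W=0}$, the head of the arrow. Likewise, $W$-arrows give torsion summands of $H_*(\cC/Z)$ generated by their heads. The only free classes are those of the ending generator $a_0$ in $\cC/Z$ and the starting generator $a_m$ in $\cC/W$. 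For a local system summand we use the same description with $X_i\cong\bF^w$: the heads of the $Z$-arrows, tensored with $\bF^w$, generate the torsion.

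\textbf{Step 2: reduce to basis elements.} Write $x$ as a sum of basis elements with coefficients in $\hat\cR$. Since the differential has no constant terms (a reduced basis), a cycle modulo $W$ can involve only generators with no outgoing pure-$Z$ arrow, up to boundaries. Here is the key observation: $[x]_{Z=0}\neq 0$ and torsion means that, modulo $Z$ and boundaries, $x$ has a nonzero component on the head $c$ of some $W$-arrow, with coefficient a power of $W$ smaller than the arrow length. Because $x$ is a cycle in $\cC$ itself and not only modulo $W$ or $Z$, we will use the grading and the splitting into summands to argue the following. One can choose a single basis element $c$ appearing in $x$ (with unit coefficient, after subtracting boundaries) that is simultaneously the head of a $W$-arrow and satisfies $Z^{N-1}[c]_{W=0}\neq0$. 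By Step 1, the latter condition says $c$ is the head of a $Z$-arrow of length $\ge N$, or $c=a_m$, whose class is free. The case $c=a_m$ is excluded, because $[x]_{W=0}$ is torsion and $a_m$ has no incoming $W$-arrow in the staircase once we account for the symmetry. Therefore $c$ is a sink whose $Z$-arrow has length $n_c\ge N$.

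\textbf{Step 3: conclude.} If $n_c>N$, then $\Ord(\cC)\ge n_c>N$, contradicting our assumption. So $n_c=N$ and $c$ is the required sink. We also need $\Ord(\cC)\ge N$ in general: we plan to note that the element $c$ itself witnesses $\Ord'\ge n_c$, which gives maximality consistency.

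\textbf{Main obstacle.} The main obstacle is Step 2, passing from an arbitrary cycle $x$ to a single basis element. The issue is that $Z^{N-1}[x]_{W=0}\neq0$ could be witnessed by one component of $x$, while the nontriviality of $[x]_{Z=0}$ is witnessed by a different component. We would first try to use the maximality of $N$ together with the bigrading to separate these. Take the components of $x$ in a fixed bigrading; since arrows are homogeneous, the components lying at heads of $Z$-arrows of length $\ge N$ must also carry the $W$-torsion class. Failing that, we would perturb $x$ by adding a head-generator of the longest $Z$-arrow, which either increases $\Ord'$ or produces the sink directly.
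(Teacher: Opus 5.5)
Your Step 2 carries the whole lemma, and as written it is an assertion rather than an argument. You flag this yourself, and the remedies you sketch (bigrading, maximality of $N$, perturbing $x$ by a head generator) do not close the gap. The claim you want is in fact true under your standing assumption $\Ord(\cC)\le N$, but the mechanism that makes it true is missing.

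That mechanism is to split $x$ by coefficient type in the given basis: $x=x_0+x_W+x_Z$, where $x_0$ has coefficients in $\bF$, $x_W$ is a multiple of $W$, and $x_Z$ is a multiple of $Z$.
\begin{itemize}
\item On a standard or local system summand, every component of $\d$ is a fixed monomial $W^{n_i}$ or $Z^{n_i}$ times an injective $\bF$-linear map. Each block receives its $W$-component and its $Z$-component from at most one neighbouring block. Separating $\d x=0$ by block and by $W$- and $Z$-degree therefore shows that $x_0$, $x_W$ and $x_Z$ are each cycles.
\item Hence no generator occurring in $x_0$ has an outgoing arrow. Both $[x]_{W=0}$ and $[x]_{Z=0}$ are torsion, while $a_m$ and $a_0$ carry the free classes of $H_*(\cC/W)$ and $H_*(\cC/Z)$. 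So $x_0$ is a sum of sinks.
\item Now $[x]_{W=0}=[x_0]_{W=0}+[x_Z]_{W=0}$. Suppose $Z^{N-1}[x_Z]_{W=0}\neq 0$, which is automatic when $x_0=0$. Then $y=Z^{-1}x_Z$ is a cycle in $\cC/W$, because multiplication by $Z$ is injective on the free $\bF[Z]$-module $\cC/W$. Also $[y]$ is torsion with $Z^N[y]\neq 0$, so $\Ord(\cC)\ge N+1$.
\item Otherwise $Z^{N-1}[x_0]_{W=0}\neq 0$. Then some sink occurring in $x_0$ has a $Z$-arrow of length at least $N$, hence exactly $N$ unless $\Ord(\cC)>N$.
\end{itemize}
The third point is precisely the situation you flagged, where the $Z^{N-1}$-nontriviality sits on a different component from whatever witnesses $[x]_{Z=0}\neq 0$. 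It is not resolved by finding a better cycle or by invoking maximality of $N$. It is resolved by landing in the disjunct $\Ord(\cC)>\Ord'(\cC)$.

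Two further errors.
\begin{itemize}
\item Your ``key observation'' does not produce a unit-coefficient generator. The component that makes $[x]_{Z=0}$ nonzero may have coefficient $W^k$ with $k>0$, i.e.\ it may lie in $x_W$, so it cannot by itself supply the sink. The sink has to come from $x_0$.
\item Your reason for excluding $c=a_m$ is false: $a_m$ can receive a $W$-arrow. In $\cC(1,-1)$, the complex of $T_{2,3}$, there is a $W$-arrow from $a_1$ to $a_2=a_m$. The correct reason is that $[a_m]_{W=0}$ generates the free part of $H_*(\cC/W)$ while $[x]_{W=0}$ is torsion.
\end{itemize}
The last sentence of your Step 3, about needing $\Ord(\cC)\ge N$, plays no role in the statement.
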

 \begin{proof} Without loss of generality, we assume that $\cC$ is a direct sum of standard complexes and local system complexes.  Let $x$ be a cycle as in the definition of $\Ord'(\cC)$. Write $x=x_0+x_W+x_Z$ where $x_0$ has no $W$ or $Z$ powers, $x_W$ is a multiple of $W$ and $x_Z$ is a multiple of $Z$ (with respect to the decomposition of $\cC$ into standard complexes and local system complexes).

 We break the proof into two main cases:
 \begin{enumerate}
 \item $x_0=0$
 \item $x_0\neq 0$.
 \end{enumerate} 
 We first consider the case that $x_0=0$. In this case, $y=[Z^{-1}x_Z]_{W=0}$ is an element of homology with $Z^{N}\cdot [y]\neq 0\in H_*(\cC/W)$ and $Z^{N+1}\cdot [y]=0$. Therefore $\Ord(\cC)\ge N+1>\Ord'(\cC)$, as claimed.
 
  We now consider the case that $x_0\neq 0$. We observe firstly that $x_0$ must itself be a cycle. Furthermore, $x_0$ cannot contain any endpoints of the standard complex of $\cC$ since $[x]_{W=0}$ and $[x]_{Z=0}$ are torsion.  Therefore, $x_0$ must be a sum of sinks.
 
 We observe that $[x]_{W=0}=[x_0]_{W=0}+[x_Z]_{W=0}$. Write $z$ for this element in $H_*(\cC/W)$. By definition, $Z^N\cdot z=0$ but $Z^{N-1} \cdot z \neq 0$. Clearly, $Z^{N-1} \cdot [x_0]_{W=0}\neq 0$ or $Z^{N-1}\cdot[x_Z]_{W=0}\neq 0$. Furthermore, $Z^N\cdot[x_0]_{W=0}=0$ and $Z^N\cdot [x_Z]_{W=0}=0$. If $Z^{N-1}\cdot [x_0]_{W=0}\neq 0$, then there must be a $Z^N$ arrow pointed toward one of the summands of $x_0$, which implies the main claim. On the other hand, if $Z^{N-1}\cdot [x_Z]_{W=0}\neq 0$, then $\Ord(\cC)>N=\Ord'(\cC)$ by the same argument as when $x_0=0$.
 \end{proof}

We are ready to prove Proposition~\ref{prop: longest_arrow}.
\begin{proof}[Proof of Proposition~\ref{prop: longest_arrow}]
We recall that, for a general L-space satellite pattern $P$, if an arrow weighted by $Z^n$ in $\cCFK(K)$ is of one of the following types:
 \begin{itemize}
        \item  a non-ending arrow of type $(+,+)$; or
        \item  a non-ending arrow of type $(+,-)$; or
        \item    an ending arrow with $\eta>0$ and $\varepsilon(K)=-1$,
    \end{itemize} 
    then, corresponding to these three types, we constructed an incoming arrow to a certain generator $\alpha_0$
    in Case \eqref{it: non_ending_general_case_1} and Case \eqref{it: non_ending_general_case_3} of the proof of Proposition \ref{prop: non_ending_general_case}, and in Case \eqref{it: ending_general_epsilon<0_eta>0} of the proof of Proposition \ref{prop: ending_arrow_general_case}. Now suppose further that $P$ is braided. Then these constructions show that $\Ord(P(K,\lambda))\geq \ell n$ in all three cases. (See Case \eqref{it: non_ending_braided_case_1} and \eqref{it: non_ending_braided_case_3} of Proposition \ref{prop: non_ending_braided_case}, and Case \ref{it: ending_braided_case_3} of Proposition \ref{prop: ending_arrow_braided_case}.) If $\Ord(P(K,\lambda))> \ell n$, there is nothing to prove. Therefore we assume that $\Ord(P(K,\lambda))= \ell n$ henceforth.
    
In each case, the generator  $\alpha_0$  is of the form $a| x'_{-\frac{1}{2}}$, where $a=a_0$ in the first two cases and $a=a_1$ in the last. Moreover,  in each case, for the above choice of $a$, $[a| x'_{-\frac{1}{2}}]Z^{\ell n -1} \neq 0$  and $[a| x'_{-\frac{1}{2}}]Z^N = 0$ for some $N\geq \ell n$ in $H_*(\bX(P,K,\lambda)/(W))$. Since $\Ord(P(K,\lambda))= \ell n$, we may take $N=\ell n.$

Let $\cC=\cCFK(P(K,\lambda))^{\hat\cR}.$
It follows from  Lemma \ref{lem: alpha0_hori} and the previous discussion that $\Ord'(\cC)=\ell n$. Since $\Ord(\cC)=\ell n,$ by Lemma \ref{lem: refined_tor},  $\cC$ has a sink with an incoming arrow weighted by $Z^{\ell n}$. This arrow falls into one of the three cases listed above, thereby proving the claim.
\end{proof}
We are now ready to prove Theorem \ref{thm: iterate_braided}, which we recall below.
\IterateCable*
\begin{proof}
    By \cite[Theorem 1.1]{AlishahiEftekhary_Unknotting} we need only to show that $\Ord(P_k \circ \cdots \circ P_1(K)) \geq \ell_k \cdots \ell_1$. Since $K$ is not the unknot, we have $\Ord(K)>0.$
    
    If $\Ord(K)>1,$  iterating Theorem \ref{thm: tor_ord_braided} yields the desired bound.

    If $\Ord(K)=1,$  since $K \neq T_{2,3}$, by Lemma \ref{lem: Ord=1_cases},
    $\cCFK(K)^{\hat{\cR}}$ contains an arrow that is one of the three types listed in Proposition~\ref{prop: longest_arrow}. By  Proposition~\ref{prop: longest_arrow}, there are two possibilities: 
    \begin{itemize}[label=--]
        \item $\Ord(P_1(K)) > \ell_1 ,$ in which case iterating Theorem \ref{thm: tor_ord_braided} yields the desired bound; or
      \item $\Ord(P_1(K)) = \ell_1$ and  $\cCFK(P_1(K))^{\hat{\cR}}$ contains an arrow weighted by $Z^{\ell_1}$ that is of one of the three types listed in Proposition~\ref{prop: longest_arrow}. 
     \end{itemize}
     Proceeding inductively,
    if for some $s \in \{1,\dots, k-1\}$, we have $\Ord(P_{s} \circ \cdots \circ P_1(K)) > \ell_{s} \cdots \ell_1,$ then iterating Theorem \ref{thm: tor_ord_braided} yields the desired bound.  Otherwise, for every $s \in \{1,\dots, k-1\}$,  Proposition~\ref{prop: longest_arrow} implies that $\Ord(P_{s} \circ \cdots \circ P_1(K)) = \ell_{s} \cdots \ell_1$ and that
    $\cCFK(P_{s} \circ \cdots \circ P_1(K))^{\hat{\cR}}$ contains an arrow weighted by $Z^{\ell_{s} \cdots \ell_1}$ that is of one of the three types listed in Proposition~\ref{prop: longest_arrow}.  Hence, iterating Proposition~\ref{prop: longest_arrow}
    yields the desired bound. 
    \end{proof}

\bibliographystyle{custom}
\def\MR#1{}
\bibliography{biblio}	
\end{document}